\newcommand{\F}{\mathbb{F} }
\newcommand{\R}{\mathbb{R} }
\newcommand{\Q}{\mathbb{Q} }
\newcommand{\N}{\mathbb{N} }
\newcommand{\Z}{\mathbb{Z} }
\newcommand{\X}{\mathcal{X}}
\newcommand{\K}{\mathbb{K}}
\newcommand{\frakg}{\mathfrak{g}}
\newcommand{\frakp}{\mathfrak{p}}
\newcommand{\frakk}{\mathfrak{k}}
\newcommand{\fraka}{\mathfrak{a}}
\newcommand{\Ap}{A^+}
\newcommand{\ApF}{A_\F^+}
\newcommand{\frakn}{\mathfrak{n}}
\newcommand{\A}{\mathbb{A}}
\newcommand{\Fun}{\mathscr{F}} 
\newcommand{\B}{\mathcal{B}}
\newcommand{\Id}{\operatorname{Id} }
\newcommand{\diag}[1]{\operatorname{Diag}\left( #1_1, \ldots , #1_n\right) }
\newcommand{\tran}{{^{\mathsf{T}}\!}}
\newcommand{\KPhi}{\vphantom{\Phi}_{\K}\Phi}
\newcommand{\KW}{\vphantom{W}_{\K}W}
\numberwithin{equation}{section}
\newtheorem{theorem}{Theorem}[section]
\newtheorem{lemma}[theorem]{Lemma}
\newtheorem{proposition}[theorem]{Proposition}
\newtheorem{corollary}[theorem]{Corollary}
\newtheorem{question}[theorem]{Question}
\newtheorem*{rep@theorem}{\rep@title}
\newcommand{\newreptheorem}[2]{%
	\newenvironment{rep#1}[1]{%
		\def\rep@title{#2 \ref{##1}}%
		\begin{rep@theorem}}%
		{\end{rep@theorem}}}
\theoremstyle{remark} 
\title[Affine buildings for algebraic groups over real closed fields]{Generalized affine buildings for semisimple algebraic groups over real closed fields}
\author{Raphael Appenzeller}
\address{Department of Mathematics, Heidelberg University, Germany}
\email{rappenzeller@mathi.uni-heidelberg.de}
\thanks{\textit{Address}: Department of Mathematics, Heidelberg University, Germany}
\thanks{\textit{E-mail}: \texttt{rappenzeller@mathi.uni-heidelberg.de}}
\date{\today}      
\begin{document}

\def\subjclassname{\textup{2020} Mathematics Subject Classification}
\expandafter\let\csname subjclassname@1991\endcsname=\subjclassname
\subjclass{
51E24,  
	20G25,   
	14P10,  
	12J25
}
\keywords{ affine buildings, symmetric spaces, algebraic groups, real algebraic geometry, real closed fields, valuations}
\date{\today}

\begin{abstract}
	We use real algebraic geometry to construct an affine $\Lambda$-building $\B$ associated to the $\F$-points of a semisimple algebraic group, where $\F$ is a valued real closed field. We characterize the spherical building at infinity and the local building at a base point. We compute stabilizers of various subsets of $\B$ and obtain group decompositions. 
\end{abstract}


\maketitle

\section{Introduction}

Buildings were introduced by Jacques Tits as an analogue of Riemannian symmetric spaces for algebraic groups. In their seminal work \cite{BrTi,BrTi84}, Bruhat and Tits associate (not necessarily discrete) affine buildings to reductive groups over fields with real valuations. Other constructions of affine buildings have since been found in terms of lattices, seminorms \cite{BrTi84_norms, BrTi87, Par} and asymptotic cones \cite{KlLe}. In this work, we study symmetric spaces from a real algebraic point of view to construct non-discrete affine buildings associated to algebraic groups over real closed valued fields following ideas from \cite{Bru2,KrTe}. We actually work in the more general framework of affine $\Lambda$-buildings, which were introduced by \cite{Ben1} to allow for valuations to arbitrary ordered abelian groups $\Lambda$. Recently, the Bruhat-Tits construction has also been generalized to this setting by \cite{HIL23}. 

Riemannian symmetric spaces of non-compact type are determined by their symmetry groups which are Lie groups that are (essentially) the $\R$-points $G_{\R}$ of semisimple algebraic groups $G < \operatorname{SL}_n$. In this work, we replace the reals $\R$ by a real closed field $\F$ with a valuation to some ordered abelian group $\Lambda$. In analogy to the construction of symmetric spaces from $G_{\R}$, we use $G_\F$ to define a $\Lambda$-metric space $\B$ on which $G_\F$ acts by isometries. Under a technical assumption, we show that $\B$ admits the structure of an affine $\Lambda$-building.

\begin{reptheorem}{thm:B_is_building}
	If the root system $\Sigma$ of the Lie group $G_\R$ is reduced ($\alpha \in \Sigma$ implies $2\alpha \notin \Sigma$), then $\B$ is an affine $\Lambda$-building of type $\A = \A(\Sigma^\vee, \Lambda, \Lambda^{\operatorname{rk}(G)})$.
\end{reptheorem} 

In Section \ref{sec:assoc-buildings}, we investigate the building at infinity $\partial_\infty \B$ and the local building $\Delta_o\B$ at a base point $o\in \B$, as introduced in \cite{Ben1} and \cite{Sch09}.

\begin{reptheorem}{thm:building_at_infty}
	The group $G_\F$ acts strongly transitively on $\partial_\infty\B$, which can therefore be identified with the spherical building associated to $G_\F$.
\end{reptheorem}
\begin{reptheorem}{thm:germ_building}
	Let $k$ be the residue field of the valued field $\F$ and $G_k$ the $k$-extension of $G_\F$. The group $G_k$ acts strongly transitively on $\Delta_o \B$, which can therefore be identified with the spherical building associated to $G_k$.
\end{reptheorem}

Real closed fields $\F$ are ordered fields that have the same first-order logic theory as the reals. In \cite{AppAGRCF}, the author used a transfer principle from model theory to recover many results about Lie groups in the setting of $G_{\F}$; these include group decompositions, subgroup structure and Kostant convexity. The paper \cite{AppAGRCF} may be considered as a prerequisite to this paper, as all the results there are used in this paper.

\subsection{The construction of \texorpdfstring{$\B$}{B}}
Since $\operatorname{SL}(n,\R)$ acts transitively on 
$$
P_\R \coloneq \left\{ x \in \mathbb{R}^{n\times n} \colon x = x\tran , \, \det(x)=1 , \, x \text{ is positive definite } \right\}
$$
(by $g.x \coloneq gxg\tran$) and the stabilizer of $\operatorname{Id} \in P_\R$ is $\operatorname{SO}(n)$, $P_\R$ is a semialgebraic (defined by equalities and inequalities of polynomials) model for the symmetric space $\operatorname{SL}(n,\R)/\operatorname{SO}(n,\R)$. 
Every symmetric space of non-compact type can be realized as a totally geodesic submanifold of $P_\R$. In fact, for every symmetric space $M$ of non-compact type there exists a semisimple self-adjoint ($\forall g \in G_\R, g\tran \in G_\R$) algebraic subgroup $G_\R <\operatorname{SL}(n,\R)$ such that the orbit $\X_\R \coloneq G_\R .\operatorname{Id} \subseteq P_\R$ is a semialgebraic model for $M$ \cite[Theorem 2.6.5]{Ebe}. The symmetric space $\X_\R$ comes equipped with a $G_\R$-invariant Riemannian distance: the maximal flats are isometric to a Euclidean vector space and for any two points there is a maximal flat containing the two, so the distance is given as the norm of the difference of the two corresponding points in the vector space.

The construction of the building $\B$ mimics the above description closely, the main difference being, that we now work over a valued real closed field $\F$, see Sections \ref{sec:valued_real_closed_fields} and \ref{sec:building_def} for definitions. In real algebraic geometry, semialgebraic sets are considered over general real closed fields. We call the $\F$-extension $\X_\F$ of $\X_\R$ the \emph{non-standard symmetric space}. The $\F$-extension $G_\F$ of $G_\R$ acts transitively on $\X_\F$ with stabilizer
$$
K_\F \coloneq G_\F \cap \operatorname{SO}(n,\F) = \operatorname{Stab}_{G_\F}(\Id).
$$ 
Let $A_\F < G_\F$ be the $\F$-extension of the semialgebraic connected component of a maximal self-adjoint $\F$-split torus of $G_\F$ containing the identity \cite[Theorem \ref{I-thm:split_tori}]{AppAGRCF}. For any two points $x,y \in \X_\F$, let $g\in G_\F$ such that $y=g.x$. The Cartan decomposition $G_\F = K_\F A_\F K_\F$ \cite[Theorem \ref{I-thm:KAK}]{AppAGRCF} can then be used to obtain $a \in A_\F$ such that $g=kak'$ for some $k$, $k' \in K_\F$. In fact, $a$ can be taken in lie in the non-standard fundamental Weyl cone
$$
A_\F^+ \coloneq \left\{ a \in A_\F \colon \chi_\alpha(a) \geq 1 \text{ for all } \alpha \in \Sigma_{>0} \right\},
$$
where $\chi_\alpha$ are algebraic characters associated to the root system $\Sigma$ of $G_\R$. We prove that this defines a map.
\begin{replemma}{lem:Cartan_projection}
	The Cartan-projection $\delta_\F \colon \X_\F \times \X_\F  \to \ApF , \,(x,y) \mapsto a$
	is well-defined and invariant under the action of $G_\F$.
\end{replemma} 
We then define the semialgebraic multiplicative norm
\begin{align*}
	N_\F \colon A_\F & \to \F_{\geq 1} , \ 
	a \mapsto \prod_{\alpha \in \Sigma} \max \left\{ \chi_\alpha(a), \chi_\alpha(a)^{-1} \right\}.
\end{align*}
Over the reals, $d \coloneq \log \circ N_\R \circ \delta_\R$ defines a Finsler-metric on $\X_\R$. However, the logarithm may not be defined on $\F$, since it is not a semialgebraic function, but we can replace $\log$ by a order compatible valuation $v \colon \F_{\neq 0} \to \Lambda$, where $\Lambda$ is an ordered abelian group, to obtain a symmetric non-negative function
\begin{align*}
	d \colon \X_\F \times \X_\F &\to \Lambda , \ (x,y) \mapsto (-v)\left( N_\F \left( \delta_\F (x,y)\right) \right).
\end{align*}
The function $d$ is not quite a $\Lambda$-metric, since there are points $x\neq y \in \X_\F$ with $d(x,y) = 0$, but we have the following.
\begin{reptheorem}{thm:pseudodistance}
	The function $d \colon \X_\F \times \X_\F \to \Lambda$ is a pseudo-distance.
\end{reptheorem}
The proof of Theorem \ref{thm:pseudodistance} relies on the Iwasawa retraction whose properties follow from Kostant convexity, see \cite[Theorems \ref{I-thm:KAU} and \ref{I-thm:kostant_F}]{AppAGRCF}.
\begin{reptheorem}{thm:UAKret}
	The Iwasawa retraction $\rho \colon \X_\F \to A_\F.\operatorname{Id}$ is a $d$-diminishing retraction to $A_\F.\operatorname{Id}$.
\end{reptheorem} 
The main object of our investigation is defined as the quotient $\B \coloneq \X_\F/\!\!\sim$, where $x \sim y$ whenever $d(x,y)=0 \in \Lambda$. By Theorem \ref{thm:pseudodistance}, $\B$ is a $\Lambda$-metric space. We note that $G_\F$ acts transitively by isometries on $\B$. Let $o \coloneq [\operatorname{Id}] \in \B$. In Section \ref{sec:Xapartment} we verify that $\A \coloneq A_\F.o  \subseteq \B$ is an apartment in the sense of affine $\Lambda$-buildings. The inclusion $f_0 \colon \A  \to \B$ can be used to define an atlas $\Fun \coloneq \left\{ g.f_0 \colon \A \to \B \colon g \in G_\F \right\}$. The main result, Theorem \ref{thm:B_is_building}, states that when $\Sigma$ is reduced, the $\Lambda$-metric space $\B$ together with the atlas $\Fun$ is an affine $\Lambda$-building.

The definition of $\B$ required relatively few results about the structure of algebraic groups compared with the definition of the building in Bruhat-Tits \cite{BrTi}. However, the proof of our main result that $\B$ is an affine $\Lambda$-building does rely on a deep understanding of $G_\F$ and the action of $G_\F$ on $\B$. Most of the results about $G_\F$ and $\X_\F$ can be deduced from known results for Lie groups using the transfer principle in real algebraic geometry. However, since the valuation $v$ is not semialgebraic, statements about $\B$ typically have to be proven more directly. We highlight some of these results of independent interest.
 
\subsection{The action of \texorpdfstring{$G_\F$}{G(F)} on \texorpdfstring{$\B$}{B}}

A central theme in the proof of Theorem \ref{thm:B_is_building} is to study the pointwise stabilizers of subsets of the apartment $\A \subseteq \B$. Recall from \cite[Section \ref{I-sec:decompositions}]{AppAGRCF} that an order on the root system $\Sigma$ allows us to define $U_\F$ as the exponential of the sum of root spaces corresponding to positive roots. Let $N_\F$ and $M_\F$ be the $\F$-extensions of the semialgebraic groups $N_\R \coloneq \operatorname{Nor}_{K_\R}(A_\R)$ and $M_\R \coloneq \operatorname{Cen}_{K_\R}(A_\R)$. Let $O \coloneq \left\{ a \in \F \colon (-v)(a) \leq 0 \right\}$ be the valuation ring associated to the valuation $v$ and let $E_\F(O) \coloneq E_\F \cap \operatorname{SL}(n,O)$ for any semialgebraic subset $E_\F \subseteq G_\F$.

The stabilizer of $o \in \B$ was calculated by \cite{Tho} for the special case when $\F$ is a Robinson field. For general fields, it has been suggested by \cite{KrTe} and independently proven by \cite{BIPP23arxiv}.
 
\begin{reptheorem}{thm:stab} 
	The stabilizer of a base point $o\in\B$ in $G_\F$ is $G_\F(O)$.
\end{reptheorem}

As a consequence of the Iwasawa retraction Theorem \ref{thm:UAKret}, we give an Iwasawa decomposition of the stabilizer of $o$.

\begin{repcorollary}{cor:UAKO}
	There is an Iwasawa decomposition $G_\F(O) = U_\F(O) A_\F(O) K_\F$, meaning that for every $g \in G_\F(O)$ there are unique $u \in U_\F(O)$, $a\in A_\F(O)$ and $k\in K_\F=K_\F(O)$ with $g=uak$.
\end{repcorollary}

In Section \ref{sec:Bisbuilding} we implement some ideas from \cite{BrTi} in the semialgebraic setting. For $\alpha \in \Sigma$, the root groups are $(U_\alpha)_\F \coloneq \exp( (\frakg_\alpha \oplus \frakg_{2\alpha})_\F)$, 
and we introduce explicit root group valuations $\varphi_\alpha \colon (U_\alpha)_\F \to \Lambda \cup \{-\infty\}$ in Subsection \ref{sec:root_group_valuation}. Taking a closer look at the unipotent group, we notice that if $u \in U_\F$ sends some point of the apartment to some other point of the apartment, then the two points have to be the same.

\begin{repproposition}{prop:UonA}
	For all $u\in U_\F$ and $a \in A_\F$, $
	ua.o \in \mathbb{A} \iff ua.o = a.o. 
	$
\end{repproposition}

In fact, the fixed point set of elements in $(U_\alpha)_\F$ is a half-apartment.

\begin{repproposition}{prop:Ualphaconv}
	Let $\alpha \in \Sigma$. For $u \in (U_\alpha)_\F$ we have
	$$
	\left\{ p \in \mathbb{A} \colon u.p  \in \mathbb{A}  \right\} = \left\{ a.o \in \A \colon \varphi_\alpha(u) \leq (-v)\left(\chi_\alpha\left(a\right)\right)  \right\}
	$$
	and therefore this set is a half-apartment when $u \neq \operatorname{Id}$. 
\end{repproposition}
 
 For $U_\F$, this can be upgraded to finite intersections of half-apartments.
 
 \begin{repproposition}{prop:Uconv}
 	For $u\in U_\F$ there are $k_\alpha \in \Lambda \cup \{-\infty \}$ for $\alpha \in \Sigma_{>0}$ such that
 	$$
 	\{p \in \A \colon u.p \in \A\} = \left\{ a.o \in \A \colon  k_\alpha \leq (-v)\left(\chi_\alpha \left(a\right)\right) \text{ for all } \alpha \in \Sigma_{>0}   \right\}
 	$$
 	and therefore the set of fixed points is a finite intersection of half-apartments. If $u$ fixes all of $\A$, then $u = \operatorname{Id}$. 
 \end{repproposition}
 
 We proceed to describe the pointwise stabilizers of the whole apartment $\A$, the half-apartments
 $$
 H_\alpha^+ \coloneq \left\{ a.o \in \A \colon (-v)(\chi_\alpha(a)) \geq 0 \right\}
 $$
 and the fundamental Weyl chamber
 $$
 C_0 \coloneq \bigcap_{\alpha >0} H_\alpha^+.
 $$
 
 \begin{reptheorem}{thm:stab_A}
 	The pointwise stabilizer of $\A$ in $G_\F$ is $ A_\F(O)M_\F$.
 \end{reptheorem}
\begin{reptheorem}{thm:NalphaO_fixes_H}
	Let $\alpha \in \Sigma$. The pointwise stabilizer of $
	H_\alpha^+ $ is $(U_\alpha)_\F(O) A_\F(O) M_\F$.
\end{reptheorem} 
\begin{reptheorem}{thm:NO_fixes_s0}
	The pointwise stabilizer of $C_0$ in $G_\F$ is 
	$U_\F(O)A_\F(O)M_\F.$
\end{reptheorem}

In Subsection \ref{sec:BT_rank_1} we take a closer look at the rank one subgroup $(L_{\pm \alpha})_\F < G_\F$ introduced in \cite[Section \ref{I-sec:rank1}]{AppAGRCF}. We now restrict to groups $G_\F$ with reduced root systems, to be able to use the Jacobson-Morozov Lemma to find elements $m(u)\in N_\F$ representing a reflection along some affine hyperplane $M_{\alpha, \ell} = \left\{ a.o \in \A \colon (-v)(\chi_\alpha(a)) = \ell \right\}$ for $\alpha \in \Sigma$ and $\ell \in \Lambda$. A careful analysis of the rank one subgroup $L = \left\langle  (U_\alpha)_\F, (U_{-\alpha})_\F \right\rangle < (L_{\pm \alpha})_\F$ results in the following decomposition of its stabilizer.

	\begin{repproposition}{prop:stab_L_Omega} Assume $\Sigma$ is reduced. Let $\alpha \in \Sigma$, $\Omega \subseteq \A$ a non-empty finite subset. The pointwise stabilizers
		\begin{align*}
			L_\Omega &\coloneq L \cap \operatorname{Stab}_{G_\F}(\Omega), \\
			U_{\alpha,\Omega} &\coloneq (U_\alpha)_\F \cap \operatorname{Stab}_{G_\F}(\Omega), \\
			U_{-\alpha, \Omega} &\coloneq (U_{-\alpha})_\F \cap \operatorname{Stab}_{G_\F}(\Omega), \\
			N_\Omega &\coloneq \operatorname{Nor}_{G_\F}(A_\F) \cap \operatorname{Stab}_{G_\F}(\Omega)
		\end{align*}
	satisfy $L_\Omega = \left\langle U_{\alpha,\Omega}, U_{-\alpha, \Omega} , (N_\Omega \cap L) \right\rangle = U_{\alpha,\Omega} U_{-\alpha, \Omega}(N_\Omega \cap L)$.
\end{repproposition}
In Subsection \ref{sec:BT_higher_rank} we upgrade the rank one result to $G_\F$. For $\Omega \subseteq \A$, let
\begin{align*}
	\hat{P}_{\Omega} &\coloneq \left\langle N_\Omega, U_{\alpha, \Omega} \colon \alpha \in \Sigma \right\rangle, \\
	U_\Omega^+ &\coloneq \left\langle U_{\alpha,\Omega} \colon \alpha \in \Sigma_{>0}\right\rangle, \\
	U_{\Omega}^-&\coloneq \left\langle U_{\alpha,\Omega} \colon \alpha \in \Sigma_{<0} \right\rangle.
\end{align*}
\begin{reptheorem}{thm:BTstab}
	Assume $\Sigma$ is reduced. The pointwise stabilizer of any subset $\Omega \subseteq \A$ satisfies
	$
	\operatorname{Stab}_{G_\F}(\Omega) = \hat{P}_\Omega = U_\Omega^+ U_{\Omega}^- N_{\Omega} .
	$
\end{reptheorem}
The proof of Theorem \ref{thm:BTstab} is tightly intertwined with the proof of axiom (A2) of affine buildings. It first relies on the following \emph{mixed Iwasawa} decomposition for $G_\F$. 

\begin{reptheorem}{thm:BT_mixed_Iwasawa}
	Assume $\Sigma$ is reduced. Then $G_\F = U_\F \cdot \operatorname{Nor}_{G_\F}(A_\F) \cdot G_\F(O)$.
\end{reptheorem}

With the help of the mixed Iwasawa decomposition we obtain Theorem \ref{thm:BTstab} in the case of finite subsets $\Omega \subseteq \A$. This statement for finite subsets is then enough to prove the second part of axiom (A2). The full Theorem \ref{thm:BTstab} then follows from this part of (A2) and the rest of axiom (A2) follows from the full Theorem \ref{thm:BTstab}. We also obtain a description of the (not necessarily pointwise) stabilizer of $\A$.
\begin{repproposition}{thm:stab'_A}
	The stabilizer of $\A$ is $\operatorname{Nor}_{G_\F}(A_\F) = A_\F M_\F$.
\end{repproposition}

\subsection{Related work}

There are multiple connections between symmetric spaces and buildings. Mostow associated a building at infinity to symmetric spaces of non-compact type to prove rigidity results on lattices in higher rank Lie groups \cite{Mos73}. Kleiner and Leeb showed that the asymptotic cone of a symmetric space of non-compact type is a non-discrete affine $\R$-building and used this result to show rigidity of quasi-isometries in symmetric spaces of non-compact type \cite{KlLe}. The asymptotic cone is a special case of the building $\B$, when the field $\F$ is a Robinson field \cite[Theorem 1.11]{BIPP23arxiv}. Motivated to give a simpler proof of the rigidity of quasi-isometries, Kramer and Tent \cite{KrTe} suggested the construction of $\B$, which is carried out in detail in the present work. 

A newer promising direction where $\B$ plays an important role is the study of geometric structures on surfaces, and more generally the study of character varieties, for a survey see \cite{Wie18}. Character varieties are spaces of (equivalence classes of) representations of discrete groups into Lie groups. Their properties can be studied using various compactifications. Thurston \cite{Thu88} constructed a compactification of the space of discrete and faithful representations of a fundamental group of a surface of genus at least two into the isometry group of the hyperbolic plane. Thurston described the boundary points in terms of actions on certain $\R$-trees and used a fixed point theorem to classify elements of the mapping class group. In higher rank, similar compactifications have been developed, such as the marked length compactification \cite{Bon88,Par12}. The real spectrum compactification \cite{Bru1,BIPP21,BIPP23arxiv} is a finer compactification with some good properties, for instance it preserves connected components. Similarly to Thurston's description, \cite{Bru1,Bru2} and \cite[Theorem 1.1]{BIPP23arxiv} show that the boundary points correspond to an action on $\B$, which is a $\Lambda$-tree in rank one, or more generally a $\Lambda$-building, as we show in Theorem \ref{thm:B_is_building}, see \cite{Jae25arxiv} for more interpretations of boundary points. The real algebraic approach taken for the real spectrum compactification harmonizes closely with the construction of $\B$. This approach has also proved useful to study Hitchin components and projective structures \cite{Fla22arxiv,FlPa25arxiv}. A unifying approach to geometric structures are $\Theta$-positive representations: it is now known that all $\Theta$-positive components consist of discrete and faithful representations \cite{BGLPW24}. We would like to point out that the technical condition on the root system in Theorem \ref{thm:B_is_building} is always satisfied in the case of $\Theta$-positive representations \cite{GuWi24}.  Theorems \ref{thm:germ_building} and \ref{thm:building_at_infty} about the global and local structure of $\B$ are promising tools to understand degenerations of geometric structures. When $\Lambda < \R$ it is known that $\B$ is contained in some affine $\R$-building \cite{ScSt12} which in turn is contained in a complete affine $\R$-building \cite[Lemma 4.4]{Str11}, which is a CAT(0) space. The completion $\overline{\B}$ of $\B$ (viewed as a convex subset of a complete CAT(0)-space) is then a CAT(0) space and fixed point theorems can be applied.

The construction of $\B$ is also interesting from the point of view of the theory of affine $\Lambda$-buildings. In the beginning, the only known examples of affine $\Lambda$-buildings with $\Lambda \neq \R$ were the ones of type $A_n$ in Bennet's paper \cite{Ben1}, where he introduced the concept. In their paper on functoriality \cite{ScSt12}, Schwer and Struyve showed how to construct new $\Lambda$-buildings from others. Recently, H\'ebert, Izquierdo and Loisel \cite{HIL23} generalized the Bruhat-Tits construction to $\Lambda\neq \R$. Their construction works for split groups, or quasi-split groups over Henselian fields. The real closed fields we consider are not always Henselian and the groups we consider need to be semi-simple, but not necessarily quasi-split, so our construction $\B$ gives new examples of affine $\Lambda$-buildings. While the definition of the Bruhat-Tits building \cite{BrTi,BrTi84} and the generalization in \cite{HIL23} requires quite a deep understanding of algebraic groups, the definition of $\B$ is relatively elementary: it is a quotient of a non-standard symmetric space. However for the proof that $\B$ is an affine $\Lambda$-building, we rely extensively on the theory of algebraic groups over real closed fields as developed in \cite{AppAGRCF}. To our knowledge, all examples of non-discrete affine buildings come from algebraic groups. It would be interesting to exhibit exotic non-discrete affine buildings, analogous to the discrete setting. Discrete affine buildings have been fully classified \cite{Wei09}, a similar classification in the non-discrete case is open, though partial results in the case of $\mathbb{R}$-buildings can be found in \cite{Tit86}.

\subsection{Acknowledgements}

This paper (except for Section \ref{sec:assoc-buildings} which is new) is the main second part of the author's doctoral thesis \cite{App24thesis}, but Section \ref{sec:Xapartment} has been substantially reformulated. The first part of the thesis is contained in \cite{AppAGRCF} which at the time of writing is under review for publication. The author would like to thank Marc Burger for continued support and is grateful for useful discussions with Petra Schwer and Auguste Hébert, in particular for their insistence on reading \cite{BrTi} carefully. The author is thankful for valuable inputs by Linus Kramer, Gabriel Pallier, Luca de Rosa, Xenia Flamm, Victor Jaeck and Segev Gonen Cohen. The author greatly appreciated the detailed and careful comments by an anonymous reviewer.

\begingroup
	\setlength{\parskip}{1.5pt} 
	\tableofcontents
\endgroup

	\section{Affine \texorpdfstring{$\Lambda$}{Λ}-buildings}\label{sec:affine_L_buildings} 
	
	
	\subsection{\texorpdfstring{$\Lambda$}{Λ}-metric spaces}
	
	An abelian group $(\Lambda, +)$ with a total order such that $x,y \geq 0$ implies $x+y \geq 0$ for all $x,y \in \Lambda$ is called an \emph{ordered abelian group}. If an ordered abelian group $\Lambda$ is isomorphic (as an ordered group) to a subgroup of $(\R,+)$, it is called \emph{Archimedean}. Hahn's embedding theorem classifies all ordered abelian groups.
	
	\begin{theorem}[\cite{Hah07}] For every ordered abelian group $(\Lambda,+)$ there is a ordered set $\Omega$ such that $\Lambda<\R^{\Omega}$ as an ordered subgroup, where 
		$$
		\R^\Omega = \{ f\colon \Omega \to \R \colon \operatorname{supp}(f) \text{ is contained in a well-ordered set } \}
		$$
		is equipped with the lexicographical ordering.  
	\end{theorem}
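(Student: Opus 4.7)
The plan is the classical proof via Archimedean equivalence classes, H\"older's theorem, and a transfinite extension argument.

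First I would introduce Archimedean equivalence on $\Lambda \setminus \{0\}$: declare $a \sim b$ iff there exists $n \in \N$ with $n|a| \geq |b|$ and $n|b| \geq |a|$, and declare $[a] \prec [b]$ iff $n|a| < |b|$ for every $n \in \N$. This relation is well-defined and totally orders the set of equivalence classes. Let $\Omega$ be this set of classes equipped with the \emph{reverse} of $\prec$, so that the class of the largest-magnitude component of an element of $\Lambda$ sits at the smallest index (the convention needed to match the lexicographic order on $\R^\Omega$, which reads from the smallest nonzero coordinate). For each $\omega \in \Omega$ let $\Lambda_{\geq \omega}$ be the convex subgroup generated by elements of class represented by $\omega$ or earlier in $\Omega$, and $\Lambda_{>\omega}$ the convex subgroup generated by elements of strictly later class. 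The quotient $\Lambda_{\geq\omega}/\Lambda_{>\omega}$ is then an Archimedean ordered abelian group, so by H\"older's theorem it embeds as an ordered group into $(\R,+)$; fix such an embedding $\psi_\omega$.

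Next I would assemble the $\psi_\omega$ into a single map $\psi \colon \Lambda \to \R^\Omega$ by Zorn's lemma on the poset of pairs $(H, \psi_H)$, where $H$ is a convex subgroup of $\Lambda$ and $\psi_H \colon H \to \R^\Omega$ is an order-preserving group homomorphism with well-ordered support that is compatible with each $\psi_\omega$ modulo $\Lambda_{>\omega}$. The zero map on $\{0\}$ shows the poset is non-empty, and chains have obvious upper bounds given by union, so Zorn's lemma produces a maximal element $(H, \psi_H)$. The argument concludes by showing $H = \Lambda$: given $a \in \Lambda \setminus H$, one must extend $\psi_H$ to the convex hull $H' = \langle H, a\rangle_{\mathrm{conv}}$. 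The value $\psi_H(a)$ is defined coordinate-by-coordinate: at $\omega \in \Omega$ one reduces $a$ modulo $\Lambda_{>\omega}$ and compares with the image of $H$ in $\Lambda_{\geq\omega}/\Lambda_{>\omega}$, choosing $\psi_H(a)(\omega)$ as the unique real number (or supremum thereof) making the extension order-preserving at the $\omega$-level.

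The main obstacle is precisely this extension step, and in particular showing that the resulting $\psi_H(a)$ has well-ordered support. The key observation is that the support of $\psi_H(a)$ equals the set of $\omega \in \Omega$ at which $a$ differs (modulo $\Lambda_{>\omega}$) from every element of the already-embedded group, and this set of ``discrepancy classes'' is a chain that is well-ordered under the order of $\Omega$: if it contained a strictly decreasing sequence in $\Omega$ (i.e. strictly increasing in $\prec$), one could construct an element of $\Lambda$ of arbitrarily large Archimedean class bounded by $|a|$, contradicting the definition of the class of $a$. Once well-ordered support is established, order-preservation under the lexicographic order on $\R^\Omega$ follows because the smallest-index nonzero coordinate of $\psi_H(x)$ corresponds, by construction via $\psi_\omega$, to the sign of $x$ modulo the strictly smaller convex subgroup $\Lambda_{>\omega}$, which by convexity agrees with the sign of $x$ itself. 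Together these give the required ordered embedding $\Lambda \hookrightarrow \R^\Omega$.
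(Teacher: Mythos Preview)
The paper does not prove this theorem: it is stated with a citation to Hahn's original 1907 paper and used as background. So there is no ``paper's own proof'' to compare against.

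Your sketch follows the classical route (Archimedean classes as index set, H\"older on the Archimedean quotients, then a Zorn/transfinite extension), which is the standard textbook argument. Two small points worth tightening if you ever write this out in full: first, your description of $\Lambda_{\geq\omega}$ as generated by classes ``$\omega$ or earlier in $\Omega$'' has the order reversed --- convex subgroups consist of elements of Archimedean class \emph{at most} a given one (in $\prec$), i.e.\ \emph{later} in your $\Omega$-order; second, the well-ordered support argument you give is the genuinely delicate step and your justification (``one could construct an element of arbitrarily large Archimedean class bounded by $|a|$'') is not quite the right formulation --- the usual argument shows that an infinite decreasing sequence in the support would let you build, inside $\Lambda$, an element whose expansion has a strictly smaller leading term than any of its approximants, and iterating contradicts well-foundedness of the classes below $[a]$ that actually appear. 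But these are refinements of a correct outline, and since the paper treats the result as a black box there is nothing further to compare.
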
 
	Let $(\Lambda,+)$ be a non-trivial ordered abelian group. In particular, $\Lambda$ has no torsion. 
	We will use the following generalization of metric spaces.
	If $X$ is a set and $d \colon X \times X \to \Lambda$ is a function, we call $(X,d)$ a \emph{$\Lambda$-pseudometric space} if for all $x,y,z \in X$
	\begin{enumerate}
		\item [(1)] $d(x,x) = 0$, $d(x,y) \geq 0$
		\item [(2)] $d(x,y) = d(y,x)$
		\item [(3)] $d(x,y) \leq d(x,z) + d(z,y)$. 
	\end{enumerate}
	If in addition, $d(x,y) = 0$ implies $x=y$, then $(X,d)$ is called a \emph{$\Lambda$-metric space}. The axioms are direct generalizations of the notions of (pseudo)metric spaces when $\Lambda = \R$. Any $\Lambda$-pseudometric space can be turned into a $\Lambda$-metric space by quotienting out the equivalence relation of having distance $0$.
	
	Important examples of $\Lambda$-metric spaces are $\Lambda$-trees \cite{Chi01}, which serve as rank one examples of affine $\Lambda$-buildings.
	
	\subsection{Root systems}\label{sec:root_system}
	
	A detailed treatment of root systems can be found in \cite{Bou08}. Let $V$ be a finite dimensional Euclidean vector space with scalar product $\langle \cdot, \cdot \rangle$. For $\alpha \in V$, the reflection along the hyperplane $$M_\alpha = \{ \beta \in~V \colon \langle \alpha , \beta \rangle =0\}$$ is given by
	$$
	r_{\alpha} (\beta) = \beta - 2 \frac{\langle \alpha , \beta \rangle}{ \langle \alpha, \alpha \rangle} \alpha . 
	$$
	A pair $(\Phi,V)$ where $\Phi \subseteq V$ is called a \emph{root system} if 
	\begin{enumerate}
		\item [(R1)] $\Phi$ is finite, symmetric ($\Phi = -\Phi$), spans $V$ and does not contain $0$.
		\item [(R2)] For every $\alpha \in \Phi$ the reflection $r_\alpha \colon V \to V$ preserves $\Phi$.
	\end{enumerate} 
	When $V$ can be determined from the context, the root system may be denoted by just $\Phi$. A root system is called \emph{crystallographic} if it satisfies the integrality condition
	\begin{enumerate}
		\item [(R3)] If $\alpha, \beta \in \Phi$, then $2\langle \alpha, \beta \rangle / \langle \alpha, \alpha \rangle \in \Z$.
	\end{enumerate} 
	For crystallographic root systems, $\operatorname{Span}_{\mathbb{Z}}(\Phi)$ is a lattice in $V$. A root system is called \emph{reduced} if
	\begin{enumerate}
		\item [(R4)] For every $\alpha \in \Phi$, $\R \cdot \alpha \cap \Phi = \{ \pm \alpha\}$.
	\end{enumerate}
	
	\begin{figure}[h]
		\centering
		\includegraphics[width=0.8\linewidth]{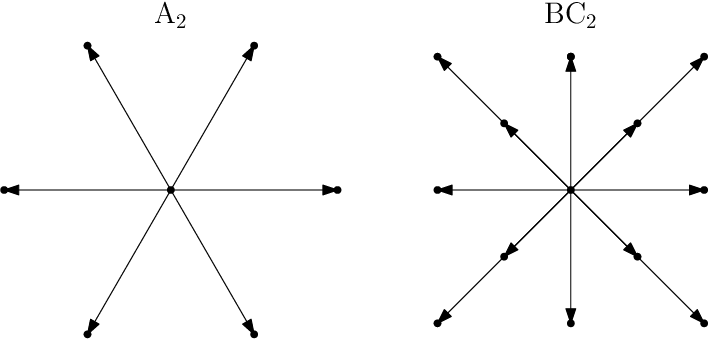}
		\caption{ Two examples of crystallographic root systems. Type $\operatorname{A}_2$ on the left, type $\operatorname{BC}_2$ on the right. While $\operatorname{A}_2$ is reduced, $\operatorname{BC}_2$ is not. }
		\label{fig:examples_root_systems}
	\end{figure}
	
	Figure \ref{fig:examples_root_systems} gives examples of two crystallographic root systems, one of them reduced the other not.	The \emph{spherical Weyl group $W_s$} of a root system is the subgroup of isometries of $V$ generated by reflections along the hyperplanes $M_\alpha$ for $\alpha \in \Phi$. A subset $\Delta \subseteq \Phi$ is a \emph{basis of $\Phi$}, if it is a vector space basis of $V$ such that all roots $\beta \in \Phi$ can be written as $\beta = \sum_{\delta \in \Delta} \lambda_\delta \delta$ with $\lambda_\delta \in \Z$ for $\delta \in \Delta$ and such that all $\lambda_{\delta}$ have the same sign (all are non-negative or non-positive). Every root system has a basis and the spherical Weyl group acts transitively on the set of bases of $\Phi$. The cardinality of a basis $\Delta$ is called the \emph{rank} of the root system $\Phi$ and coincides with $\dim(V)$.
	 
	The connected components of $V\setminus \bigcup_{\alpha \in \Phi} M_\alpha$ are called \emph{(open) chambers}. The spherical Weyl group acts simply transitively on the set of chambers. Given a basis $\Delta$, the \emph{fundamental Weyl chamber of $\Delta$} is
	$$
	C_0(\Delta) = \left\{ v\in V \colon \langle v, \alpha \rangle >0 \ \text{ for all } \alpha \in \Delta \right\}.
	$$ 
	Conversely, every chamber $C$ determines a basis $\Delta(C)$ formed by those $\alpha \in \Phi$, that satisfy $\langle \alpha, v \rangle >0$ for all $v \in C$ and that cannot be written as a sum of other such elements of $\Phi$, see \cite[VI\S1.5]{Bou08}. A basis $\Delta$ determines the set  of \emph{positive roots} $\Phi_{>0} \subseteq \Phi$ given by positive integer combinations of elements in the basis. A basis thus determines a partial order on $\Phi$ by $\alpha < \beta$ if $\beta-\alpha \in \Phi_{>0}$.  A total order on $\Phi$ with the same positive elements can be obtained by choosing an order on the basis and extending it lexicographically to $ \Phi \subseteq \operatorname{Span}_{\mathbb{Z}}(\Phi)$.


The scalar product defines an isomorphism of Euclidean vector spaces $V \cong V^\star$, $v \mapsto v^\star$ by the defining property that $v^\star(w) = \langle v, w \rangle$ for $v,w \in \Phi$, where $V^\star$ is equipped with the scalar product $\langle v^\star , w^\star  \rangle \coloneqq  \langle v,w \rangle$. The \emph{coroot\footnote{Sometimes $2\alpha/\langle \alpha , \alpha \rangle \in V$ is called a coroot instead of its dual. }  of} $\alpha \in \Phi$ is 
$$
 \alpha^\vee \coloneqq 2\frac{\alpha^\star}{\langle \alpha, \alpha \rangle} \in V^\star.
$$  
The \emph{coroot system} $(\Phi^\vee, V^\star)$, where $\Phi^\vee \coloneqq \{ \alpha^\vee \colon \alpha \in \Phi  \}$ is also a root system \cite[VI\S1.1]{Bou08}, but $\Phi$ and $\Phi^\vee$ are not isomorphic in general. When $\Phi$ is a crystallographic root system, there is a non-degenerate bilinear form
\begin{align*}
	b \colon \operatorname{Span}_{\mathbb{Z}}(\Phi) \times \operatorname{Span}_{\mathbb{Z}}( \Phi^\vee) & \to \mathbb{Z} \\
	(\alpha, \beta^\vee) & \mapsto 2\frac{\langle \alpha, \beta \rangle}{\langle \beta , \beta \rangle} = \beta^\vee(\alpha)
\end{align*}
taking values in $\mathbb{Z}$. Given a basis $\Delta = \{\delta_1, ,\ldots, \delta_r\}$, the matrix $(B_{ij})_{ij} \coloneqq b(\delta_i, \delta_j^\vee)$ is called the \emph{Cartan matrix}. Note that we have the following calculation rules, but in general $(\alpha + \beta)^\vee \neq \alpha^\vee + \beta^\vee$.

\begin{lemma}
	Let $\Phi$ be a crystallographic root system and $\alpha, \beta \in \Phi$. Then
	\begin{itemize}
		\item [(i)] $r_\beta(\alpha) = \alpha - b(\alpha, \beta^\vee) \cdot \beta 
		= \alpha - \beta^\vee(\alpha) \cdot \beta$,
		\item [(ii)]  $b(\alpha, \alpha^\vee) = 2$,
		\item [(iii)] $(\alpha^\vee)^\star = (\alpha^\star)^\vee$,
		\item [(iv)] $(\alpha^\vee)^\vee = \alpha$. 
	\end{itemize}
\end{lemma}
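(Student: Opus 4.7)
The lemma is a collection of four elementary identities, and my plan is to verify each by unpacking the definitions already given in the excerpt, namely $\alpha^\vee = 2\alpha^\star/\langle \alpha,\alpha\rangle$, the rule $\langle v^\star,w^\star\rangle \coloneqq \langle v,w\rangle$ defining the scalar product on $V^\star$, and the formula $r_\beta(v) = v - 2\frac{\langle v,\beta\rangle}{\langle \beta,\beta\rangle}\beta$.

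For (i), I would simply match the definition of $r_\beta(\alpha)$ against $b(\alpha,\beta^\vee) = 2\langle \alpha,\beta\rangle/\langle \beta,\beta\rangle$, which is precisely the coefficient appearing in the reflection formula; the equality $b(\alpha,\beta^\vee) = \beta^\vee(\alpha)$ is the second defining identity for $b$ displayed in the paper. For (ii), one substitutes $\beta=\alpha$ to get $b(\alpha,\alpha^\vee) = 2\langle\alpha,\alpha\rangle/\langle\alpha,\alpha\rangle = 2$.

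For (iii) and (iv) I would use the canonical identification of $V$ with $V^{\star\star}$ and compute both sides by evaluating on $w^\star$ for arbitrary $w\in V$. For (iii), writing $\alpha^\vee = 2\alpha^\star/\langle\alpha,\alpha\rangle$ I get
\begin{equation*}
(\alpha^\vee)^\star(w^\star) = \langle \alpha^\vee, w^\star\rangle = \frac{2\langle \alpha^\star, w^\star\rangle}{\langle \alpha,\alpha\rangle} = \frac{2\langle\alpha,w\rangle}{\langle\alpha,\alpha\rangle},
\end{equation*}
and on the other hand $(\alpha^\star)^\vee = 2(\alpha^\star)^\star/\langle\alpha^\star,\alpha^\star\rangle = 2(\alpha^\star)^\star/\langle\alpha,\alpha\rangle$, whose value at $w^\star$ is the same expression. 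For (iv), I compute $\langle\alpha^\vee,\alpha^\vee\rangle = 4\langle\alpha^\star,\alpha^\star\rangle/\langle\alpha,\alpha\rangle^2 = 4/\langle\alpha,\alpha\rangle$, then
\begin{equation*}
(\alpha^\vee)^\vee = \frac{2(\alpha^\vee)^\star}{\langle\alpha^\vee,\alpha^\vee\rangle} = \frac{\langle\alpha,\alpha\rangle}{2}(\alpha^\vee)^\star \stackrel{\text{(iii)}}{=} \frac{\langle\alpha,\alpha\rangle}{2}(\alpha^\star)^\vee = (\alpha^\star)^\star = \alpha,
\end{equation*}
where the last equality uses the standard identification $V^{\star\star}\cong V$.

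There is no real obstacle here — the only mildly delicate point is keeping track of the three vector spaces $V$, $V^\star$, $V^{\star\star}$ and the three inner products (the one on $V$, the induced one on $V^\star$, and by iteration the one on $V^{\star\star}$) so that the starred and $\vee$-ed objects live in the correct spaces. Once one fixes notation and uses the fact that the iterated dual scalar product on $V^{\star\star}$ again satisfies $\langle v^{\star\star}, w^{\star\star}\rangle = \langle v,w\rangle$, the four identities reduce to one-line algebraic manipulations.
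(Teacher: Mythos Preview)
Your proof is correct and follows essentially the same approach as the paper: both arguments verify each identity by directly unpacking the definitions of $r_\beta$, $\alpha^\vee$, and the induced scalar product on $V^\star$. The computations for (iii) and (iv) are virtually identical to the paper's, differing only in that you evaluate on elements of the form $w^\star$ while the paper writes a general $H\in V^\star$; since the star map is an isomorphism these are the same thing.
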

\begin{proof}
	Item (i) is immediate from the definition of the reflection $r_\beta$, and (ii) is the statement that $\Phi$ is crystallographic. For all $H \in \operatorname{Span}_{\mathbb{Z}}(\Phi^\vee)$ we have 
	$$
	(\alpha^\star)^\vee (H) = 2 \frac{(\alpha^\star)^\star(H)}{\langle \alpha^\star , \alpha^\star\rangle}
	= 2 \frac{\langle \alpha^\star, H \rangle }{\langle \alpha, \alpha \rangle} = \langle \alpha^\vee , H \rangle = (\alpha^\vee)^\star(H),
	$$
	so (iii) holds and then (iv) follows from
	\begin{equation*}
	(\alpha^\vee)^\vee = 2\frac{(\alpha^\vee)^\star}{\langle \alpha^\vee, \alpha^\vee\rangle} = 2 \frac{(\alpha^\star)^\vee}{\langle 2\frac{\alpha^\star}{\langle \alpha, \alpha \rangle }, 2\frac{\alpha^\star}{\langle \alpha, \alpha \rangle } \rangle}
	= 2\frac{ 2\frac{(\alpha^\star)^\star}{\langle \alpha^\star, \alpha^\star \rangle}}{4 \frac{1}{\langle \alpha ,\alpha \rangle^2}\langle \alpha, \alpha \rangle} = (\alpha^\star)^\star = \alpha. \qedhere
\end{equation*}
\end{proof}

	\subsection{Apartments}\label{sec:modelapartment}

	We introduce the model apartment $\A$. A more detailed introduction can be found in \cite{Ben1}, and including the non-crystallographic case in \cite{Sch09}. The root systems in our setting will arise from algebraic groups, and are therefore crystallographic, but possibly not reduced.
	
	Let $\Lambda$ be a non-trivial ordered abelian group. 
	 Let $(\Phi,V)$ be a crystallographic root system and $\operatorname{Span}_{\Z}(\Phi) \subseteq V$ its associated lattice. 
	Since both $\operatorname{Span}_{\Z}(\Phi)$ and $\Lambda$ are $\Z$-modules, we can define the \emph{model apartment} 
	$$
	\A \coloneqq \operatorname{Span}_{\Z}(\Phi) \otimes_{\Z} \Lambda .
	$$
	For a basis $\Delta = \{\delta_1 , \ldots , \delta_r \}$, the $\mathbb{Z}$-bilinear map
	\begin{align*}
		\operatorname{Span}_{\Z}(\Phi) \times \Lambda & \to \Lambda^{r} \\
		\left(\sum_{i=1}^r z_i \delta_i , \lambda \right)& \mapsto (z_1 \lambda, \ldots , z_r \lambda)
	\end{align*}
    extends to a $\mathbb{Z}$-module isomorphism
	$
	\A 
	\cong \Lambda^{r}
	$, giving rise to the notation
	$$
	\A \cong \left\{ \sum_{\delta \in \Delta} \lambda_\delta \delta \colon \lambda_\delta \in \Lambda \right\}.
	$$
	Note that if $\Lambda$ is a $\Q$-vector space, then we also have
	$$
	\A = \operatorname{Span}_\Q(\Phi) \otimes_{\Q} \Lambda.
	$$
	 The action of $W_s$ on $\Phi$ extends to an action on $\A$, fixing $0 \in \A$. The apartment $\A$ itself acts by translation on $\A$. Let $T\subseteq \A$ be a subgroup of the translation group normalized by $W_s$, meaning that $wTw^{-1} = T$ for all $w\in W_s$. Then $W_a = T \rtimes W $ is called the \emph{affine Weyl group} and acts on $\A$. If $T=\A$, $W_a$ is called the \emph{full affine Weyl group}. We denote the data of a model apartment together with an affine Weyl group as $\A = \A(\Phi,\Lambda,T)$ since the root system $\Phi$, the ordered abelian group $\Lambda$ and the subgroup of translations $T$ fully determine $\A$ and the affine Weyl group. 
	 
	 
	 For any root $\alpha \in \Phi$, the reflection $r_\alpha \colon \Phi \to \Phi$ extends to a \emph{reflection}
	 \begin{align*}
	 	r_{\alpha} \colon \A & \to \A \\
	 	\sum_{\delta \in \Delta} \lambda_\delta \delta & \mapsto \sum_{\delta \in \Delta} \lambda_\delta r_\alpha (\delta) 
	 \end{align*}
	 and for any $w \in W_a$, the conjugates $w \circ r_\alpha \circ w^{-1} \in W_a$ are called \emph{affine reflections}. 
	 The bilinear form $b$ 
	 extends naturally to the bilinear form
	 \begin{align*}
	 b \colon 	\A \times \operatorname{Span}_{\mathbb{Z}}(\Phi^\vee)  & \to \Lambda \\
	 	\left( \sum_{\delta \in \Delta} \lambda_\delta \delta , \sum_{\varepsilon \in \Delta } \mu_{\varepsilon} \varepsilon^\vee \right) & \mapsto \sum_{\delta \in \Delta} \sum_{ \varepsilon\in \Delta} \mu_{\varepsilon}   b( \delta, \varepsilon^\vee)  \lambda_{\delta}
	 \end{align*}
	 but we notice that $b$ can not be extended to all of $\A \times \operatorname{Span}_{\Z}(\Phi^\vee) \otimes_{\Z} \Lambda$, since $\Lambda$ may not have a multiplication. For $\alpha\in \Phi$ and $\lambda \in \Lambda$ we define the \emph{affine wall}
	 $$
	 M_{\alpha,\lambda} \coloneqq
	 \left\{x \in \A \colon  b(x, \alpha^\vee ) = \lambda  \right\}
	 $$
	 as well as the two \emph{affine half-spaces} 
	 \begin{align*}
	 	H_{\alpha, \lambda}^+ &= \left\{ x\in \A \colon    b( x, \alpha^\vee ) \geq \lambda  \right\} \\
	 	H_{\alpha, \lambda}^- &= \left\{ x \in \A \colon  b( x, \alpha^\vee ) \leq \lambda  \right\}
	 \end{align*}
	 defined by $M_{\alpha,\lambda}$. The intersection of the positive half-spaces for all $\delta \in \Delta$ is called the \emph{fundamental Weyl chamber}
	 $$
	 C_0 \coloneqq \left\{ x \in \A \colon b\left(x, \delta^\vee \right) \geq 0 \text{ for all } \delta \in \Delta \right\} = \bigcap_{\delta \in \Delta} H_{\delta,0}^+.
	 $$
	 The images of the fundamental Weyl chamber under the action of the affine Weyl group are called \emph{sectors or chambers}.

     The model apartment $\A$ can be endowed with a $\Lambda$-metric. There are multiple ways to do this and we will describe one. Since $\A$ may not admit a $\Lambda$-valued scalar product, we instead use the $W_s$-invariant $\Lambda$-valued norm $N \colon \A \to \Lambda$ defined by
     $$
     N( x ) = \sum_{\alpha \in \Phi_{>0}} \left| b( x, \alpha^\vee ) \right| ,
     $$ 
     where $|\lambda| = \max\{\lambda, -\lambda\}$ is the absolute value. Then
     $$
     d(x,y) = N(x-y)
     $$
     turns $\A$ into a $\Lambda$-metric space. Note that when $x-y \in C_0$, 
     $$
     d(x,y) =   b\left( x-y , \sum_{\alpha \in \Phi_{>0} } \alpha^\vee \right).
     $$
     Note that the $\Lambda$-metric space $\A$ may not be uniquely geodesic: there may be two or more distinct isometric embeddings of a $\Lambda$-interval with coinciding endpoints. Since $\Lambda$ is just a group, the classical notion of convexity using linear combinations does not make sense. Instead we say that a subset $B \subseteq \A$ is \emph{$W_a$-convex}, if it is a finite intersection of affine half-spaces. Figure \ref{fig:A2_segment} illustrates that $\A$ may not be uniquely geodesic and that the union of all geodesics between two points is a $W_a$-convex set.

	 	\begin{figure}[h]
		\centering
		\includegraphics[width=0.6\linewidth]{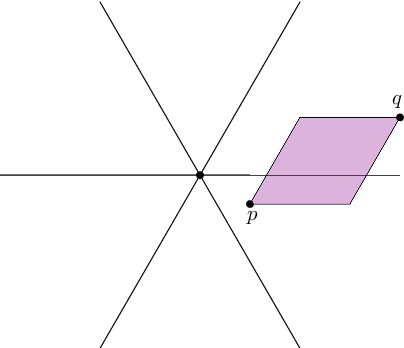}
		\caption{ Given two points $p,q \in \A$, the interval $\{ r\in~\A \colon d(p,r)+d(r,q) = d(p,q)\}$ is a $W_a$-convex set defined by the intersection of four half-planes parallel to the walls in this example of type $\operatorname{A}_2$. }
		\label{fig:A2_segment}
	\end{figure} 
	
	\subsection{Affine \texorpdfstring{$\Lambda$}{Λ}-buildings}\label{sec:lambda_building}
	 
	Let $\A$ be an apartment of type $\A=\A(\Phi,\Lambda,T)$ with affine Weyl group $W_a$ as in the previous section. Let $X$ be a set and $\Fun$ a set of injective maps $\A \to X$. The set $\Fun$ is called the \emph{atlas} or \emph{apartment system} and its elements are called the \emph{charts}. The images $f(\A) $ for $f\in\Fun$ are called \emph{apartments}. The images of walls, half-spaces and chambers are called \emph{walls, half-apartments} and \emph{sectors}. 
	
	The pair $(X,\Fun)$ is called a \emph{generalized affine building} or a \emph{affine $\Lambda$-building} of type $\A = \A(\Phi,\Lambda,T)$ if the following six axioms are satisfied. Axioms (A4) and (A6) are illustrated in Figures \ref{fig:A4} and \ref{fig:A6}.
	
	\begin{enumerate}
		\item [(A1)] For all $f\in \Fun$, $w\in W_a$, $f \circ w \in \Fun$.
		\item [(A2)] For all $f,f'\in \Fun$, the set $B= f^{-1}\left( f(\A)\cap f'(\A)  \right)$ is $W_a$-convex and there is a $w\in W_a$ such that
		$
		f|_B = f'\circ w |_B.
		$
		\item [(A3)] For all $x,y\in X$, there is a $f\in \Fun$ such that $x,y\in f(\A)$.
		\item [(A4)] For any sectors $s_1,s_2 \subseteq X$ there are subsectors $s_1' \subseteq s_1, s_2' \subseteq s_2$ such that there is an $f\in \Fun$ with $s_1', s_2' \subseteq f(\A)$.
		\item [(A5)] For every $x\in X$ and $f\in \Fun$ with $x\in f(\A)$ there is a distance diminishing retraction $r_{x,f} \colon X \to f(\A)$ such that $f^{-1}(\{x\}) = \{x\}$.
		\item [(A6)] For $f_1,f_2,f_3 \in \Fun$, if $f_i(\A) \cap f_j(\A)$ are half-apartments for $i\neq j$, then $f_1(\A) \cap f_2(\A) \cap f_3(\A) \neq \emptyset$.
 	\end{enumerate}
 	
 	\begin{figure}[h]
 		\centering
 		\includegraphics[width=0.8\linewidth]{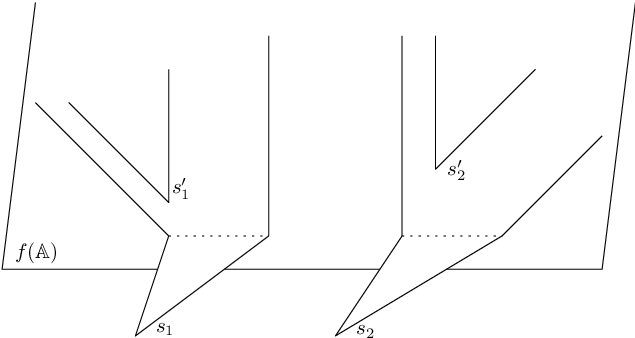}
 		\caption{Axiom (A4) states that while arbitrary sectors $s_1,s_2$ may not lie in a common flat, they contain subsectors $s_1',s_2'$ contained in a common flat $f(\A)$. }
 		\label{fig:A4}
 	\end{figure} 
 	
 	\begin{figure}[h]
 		\centering
 		\includegraphics[width=0.8\linewidth]{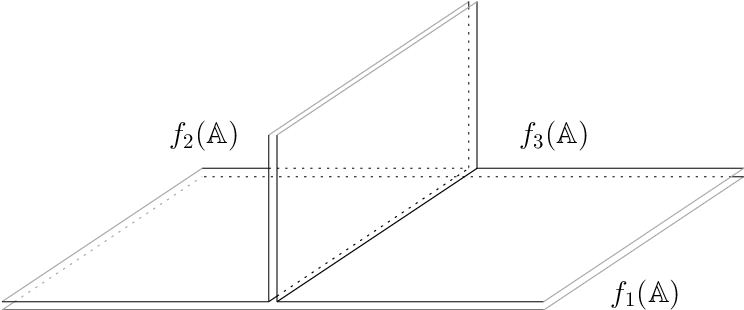}
 		\caption{Axiom (A6) states that whenever three apartments intersect pairwise in a half-apartment, then there is at least one point in the common intersection of all three.}
 		\label{fig:A6}
 	\end{figure}
 	
 	
 	The distance in axiom (A5) is the function $d\colon X\times X \to \Lambda$ induced from the distance on the model apartment $\A$, whose existence follows from axioms (A1), (A2) and (A3): for any two points $x,y\in X$ we use axiom (A3) to find $f\in \Fun$ such that $x,y \in f(\A)$ and define $d(x,y) = d( f^{-1}(x) , f^{-1}(y) ) \in \Lambda $. This is well defined, by axioms (A1) and (A2). A $d$-diminishing retraction is then a function $r \colon X \to f(\A)$ satisfying $r(y)=y$ for all $y\in f(\A)$ and $$
 	d(r(x),r(y)) \leq d(x,y)
 	$$
 	for all $x,y\in X$. Note that while axioms (A1) - (A3) can be used to define a symmetric positive definite function $d$, axiom (A5) can be used to show that $d$ satisfies the triangle inequality. As shown in \cite{BeScSt}, in the presence of the other five axioms, axiom (A5) is equivalent to the triangle inequality. In fact, \cite{BeScSt} contains a number of collections of axioms that characterize affine $\Lambda$-buildings. We will use the following characterization.
 	
 	\begin{theorem}[Theorem 3.1, \cite{BeScSt}] \label{thm:equivalent_axioms}
 		Let $(X,\Fun)$ be a set with an atlas such that the axioms (A1), (A2), (A3) and (A4), as well as
 		\begin{enumerate}
 			\item [(TI)] The function $d$ induced from the distance in apartments satisfies the triangle inequality.
 		\end{enumerate}
 		and the exchange condition
 		\begin{enumerate}
 			\item [(EC)] For $f_1,f_2 \in \Fun$, if $f_1(\A)\cap f_2(\A)$ is a half-apartment, then there exists $f_3 \in \Fun$ such that $f_i(\A)\cap f_3(\A)$ are half-apartments for $i\in \{1,2\}$. Moreover $f_3(\A)$ is the symmetric difference of $f_1(\A)$ and $f_2(\A)$ together with the boundary wall of $f_1(\A) \cap f_2(\A)$.
 		\end{enumerate}
 		are satisfied. Then $(X,\Fun)$ is an affine $\Lambda$-building. 
 	\end{theorem}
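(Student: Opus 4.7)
The plan is to derive axioms (A5) and (A6), as the remaining building axioms are either assumed outright or built into the setup. As a preliminary, axioms (A1), (A2), (A3) already suffice to define the function $d \colon X \times X \to \Lambda$ alluded to in the statement: for $x,y \in X$, pick via (A3) a chart $f \in \Fun$ with $x,y \in f(\A)$ and set $d(x,y) \coloneqq d_\A(f^{-1}(x), f^{-1}(y))$, well-defined by (A2) since the transition maps between overlapping charts are restrictions of elements of $W_a$, hence isometries of $\A$. Combined with the hypothesis (TI), this makes $(X,d)$ a $\Lambda$-pseudometric space, and hence a $\Lambda$-metric space once we know $d(x,y) = 0$ forces $x = y$ (which in turn follows once distance-diminishing retractions are available).

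For axiom (A6), let $f_1, f_2, f_3 \in \Fun$ with pairwise intersections $H_{ij} \coloneqq f_i(\A) \cap f_j(\A)$ half-apartments, with boundary walls $M_{ij}$. Applying (EC) to the pair $(f_1,f_2)$ produces a chart whose image is the symmetric difference of $f_1(\A), f_2(\A)$ together with $M_{12}$; in particular $M_{12} \subseteq f_1(\A) \cap f_2(\A)$. The walls $M_{13}$ and $M_{23}$ lie respectively in $f_1(\A)$ and $f_2(\A)$, and because all three walls are determined by root directions of the common model apartment, a combinatorial case analysis of how $M_{13}$ and $M_{23}$ meet $M_{12}$ (using (EC) again to exchange apartments when the walls are initially mis-aligned) shows that the three walls share at least one point; any such point automatically lies in the triple intersection.

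For axiom (A5), define $r_{x,f} \colon X \to f(\A)$ pointwise as follows: given $y \in X$, pick via (A3) a chart $f'$ containing both $x$ and $y$; by (A2) there is $w \in W_a$ with $f|_B = f' \circ w|_B$ on $B \coloneqq f^{-1}(f(\A) \cap f'(\A))$, and $w$ may be normalized so that $f^{-1}(x)$ is fixed. Set $r_{x,f}(y) \coloneqq (f \circ w \circ (f')^{-1})(y) \in f(\A)$. Independence of the choice of $f'$ is established by chaining through (EC): any two apartments containing $x$ are connected by a chain of apartments each pair of which differs by a half-apartment exchange, and each exchange induces the same retraction on the overlap. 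For the distance-diminishing inequality, given $y_1, y_2 \in X$, use (A3) to choose an apartment $g$ containing both; then use (A4) and iterated applications of (EC) to tile $g(\A)$ by finitely many pieces, each of which retracts to $f(\A)$ either by an isometry or by a wall-fold (a reflection that identifies two points on opposite sides of a wall, and is therefore distance-decreasing). The triangle inequality (TI) assembles these local inequalities into $d(r_{x,f}(y_1), r_{x,f}(y_2)) \leq d(y_1, y_2)$.

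The hard part is the distance-diminishing property in (A5). Piece by piece the retraction is either an isometry or a folding along a wall, but in a bare $\Lambda$-metric setting there is no continuity or compactness to appeal to, so the finiteness and combinatorial coherence of the tiling must be established by hand. The exchange condition (EC) is precisely what guarantees the existence of the auxiliary apartments needed to perform each successive fold, and (A4) is what lets one align sectors based at distinct points into a common apartment; together they make the inductive tiling argument possible. This interplay is the technical heart of the equivalence theorem in \cite{BeScSt} and the reason the result is nontrivial rather than a mechanical unwinding of definitions.
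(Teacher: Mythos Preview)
The paper does not prove this theorem; it is quoted verbatim as Theorem~3.1 of \cite{BeScSt} and used as a black box. There is therefore no ``paper's own proof'' to compare against. Your proposal is a reasonable high-level outline of how one might expect the argument in \cite{BeScSt} to go, and you correctly identify that the substance lies in deriving (A5) and (A6) from the alternative axiom set.

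That said, as a proof your sketch has genuine gaps. For (A6), ``a combinatorial case analysis of how $M_{13}$ and $M_{23}$ meet $M_{12}$'' is a placeholder, not an argument: in a $\Lambda$-building with $\Lambda \neq \R$ there is no topology to lean on, and one must actually show the three walls intersect using only the axioms. For (A5), both the well-definedness of $r_{x,f}$ (independence of the auxiliary apartment $f'$) and the distance-diminishing property require real work. Your claim that ``any two apartments containing $x$ are connected by a chain of apartments each pair of which differs by a half-apartment exchange'' is not obvious from (EC) alone---(EC) only tells you what happens when two apartments already meet in a half-apartment, not that such chains exist in general. Similarly, the ``tiling'' of $g(\A)$ into finitely many pieces that retract by isometries or folds is exactly the nontrivial content of the result, and you have not indicated how (A4) and (EC) produce it. You yourself flag this in your final paragraph, which is honest but confirms that what you have written is an outline rather than a proof. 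If the goal were to reproduce the result, you would need to consult \cite{BeScSt} directly.
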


 \section{ Valued real closed fields}\label{sec:valued_real_closed_fields}
 
 For an introduction to real closed fields, valued fields and real algebraic geometry we recommend  \cite{BCR}, see also Section \ref{I-sec:real_closed} of \cite{AppAGRCF}. A \emph{real closed field} is an ordered field with the properties that 
 \begin{enumerate}
 	\item [(a)] every positive element has a square root, and
 	\item [(b)] every odd-degree polynomial has a zero.
 \end{enumerate}
 An ordered field is called \emph{Archimedean} if every element is bounded by a natural number. A major tool when working with real closed fields is the following transfer principle from model theory. 
 \begin{theorem}[Transfer principle, \cite{BCR}]\label{thm:logic}
 	Let $\F$ and $\F'$ be real closed fields. Let $\varphi$ be a sentence in first-order logic for ordered fields with parameters in $\F \cap \F'$. Then $\varphi$ is true for $\F$ if and only if $\varphi$ is true for $\F'$, formally $\F \models \varphi \iff \F' \models \varphi$.
 \end{theorem}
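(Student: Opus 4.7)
The plan is to derive this from Tarski's quantifier elimination theorem for the theory of real closed fields. First I would invoke the quantifier elimination: every first-order formula $\varphi(x_1, \ldots, x_n)$ in the language of ordered fields is provably equivalent, modulo the axioms of real closed fields, to a quantifier-free formula $\psi(x_1, \ldots, x_n)$ with the same free variables. Applying this to the sentence $\varphi$ with parameters $a_1, \ldots, a_n \in \F \cap \F'$, we obtain a quantifier-free $\psi$ with the same parameters such that both $\F$ and $\F'$ satisfy $\varphi \leftrightarrow \psi$, since both are models of the theory of real closed fields.

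Second, I would argue directly that any quantifier-free $\psi$ with parameters in $\F \cap \F'$ has the same truth value in $\F$ and in $\F'$. Indeed, such a $\psi$ is a Boolean combination of atomic formulas of the form $p(a_1, \ldots, a_n) = 0$ or $p(a_1, \ldots, a_n) > 0$ with $p \in \Z[x_1, \ldots, x_n]$. Each such atomic formula is evaluated using only the ring and order operations performed on $a_1, \ldots, a_n$, which lie in $\F \cap \F'$. Since $\F$ and $\F'$ induce the same ordered ring structure on their common elements, the evaluations agree, so $\F \models \psi$ iff $\F' \models \psi$. Combining with the first step yields $\F \models \varphi$ iff $\F' \models \varphi$.

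The main obstacle is the quantifier elimination itself, which is far from routine. Its proof proceeds by induction on formula complexity, reducing to the elimination of a single existential quantifier over a system of polynomial (in)equalities; this in turn relies on real root counting techniques such as Sturm's theorem and the algebraic properties characterizing real closed fields (existence of square roots of positive elements and roots of odd-degree polynomials). I would not reproduce this argument and would defer it entirely to the reference \cite{BCR}, where it is carried out in the form best suited to the semialgebraic geometry used throughout this paper.
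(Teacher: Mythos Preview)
The paper does not give its own proof of this statement; it simply cites \cite{BCR} and moves on. Your outline via Tarski's quantifier elimination is the standard route and is precisely what one finds in that reference, so your proposal is correct and aligned with what the paper defers to.
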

 
 \subsection{Valuations}
 
 A subring $O \subseteq \F$ of an ordered field $\F$ is an \emph{order convex subring}, if for all $a,b \in O,\  c \in \F, \ a \leq c \leq b $ implies $c \in O$. Note that every order convex subring is in particular a valuation ring: for all $a \in \F$, we have $a \in O$ or $a^{-1} \in O $. Let $(\Lambda,+)$ be an ordered abelian group. A \emph{valuation} on an ordered field $\F$ is a map $v \colon \F \to \Lambda \cup \{\infty\}$ which satisfies for all $a,b \in \F$
 \begin{itemize}
 	\item [(1)] $v(a) = \infty$ if and only if $a = 0$.
 	\item [(2)] $v(ab) = v(a) + v(b)$.
 	\item [(3)] $v(a+b) \geq \min \{ v(a), v(b)\}$. 
 \end{itemize}
 We say that the valuation is \emph{order compatible}, if $(-v)(a) \geq (-v)(b)$ whenever $a \geq b \geq 0$. We will often use the same letter for $v$ and $v|_{\F_{>0}} \colon \F_{>0} \to \Lambda$. We will often be more interested in $(-v)$ than in $v$, as $(-v)$ is order preserving (when restricted to $\F_{>0}$). There is a correspondence between order convex valuation rings and order compatible valuations.
 
 \begin{theorem}[\cite{Kru}]
 	Every order convex valuation ring $O\subseteq \F$ gives rise to an order compatible valuation
 	\begin{align*}
 		v \colon \F & \to \Lambda \cup \{\infty\},
 	\end{align*}
 	where $\Lambda \cong \F^\times/O^\times$ with the order given by $-v(a) \leq -v(b)$ for all $0 \leq a \leq b \in \F$. On the other hand, every order compatible valuation gives rise to an order convex valuation ring
 	$$
 	O \coloneqq \{ a \in \F \colon v(a)\geq 0\}.
 	$$
 \end{theorem}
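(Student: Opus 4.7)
The plan is to handle the two directions separately; both are standard, and the order-compatibility condition will interact cleanly with order convexity.

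For the first direction, starting from an order convex valuation ring $O$, I would form the multiplicative group $\F^\times/O^\times$, write it additively as $\Lambda$, and define $v(a)$ as the coset of $a$, extended by $v(0) = \infty$ as the largest element. Axioms (1) and (2) of a valuation are then tautological. I would endow $\Lambda$ with the order $v(a) \leq v(b) \iff b/a \in O$, which is independent of representatives because $O^\times \subseteq O$ is a group. The key use of order convexity is in showing this order is total: given $0 < a \leq b$ in $\F$, the quotient $a/b$ lies in $[0,1] \cap \F \subseteq O$ because $0, 1 \in O$, so $v(b) \leq v(a)$. For compatibility of the group operation with the order, $v(a) \geq 0 = v(1)$ forces $a \in O$, so $v(a), v(b) \geq 0$ gives $ab \in O$, that is, $v(a) + v(b) = v(ab) \geq 0$. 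For the ultrametric inequality, if $v(a) \leq v(b)$ then $b/a \in O$, so $(a+b)/a = 1 + b/a \in O$, giving $v(a+b) \geq v(a) = \min(v(a), v(b))$. The order compatibility of $v$ itself is exactly the totality argument above.

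For the second direction, given an order compatible valuation $v$, I would set $O = \{a \in \F : v(a) \geq 0\}$ and verify it is a subring using $v(1) = 0$ and $v(-1) = 0$ (forced by $2 v(-1) = v(1)$ and torsion-freeness of $\Lambda$), together with the valuation axioms. That $O$ is a valuation ring is automatic from $v(a) + v(a^{-1}) = 0$ for $a \neq 0$. For order convexity, if $a \leq c \leq b$ with $a, b \in O$, I would first reduce to the case $c \geq 0$ using $-O = O$; then $0 \leq c \leq \max(b, -a) \in O$, and order compatibility of $v$ yields $v(c) \geq v(\max(b, -a)) \geq 0$.

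The only step that is not completely formal is the well-definedness and totality of the order on $\Lambda$ in the first direction, and this is precisely where order convexity of $O$ is essential; everything else is a direct unwinding of the definitions.
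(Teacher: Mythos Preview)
The paper does not supply its own proof of this theorem; it is stated with a citation to Krull and used as background. Your argument is the standard construction and is correct. One small remark on the first direction: your totality argument treats only $0 < a \leq b$; to cover arbitrary nonzero $a,b$ you should note that $-1 \in O^\times$ (since $O$ is a subring and $(-1)^{-1}=-1$), so $v(a)=v(|a|)$ and one may reduce to the positive case. In the second direction your reduction via $\max(b,-a)$ is fine but slightly more than needed: once $c\geq 0$ you already have $0\leq c\leq b$ with $b\in O$, and order compatibility directly gives $v(c)\geq v(b)\geq 0$.
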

 
 Real closed fields always admit order compatible valuation rings (sometimes many), but the only order compatible valuation on an Archimedean real closed field is the trivial valuation. The ordered abelian group $\Lambda$ of a valuation of a real closed field is always divisible, and hence a $\Q$-vector space.

 \subsection{Residue field}\label{sec:residue_field}
 
 If $O$ is a valuation ring with maximal ideal 
 $$
 J\coloneqq \{a \in O \colon v(a) > 0\} = O \setminus O^{\times}
 $$
 then $k \coloneqq  O/J$ is called the \emph{residue field}. When $\mathbb{F}$ is real closed, so is $k$ \cite[Theorem 4.3.7]{EnPr05}. To an algebraic group $G$ and a real closed field $\F$ we will associate an affine $\Lambda$-building $\mathcal{B}$. In Section \ref{sec:assoc-buildings} we will show that the building at infinity is the spherical building associated to $G(\F)$ and the residue building is the spherical building associated to $G(k)$.

 \subsection{Examples}\label{sec:real_closed_fields_examples}
 The real numbers $\R$ and the real algebraic numbers $\overline{\Q}^{\operatorname{rc}}$ are Archimedean real closed fields.
 The field of \emph{Puiseux series} over some real closed field $\F$
 $$
 \overline{\F(X)}^{\operatorname{rc}}  \coloneqq \left\{  \sum_{k=-\infty}^{k_0} c_k X^{\frac{k}{m}} \, \colon \, k_0, m \in \mathbb{Z}, \, m > 0 , \, c_k \in \F, \, c_{k_0}\neq 0\right\},
 $$ 
 is a non-Archimedean real closed field, where the usual order on $\F$ is extended by $X>r$ for all $r \in \F$ \cite{BCR}. An order compatible valuation $v \colon \overline{\F(X)}^{\operatorname{rc}} \to \Lambda\cup \{\infty\} =  \Q\cup \{\infty\}$ is given by the degree
 \begin{align*}
 	v\left(  \sum_{k=-\infty}^{k_0} c_k X^{k/m}\right)  =-\frac{k_0}{m}.
 \end{align*} 
 
Analogous to the algebraic closure of fields, every ordered field $\K$ admits a \emph{real closure} $\overline{\K}^{\operatorname{rc}}$, the intersection of all real closed fields containing $\K$. The real algebraic numbers $\overline{\Q}^{\operatorname{rc}}$ are the real closure of the rationals and the Puiseux-series are the real closure of the ordered field of rational functions $\F(X)$. 

Generalizing slightly, we may consider the field of rational functions $\F(X,Y)$ with two variables, with the order where $X> Y^n> r$ for every $n\in \N$ and $r\in \F$. The multi-degree gives a valuation $ v \colon \F(X,Y) \to \Z \times \Z \cup \{\infty\}$ with lexicographical ordering, and the real closure $\overline{\F(X,Y)}^{\operatorname{rc}}$ is a real closed field with valuation group $\Lambda \cong \Q \times \Q$ with lexicographical ordering. 
 
 A \emph{non-principal ultrafilter on $\Z$} is a function $\omega \colon \mathcal{P}(\Z) \to \{0,1\}$ that satisfies
 \begin{enumerate}
 	\item [(1)] $\omega(\emptyset) = 0$, $\omega(\Z) = 1$,
 	\item [(2)] if $A,B \subseteq \Z$ satisfy $A\cap B = \emptyset$, then $\omega(A \cup B) = \omega(A) + \omega(B)$,
 	\item [(3)] all finite subsets $A \subseteq \Z$ satisfy $\omega(A) = 0$.
 \end{enumerate}
 Ultrafilters can be thought of as finitely-additive probability measures that only take values in $0$ and $1$. The existence of non-principal ultrafilters is equivalent to the axiom of choice \cite{Hal}. For a given ultrafilter $\omega$, we define the \emph{hyperreal numbers} $\R_{\omega}$ to be the equivalence classes of infinite sequences $\R_{\omega} = \R^{\N}/ \!\! \sim$, where $x = (x_i)_{i \in \N} \sim y = (y_i)_{i \in \N}$ if $\omega(\{ i \in \N \colon x_i \neq y_i\}) = 0$ or $\omega(\{ i \in \N \colon x_i = y_i\}) = 1$. We define addition and multiplication componentwise, the multiplicative inverse is obtained by taking the inverses of all non-zero entries, turning $\F_\omega$ into a field. Considering constant sequences, the real numbers are a subfield of $\R_\omega$. The hyperreals are an ordered field with respect to the order defined by $[(x_i)_{i\in \N}] \leq [(y_i)_{i \in \N} ]$ if and only if $\omega(\{i \in \N \colon x_i \leq y_i\}) = 1$. The hyperreals are real closed, since $\R$ is. One can check that the hyperreals do not admit a non-trivial valuation to a subgroup of $\R$: any valuation group for the hyperreals has infinite rank. The hyperreals are non-Archimedean, since the equivalence class containing $(1, 2, 3, \ldots)$ is an \emph{infinite} element, meaning it is larger than any natural number. 
 
 Let $b \in \R_{\omega}$ be an infinite element. Then
 $$
 O_b \coloneqq \{ x \in \R_{\omega} \colon |x| < b^m \text{ for some } m \in \Z \}
 $$
 is an order convex subring of $\R_\omega$ with maximal ideal
 $$
 J_b \coloneqq \{ x \in \R_\omega \colon |x| <  b^m \text{ for all } m \in \Z \}.
 $$
 The \emph{Robinson field} associated to the non-principal ultrafilter $\omega$ and the infinite element $b$ is the quotient $\R_{\omega,b} \coloneqq O_b/J_b$ \cite{Rob96}. The Robinson field is a non-Archimedean real closed field. Note that $[b] \in \R_{\omega,b} $ is a \emph{big} element, meaning that for all $a \in \R_{\omega,b}$ there is an $n \in \N$ such that $a < b^n$. 
 
 Non-Archimedean ordered fields $\F$ with big elements admit an order compatible valuation $v \colon \F \to \R \cup \{\infty\}$ by letting $v(a)$ be the real number defined by the Dedekind cut
 \begin{align*}
 	A_a &\coloneqq \left\{ \frac{p}{q} \in \Q \colon b^p  \leq a^q , q \in \Z_{>0}, p \in \Z\right\} \\
 	B_a &\coloneqq \left\{ \frac{p}{q} \in \Q \colon b^p  \geq a^q , q \in \Z_{>0}, p \in \Z\right\} 
 \end{align*} 
 for $a \in \F$ and $b\in \F$ a big element \cite{Bru2}. Note that when $\F$ is Archimedean, every element $b>1$ is big and we can still define as above $v(a) = -\log_b|a|$, which then is the usual logarithm with base $b$. However $v$ is not a valuation in our sense, since it does not satisfy the strong triangle inequality, condition (3) in the definition.

\section{Definition of the building \texorpdfstring{$\B$}{B}}\label{sec:building_def} 
For background on algebraic groups over real closed fields, see \cite{AppAGRCF}. Let $\K$ and $\F$ be real closed fields such that $\K \subseteq \R \cap \F$. Often we assume $\F$ to be non-Archimedean with order compatible valuation $v \colon \F\to \Lambda \cup \{\infty\}$. Let $G<\operatorname{SL}_n$ be a semisimple connected self-adjoint algebraic $\K$-group and $S$ a maximal $\K$-split torus that satisfies $s=s\tran$ for all $s\in S$. Let $A_{\F}$ be the semialgebraic extension of the semialgebraic connected component of $S_{\K}$ containing the identity and let $K\coloneqq G\cap \operatorname{SO}_n$. 

For $\fraka = \operatorname{Lie}(A_\R)$, let $\Sigma \subseteq \fraka^\star$ be the root system whose elements $\alpha \in \Sigma$ correspond to $\K$-roots $\chi_\alpha \in \KPhi \subseteq \hat{S}$, see Section \ref{I-sec:compatibility} in \cite{AppAGRCF}. Then $W_s=\KW$ is its spherical Weyl group. After choosing a basis $\Delta \subseteq \Sigma$ we let $U$ be the unipotent group associated to the positive root spaces and
$$
A_\F^+ \coloneqq \left\{ a\in A_\F \colon \chi_\alpha(a) \geq 1 \text{ for all } \alpha \in \Delta \right\}.
$$ 

\subsection{Non-standard symmetric spaces}

In the theory of symmetric spaces, 
$$
P_\R = \left\{ x \in \R^{n\times n} \colon x=x\tran,\, \det(x)=1,\, x \text{ is positive definite}  \right\} 
$$
is a model for the symmetric space of non-compact type associated to $\operatorname{SL}(n,\R)$. The group $\operatorname{SL}(n,\R)$ acts transitively on $P_\R$ by
$$
g.x = g x g^{T}.
$$
for $g\in \operatorname{SL}(n,\R)$, $x \in P_\R$. The orbit $\X_\R = G_\R.\operatorname{Id} \subseteq P_\R$ is a closed subset and a model for the symmetric space associated to $G_\R$. We note that $P$ is a semialgebraic set defined over $\Q$ and consider its $\F$-extension $P_\F$. The action is algebraic, so the orbit can be semialgebraically extended to
$$
\X_\F = G_\F . P_\F.
$$
When $\F$ is non-Archimedean, we call $\X_\F$ the \emph{non-standard symmetric space associated to $G_\F$}. 
\begin{proposition}\label{prop:trans}
	\begin{itemize}
		\item [(a)] The group $G_\F$ acts transitively on $\X_\F$.
		\item [(b)] The stabilizer of $\operatorname{Id}\in \X_\F$ is $K_\F$.
		\item  [(c)] For any pair $x,y \in \X_\F$ there is a $g\in G_\F$ such that $g.x=\operatorname{Id}$ and $g.y$ lies in the non-standard closed Weyl chamber
		$$
		\ApF. \operatorname{Id} = \left\{ a.\operatorname{Id} \in \X_\F \colon \chi_\alpha(a) \geq 1 \text{ for all } \alpha \in \Sigma \right\}.
		$$
	\end{itemize}
\end{proposition}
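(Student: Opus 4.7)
My plan is to treat the three parts in turn: (b) is a direct computation, (a) reduces to the classical transitivity over $\R$ via the transfer principle, and (c) follows from the Cartan decomposition together with (a) and (b).

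For (b), the action formula gives $g.\operatorname{Id} = g\operatorname{Id} g\tran = g g\tran$, so $g \in G_\F$ stabilizes $\operatorname{Id}$ precisely when $g g\tran = \operatorname{Id}$, i.e.\ $g \in \operatorname{SO}_n(\F)$. Hence $\operatorname{Stab}_{G_\F}(\operatorname{Id}) = G_\F \cap \operatorname{SO}_n(\F) = K_\F$, with no transfer needed.

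For (a), over $\R$ the equality $\X_\R = G_\R.\operatorname{Id}$ holds by the very definition of $\X_\R$, so the first-order sentence
\[
\forall x \, \bigl( \X(x) \rightarrow \exists g\, (G(g) \wedge g g\tran = x) \bigr)
\]
is true over $\R$. Since $\X$, $G$ and $\operatorname{Id}$ are defined semialgebraically over $\K \subseteq \R \cap \F$, this sentence (and the reverse inclusion $G.\operatorname{Id} \subseteq \X$) has all parameters in $\K$, so the transfer principle (Theorem \ref{thm:logic}) yields $\X_\F = G_\F.\operatorname{Id}$, i.e.\ transitivity.

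For (c), given $x, y \in \X_\F$, use (a) to pick $h \in G_\F$ with $h.x = \operatorname{Id}$, set $y' \coloneq h.y$, and use (a) again to obtain $g' \in G_\F$ with $g'.\operatorname{Id} = y'$. The Cartan decomposition $G_\F = K_\F \ApF K_\F$ (Theorem \ref{I-thm:KAK} of \cite{AppAGRCF}) factors $g' = k a k'$ with $k, k' \in K_\F$ and $a \in \ApF$. Because $K_\F$ fixes $\operatorname{Id}$ by (b), the right factor $k'$ disappears and $y' = ka.\operatorname{Id}$. Setting $g \coloneq k^{-1} h$ gives $g.x = k^{-1}.\operatorname{Id} = \operatorname{Id}$ and $g.y = k^{-1}.y' = a.\operatorname{Id} \in \ApF.\operatorname{Id}$.

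No single step is genuinely difficult; the only mildly delicate point is the verification in (a) that the defining formulas live in the language of real closed fields with parameters in $\K$, which is built into the semialgebraic setup of the paper. The main external input is the Cartan decomposition imported from \cite{AppAGRCF}, which does the real work in (c).
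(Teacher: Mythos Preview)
Your proof is correct and follows essentially the same approach as the paper: parts (a) and (b) are immediate from the definitions (the paper simply says so without spelling out the transfer argument you give), and for (c) both you and the paper use transitivity twice plus the Cartan decomposition $G_\F = K_\F \ApF K_\F$ from \cite{AppAGRCF}, with only cosmetic differences in which element gets decomposed.
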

\begin{proof}
	Transitivity and stabilizer of $\operatorname{Id}$ follow directly from the definitions. Use transitivity obtain $h \in G_\F$ with $h.x  =\operatorname{Id}$. Use transitivity again to obtain $h'\in G_\F$ with $h'.h.y =\operatorname{Id}$. Now decompose $h'=kak' \in G_\F = K_\F A_\F K_\F$ using the Cartan decomposition \cite[Theorem \ref{I-thm:KAK}]{AppAGRCF}, where we may assume $a^{-1}\in \ApF$ after applying an element of the spherical Weyl group $W_s$. Setting $g=k'h$ now results in the claimed
	\begin{align*}
		g.x &= k'h.x = k'.\operatorname{Id} = \operatorname{Id}, \\
		g.y &= k'h.y = k'(h')^{-1}.\operatorname{Id} = a^{-1}k^{-1}.\operatorname{Id} = a^{-1}.\operatorname{Id} \in \ApF . \operatorname{Id}. \qedhere
	\end{align*}
\end{proof}

\subsection{The pseudo-distance}\label{sec:pseudodistance}

The symmetric space $\X_\R$ admits an explicit distance formula. Here we mimic this process to define a pseudo-distance on $\X_\F$. Let $x,y \in \X_\F$ be two points. We will first send $x$ and $y$ to a common flat, on which we define a multiplicative norm $N_\F$. In the real case, the logarithm would then be applied to obtain an additive distance. For the non-standard symmetric space we instead use the valuation $v \colon \F_{>0} \to \Lambda$.

By Proposition \ref{prop:trans}(a), for $x,y \in \X_\F$ there are $g,h\in G_\F$ with $x=g.\Id$, $y=h.\Id$. We use the Cartan decomposition \cite[Theorem \ref{I-thm:KAK}]{AppAGRCF} to write $g^{-1}h = kak' \in G_\F = K_\F A_\F K_\F$, where $a$ can be chosen to lie in $A_\F^+$ and is then unique. This gives a Weyl-chamber valued distance on $\X_\F$, which we call the \emph{Cartan projection}. 


\begin{lemma}\label{lem:Cartan_projection}
	The Cartan-projection 
	\begin{align*}
		\delta_\F \colon \X_\F \times \X_\F & \to \ApF\\
		(x,y) &\mapsto a
	\end{align*}
	is well-defined and invariant under the action of $G_\F$. For all $x,y \in \X_\F$, $\delta_\F(y,x)$ is in the Weyl-group orbit of $\delta(x,y)^{-1}$.
\end{lemma}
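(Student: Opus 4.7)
The plan is to prove the three claims in sequence: well-definedness of $a$, $G_\F$-invariance, and the Weyl-group symmetry, each reducing to the uniqueness clause of the Cartan decomposition $G_\F = K_\F A_\F K_\F$ from \cite[Theorem \ref{I-thm:KAK}]{AppAGRCF}. Throughout, the key input I will quote is that the $A_\F^+$-component in a Cartan decomposition $g = kak'$ with $a \in A_\F^+$ is unique, even though the $K_\F$-factors are not.

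For well-definedness, suppose $x = g.\Id = g_1.\Id$ and $y = h.\Id = h_1.\Id$. By Proposition \ref{prop:trans}(b), $\operatorname{Stab}_{G_\F}(\Id) = K_\F$, so there exist $k_1,k_2 \in K_\F$ with $g_1 = g k_1$ and $h_1 = h k_2$. If $g^{-1}h = k a k'$ with $a \in A_\F^+$, then
$$
g_1^{-1} h_1 \;=\; k_1^{-1} g^{-1} h k_2 \;=\; (k_1^{-1} k)\, a\, (k' k_2)
$$
is again a Cartan decomposition with middle term $a \in A_\F^+$, so by uniqueness the element $a$ produced by $\delta_\F$ does not depend on the choice of $g$ and $h$.

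For $G_\F$-invariance, let $\gamma \in G_\F$ and pick any $g,h$ with $x = g.\Id$, $y = h.\Id$; then $\gamma.x = (\gamma g).\Id$ and $\gamma.y = (\gamma h).\Id$. Since $(\gamma g)^{-1}(\gamma h) = g^{-1} h$, the Cartan decompositions coincide, giving $\delta_\F(\gamma.x,\gamma.y) = \delta_\F(x,y)$.

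For the last claim, observe that $h^{-1} g = (g^{-1} h)^{-1} = (k a k')^{-1} = (k')^{-1} a^{-1} k^{-1}$, which is a $K_\F A_\F K_\F$ decomposition of $h^{-1} g$ with middle term $a^{-1} \in A_\F$, though in general $a^{-1} \notin A_\F^+$. The spherical Weyl group $W_s = N_\F/M_\F$ acts on $A_\F$ and $A_\F^+$ is a fundamental domain for this action, so there is $w \in W_s$ with $w.a^{-1} \in A_\F^+$, represented by some $n \in N_\F \subseteq K_\F$; then $h^{-1} g = (k')^{-1} n^{-1} (w.a^{-1}) n k^{-1}$ is a Cartan decomposition of $h^{-1} g$ with positive middle term $w.a^{-1}$, hence $\delta_\F(y,x) = w.a^{-1} = w.\delta_\F(x,y)^{-1}$ lies in the $W_s$-orbit of $\delta_\F(x,y)^{-1}$. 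The only subtle point (and the main obstacle) is the uniqueness of the $A_\F^+$-component, which is why the entire argument bootstraps off of Theorem \ref{I-thm:KAK}; everything else is bookkeeping with stabilizers and the fundamental-domain property of $A_\F^+$.
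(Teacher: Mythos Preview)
Your proof is correct and follows essentially the same approach as the paper: both reduce all three claims to the uniqueness of the $A_\F^+$-component in the Cartan decomposition, together with the observation that $(\gamma g)^{-1}(\gamma h)=g^{-1}h$ for invariance and that inverting $g^{-1}h$ produces a $K_\F A_\F K_\F$ expression with middle term $a^{-1}$, which one then moves into $A_\F^+$ by a Weyl group element. The only cosmetic difference is that you write out the representative $n\in N_\F$ explicitly in the last step, whereas the paper simply invokes uniqueness of the Cartan decomposition up to the $W_s$-action.
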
 
\begin{proof}
	Assume $x=g.\Id = \overline{g}.\Id$ and $y=h.\Id = \overline{h}.\Id$ for some $g,\overline{g}, h, \overline{h} \in G_\F$. By Proposition \ref{prop:trans}(b), $g^{-1}\overline{g} \in K_\F$ and $h^{-1}\overline{h} \in K_\F$. Let $a,\overline{a}\in A_\F^+$ be the unique elements satisfying $g^{-1}h \in K_\F a K_\F$ and $\overline{g}^{-1}\overline{h} = \overline{k}\overline{a}\overline{k}' \in K_\F \overline{a} K_\F$ from the Cartan decomposition \cite[Theorem \ref{I-thm:KAK}]{AppAGRCF}. Then
	$$
	\overline{k}\overline{a}\overline{k}= \overline{g}^{-1}\overline{h} \in K_\F g^{-1}  h K_\F = K_\F a K_\F,
	$$
	so $\overline{a} = a$ and $\delta_\F(x,y) = a$ is well-defined. The $G_\F$-invariance holds because for $t \in G_\F$, the distance between $t.g.\Id$ and $t.h.\Id$ is calculated from $(t g)^{-1}(t h) = g^{-1}h$ and thus agrees with the distance between $x$ and $y$. If $\delta_\F(x,y) = a$, then $\delta_\F(y,x)$ is defined by $h^{-1}g \in K_\F \delta_\F(y,x) K_\F$. Thus 
	$$
	K_\F \delta_\F(y,x) K_\F \ni h^{-1}g = (g^{-1}h)^{-1} \in K_\F \delta_\F(x,y)^{-1} K_\F \subseteq K_\F A_\F K_\F,
	$$
	and since the Cartan decomposition is unique up to the action of the spherical Weyl group, $\delta_\F(y,x)$ is in the orbit of $\delta(x,y)^{-1}$.
\end{proof}

We use a basis $\Delta$ of the root system $\Sigma$ to define a notion of positive roots $\Sigma_{>0}$. For $\alpha \in \Sigma$, let $\chi_\alpha \colon A_\R \to \R^\times$ be the corresponding ($\R$-points of the) algebraic characters. We define a continuous, semialgebraic $W_s$-invariant map
\begin{align*}
	N_{\R} \colon A_{\R} &\to \R^{\times} \\
	a & \mapsto  \prod_{\alpha \in \Sigma} \max \left\{\chi_\alpha(a), \chi_\alpha (a)^{-1}\right\}
\end{align*}
which is a multiplicative norm, meaning that for all $a,b \in A_{\R}$
\begin{itemize}
	\item [(1)] $N_{\R}(a)\geq 1$ and $N_{\R}(a) = 1$ if and only if $a=\operatorname{Id}$,
	\item [(2)] $N_{\R}(ab) \leq N_{\R}(a) N_{\R}(b)$.
\end{itemize}
We call $N_\R$ the \emph{semialgebraic norm}\footnote{There are many continuous semialgebraic multiplicative norms satisfying (1) and (2), but as norms on finite dimensional vector spaces, they are equivalent and it suffices for our purposes to fix $N_\R$.}.
Since $N_{\R}$ is semialgebraic, we can extend it to a map $N_{\F} \colon A_{\F} \to \F^{\times}$ which is still a $W_s$-invariant multiplicative norm satisfying (1) and (2) by the transfer principle and $N_\F$ is given by the same formula involving the characters.
For $\F$ non-Archimedean, we now use the Cartan projection $\delta_\F$ together with the semialgebraic norm $N_\F$ and the valuation $v \colon \F_{>0} \to \Lambda $ to define
\begin{align*}
	d \colon \X_\F \times \X_\F &\to \Lambda \\
	(x,y) &\mapsto (-v)(N_\F (\delta_{\F}(x,y))).
\end{align*}

We will show in Theorem \ref{thm:pseudodistance} that $d$ is a pseudo-distance on $\X_\F$. The pseudo-distance $d$ fails to be positive definite due to the fact that $v$ is not injective. The proof of the triangle inequality uses Kostant's convexity theorem and the Iwasawa retraction
\begin{align*}
	\rho \colon \X_\F &\to A_\F.\operatorname{Id} \\
	g.\operatorname{Id} = uak.\operatorname{Id} & \mapsto a.\operatorname{Id}.
\end{align*}
using the Iwasawa-decomposition $G_\F = U_\F A_\F K_\F$ \cite[Theorem \ref{I-thm:KAU}]{AppAGRCF}.

\begin{lemma}\label{lem:rhoA}
	For all $a\in A_\F$, $x\in \X_\F$, $\rho(a.x) = a.\rho(x)$.
\end{lemma}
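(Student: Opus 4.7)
The plan is to reduce everything to a manipulation of the Iwasawa decomposition $G_\F = U_\F A_\F K_\F$, using the fact that $A_\F$ normalizes $U_\F$. This normalization holds because $U_\F$ is the exponential of the sum of positive root spaces, and the adjoint action of $A_\F$ preserves each root space (acting on $\mathfrak{g}_\alpha$ by the character $\chi_\alpha$); this has already been established in \cite{AppAGRCF}.

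First I would use Proposition \ref{prop:trans}(a) to write $x = g.\operatorname{Id}$ for some $g \in G_\F$, and then apply the Iwasawa decomposition \cite[Theorem \ref{I-thm:KAU}]{AppAGRCF} to obtain $g = u b k$ with $u \in U_\F$, $b \in A_\F$, $k \in K_\F$. By definition of $\rho$, this gives $\rho(x) = b.\operatorname{Id}$.

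Next I would compute an Iwasawa decomposition of $ag$. Using $aua^{-1} \in U_\F$, I rewrite
\[
ag = a u b k = (aua^{-1}) \cdot (ab) \cdot k,
\]
which is an Iwasawa decomposition of $ag$ since $aua^{-1} \in U_\F$, $ab \in A_\F$, and $k \in K_\F$. By uniqueness of the Iwasawa decomposition, $\rho(a.x) = \rho((ag).\operatorname{Id}) = (ab).\operatorname{Id} = a.(b.\operatorname{Id}) = a.\rho(x)$, as desired.

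The argument is essentially a one-line computation once one knows that $A_\F$ normalizes $U_\F$ and that the Iwasawa decomposition is unique; there is no real obstacle, only bookkeeping about which factor absorbs the left multiplication by $a$.
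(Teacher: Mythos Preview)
Your proof is correct and essentially identical to the paper's: write $x=g.\operatorname{Id}$, take an Iwasawa decomposition $g=ubk$, use $aua^{-1}\in U_\F$ (which the paper cites as \cite[Proposition \ref{I-prop:anainN}]{AppAGRCF}) to rewrite $ag=(aua^{-1})(ab)k$, and read off $\rho(a.x)=ab.\operatorname{Id}=a.\rho(x)$. The only cosmetic difference is that you invoke uniqueness of the Iwasawa decomposition explicitly, whereas the paper relies on it implicitly through the well-definedness of $\rho$.
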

\begin{proof}
	Let $g=ua'k\in G_\F=U_\F A_\F K_\F$ such that $g.\operatorname{Id} = x$. By \cite[Proposition \ref{I-prop:anainN}]{AppAGRCF}, $aua^{-1} \in U_\F$, so
	\begin{align*}
		\rho(a.x) &= \rho(aua'k.\operatorname{Id}) = \rho((aua^{-1}aa'.\operatorname{Id}) = aa'.\operatorname{Id} = a .\rho(ua'k.\operatorname{Id}) = a .\rho(x). 
	\end{align*}
\end{proof}

We use Kostant's convexity theorem to prove that $\rho$ is a $d$-diminishing retraction. 
\begin{theorem}\label{thm:UAKret}
	The map $\rho \colon \X_\F \to A_\F.\operatorname{Id}$ is a $d$-diminishing
	$$
	\forall x,y \in \X_\F \colon d(\rho(x),\rho(y)) \leq d(x,y),
	$$ 
	retraction to $A_\F.\operatorname{Id}$.
\end{theorem}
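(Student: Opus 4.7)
The plan is to reduce the inequality $d(\rho(x),\rho(y)) \leq d(x,y)$ to a multiplicative inequality in $A_\F$ and then invoke Kostant convexity. Two equivariance properties of $\rho$ will do the reduction: the $A_\F$-equivariance $\rho(a.z) = a.\rho(z)$ supplied by Lemma \ref{lem:rhoA}, and the $U_\F$-invariance $\rho(u.z) = \rho(z)$ for $u \in U_\F$, which follows directly from the Iwasawa decomposition $G_\F = U_\F A_\F K_\F$ and the fact that $U_\F$ is a subgroup. Combined with the $G_\F$-invariance of $d$ inherited from Lemma \ref{lem:Cartan_projection}, these allow a reduction to the case $x = \operatorname{Id}$: writing $\rho(x) = b.\operatorname{Id}$, I first replace $(x,y)$ by $(b^{-1}.x, b^{-1}.y)$ to achieve $\rho(x) = \operatorname{Id}$, whence $x = u_0.\operatorname{Id}$ for some $u_0 \in U_\F$, and then replace by $(u_0^{-1}.x, u_0^{-1}.y)$ to reach $x = \operatorname{Id}$ without altering either $d(x,y)$ or $d(\rho(x),\rho(y))$.

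With $x = \operatorname{Id}$ and $y = g.\operatorname{Id}$ for some $g \in G_\F$, I extract the $A_\F$-component of $g$ in two different ways. The Cartan decomposition $g = k a k'$ with $a \in A_\F^+$ gives $\delta_\F(\operatorname{Id}, y) = a$, hence $d(\operatorname{Id}, y) = (-v)(N_\F(a))$. The Iwasawa decomposition $g = u_1 a_I k_1$ gives $\rho(y) = a_I.\operatorname{Id}$, and since $\delta_\F(\operatorname{Id}, a_I.\operatorname{Id})$ is the unique $A_\F^+$-representative of the $W_s$-orbit of $a_I$, the $W_s$-invariance of $N_\F$ delivers $d(\operatorname{Id}, \rho(y)) = (-v)(N_\F(a_I))$. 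Because $-v$ is order-preserving on $\F_{>0}$, the desired inequality is equivalent to the purely multiplicative statement
$$
N_\F(a_I) \leq N_\F(a) \quad \text{in } \F_{>0}.
$$

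This last inequality is precisely what Kostant's convexity theorem provides when applied to the $W_s$-invariant norm $N_\F$. In the Archimedean case the reason is transparent: $\log N_\R = \sum_{\alpha \in \Sigma} |\log \chi_\alpha|$ is $W_s$-invariant and convex on $\fraka$, so the classical Kostant statement that $\log a_I$ lies in the convex hull of the orbit $W_s \cdot \log a$ immediately yields $\log N_\R(a_I) \leq \log N_\R(a)$. The main obstacle is to make the analogous deduction from the semialgebraic Kostant convexity theorem \cite[Theorem \ref{I-thm:kostant_F}]{AppAGRCF}; I expect this to go through either by applying the convexity statement character by character to each factor $\max\{\chi_\alpha, \chi_\alpha^{-1}\}$ of $N_\F$, or by noting that the inequality $N_\F(a_I) \leq N_\F(a)$, for $a_I$ and $a$ arising as Iwasawa- and Cartan-$A$-parts of a common element of $G_\F$, is a first-order property of ordered fields and hence transfers from $\R$ to $\F$ via the transfer principle.
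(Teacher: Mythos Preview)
Your reduction to the case $x=\operatorname{Id}$ via $A_\F$-equivariance and $U_\F$-invariance of $\rho$ is correct and is exactly what the paper does, only packaged differently: the paper picks $g=uak$ with $x=g.\operatorname{Id}$ and $y=gb.\operatorname{Id}$ for some $b\in A_\F$, then unwinds $d(\rho(x),\rho(y))=d(\operatorname{Id},\rho(kb.\operatorname{Id}))$ using the same two equivariances, arriving at the identical comparison of the Iwasawa $A$-part $a_I=a_\F(kb)$ against the Cartan $A$-part $b\in A_\F^+$.

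The only substantive divergence is in how the key inequality $N_\F(a_I)\le N_\F(a)$ is established. The paper works inside $\F$: it moves $a_I$ into $A_\F^+$ by some $w\in W_s$, invokes the semialgebraic Kostant theorem \cite[Theorem~\ref{I-thm:kostant_F}]{AppAGRCF} to obtain $\chi_{\gamma_i}(w(a_I))\le\chi_{\gamma_i}(b)$ for the fundamental characters $\gamma_i$, and then uses \cite[Lemma~\ref{I-lem:kostant_gammaalpha}]{AppAGRCF} to write every positive root as a non-negative rational combination of the $\gamma_i$, multiplying the resulting inequalities out factor by factor in $N_\F$. Your option (b), transferring the single first-order sentence ``$N(a_I)\le N(a)$ for all $g$'' from $\R$ (where it follows from classical Kostant plus the $W_s$-invariance and convexity of $\log N_\R$), is a legitimate and cleaner shortcut that bypasses this bookkeeping. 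Your option (a), ``character by character on each $\max\{\chi_\alpha,\chi_\alpha^{-1}\}$'', is too vague as stated: the semialgebraic Kostant theorem yields inequalities for the $\gamma_i$, not directly for the roots $\alpha$, so one still has to pass through the non-negative combination step the paper carries out.
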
 
\begin{proof}
	It is clear that $\rho$ is a retraction, meaning that $\forall a \in A_\F$, $\rho(a.\operatorname{Id})=a.\operatorname{Id}$. Let $a_\F(g) = a_\F(uak)=a$ denote the $A_\F$-component of $g\in G_\F$ as in Section \ref{I-sec:kostant} of \cite{AppAGRCF} on Kostant convexity. To show that $\rho$ is $d$-diminishing, we first claim that for all
	$
	b\in \ApF
	$
	and for all $k \in K_\F$ 
	$$
	d(\operatorname{Id}, \rho(kb.\operatorname{Id})) \leq d(\operatorname{Id}, b.\operatorname{Id}).
	$$
	For $b\in \ApF$ the set
	$$
	S_\F^b= \left\{ a\in A_\F \colon a = a_{\F}(kb) \text{ for some } k \in K_\F \right\}
	$$
	is semialgebraic and since over $\R$, $S_\R^b$ is closed under the action of the spherical Weyl group (this is a consequence of the real Kostant convexity theorem \cite[Theorem 4.1]{Kos}). The statement $W_s(S_\R^b) \subseteq S_\R^b$ can be formulated as a first-order formula, so $S_\F^b$ is also closed under the action of $W_s$. Note that $\rho(kb.\operatorname{Id}) = a_\F(kb).\operatorname{Id}$. While $a_\F(kb)$ may not lie in $\ApF$, there is a $w\in W$ such that $w(a_\F(kb)) \in \ApF$. We apply \cite[Theorem \ref{I-thm:kostant_F}]{AppAGRCF} to $w(a_\F(kb)) \in \ApF \cap S_\F^b$ to get 
	$$
	\chi_{\gamma_i} (w(a_\F(kb))) \leq \chi_{\gamma_i}(b)
	$$
	for all $\gamma_i$ described in \cite[Section \ref{I-sec:kostant}]{AppAGRCF}. By \cite[Lemma \ref{I-lem:kostant_gammaalpha}]{AppAGRCF} for every $\alpha \in \Sigma_{>0}$ there are non-negative rational numbers $n_{\alpha i} \in \Q_{\geq 0}$ such that
	$$
	\alpha = \sum_{i=1}^r n_{\alpha i}\gamma_i.
	$$
	We can now prove
	\begin{align*}
		(N_\F  \circ \delta_\F)(\operatorname{Id}, \rho(kb.\operatorname{Id})) 
		&=  \prod_{\alpha \in \Sigma} \max \left\{ \chi_\alpha (a_\F(kb)), \chi_\alpha (a_\F(kb))^{-1}\right\} \\
		&=  \prod_{\alpha \in \Sigma_{>0}} \max \left\{ \chi_\alpha (a_\F(kb))^{\pm 2} \right\}  
		=  \prod_{\alpha \in \Sigma_{>0}}  \chi_\alpha (w(a_\F(kb)))^2  \\
		&=  \prod_{\alpha \in \Sigma_{>0}} \prod_{i=1}^r  \chi_{ \gamma_i } (w(a_\F(kb)))^{2n_{\alpha i}} 
		\leq \prod_{\alpha \in \Sigma_{>0}} \prod_{i=1}^r \chi_{\gamma_i} (b)^{2n_{\alpha i}} \\
		&= \prod_{\alpha \in \Sigma}  \chi_{\alpha}(b) = (N_\F \circ \delta_\F)(\operatorname{Id},b.\operatorname{Id}),
	\end{align*}
	where we used that $n_{\alpha i}\geq 0$, so after applying $(-v)$ we proved the claim.
	
	Now let $x,y \in \X_\F$ arbitrary. By Proposition \ref{prop:trans}(c) and \cite[Theorem \ref{I-thm:KAU}]{AppAGRCF}, we can find $g=uak\in G_\F = U_\F A_\F K_\F$ such that $x=g.\operatorname{Id}$ and $y=g.b.\operatorname{Id}$ for some $b \in A_\F$. Now we use Lemma \ref{lem:rhoA} and the above to conclude 
	\begin{align*}
		d(\rho(x),\rho(y)) &= d(\rho(uak.\operatorname{Id}), \rho(uakb.\operatorname{Id})) 
		= d(a.\operatorname{Id}, \rho(akb.\operatorname{Id})) \\
		&= d(\operatorname{Id}, a^{-1}.\rho(akb.\operatorname{Id})) 
		= d(\operatorname{Id}, \rho(kb.\operatorname{Id})) \\
		&\leq d(\operatorname{Id}, b.\operatorname{Id}) = d(g.\operatorname{Id}, g.b.\operatorname{Id}) = d(x,y). \qedhere
	\end{align*}
\end{proof}

\begin{theorem}\label{thm:pseudodistance}
	The function $d \colon \X_\F \times \X_\F \to \Lambda$ is a pseudo-distance.
\end{theorem}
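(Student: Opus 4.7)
The plan is to establish the three pseudo-distance axioms for $d = (-v) \circ N_\F \circ \delta_\F$ in sequence, with virtually all the effort going into the triangle inequality. The easy axioms dispatch quickly. Taking $g=h$ in the definition of $\delta_\F$ gives $\delta_\F(x,x) = \operatorname{Id}$, whence $N_\F(\delta_\F(x,x)) = 1$ and $d(x,x) = (-v)(1) = 0$. For non-negativity, each factor $\max\{\chi_\alpha(a), \chi_\alpha(a)^{-1}\}$ in $N_\F$ is at least $1$, so $N_\F(\delta_\F(x,y)) \geq 1$; order-compatibility of $v$ makes $(-v)$ order-preserving on $\F_{>0}$, so $d(x,y) \geq (-v)(1) = 0$. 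Symmetry follows from Lemma \ref{lem:Cartan_projection}: $\delta_\F(y,x)$ lies in the $W_s$-orbit of $\delta_\F(x,y)^{-1}$, and the defining formula of $N_\F$ is manifestly both $W_s$-invariant and invariant under $a\mapsto a^{-1}$.

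For the triangle inequality $d(x,y) \leq d(x,z) + d(z,y)$, I reduce to the analogous statement on the apartment $A_\F.\operatorname{Id}$ via the Iwasawa retraction $\rho$. Since $\delta_\F$ (and therefore $d$) is $G_\F$-invariant, I first translate so that $x = \operatorname{Id}$. The Cartan decomposition writes $y = k\cdot b.\operatorname{Id}$ with $k \in K_\F$ and $b \in \ApF$; since $K_\F$ fixes $\operatorname{Id}$, acting by $k^{-1}$ lets me further assume $y = b.\operatorname{Id}$, so that $d(x,y) = (-v)(N_\F(b))$. The point $z$ remains arbitrary; using Iwasawa, write $z = uc.\operatorname{Id}$ with $u \in U_\F$ and $c \in A_\F$, so $\rho(z) = c.\operatorname{Id}$, while $\rho(x) = \operatorname{Id}$ and $\rho(y) = y$. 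Applying the distance-diminishing property of $\rho$ from Theorem \ref{thm:UAKret} to the pairs $(x,z)$ and $(z,y)$ yields
\begin{equation*}
d(\operatorname{Id}, c.\operatorname{Id}) + d(c.\operatorname{Id}, b.\operatorname{Id}) \;\leq\; d(x,z) + d(z,y).
\end{equation*}

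It then suffices to prove the apartment-level triangle inequality $d(\operatorname{Id}, b.\operatorname{Id}) \leq d(\operatorname{Id}, c.\operatorname{Id}) + d(c.\operatorname{Id}, b.\operatorname{Id})$, which is a direct computation. For any $a \in A_\F$, conjugating by a Weyl element in $N_{K_\F}(A_\F)$ and using $W_s$-invariance of $N_\F$ gives $d(\operatorname{Id}, a.\operatorname{Id}) = (-v)(N_\F(a))$, and $G_\F$-invariance gives $d(c.\operatorname{Id}, b.\operatorname{Id}) = (-v)(N_\F(c^{-1}b))$. Submultiplicativity of $N_\F$ yields $N_\F(b) = N_\F(c \cdot c^{-1}b) \leq N_\F(c)\cdot N_\F(c^{-1}b)$, and applying the order-preserving map $(-v)$ to these positive quantities converts this into the desired additive inequality, which chains with the previous estimate to close the argument. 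The main obstacle I anticipate is not any single computation but getting the direction of the retraction inequality right: naively applying $\rho$ to the pair $(x,y)$ bounds $d(\rho(x),\rho(y))$ from above by $d(x,y)$, which is the wrong direction. The crux is to normalize $x$ and $y$ so that they already lie in $A_\F.\operatorname{Id}$ and only then use distance-diminishing on the ``rogue'' middle point $z$. All the deeper Kostant-convexity input has been packaged inside Theorem \ref{thm:UAKret}, so once this reduction is set up the remainder is just bookkeeping about the valuation and the multiplicative norm.
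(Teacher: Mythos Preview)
Your proof is correct and follows essentially the same approach as the paper: verify the easy axioms via $W_s$- and inversion-invariance of $N_\F$, establish the triangle inequality on $A_\F.\operatorname{Id}$ from submultiplicativity of $N_\F$, then reduce the general case by moving $x,y$ into the flat and applying the distance-diminishing Iwasawa retraction from Theorem~\ref{thm:UAKret} to the middle point. The only cosmetic difference is that the paper invokes Proposition~\ref{prop:trans}(c) to place $g.x,g.y \in A_\F.\operatorname{Id}$ in one step, whereas you do this by hand (translate $x$ to $\operatorname{Id}$, then use Cartan on $y$).
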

\begin{proof}
	We note that by definition, $N_\F(a) = N_\F(a^{-1})$ for all $a\in A_\F$. By Weyl group invariance and the last part of Lemma \ref{lem:Cartan_projection} we then obtain $N_\F(\delta_\F(x,y)) = N_\F(\delta_\F(y,x))$ for all $x,y\in \X_\F$, whence $d$ is symmetric.
	Since $N_\F(A_\F) \subseteq \F_{\geq 1}$, $(-v)(1)=0$ and $-v$ is order-preserving, $d$ is positive. It is also clear that $d(x,x) = 0$ for all $x\in \X_\F$. It remains to show that the triangle inequality holds. We start by analyzing the distance on the non-standard maximal flats $A_\F.\operatorname{Id}$. Let $a,b,c \in A_\F$, then we can use property (2) of the semialgebraic norm $N_\F \colon \A_\F \to \F_{>0}$ to deduce
	\begin{align*}
		d(a.\operatorname{Id},b.\operatorname{Id}) & = d(\operatorname{Id}, a^{-1}b.\operatorname{Id}) = (-v)(N_\F(a^{-1}b)) \\
		&= (-v)(N_\F(a^{-1}c c^{-1}b)) 
		\leq -(v)(N_\F(a^{-1}c) N_\F(c^{-1}b)) \\
		&= -v(N_\F(a^{-1}c)) + (-v)(N_\F(c^{-1}b)) \\
		&= d(a.\operatorname{Id},c.\operatorname{Id}) + d(c.\operatorname{Id},b.\operatorname{Id}),
	\end{align*}
	which settles the triangle inequality for points in $ A_\F.\operatorname{Id}$. For the general case we use the Iwasawa retraction from Theorem \ref{thm:UAKret}, as suggested in \cite[Lemma 1.2]{KrTe}. Let $x,y,z \in \X_\F$. By Proposition \ref{prop:trans}(c) there is a $g\in G_\F$ with $g.x ,g.y \in A_\F.\operatorname{Id}$. Then
	\begin{align*}
		d(x,y) & = d(g.x,g.y) = d(\rho(g.x),\rho(g.y)) \\
		& \leq d(\rho(g.x),\rho(g.z)) + d (\rho(g.z),\rho(g.y)) \\
		&\leq d(g.x,g.z) + d(g.z,g.y) = d(x,z) + d(z,y) 
	\end{align*}
	concludes the proof.
\end{proof}

\subsection{The building} \label{sec:building}

By Theorem \ref{thm:pseudodistance}, the non-standard symmetric space $\X_\F$ admits a $\Lambda$-pseudometric. We consider the quotient
$$
\B = \X_\F/\!\sim
$$
where $x\sim y \in \B$ when $d(x,y) = 0 \in \Lambda$. We denote the induced $\Lambda$-metric on $\B$ by the same letter $d$. We note that $G_\F$ acts by isometries on $\B$. In Section \ref{sec:Bisbuilding} we will show that the $\Lambda$-metric space $\B$ admits the structure of an affine $\Lambda$-building in certain cases, see Theorem \ref{thm:B_is_building}.
Before we start checking the axioms of affine $\Lambda$-buildings, we investigate the apartment structure in Section \ref{sec:Xapartment} and stabilizers in Section \ref{sec:stabilizer}.

\subsection{The model apartment}\label{sec:Xapartment}

Over the reals, the orbit $A_\R.\operatorname{Id} \subseteq \X_\R$ is a maximal flat in the symmetric space $\X$. We take a closer look at the group $A_\F$ and its orbit $A_\F.\operatorname{Id} \subseteq \X_\F$ to define a space $A_\Lambda$ which will play the role of the model apartment $\A$.
Let $O$ be an order convex valuation ring of the non-Archimedean real closed field $\F$ and $(-v) \colon \F_{>0} \to \Lambda$ the associated order preserving valuation. 
We define the group
$$
A_{\Lambda} \coloneqq A_{\F}/\{ a \in A_{\F} \colon N_{\F}(a) \in O \}.
$$
The goal of this section is to prove Theorem \ref{thm:apt} which states that $A_{\Lambda}$ can be given the structure of a model apartment $\A = \A(\Sigma^\vee,\Lambda,A_{\Lambda})$, as defined in Section \ref{sec:modelapartment}. We also describe walls, half-apartments, sectors and the distance function in terms of $A_\F$. 

From \cite[Section \ref{I-sec:compatibility}]{AppAGRCF} we recall that the root system $\Sigma$ of the real Lie group $G_\R$ can be identified with the root system $\KPhi$ of characters, where associated to every root $\alpha \in \Sigma$ there is a character $\chi_\alpha \colon A_\F \to \F_{>0}$. Similarly, the coroot system $\Sigma^\vee$ can be identified with the coroots $X_{\star}(S)$, where associated to every root $\alpha$ we obtain a one-parameter subgroup $t_\alpha \colon \F_{>0} \to A_\F$. In fact there are algebraic characters $\chi_\eta$ and one-parameter subgroups $t_\eta$ for every $\eta \in \operatorname{Span}_{\mathbb{Z}}(\Sigma)$ and semialgebraic characters and one-parameter subgroups for $\eta \in \operatorname{Span}_{\mathbb{Q}}(\Sigma)$. 

Formally $\Sigma \subseteq \fraka^\star$ where $\fraka$ is a maximal abelian subalgebra of $\frakp$ in a Cartan decomposition $\frakg = \frakk \oplus \frakp$. Thus $\Sigma^\vee \subseteq \fraka$, using the canonical identification $(\fraka^\star)^\star \cong \fraka$. Recall that every crystallographic root system $\Phi$ comes with a non-degenerate bilinear form
\begin{align*}
	b \colon \operatorname{Span}_{\Z}(\Phi) \times \operatorname{Span}_\Z(\Phi^\vee) & \to  \Z \\
	(\alpha, \beta^\vee) &\mapsto 
	2\frac{\langle \alpha ,\beta\rangle}{\langle \beta, \beta\rangle} = \beta^\vee (\alpha).
\end{align*}
The bilinear forms $b$ of $\Sigma$ and $b^\vee$ of $\Sigma^\vee$ are not symmetric, but they are the transposes of each other in the following sense.
\begin{lemma}\label{lem:coroot_transpose}
	For all $\alpha, \beta \in \Sigma $,
	$$
	\alpha(\beta^\vee) = \beta^\vee (\alpha) = b(\alpha, \beta^\vee) = b^\vee (\beta^\vee, \alpha),
	$$
	where we identified $(\alpha^\vee)^\vee = \alpha \in \Sigma$.
\end{lemma}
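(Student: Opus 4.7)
The plan is to chain four tautological unwrappings, each following directly from a definition or from part (iv) of the preceding lemma, $(\alpha^\vee)^\vee = \alpha$. All four quantities are meant to record the Cartan integer $2\langle \alpha, \beta\rangle/\langle \beta, \beta\rangle$, so it suffices either to reduce each expression to this common value, or (more economically) to reduce them to one another along a short cycle. The only nontrivial input beyond definitions is the identification $(\alpha^\vee)^\vee = \alpha$ in $V$, which lets one navigate between the root system $(\Sigma, V)$ and the coroot system $(\Sigma^\vee, V^\star)$ symmetrically.

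First I would read off $b(\alpha, \beta^\vee) = \beta^\vee(\alpha)$ directly from the definition of $b$ recalled at the start of the section; unwrapping $\beta^\vee = 2\beta^\star/\langle\beta,\beta\rangle$ and applying this linear functional to $\alpha \in V$ reproduces $2\langle\alpha,\beta\rangle/\langle\beta,\beta\rangle$. Next I would establish $\alpha(\beta^\vee) = \beta^\vee(\alpha)$ as an instance of the canonical double-dual pairing: under the identification $V \cong (V^\star)^\star$, an element $\alpha \in V$ acts on $\xi \in V^\star$ by $\xi \mapsto \xi(\alpha)$, so evaluating at $\xi = \beta^\vee$ gives the claim. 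This identification is the one implicit in the statement $(\alpha^\vee)^\vee = \alpha$ from (iv) of the preceding lemma.

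Finally, for $b^\vee(\beta^\vee,\alpha)$ I would apply the very same definition of the Cartan pairing, now to the root system $(\Sigma^\vee, V^\star)$ in place of $(\Sigma, V)$: here $\beta^\vee$ plays the role of a root and $\alpha = (\alpha^\vee)^\vee$ plays the role of the coroot dual to $\alpha^\vee$. The defining formula then reads
\[
b^\vee(\beta^\vee, \alpha) \;=\; b^\vee(\beta^\vee, (\alpha^\vee)^\vee) \;=\; (\alpha^\vee)^\vee(\beta^\vee) \;=\; \alpha(\beta^\vee),
\]
closing the loop with the previous step.

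I do not expect a real obstacle: the whole statement is a bookkeeping check designed to legitimise the notation used in subsequent sections. The one point that merits care is ensuring that the double-dual identification $V \cong (V^\star)^\star$ used implicitly when writing $\alpha(\beta^\vee)$ is the \emph{same} identification under which $(\alpha^\vee)^\vee = \alpha$, but this is exactly the content of parts (iii) and (iv) of the preceding lemma, where one computes $(\alpha^\vee)^\star = (\alpha^\star)^\vee$ and deduces $(\alpha^\vee)^\vee = \alpha$ via the isomorphism $V \cong V^\star$ induced by the inner product.
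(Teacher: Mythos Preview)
Your proposal is correct and follows essentially the same approach as the paper: both are definition-chasing verifications that all four expressions equal the Cartan integer $2\langle\alpha,\beta\rangle/\langle\beta,\beta\rangle$. The paper establishes only the nontrivial equality $b^\vee(\beta^\vee,(\alpha^\vee)^\vee)=b(\alpha,\beta^\vee)$ by expanding the inner-product formula $2\langle\beta^\vee,\alpha^\vee\rangle/\langle\alpha^\vee,\alpha^\vee\rangle$ and simplifying directly, whereas you reach the same conclusion by applying the evaluation form $b^\vee(\gamma,\delta^\vee)=\delta^\vee(\gamma)$ and invoking $(\alpha^\vee)^\vee=\alpha$; these are two presentations of the same computation.
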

\begin{proof}
	This is a direct computation using $\langle \beta^\star, \alpha^\star \rangle = \langle \beta, \alpha \rangle$,
	\begin{align*}
		b^\vee \left( \beta^\vee, (\alpha^\vee)^\vee \right) 
		&= 2 \frac{ \langle \beta^\vee, \alpha^\vee \rangle }{\langle \alpha^\vee, \alpha^\vee \rangle} = 2 \frac{\left\langle 2 \frac{\beta^\star}{\langle \beta, \beta \rangle}, 2 \frac{\alpha^\star}{\langle \alpha, \alpha \rangle} \right\rangle }{ \left\langle 2 \frac{\alpha^\star}{\langle \alpha, \alpha \rangle}, 2 \frac{\alpha^\star}{\langle \alpha, \alpha \rangle} \right\rangle } 
		= 2 \frac{\langle \beta, \alpha \rangle}{\langle \beta, \beta \rangle} = b(\alpha, \beta^\vee). \qedhere
	\end{align*}
\end{proof}

\begin{proposition}
	For all $\alpha, \beta \in \operatorname{Span}_{\Z}(\Sigma)$ and $x \in \F_{>0}$, we have 
	$$
	\chi_\alpha(t_\beta(x)) = x^{b(\alpha, \beta^\vee)}.
	$$
\end{proposition}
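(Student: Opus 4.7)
The plan is to reduce the identity to the Lie-algebra level over $\R$ and then transfer back. For $\alpha, \beta \in \operatorname{Span}_\Z(\Sigma)$, both $\chi_\alpha \colon A_\F \to \F_{>0}$ and $t_\beta \colon \F_{>0} \to A_\F$ are algebraic group homomorphisms (this is exactly the reason the paper distinguishes integer span from rational span just before the statement). Hence the composition $\chi_\alpha \circ t_\beta \colon \F_{>0} \to \F_{>0}$ is an algebraic group homomorphism, and every such homomorphism is necessarily of the form $x \mapsto x^m$ for a unique $m \in \Z$. The whole content of the proposition is therefore the identification $m = b(\alpha, \beta^\vee)$.

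Since the two semialgebraic functions $x \mapsto \chi_\alpha(t_\beta(x))$ and $x \mapsto x^{b(\alpha,\beta^\vee)}$ are defined by polynomials over $\Q$ (or at least $\K$), by the transfer principle (Theorem \ref{thm:logic}) it suffices to verify the equation for $\F = \R$. Over $\R$, I would invoke the compatibility recalled from Section \ref{I-sec:compatibility} of \cite{AppAGRCF}: identifying $\Sigma \subseteq \fraka^\star$ as the roots of $G_\R$ and $\Sigma^\vee \subseteq \fraka$ as the coroots, one has
\begin{equation*}
\chi_\alpha(\exp(H)) = e^{\alpha(H)} \quad \text{for } H \in \fraka, \qquad t_\beta(e^s) = \exp(s\beta^\vee) \quad \text{for } s \in \R.
\end{equation*}
Writing $x = e^s > 0$ and composing then gives
\begin{equation*}
\chi_\alpha(t_\beta(x)) \;=\; \chi_\alpha(\exp(s\beta^\vee)) \;=\; e^{s\alpha(\beta^\vee)} \;=\; x^{\alpha(\beta^\vee)},
\end{equation*}
and Lemma \ref{lem:coroot_transpose} identifies $\alpha(\beta^\vee) = b(\alpha, \beta^\vee)$, finishing the real case.

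The main (and only delicate) point is the bookkeeping in the first display: one has to check that the paper's conventions for the characters $\chi_\alpha$ and one-parameter subgroups $t_\beta$ match the standard Lie-theoretic normalizations, so that their differentials at the identity really are $\alpha \in \fraka^\star$ and $\beta^\vee \in \fraka$ with the identifications used in Section \ref{sec:root_system}. Once that is in place, the result is immediate: both sides are algebraic, agree on the dense semialgebraic subset $\{e^s : s \in \R\} \subseteq \R_{>0}$, and hence agree as algebraic homomorphisms $\F_{>0} \to \F_{>0}$ by transfer.
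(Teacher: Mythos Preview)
Your proof is correct and follows essentially the same route as the paper: reduce to $\R$ via the transfer principle, use the compatibility $\chi_\alpha(\exp H) = e^{\alpha(H)}$ and $t_\beta(e^s) = \exp(s\beta^\vee)$ (which the paper cites as \cite[Lemma \ref{I-lem:oneparam_compatibility}]{AppAGRCF}), and compose. Your preliminary observation that $\chi_\alpha \circ t_\beta$ must be $x \mapsto x^m$ for some integer $m$ is a nice framing device but not needed for the argument, and the paper omits it.
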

\begin{proof}
	Over the reals, we have by \cite[Lemma \ref{I-lem:oneparam_compatibility}]{AppAGRCF} that for all $x = e^s$,
	\begin{align*}
		\chi_\alpha\left(t_\beta\left(e^s\right)\right) &= \chi_\alpha \left( \exp \left( s x_\beta \right)\right) 
		= e^{\alpha(s x_\beta)} = \left(e^s\right)^{\alpha(x_\beta)} 
		= x^{b(\alpha, \beta^\vee)}
	\end{align*}
    where $x_\beta := \beta^\vee := 2\beta^\star /\langle \beta, \beta \rangle  \in \Sigma^\vee$. The statement is a first-order formula, hence by the transfer principle the statement also holds for $\F$.
\end{proof}

The characters and one-parameter subgroups descend to $A_\Lambda$. 

\begin{proposition}\label{prop:char_cochar_Lambda}
	For all $\alpha, \beta \in \operatorname{Span}_{
		\mathbb{Z}}(\Sigma)$, the characters $\chi_\alpha$ and the one-parameter subgroups $t_\beta$ descend to group homomorphisms $\overline{\chi}_\alpha \colon A_\Lambda \to \Lambda$ and $\overline{t}_\beta \colon \Lambda \to A_\Lambda$ such that the diagram
	$$
	\begin{tikzcd}
		\mathbb{F}_{>0} \arrow[d, "-v"] \arrow[r, "t_{\beta}"] & A_{\mathbb{F}} \arrow[d,two heads] \arrow[r, "\chi_\alpha"] & \mathbb{F}_{>0} \arrow[d, "-v"]  \\
		\Lambda \arrow[r, "\overline{t}_\beta"]                             & A_\Lambda \arrow[r, "\overline{\chi}_\alpha"]                     & \Lambda                                 
	\end{tikzcd}
	$$
	commutes and such that 
	$
	\chi_\alpha \circ t_\beta (\lambda ) =b( \alpha, \beta^\vee ) \cdot \lambda
	$ for every $\lambda \in \Lambda$.
\end{proposition}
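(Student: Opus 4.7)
The plan is to define $\overline{\chi}_\alpha$ and $\overline{t}_\beta$ by the commutativity of the diagram and then verify the needed well-definedness conditions; the main technical point is that $N_\F(a) \in O$ forces $(-v)(\chi_\alpha(a)) = 0$ for every $\alpha \in \operatorname{Span}_{\mathbb{Z}}(\Sigma)$.

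First I will treat $\overline{\chi}_\alpha$. Writing $\alpha = \sum n_i \alpha_i$ as a $\mathbb{Z}$-combination of a basis and using multiplicativity of characters, it suffices to handle $\alpha \in \Sigma$, so that $\chi_\alpha$ appears as one of the factors in the definition of $N_\F$. The key observation is that every factor $\max\{\chi_\alpha(a), \chi_\alpha(a)^{-1}\}$ in $N_\F(a) = \prod_{\alpha \in \Sigma}\max\{\chi_\alpha(a), \chi_\alpha(a)^{-1}\}$ is $\geq 1$, so its image under the order-preserving valuation $(-v)$ is $\geq 0$. Hence if $N_\F(a) \in O$, i.e.\ $(-v)(N_\F(a)) \leq 0$, then each summand $(-v)(\max\{\chi_\alpha(a), \chi_\alpha(a)^{-1}\})$ must vanish, giving $(-v)(\chi_\alpha(a)) = 0$. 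This lets me define $\overline{\chi}_\alpha([a]) \coloneqq (-v)(\chi_\alpha(a))$ unambiguously, and it is a group homomorphism because both $\chi_\alpha$ and $-v$ are.

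Next I will construct $\overline{t}_\beta$. Since $-v \colon \F_{>0} \to \Lambda$ is surjective (the sign map already shows surjectivity on $\F^\times$, and $v(-1)=0$), I set $\overline{t}_\beta(\lambda) \coloneqq [t_\beta(x)]$ for any $x \in \F_{>0}$ with $(-v)(x) = \lambda$. Well-definedness amounts to: if $(-v)(x) = (-v)(y)$, then $N_\F(t_\beta(x)t_\beta(y)^{-1}) = N_\F(t_\beta(x/y)) \in O$. Using the previous proposition, each factor $\max\{\chi_\alpha(t_\beta(x/y)),\chi_\alpha(t_\beta(x/y))^{-1}\} = \max\{(x/y)^{b(\alpha,\beta^\vee)},(x/y)^{-b(\alpha,\beta^\vee)}\}$ is a power of $x/y$, which lies in $O^\times$ since $(-v)(x/y) = 0$. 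Hence the whole product lies in $O$, as required. Again $\overline{t}_\beta$ is a homomorphism because $t_\beta$ and $-v$ are.

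Commutativity of the two squares is immediate from these definitions. For the final equation, given $\lambda \in \Lambda$ I pick $x \in \F_{>0}$ with $(-v)(x) = \lambda$ and chase the diagram: by the previous proposition,
\begin{equation*}
\overline{\chi}_\alpha(\overline{t}_\beta(\lambda)) = \overline{\chi}_\alpha([t_\beta(x)]) = (-v)(\chi_\alpha(t_\beta(x))) = (-v)\left(x^{b(\alpha,\beta^\vee)}\right) = b(\alpha,\beta^\vee)\cdot \lambda,
\end{equation*}
where the last equality uses that $-v$ is a group homomorphism and $b(\alpha,\beta^\vee) \in \mathbb{Z}$ acts on $\Lambda$ by iterated addition. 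The main obstacle in the whole argument is the descent of $\overline{\chi}_\alpha$, which relies crucially on the fact that each of the positive factors in $N_\F$ contributes a non-negative term to $(-v)(N_\F(a))$, so that a sum being zero forces every summand to be zero.
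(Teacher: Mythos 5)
Your proof is correct and follows essentially the same route as the paper: both arguments hinge on the observation that each factor of $N_\F(a)$ is $\geq 1$, so that $N_\F(a)\in O$ forces $(-v)(\chi_\gamma(a))=0$ for every $\gamma\in\Sigma$, and on the compatibility $\chi_\alpha\circ t_\beta(x)=x^{b(\alpha,\beta^\vee)}$ for $\overline{t}_\beta$. If anything your version is slightly tighter, since by passing to the value group and observing that a sum of non-negative summands which is $\leq 0$ must vanish termwise, you land directly on $(-v)(\chi_\alpha(a))=0$, where the paper stops at $\chi_\alpha(a)\in O$ and leaves the upgrade to $O^\times$ via the symmetry $\Sigma=-\Sigma$ implicit.
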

\begin{proof}
	We denote by $\pi \colon A_\F \to A_\Lambda = A_\F / \{ a\in A_\F \colon N_\F(a) \in O \}$ the projection. We first show that $\overline{t}_\beta \colon \Lambda \to A_\Lambda, \, (-v)(x) \mapsto \pi(t_\beta (x))$ is well defined: we have to show that if $(-v)(x) = 0$, then $N(t_\beta(x)) \in O$. Indeed, let $x\in \F_{>0}$ such that $(-v)(x)=0$, so $x \in O^\times$. Then in particular $ \chi_\gamma (t_\beta (x)) = x^{b( \gamma, \beta^\vee )} \in O^\times$ for all $\gamma \in \Sigma$ and thus
	\begin{align*}
		N_\F(t_\beta(x)) & = \prod_{\gamma \in \Sigma} \max \left\{ \chi_\gamma( t_\beta(x)) , \chi_\gamma(t_\beta(x))^{-1} \right\} \in O.
	\end{align*}
	
	To show that $\overline{\chi}_\alpha \colon A_{\Lambda} \to \Lambda, \, \pi(a) \mapsto -v(\chi_\alpha(a))$ is well-defined, we have to show that if $N_\F(a) \in O$, then $(-v)(\chi_\alpha(a)) =0$. Indeed, if $a\in A_\F$ with $N_\F(a) \in O$, then $\chi_\gamma(a) \in O$ for all $\gamma \in \Sigma$ since
	\begin{align*}
		N_\F(a) & = \prod_{\gamma \in \Sigma} \max \left\{ \chi_\gamma( a) , \chi_\gamma(a)^{-1} \right\} \in O
	\end{align*}
	is a product of elements that are larger than $1$. Since $\alpha \in \operatorname{Span}_\Z(\Sigma)$, $\chi_\alpha(a) $ is a product of $\chi_\gamma(a) \in O$ and hence in $O$.
	
	The maps $\overline{t}_\beta$ and $\overline{\chi}_\alpha$ were defined so that the diagrams commute. For $\lambda \in \Lambda$, we can find $x \in \F_{>0}$ such that $(-v)(x)=\lambda $. Then
	\begin{align*}
		\overline{\chi}_\alpha \circ \overline{t}_\beta (\lambda) &= \overline{\chi}_\alpha (\pi(t_\beta(x ) ) )  
		= (-v)\left( \chi_\alpha \circ t_\beta (x) \right) \\
		&= -v\left(x^{b( \alpha, \beta^\vee )} \right)
		= b( \alpha , \beta^\vee) \cdot (-v)(x)  . \qedhere
	\end{align*}
\end{proof}

Our goal is to use the characters and one-parameter subgroups to identify $A_\F$ with the model apartment $\operatorname{Span}_{\Q}(\Sigma^\vee)$. As $\F$ is real closed, we can view $\F_{>0}$ as a $\Q$-vector space with neutral element $1$ and scalar multiplication given by potentiation.
\begin{proposition}\label{prop:apt0}
	Let $L \coloneqq \operatorname{Span}_{\mathbb{Q}}(\Sigma)$. The map $\psi \colon A_\F \to \operatorname{Hom}_{\mathbb{Q}}(L,\F_{>0})$ that sends $a\in A_\F$ to $\gamma \mapsto \chi_\gamma(a)$ is a group isomorphism. For all $\delta, \delta' \in \Delta$, there are rational numbers $q_{\delta,\delta'} \in \Q$ such that 
	$$
	a = \prod_{\delta, \delta' \in \Delta} t_\delta (\chi_{\delta'}(a))^{q_{\delta,\delta'}}
	$$
	for all $a \in A_\F$.
\end{proposition}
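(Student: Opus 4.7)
The plan is to recognize both $A_\F$ and $\operatorname{Hom}_{\Q}(L, \F_{>0})$ as being parametrized by $r = |\Delta|$ free values in $\F_{>0}$ via evaluation on a basis $\Delta$ of $\Sigma$, and then invert the Cartan matrix to obtain the explicit formula.

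First I would verify that $\psi$ is a well-defined group homomorphism. The $\Q$-linearity of $\gamma \mapsto \chi_\gamma(a)$ reduces to the identities $\chi_{\gamma+\gamma'}(a) = \chi_\gamma(a)\chi_{\gamma'}(a)$ and $\chi_{q\gamma}(a) = \chi_\gamma(a)^q$ for $q \in \Q$, which hold by construction of the semialgebraic characters indexed by $L = \operatorname{Span}_{\Q}(\Sigma)$ (rational powers are well defined in $\F_{>0}$ because $\F$ is real closed). Multiplicativity of $\psi$ follows because each $\chi_\gamma$ is multiplicative in $a$.

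Next I would show that $\chi_\Delta \colon A_\F \to (\F_{>0})^r$, $a \mapsto (\chi_\delta(a))_{\delta \in \Delta}$, is a group isomorphism. Over $\R$, $A_\R$ is the identity component of the $\R$-points of the $\K$-split torus $S$, hence an abelian Lie group of dimension $r$ isomorphic to $(\R_{>0})^r$; since $\Delta$ is a basis of $\fraka^\star$, the map $\chi_\Delta$ is a group isomorphism over $\R$. This property is first-order in ordered fields with parameters in $\K$, so it transfers to $\F$. Composing with the evaluation isomorphism $\operatorname{Hom}_{\Q}(L, \F_{>0}) \cong (\F_{>0})^r$ induced by the $\Q$-basis $\Delta$ of $L$ yields $\chi_\Delta = \operatorname{ev}_\Delta \circ \psi$, so $\psi$ itself is an isomorphism.

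For the explicit formula, let $B = (b(\delta', \delta^\vee))_{\delta, \delta' \in \Delta}$ be the Cartan matrix, which is invertible over $\Q$ since $\Sigma$ is crystallographic, and let $(q_{\delta, \delta'})_{\delta, \delta' \in \Delta}$ denote the entries of $B^{-1}$, so that $\sum_{\delta \in \Delta} b(\delta'', \delta^\vee) \, q_{\delta, \delta'}$ equals $1$ if $\delta' = \delta''$ and $0$ otherwise. Given $a \in A_\F$, set $a' \coloneqq \prod_{\delta, \delta' \in \Delta} t_\delta(\chi_{\delta'}(a))^{q_{\delta, \delta'}}$. Using $\chi_\gamma(t_\beta(x)) = x^{b(\gamma, \beta^\vee)}$ from the preceding proposition, a direct computation gives $\chi_{\delta''}(a') = \chi_{\delta''}(a)$ for every $\delta'' \in \Delta$, whence $a' = a$ by the injectivity of $\chi_\Delta$.

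The main obstacle is the transfer argument in the middle step: one has to formulate ``$\chi_\Delta$ is a group isomorphism'' as a first-order sentence with parameters in $\K \subseteq \R \cap \F$, using the semialgebraicity of the characters and of the group law on $A$. Once this is in place, the rest reduces to linear algebra over $\Q$ with the Cartan matrix.
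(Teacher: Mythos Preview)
Your argument is correct and uses the same two ingredients as the paper: the transfer principle and the inverse of the Cartan matrix. The only structural difference is that the paper transfers just injectivity (the first-order statement that $\chi_\alpha(a)=1$ for all $\alpha$ forces $a=\operatorname{Id}$) and then proves surjectivity by exhibiting the explicit preimage $a=\prod_{j,k} t_{\delta_j}(f(\delta_k))^{B_{jk}^{-1}}$, so that surjectivity and the product formula come out of a single computation; you instead transfer the full bijectivity of $\chi_\Delta$ and then verify the formula as a separate step. Both routes are valid; the paper's is slightly more economical since it avoids having to phrase surjectivity as a first-order sentence.
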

\begin{proof}
	It is clear that $\psi$ is a well defined group homomorphism. From dimension reasons over the reals $\R$, it follows that if $\chi_\alpha(a)=1$ for all $\alpha \in \Sigma$, then $a=\operatorname{Id}$. This is a first-order property and hence also true over $\F$ by the transfer principle, so $\psi$ is injective. For surjectivity, we construct an explicit preimage of $f\in \operatorname{Hom}_{\Q}(L,\F_{>0})$ using the Cartan matrix $B_{ij} = b(\delta_i, \delta_j)$ for a basis $\Delta = \{\delta_1, \ldots, \delta_r\}$ of $\Sigma$, see Section \ref{sec:root_system}. Since $B\in \Z^{r \times r}$ is non-degenerate, there exists an inverse $B^{-1} \in \Q^{r \times r}$ with entries $B_{jk}^{-1}$. Note that rational powers of elements in $A_\F$ exist and are uniquely defined, as they are over $\R$ and this is a first-order property. Now consider 
	$$
	a \coloneqq \prod_{j,k = 1}^r t_{\delta_j}(f(\delta_k ))^{B_{jk}^{-1}} \in A_\F.
	$$
    For arbitrary $\gamma = \sum_{i=1}^r y_i \delta_i \in L$ we have
    \begin{align*}
    	\chi_{\gamma}\left(a \right) &= \prod_{i=1}^r  \chi_{\delta_i}\left( \prod_{j,k=1}^r t_{\delta_j}(f(\delta_k))^{B_{jk}^{-1}} \right)^{y_i} \\
    	&= \prod_{i,j,k=1}^r \chi_{\delta_i}\left(t_{\delta_j}(f(\delta_k)) \right)^{ B_{jk}^{-1} y_i }  
    	= \prod_{i,k=1}^r f(\delta_k)^{\sum_{j=1}^r B_{ij}B_{jk}^{-1} y_i} \\
    	&= \prod_{i=1}^r f(\delta_i)^{y_i} =   f\left( \sum_{i=1}^r y_i \delta_i \right) = f(\gamma),
    \end{align*}
    so $\psi$ is an isomorphism. Taking $f= \gamma \mapsto \chi_{\gamma}(a)$ gives the stated formula for $q_{\delta_j,\delta_k} \coloneq B_{jk}^{-1}$.
\end{proof}

We now prove the analogue of Proposition \ref{prop:apt0} in the setting of $\Lambda$.

\begin{proposition}\label{prop:apt1}
	Let $L:= \operatorname{Span}_{\Q}(\Sigma)$. The map $\overline{\psi} \colon A_\Lambda \to \operatorname{Hom}_{\Q}(L,\Lambda)$ that sends $\xi \in A_\Lambda$ to $\gamma \mapsto \overline{\chi}_\gamma(\xi)$ is a group isomorphism.
\end{proposition}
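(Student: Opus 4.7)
The plan is to reduce Proposition \ref{prop:apt1} to Proposition \ref{prop:apt0} via the valuation $(-v)\colon \F_{>0}\to\Lambda$, noting that both $\F_{>0}$ and $\Lambda$ are divisible (hence $\Q$-vector spaces) and that $(-v)$ is a surjective $\Q$-linear map. First I would verify that $\overline{\psi}$ is a well-defined group homomorphism; the homomorphism property is immediate from the multiplicativity of each $\chi_\gamma$ together with the logarithmic property of $(-v)$. For well-definedness, I need that if $a,a'\in A_\F$ satisfy $\pi(a)=\pi(a')$, i.e.\ $N_\F(a^{-1}a')\in O$, then $(-v)(\chi_\gamma(a^{-1}a'))=0$ for every $\gamma\in L$. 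This is proved exactly as in Proposition \ref{prop:char_cochar_Lambda}: $N_\F(b)\in O$ forces each $\chi_\delta(b)\in O^\times$ for $\delta\in\Sigma$, and since $O^\times$ is closed under rational powers (because $(-v)$ is $\Q$-linear on $\F_{>0}$), the same holds for $\chi_\gamma(b)$ when $\gamma\in L=\operatorname{Span}_\Q(\Sigma)$.

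For injectivity, I would suppose $\overline{\psi}(\pi(a))=0$, so that $(-v)(\chi_\gamma(a))=0$ for all $\gamma\in L$, and in particular for $\gamma\in\Sigma$. Then $\chi_\gamma(a)\in O^\times$ for every $\gamma\in\Sigma$, so $\max\{\chi_\gamma(a),\chi_\gamma(a)^{-1}\}\in O^\times$ and therefore
\[
N_\F(a)=\prod_{\gamma\in\Sigma}\max\{\chi_\gamma(a),\chi_\gamma(a)^{-1}\}\in O^\times\subseteq O,
\]
which gives $\pi(a)=0$ in $A_\Lambda$.

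For surjectivity, let $f\in \operatorname{Hom}_\Q(L,\Lambda)$. Fix a basis $\Delta=\{\delta_1,\ldots,\delta_r\}$ of $\Sigma$, which is simultaneously a $\Q$-basis of $L$. Since $(-v)\colon \F_{>0}\to\Lambda$ is surjective, for each $i$ I can pick $x_i\in\F_{>0}$ with $(-v)(x_i)=f(\delta_i)$, and then define $\tilde f\in \operatorname{Hom}_\Q(L,\F_{>0})$ by $\tilde f(\sum q_i\delta_i)=\prod x_i^{q_i}$. By construction $(-v)\circ\tilde f=f$ on a basis, hence on all of $L$ by $\Q$-linearity. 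Proposition \ref{prop:apt0} provides $a\in A_\F$ with $\psi(a)=\tilde f$, i.e.\ $\chi_\gamma(a)=\tilde f(\gamma)$ for every $\gamma\in L$, and then
\[
\overline{\psi}(\pi(a))(\gamma)=(-v)(\chi_\gamma(a))=(-v)(\tilde f(\gamma))=f(\gamma),
\]
so $\overline{\psi}(\pi(a))=f$.

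The only mildly delicate point is the lifting step in the surjectivity argument: one needs $(-v)$ to be surjective on $\F_{>0}$ and $\Lambda$ to be divisible so that a $\Q$-linear $f\colon L\to\Lambda$ really can be lifted to a $\Q$-linear $\tilde f\colon L\to\F_{>0}$. Both facts are available in the valued real closed setting recalled in Section \ref{sec:valued_real_closed_fields}, so no additional obstacle appears.
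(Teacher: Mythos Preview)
Your proof is correct and uses essentially the same ingredients as the paper: both reduce to Proposition~\ref{prop:apt0} together with the surjectivity of $(-v)\colon\F_{>0}\to\Lambda$. The only stylistic difference is that the paper packages your three checks (well-definedness, injectivity, surjectivity) into a single application of the first isomorphism theorem to the surjective composite $f\colon A_\F\xrightarrow{\psi}\operatorname{Hom}_\Q(L,\F_{>0})\xrightarrow{(-v)_*}\operatorname{Hom}_\Q(L,\Lambda)$, observing that $\ker(f)=\{a\in A_\F: N_\F(a)\in O\}$; your injectivity and well-definedness arguments together are exactly this kernel computation, and your surjectivity lift is exactly the surjectivity of $f$.
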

\begin{proof}
	It is clear that $\overline{\psi}$ is a well defined group homomorphism. By the isomorphism theorem applied to $f$ in the commuting diagram
	$$
	\begin{tikzcd}
		A_{\mathbb{F}} \arrow[r, "\psi"] \arrow[d, two heads] \arrow[rd, "f" description, two heads] & {\operatorname{Hom}_{\mathbb{Q}}(L,\mathbb{F}_{>0})} \arrow[d, two heads] \\
		A_{\Lambda} \arrow[r, "\overline{\psi}"]                                                     & {\operatorname{Hom}_{\mathbb{Q}}(L,\Lambda)}                            
	\end{tikzcd}
$$
	it suffices to show that $\operatorname{ker}(f) = \left\{ a \in A_\F \colon N_\F(a) \in O \right\}$. Indeed $a\in A_\F$ satisfies $N_\F(a)\in O$ if and only if $\chi_\gamma(a) = \psi(a)(\gamma)\in O$ for all $\gamma \in L$, if and only if $f(a) = 0 \in \Lambda$.
\end{proof}

\begin{proposition}\label{prop:apt2}
	Let $L:= \operatorname{Span}_{\Q}(\Sigma)$ and $L^\vee := \operatorname{Span}_{\Q}(\Sigma^\vee)$. The $\Q$-linear map $\overline{\varphi} \colon L^\vee \otimes_\Q \Lambda \to \operatorname{Hom}_{\Q}(L,\Lambda)$ that sends the basis element $ \alpha^\vee \otimes \lambda $ to $\gamma \mapsto \gamma(\alpha^\vee) \cdot \lambda$ is a $\Q$-vector space isomorphism.
\end{proposition}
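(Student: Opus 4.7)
The plan is to verify well-definedness via the universal property of the tensor product, then identify both sides with $\Lambda^r$ using a basis of $\Sigma$, and finally observe that in these coordinates $\overline{\varphi}$ is multiplication by the Cartan matrix, which is invertible over $\mathbb{Q}$.

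First I would note that the map $L^\vee \times \Lambda \to \operatorname{Hom}_\Q(L,\Lambda)$ sending $(\alpha^\vee, \lambda)$ to the functional $\gamma \mapsto b(\gamma,\alpha^\vee)\cdot \lambda = \alpha^\vee(\gamma)\cdot \lambda$ is $\mathbb{Q}$-bilinear. This uses that $\Lambda$ is divisible (recalled in Section \ref{sec:valued_real_closed_fields}) and hence a $\mathbb{Q}$-vector space, so $\mathbb{Q}$-scalar multiplication is available and interacts with $b$ as required. By the universal property, this induces a well-defined $\mathbb{Q}$-linear map $\overline{\varphi}$, agreeing on pure tensors with the formula in the statement.

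Next, fix a basis $\Delta = \{\delta_1,\ldots,\delta_r\}$ of $\Sigma$. Then $\Delta^\vee = \{\delta_1^\vee,\ldots,\delta_r^\vee\}$ is a $\mathbb{Q}$-basis of $L^\vee$, so there is a $\mathbb{Q}$-linear isomorphism
\[
L^\vee \otimes_\Q \Lambda \xrightarrow{\;\cong\;} \Lambda^r, \qquad \sum_{j=1}^r \delta_j^\vee \otimes \lambda_j \;\mapsto\; (\lambda_1,\ldots,\lambda_r).
\]
On the other hand, a $\mathbb{Q}$-linear map $L\to \Lambda$ is determined by its values on $\Delta$ and any assignment of such values extends (again using that $\Lambda$ is a $\mathbb{Q}$-vector space), so
\[
\operatorname{Hom}_\Q(L,\Lambda) \xrightarrow{\;\cong\;} \Lambda^r, \qquad f \;\mapsto\; \bigl(f(\delta_1),\ldots,f(\delta_r)\bigr).
\]

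In these coordinates, $\overline{\varphi}$ sends $(\lambda_1,\ldots,\lambda_r)$ to the tuple whose $i$-th entry is $\sum_{j=1}^r \delta_i(\delta_j^\vee)\cdot \lambda_j = \sum_{j=1}^r B_{ij}\lambda_j$, where $B = (B_{ij})$ is the Cartan matrix of $\Sigma$ with respect to $\Delta$. Thus $\overline{\varphi}$ corresponds to multiplication by $B$ on $\Lambda^r$. Since $B \in \mathbb{Z}^{r\times r}$ has nonzero determinant (by non-degeneracy of the bilinear form $b$ recalled in Section \ref{sec:root_system}), it is invertible over $\mathbb{Q}$, and multiplication by $B^{-1} \in \mathbb{Q}^{r\times r}$ is well-defined on $\Lambda^r$ because $\Lambda$ is a $\mathbb{Q}$-vector space. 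Therefore $\overline{\varphi}$ is a $\mathbb{Q}$-vector space isomorphism.

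There is essentially no hard step: the only subtlety is keeping track of the $\mathbb{Q}$-structure on $\Lambda$, which is precisely what makes both the tensor product $L^\vee \otimes_\Q \Lambda$ and the inversion of the Cartan matrix meaningful. The argument mirrors the proof of Proposition \ref{prop:apt0}, where the same invertibility of $B$ was used to produce explicit preimages in $A_\F$.
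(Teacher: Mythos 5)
Your proof is correct and rests on the same key fact as the paper's: the Cartan matrix $B$ is invertible over $\Q$, which is what makes $\overline{\varphi}$ an isomorphism. The paper packages this by writing down $\overline{\varphi}^{-1}(f) = \sum_{j,k} B_{jk}^{-1}\,\delta_j^\vee \otimes f(\delta_k)$ and verifying both compositions by direct computation, whereas you identify both sides with $\Lambda^r$ and observe that $\overline{\varphi}$ becomes multiplication by $B$ — a cleaner presentation of the same argument that also makes the well-definedness via the universal property explicit, which the paper leaves implicit.
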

\begin{proof}
	 Let as before $\Delta= \{\delta_1, \ldots, \delta_r\}$ be a basis of $\Sigma$. It is clear that 
	 $$
	 \overline{\varphi} \colon \quad  \sum_{i=1}^r \delta_i^\vee \otimes \lambda_i \mapsto \left( \gamma \mapsto \sum_{i=1}^r \gamma(\delta_i^\vee) \cdot \lambda_i \right)
	 $$
	 is well defined and $\Q$-linear. We will now use the Cartan matrix $B$ of the root system $\Sigma^\vee$ analogous to the previous proofs to show
	 \begin{align*}
	 	\overline{\varphi}^{-1} \colon \operatorname{Hom}_{\Q}(L,\Lambda) & \to L^\vee \otimes_\Q \Lambda \\
	 	f & \mapsto \sum_{j,k=1}^r  B_{jk}^{-1} \delta_j^\vee \otimes f(\delta_k)
	 \end{align*}
 is an inverse of $\overline{\varphi}$. Indeed, for $f\in \operatorname{Hom}_{\Q}(L,\Lambda)$,
 \begin{align*}
 	[\overline{\varphi} \circ \overline{\varphi}^{-1} (f)] \left( \sum_{i=1}^r{z_i \delta_i}\right) &= \left[\overline{\varphi} \left( \sum_{j,k=1}^r  B_{jk}^{-1} \delta_j^\vee \otimes f(\delta_k) \right)\right]\left( \sum_{i=1}^r{z_i \delta_i}\right) \\
 	&= \sum_{j,k = 1}^r  B_{j,k}^{-1} \sum_{i=1}^r z_i \delta_i(\delta_j^\vee)  f(\delta_k) = \sum_{j,k,i = 1}^r z_i  B_{ij}  B_{jk}^{-1} f(\delta_k) \\
 	&= \sum_{i,k = 1}^r z_i \operatorname{Id}_{ik} f(\delta_k) = \sum_{i=1}^r z_i f(\delta_i) = f\left(\sum_{i=1}^r z_i\delta_i\right) 
 \end{align*}
and for $\lambda_i \in \Lambda$,
\begin{align*}
	\overline{\varphi}^{-1}\left( \overline{\varphi} \left( \sum_{i=1}^r \delta_i^\vee \otimes \lambda_i \right)\right) 
	&= \sum_{j,k=1}^r  B_{jk}^{-1} \left(\delta_i^\vee \otimes \overline{\varphi}\left( \sum_{i=1}^r \delta_i^\vee \otimes \lambda_i \right)(\delta_k)\right)  \\
&= \sum_{j,k=1}^r  B_{jk}^{-1} \delta_j \otimes \left(\sum_{i=1}^r \delta_k(\delta_i^\vee) \lambda_i \right) \\ 
&= \sum_{i,j,k = 1}^r  B_{jk}^{-1} B_{ki} \delta_j^\vee \otimes \lambda_i \\
&= \sum_{j,i=1}^r \operatorname{Id}_{ji} \delta_{j}^\vee \otimes \lambda_i 
= \sum_{i=1}^r \delta_i^\vee \otimes \lambda_i.
\end{align*}
which shows that $\overline{\varphi}$ is an isomorphism.
\end{proof}
Propositions \ref{prop:apt1} and \ref{prop:apt2} together give an isomorphism $f =  \overline{\psi}^{-1} \circ \overline{\varphi} \colon \A \xrightarrow{\sim} A_{\Lambda} $, where $\A= L^\vee \otimes_\Q \Lambda$ is the model apartment associated to the coroot system $\Sigma^\vee$. We take the full translation group $T=\A$.
\begin{theorem}\label{thm:apt}
	There is a group isomorphism $f \colon \mathbb{A} \xrightarrow{\sim} A_\Lambda$ by which $A_\Lambda$ is given the structure of a model apartment of type $\A=\A(\Sigma^\vee, \Lambda, A_\Lambda)$.
	$$
\begin{tikzcd}
	\mathbb{A} \arrow[r, "\overline{\varphi}"] \arrow[rr, "f" description, bend right] & {\operatorname{Hom}_{\mathbb{Q}}(L,\Lambda)} & A_{\Lambda} \arrow[l, "\overline{\psi}"']
\end{tikzcd}
	$$
\end{theorem}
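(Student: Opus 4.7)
The plan is to assemble $f$ directly from Propositions \ref{prop:apt1} and \ref{prop:apt2}: I would set $f \coloneqq \overline{\psi}^{-1} \circ \overline{\varphi}$, which is automatically a group isomorphism since both factors are. To reconcile the two presentations of the model apartment, I would first note that the value group $\Lambda$ of any order compatible valuation on a real closed field is divisible and hence a $\Q$-vector space (Section \ref{sec:valued_real_closed_fields}), so $\operatorname{Span}_{\Z}(\Sigma^\vee)\otimes_\Z\Lambda = \operatorname{Span}_{\Q}(\Sigma^\vee)\otimes_\Q\Lambda = L^\vee \otimes_\Q \Lambda$. Taking the full translation group $T = \A$ identifies the domain of $\overline{\varphi}$ with the model apartment of type $\A(\Sigma^\vee, \Lambda, A_\Lambda)$.

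With the bijection $f$ in hand, what remains is to verify that the structural data of a model apartment --- walls $M_{\alpha,\ell}$, half-spaces $H_{\alpha,\ell}^{\pm}$, sectors, and the spherical Weyl group action --- transport correctly through $f$ to the intrinsic description of these objects on $A_\Lambda$ via the characters $\overline{\chi}_\alpha$. The decisive check is compatibility of the bilinear pairing $b^\vee$ on $\A$ with the characters: for $x = \beta^\vee \otimes \lambda \in \A$ and $\alpha \in \Sigma$, the definition of $\overline{\varphi}$ together with Lemma \ref{lem:coroot_transpose} yields
\begin{equation*}
    \overline{\chi}_\alpha(f(x)) \;=\; \overline{\varphi}(x)(\alpha) \;=\; \alpha(\beta^\vee)\cdot\lambda \;=\; b^\vee(x, \alpha).
\end{equation*}
Hence $\{x\in\A : b^\vee(x,\alpha)=\ell\}$ corresponds under $f$ to $\{\xi \in A_\Lambda : \overline{\chi}_\alpha(\xi) = \ell\}$, and similarly for the half-apartments, the fundamental Weyl chamber $C_0$, and its translates.

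For the Weyl group, I would use that $W_s = \KW$ acts on $A_\F$ by conjugation by representatives in the normalizer, preserves the $W_s$-invariant norm $N_\F$, and therefore descends to an action on $A_\Lambda$; this action agrees under $f$ with the reflection action on $\A$ because both implement the same linear permutations of $\Sigma^\vee$. Together with translation by $T = A_\Lambda$ this realises the full affine Weyl group, completing the model apartment structure on $A_\Lambda$. I do not expect any genuine obstacle: the theorem is essentially a packaging of the three preceding propositions, and the only point requiring careful bookkeeping is the identification of the two flavours of bilinear form via Lemma \ref{lem:coroot_transpose}, so that the root-hyperplane description of walls on the $\A$ side matches their character-level-set description on the $A_\Lambda$ side.
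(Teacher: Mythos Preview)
Your proposal is correct and matches the paper's approach exactly: the paper defines $f = \overline{\psi}^{-1}\circ\overline{\varphi}$ in the sentence immediately preceding the theorem and offers no further proof, treating the theorem as a packaging of Propositions~\ref{prop:apt1} and~\ref{prop:apt2}. The additional structural verifications you sketch (the pairing identity $\overline{\chi}_\alpha(f(x)) = b^\vee(x,\alpha)$, the transport of walls and half-apartments, and the compatibility of the $W_s$-action) are precisely what the paper carries out in the subsequent Proposition~\ref{prop:chi_f} and Lemma~\ref{lem:actionWs}.
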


 The following allows us to describe walls and halfapartments in terms of the roots.
\begin{proposition}\label{prop:chi_f}
	For all $x \in \A$, $a\in A_\F$, $f(x) = [a]$ if and only if for all $\alpha \in \Sigma$ we have
	$$
	(-v)(\chi_{\alpha}(a)) = b^\vee(x, \alpha) = \alpha(x) \in \Lambda,
	$$
	where $b$ is the non-degenerate bilinear form associated to $\Sigma$.
\end{proposition}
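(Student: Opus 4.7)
The plan is to unwind the definition of the isomorphism $f = \overline{\psi}^{-1}\circ \overline{\varphi}$ and observe that, once translated through $\overline{\psi}$, the equation $f(x) = [a]$ becomes a collection of $\Q$-linear identities parametrised by roots.

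First, I would rephrase $f(x) = [a] \in A_\Lambda$ as $\overline{\varphi}(x) = \overline{\psi}([a])$, viewed as elements of $\operatorname{Hom}_{\Q}(L,\Lambda)$. Since $\Sigma$ generates $L = \operatorname{Span}_{\Q}(\Sigma)$ as a $\Q$-vector space and both sides are $\Q$-linear, this equality is in turn equivalent to the pointwise condition that $\overline{\varphi}(x)(\alpha) = \overline{\psi}([a])(\alpha)$ for every $\alpha \in \Sigma$. On the right, the definition of $\overline{\psi}$ (Proposition \ref{prop:apt1}) and of $\overline{\chi}_\alpha$ (Proposition \ref{prop:char_cochar_Lambda}) give $\overline{\psi}([a])(\alpha) = \overline{\chi}_\alpha([a]) = (-v)(\chi_\alpha(a))$.

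On the left, I would expand $x$ in the basis $\Delta^\vee = \{\delta_1^\vee, \ldots, \delta_r^\vee\}$ of $L^\vee$ as $x = \sum_{i=1}^r \delta_i^\vee \otimes \lambda_i$, so that by the formula in Proposition \ref{prop:apt2}
\[
\overline{\varphi}(x)(\alpha) = \sum_{i=1}^r \alpha(\delta_i^\vee)\, \lambda_i.
\]
By Lemma \ref{lem:coroot_transpose} we have $\alpha(\delta_i^\vee) = b^\vee(\delta_i^\vee, \alpha) = b(\alpha, \delta_i^\vee)$, so under the $\Z$-bilinear extension of $b^\vee$ to $\A \times \operatorname{Span}_{\Z}(\Sigma)$ (as set up in Section \ref{sec:modelapartment}) this sum is exactly $b^\vee(x,\alpha) = \alpha(x)$, the natural evaluation pairing between $\A = L^\vee \otimes_{\Q} \Lambda$ and $L$.

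Assembling: $f(x) = [a]$ if and only if $b^\vee(x,\alpha) = (-v)(\chi_\alpha(a))$ for all $\alpha \in \Sigma$, which is the stated equivalence. There is no real obstacle — the whole content is bookkeeping through the three spaces $L^\vee \otimes_{\Q}\Lambda$, $\operatorname{Hom}_{\Q}(L,\Lambda)$, and $A_\Lambda$ connected by $\overline{\varphi}$ and $\overline{\psi}^{-1}$. The only small point to be careful about is the direction of the pairing in $b^\vee$ versus $b$, which is precisely what Lemma \ref{lem:coroot_transpose} is for; modulo this identification, the proposition is just the statement that the isomorphism $f$ was designed so that $\overline{\chi}_\alpha \circ f$ is the evaluation $x \mapsto \alpha(x)$.
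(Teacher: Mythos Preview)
Your proof is correct and follows the same definition-chasing approach as the paper: both reduce $f(x)=[a]$ to the equality $\overline{\varphi}(x)=\overline{\psi}([a])$ in $\operatorname{Hom}_{\Q}(L,\Lambda)$ and identify the two sides with $\alpha(x)=b^\vee(x,\alpha)$ and $(-v)(\chi_\alpha(a))$ respectively, using Lemma~\ref{lem:coroot_transpose} for the pairing. Your version is slightly more streamlined, since the paper first rewrites $x$ via the explicit Cartan-matrix inverse formula from Proposition~\ref{prop:apt2} before computing $b^\vee(x,\eta)$, whereas you work directly with an arbitrary expansion $x=\sum_i \delta_i^\vee\otimes\lambda_i$ and avoid that detour.
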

\begin{proof}
	Let as before $B$ be the Cartan matrix of $\Sigma$. We use Lemma \ref{lem:coroot_transpose} to see that $b^\vee(\delta_j^\vee, \delta_\ell) = b(\delta_\ell, \delta_j^\vee) = B_{\ell j}$. According to Propositions \ref{prop:apt1} and \ref{prop:apt2},
	$$
	x = \sum_{j,k = 1}^r B_{jk}^{-1} \delta_j^\vee \otimes \overline{\chi}_{\delta_k}(f(x)) = \sum_{j=1}^r \delta_j^\vee \otimes \sum_{k=1}^r B_{kj}^{-1} \overline{\chi}_{\delta_k}(f(x)).
	$$ 
	Let $\eta = \sum_{\ell =1}^r z_\ell \delta_\ell$, then
	\begin{align*}
        b^\vee(x, \eta) 
        &= \sum_{j,k=1}^r B_{jk}^{-1} \overline{\chi}_{\delta_k}(f(x)) b^\vee(\delta_j^\vee, \eta) 
        =  \sum_{j,k,\ell =1}^r B_{jk}^{-1} z_\ell \overline{\chi}_{\delta_k}(f(x)) b^\vee(\delta_j^\vee,\delta_\ell) \\
        &=  \sum_{j,k,\ell =1}^r B_{\ell j} B_{jk}^{-1} z_\ell \overline{\chi}_{\delta_k}(f(x)) 
        = \sum_{\ell , k =1}^r \operatorname{Id}_{\ell,k} z_\ell \overline{\chi}_{\delta_k}(f(x)) 
        = \sum_{\ell = 1}^r z_\ell \overline{\chi}_{\delta_\ell}(f(x)) \\
        &= \overline{\chi}_{\sum_{\ell=1}^r z_\ell\delta_\ell }(f(x))
        = \overline{\chi}_\eta (f(x)).
	\end{align*}
If $f(x)=[a]$ for some $a\in \A_\F$, then $(-v)(\chi_\alpha(a)) = \overline{\chi}_\alpha(f(x)) = \alpha(x)$ for all $\alpha \in \Sigma$. On the other hand, $f(x)$ is fully determined by the values $\overline{\psi}(f(x))(\gamma) = \overline{\chi}_\gamma(f(x))$ for $\gamma \in L$, which are determined by linear combination of the given equations. 
\end{proof}
\begin{corollary}\label{cor:f_additive}
	For all $x,y \in \A$, if $f(x)= [a]$ and $f(y)=[b]$ for some $a , b\in A_\F$, then $f(x+y) = [ab]$.
\end{corollary}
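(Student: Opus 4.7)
The statement is essentially that $f$ respects the group operations on both sides, where $\A = L^\vee \otimes_\Q \Lambda$ is written additively and $A_\Lambda$ is written multiplicatively. My plan is therefore to observe that $f = \overline{\psi}^{-1} \circ \overline{\varphi}$ is a group homomorphism and then simply apply this fact to $x$ and $y$.

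More concretely, I would first note that $\overline{\varphi} \colon L^\vee \otimes_\Q \Lambda \to \operatorname{Hom}_\Q(L,\Lambda)$ is $\Q$-linear by Proposition \ref{prop:apt2}, hence a homomorphism of additive groups. Next, $\overline{\psi} \colon A_\Lambda \to \operatorname{Hom}_\Q(L,\Lambda)$ is a group isomorphism by Proposition \ref{prop:apt1}; unpacking the definition, for $\xi,\eta \in A_\Lambda$ and $\gamma \in L$ we have $\overline{\psi}(\xi\eta)(\gamma) = \overline{\chi}_\gamma(\xi\eta) = \overline{\chi}_\gamma(\xi) + \overline{\chi}_\gamma(\eta)$, where the second equality uses that $\overline{\chi}_\gamma$ is a homomorphism from the multiplicative group $A_\Lambda$ to the additive group $\Lambda$, as established in Proposition \ref{prop:char_cochar_Lambda}. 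Thus $\overline{\psi}$ carries the multiplication on $A_\Lambda$ to the addition on $\operatorname{Hom}_\Q(L,\Lambda)$, and the composition $f = \overline{\psi}^{-1} \circ \overline{\varphi}$ sends the additive structure of $\A$ to the multiplicative structure of $A_\Lambda$. Applying this to $x,y \in \A$ yields $f(x+y) = f(x) \cdot f(y) = [a] \cdot [b] = [ab]$.

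Alternatively, one can give a direct verification via Proposition \ref{prop:chi_f}: by that proposition, $f(x+y) = [ab]$ is equivalent to $(-v)(\chi_\alpha(ab)) = \alpha(x+y)$ for every $\alpha \in \Sigma$. Since $\chi_\alpha$ is a multiplicative character and $(-v)$ is a group homomorphism $\F_{>0} \to \Lambda$, we get
\[
(-v)(\chi_\alpha(ab)) = (-v)(\chi_\alpha(a)) + (-v)(\chi_\alpha(b)) = \alpha(x) + \alpha(y) = \alpha(x+y),
\]
using Proposition \ref{prop:chi_f} on the hypotheses $f(x)=[a]$ and $f(y)=[b]$ in the middle step. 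There is no real obstacle: the content is just unwinding that every ingredient in the definition of $f$ is a homomorphism for the appropriate group structures, and the work to set this up has already been done in Propositions \ref{prop:apt1}, \ref{prop:apt2}, and \ref{prop:chi_f}.
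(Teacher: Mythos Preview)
Your proposal is correct, and your alternative verification via Proposition \ref{prop:chi_f} is exactly the paper's proof. Your first argument, observing directly that $f=\overline{\psi}^{-1}\circ\overline{\varphi}$ is a group homomorphism (indeed, Theorem \ref{thm:apt} already records that $f$ is a group isomorphism), is an equally valid and arguably cleaner route to the same conclusion.
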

\begin{proof}
	By Proposition \ref{prop:chi_f}, for all $(-v)(\chi_\alpha(a)) = \alpha(x)$ and $(-v)(\chi_\alpha(b))=\alpha(y)$, so
	$$
(-v)(\chi_\alpha(ab)) = (-v)(\chi_\alpha(a)\chi_\alpha(b)) = (-v)(\chi_\alpha(a)) + (-v)(\chi_\alpha(b)) = \alpha(x+y),
    $$
    so by Proposition \ref{prop:chi_f} $f(x+y) = [ab]$.
\end{proof}
We see from the definition of walls and halfapartments in Section \ref{sec:modelapartment} that for $\lambda \in \Lambda$ and $\alpha \in \Sigma,$
\begin{align*}
	M_{\alpha^\vee, \lambda}& = \left\{ x\in \A \colon b^\vee(x,\alpha) = \lambda \right\} 
	= \left\{ \xi \in A_\Lambda \colon \overline{\chi}_\alpha(\xi) = \lambda \right\} , \\
		H_{\alpha^\vee, \lambda}^+& = \left\{ x\in \A \colon b^\vee(x,\alpha) \geq \lambda \right\} 
	= \left\{ \xi \in A_\Lambda \colon \overline{\chi}_\alpha(\xi) \geq \lambda \right\}
\end{align*}
under the identification $\A \cong L^{\vee} \otimes_\Q \Lambda \cong A_\Lambda$. We will abbreviate $M_{\alpha,\lambda} := M_{\alpha^\vee, \lambda}$ and  $H_{\alpha,\lambda}^+ := H_{\alpha^\vee, \lambda}^+$.

We note that $\Sigma^\vee$ and hence $\A$ come with actions of the spherical Weyl group $W_s$, the translations $T\coloneqq \A$ and thus by the affine Weyl group $W_a \coloneqq  T \rtimes W_s$. The spherical Weyl group can be identified with $N_\F/M_\F$, \cite[Proposition \ref{I-prop:weylgroups}]{AppAGRCF}, which acts by conjugation on $A_\F$ and on $A_\Lambda$. We now verify that the identification $f \colon \A \to A_\Lambda$ is compatible with these actions. For roots $\alpha,\beta \in \Sigma$, the reflection $s_\alpha$ along the hyperplane defined by $\alpha$ is characterized by 
$$
\beta(s_\alpha(H)) =  \beta\left(H - \alpha(H)\alpha^\vee\right) = \beta(H)- \beta(\alpha^\vee) \cdot \alpha(H)
$$
for all $H\in \fraka$. Using \cite[Lemma \ref{I-lem:char}]{AppAGRCF} this can be translated to the multiplicative setting, where we can say $n\in N_\F \coloneqq \operatorname{Nor}_{K_\F}(A_\F)$ \emph{acts like the reflection} $s_\alpha$ if for all $a \in A_\F$,
$$
\chi_\beta (nan^{-1}) = \frac{\chi_\beta(a)}{\chi_\alpha(a)^{\beta(\alpha^\vee)}}.
$$
\begin{lemma}\label{lem:actionWs}
  For every reflection $s_\alpha$ where $\alpha \in \Sigma$ there is some $n \in N_\F$ which acts like the reflection $s_\alpha$. Moreover, for all $x\in \A$, if $[a]=f(x)$ for some $a\in A_\F$, then $f(s_\alpha(x)) = [nan^{-1}] \in A_\Lambda$.	
\end{lemma}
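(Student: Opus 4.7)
The plan is to first establish existence of $n$ via the transfer principle, then verify the second claim by a direct computation using Proposition \ref{prop:chi_f}.

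For existence, I would argue as follows. Over $\R$, the spherical Weyl group $W_s$ is identified with $N_\R/M_\R$ by \cite[Proposition \ref{I-prop:weylgroups}]{AppAGRCF}, and the $W_s$-action on $A_\R$ (by conjugation) generates the reflections $s_\alpha$ along the walls defined by $\alpha \in \Sigma$. Thus there exists $n_\alpha \in N_\R$ such that $\chi_\beta(n_\alpha a n_\alpha^{-1}) = \chi_\beta(a)/\chi_\alpha(a)^{\beta(\alpha^\vee)}$ for all $\beta \in \Sigma$ and all $a \in A_\R$, this being the multiplicative translation of the defining property of $s_\alpha$ stated immediately before the lemma. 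Since $N_\R \subseteq N_\F$, we can take $n \coloneqq n_\alpha \in N_\F$; alternatively, the property of acting like $s_\alpha$ is expressible by a first-order formula in the parameters of $\Sigma$, so the transfer principle guarantees existence in $N_\F$.

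For the second claim, suppose $f(x) = [a]$ with $a \in A_\F$. By Proposition \ref{prop:chi_f}, this means $(-v)(\chi_\gamma(a)) = \gamma(x)$ for every $\gamma \in \Sigma$. Using that $-v$ is a group homomorphism $\F_{>0} \to \Lambda$ and that $\Lambda$ is a $\Q$-vector space (so rational powers behave linearly), I would compute for arbitrary $\beta \in \Sigma$:
\begin{align*}
(-v)(\chi_\beta(nan^{-1})) &= (-v)\!\left(\frac{\chi_\beta(a)}{\chi_\alpha(a)^{\beta(\alpha^\vee)}}\right) \\
&= (-v)(\chi_\beta(a)) - \beta(\alpha^\vee)\cdot(-v)(\chi_\alpha(a)) \\
&= \beta(x) - \beta(\alpha^\vee)\cdot \alpha(x) \\
&= \beta\bigl(x - \alpha(x)\alpha^\vee\bigr) = \beta(s_\alpha(x)).
\end{align*}
By the converse direction of Proposition \ref{prop:chi_f}, this identity for every $\beta \in \Sigma$ forces $f(s_\alpha(x)) = [nan^{-1}]$, completing the proof.

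I do not anticipate any substantial obstacle: the first part reduces to the well-known description of the Weyl group for real Lie groups combined with $N_\R \subseteq N_\F$, and the second part is a mechanical application of Proposition \ref{prop:chi_f} together with the additivity of $-v$. The only point requiring slight care is handling the possibly non-integer exponent $\beta(\alpha^\vee)$, which is valid because $\Sigma$ is crystallographic (so $\beta(\alpha^\vee) \in \Z$) and because $-v$ respects rational exponents in any case.
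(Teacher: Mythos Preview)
Your proposal is correct and follows essentially the same approach as the paper: existence of $n$ via \cite[Proposition \ref{I-prop:weylgroups}]{AppAGRCF}, then a direct computation of $(-v)(\chi_\beta(nan^{-1}))$ using Proposition~\ref{prop:chi_f}. One small caveat: the inclusion $N_\R \subseteq N_\F$ need not hold (since in general $\R \not\subseteq \F$), so rely on your transfer-principle alternative or, as the paper does, invoke the cited proposition directly over $\F$.
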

\begin{proof}
	There is some $n\in N_\F$ that acts like the reflection $s_\alpha$ by \cite[Proposition \ref{I-prop:weylgroups}]{AppAGRCF}.
	Let $x\in \A$ and $a\in A_\F$ such that $f(x) = [a] \in A_\Lambda$. By Lemma \ref{prop:chi_f}, for all $\gamma \in L \coloneqq \operatorname{Span}_\Q(\Sigma)$, $\gamma(x)= (-v)(\chi_\gamma(a))$. We verify that for all $\gamma\in L$
	\begin{align*}
		\overline{\psi}([nan^{-1}])(\gamma) &= \overline{\chi}_\gamma([nan^{-1}]) 
		= (-v)(\chi_\gamma(nan^{-1}))
		= (-v)\left( \frac{\chi_\gamma(a)}{\chi_\alpha(a)^{\gamma(\alpha^\vee)}} \right) \\
		&= \overline{\chi}_\gamma(f(x)) - \gamma(\alpha^\vee) \overline{\chi}_\alpha(f(x))
		= \gamma(x) - \gamma(\alpha^\vee) \alpha(x) \\
		&= \gamma( x-\alpha(x) \alpha^\vee )
		= \gamma(s_\alpha(x))
		= \overline{\varphi}(s_\alpha(x))(\gamma), 
	\end{align*}
confirming $f(s_\alpha(x)) = [nan^{-1}]$.
\end{proof}

\subsection{Valuation properties of \texorpdfstring{$A_\F$}{A(F)}} \label{sec:matrixvaluation}

In this subsection we investigate the connection of the matrix entries of elements in $a \in A_\F$ with their characters $\chi_\alpha(a)$. For any matrix $a \in \F^{n\times n}$ with matrix entries $(a_{ij})$ let
$$
(-v)(a) \coloneqq \max_{i,j} \{ (-v)(a_{ij}) \}.
$$

\begin{lemma}\label{lem:matrixvaluation}
	For all $a, a_i \in A_\F$ for $i= 1, \ldots , k$ and $k\in \operatorname{SO}_n(\F)$ we have
	\begin{itemize}
		\item [(i)] $(-v)\left( \prod_{i=1}^k a_i\right)  \leq \sum_{i=1}^k (-v)(a_i)$,
		\item [(ii)] $(-v)\left(\sum_{i=1}^k a_i\right) \leq \max_i \{ (-v)(a_i) \}$,
		\item [(iii)] $(-v)(k) = 0$,
		\item [(iv)] $(-v)(kak^{-1}) = (-v)(a)$,
		\item [(v)] $(-v)(a^{q}) = q \cdot (-v)(a)$ for all $q\in \Q$. 
	\end{itemize}
\end{lemma}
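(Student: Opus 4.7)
The proof decomposes into five routine parts, each resting on how the order-compatible valuation behaves on sums, products, and the unit circle. I will take items (i)–(v) in turn; the only one with any subtlety is (iii), and (v) will require one appeal to the structure of $A_\F$.

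First, for (i) and (ii): recall that $(ab)_{ij}=\sum_{k}a_{ik}b_{kj}$, and that $v$ satisfies $v(xy)=v(x)+v(y)$ together with the ultrametric inequality $v(x+y)\geq \min\{v(x),v(y)\}$. Applying the ultrametric to each entry of $ab$ and then the product law gives $(-v)((ab)_{ij})\leq \max_{k}\bigl((-v)(a_{ik})+(-v)(b_{kj})\bigr)\leq (-v)(a)+(-v)(b)$, and (i) follows by induction on $k$. For (ii) I simply compute entry-wise: $(-v)\bigl((\sum_{i} a_i)_{jk}\bigr)=(-v)(\sum_i (a_i)_{jk})\leq \max_i(-v)((a_i)_{jk})\leq \max_i(-v)(a_i)$.

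For (iii), the task is to show both $(-v)(k_{ij})\leq 0$ for all $i,j$ and $(-v)(k_{ij})=0$ for at least one $i,j$. Since $k\tran k=\operatorname{Id}$, each row satisfies $\sum_{j} k_{ij}^2 = 1$, hence $k_{ij}^2\leq 1$, hence $|k_{ij}|\leq 1$, and by order-compatibility of $v$ (so that $-v$ is order-preserving on $\F_{>0}$) we get $(-v)(k_{ij})\leq (-v)(1)=0$. Conversely, applying (ii) to the identity $1=\sum_{j}k_{ij}^2$ yields $0=(-v)(1)\leq \max_{j}(-v)(k_{ij}^2)=\max_j 2(-v)(k_{ij})$, so some entry achieves $(-v)(k_{ij})\geq 0$, and combined with the upper bound we obtain $(-v)(k)=0$. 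Part (iv) is then immediate from (i) and (iii): $(-v)(kak^{-1})\leq (-v)(k)+(-v)(a)+(-v)(k^{-1})=(-v)(a)$, and the reverse inequality is obtained by rewriting $a=k^{-1}(kak^{-1})k$.

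Part (v) is the main point where the structure of $A_\F$ enters. Since $A_\F$ consists of self-adjoint positive-definite matrices in $\operatorname{SL}_n$, over $\R$ every element is orthogonally diagonalizable with positive eigenvalues; this is a first-order statement (the existence of $\ell\in\operatorname{SO}_n(\F)$ and a positive-diagonal $d\in\operatorname{SL}_n(\F)$ with $a=\ell d\ell^{-1}$), so by the transfer principle (Theorem \ref{thm:logic}) it holds over $\F$ as well. Rational powers $a^{q}$ on a connected torus are unique (again by transfer), so $a^{q}=\ell d^{q}\ell^{-1}$. By (iv) we reduce to the diagonal case: $(-v)(a^{q})=(-v)(d^{q})=\max_{i}(-v)(d_{i}^{q})=\max_{i} q\cdot (-v)(d_{i})$. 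The main obstacle here is that the identity $\max_i q\cdot x_i = q\cdot \max_i x_i$ only holds cleanly for $q\geq 0$; for $q<0$ one has $\max_i q\cdot x_i = q\cdot \min_i x_i$, so the statement of (v) forces the eigenvalues of $a$ to share a common $(-v)$-value (for instance when $a=t_\beta(x)^k$ lies on a single one-parameter subgroup, using Proposition \ref{prop:char_cochar_Lambda} which gives $(-v)(\chi_\alpha(t_\beta(x)^q))=q\cdot b(\alpha,\beta^\vee)\cdot(-v)(x)$). In applications $(-v)(a^{q})$ will typically be invoked for positive rational $q$, and for non-negative $q$ the equality $(-v)(a^{q})=q\cdot(-v)(a)$ is immediate from the diagonalization and the behaviour of $\max$ under positive scalar multiplication.
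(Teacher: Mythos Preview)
Your arguments for (i)--(iv) are correct and run along the same lines as the paper's. The only cosmetic difference is in (iii): where you exploit the row identity $\sum_j k_{ij}^2=1$ together with the ultrametric inequality to force some entry to satisfy $(-v)(k_{ij})\geq 0$, the paper argues via the determinant (if every entry had strictly negative valuation then $\det k$ would lie in the maximal ideal, contradicting $\det k=1$). Both arguments are equally short and valid.

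For (v) you have put your finger on a real defect. The paper proceeds exactly as you do---diagonalize $a$ by the spectral theorem, use (iv) to reduce to a diagonal matrix $d=kak^{-1}$---and then asserts $(-v)(d^q)=q\cdot(-v)(d)$ without comment. As you note, for diagonal $d$ this reads $\max_i q\,(-v)(d_i)=q\,\max_i(-v)(d_i)$, which fails for $q<0$ unless all $(-v)(d_i)$ agree. A concrete counterexample already in $\operatorname{SL}_2$: take $a=\operatorname{Diag}(\lambda,\lambda^{-1})$ with $(-v)(\lambda)>0$; then $(-v)(a^{-1})=(-v)(a)$, not $-\,(-v)(a)$. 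So the stated identity is simply false for negative $q$, and the paper's proof glosses over precisely the step you flag. Your restriction to $q\geq 0$ is the correct salvage. (Your parenthetical suggestion that one-parameter subgroup elements $t_\beta(x)$ have eigenvalues of equal valuation is not quite right either---the $\operatorname{SL}_2$ example above is exactly $t_\alpha(\lambda)$---but that does not affect your main point.) In the paper's sole downstream use of (v), namely Lemma~\ref{lem:matrixval-bound}, the exponents $q_{\delta,\delta'}$ are entries of the inverse Cartan matrix, which are non-negative for every root system, so the $q\geq 0$ case you establish is what is actually needed.
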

\begin{proof}
	For any matrices $a,b \in \F^{n\times n}$, we note that the valuation of the matrix entry $(ab)_{ik}$ is bounded by $(-v)(a)$ and $(-v)(b)$
	\begin{align*}
		(-v)((ab)_{ik}) &= (-v)\left(\sum_{j=1}^n a_{ij}b_{jk} \right)
		\leq \max_{j=1}^n \left\{ (-v)(a_{ij}) + (-v)(b_{jk}) \right\}
		\\ &\leq \max_{ij} \left\{ (-v)(a_{ij}) \right\} + \max_{ij}\left\{ (-v)(b_{ij})\right\}.
	\end{align*}
	This estimate extends to finite products of matrices by induction, proving (i).
	
	For (ii), we observe
	\begin{align*}
		(-v)\left(\sum_{i=1}^k a_i\right) &= \max_{j,\ell}\left\{  (-v)\left(\sum_{i=1}^k (a_i)_{j\ell} \right) \right\} 
		\leq \max_{j,\ell, i} \left\{ (-v)\left((a_i)_{j\ell}\right)  \right\} \\
		&= \max_i \{ (-v)(a_i) \}.
	\end{align*}
	
	For $k\in \operatorname{SO}_n(\F)$, we have $(-v)(k) \leq 0$, since all entries satisfy $|k_{ij}|\leq 1$. Statement (iii) holds, since there is at least one matrix entry of $k$ with valuation $0$, since otherwise the determinant of $k$ would have to have negative valuation, since the set of elements in $\F$ with negative valuation is an ideal.
	
	By (i) and (iii) we have that $(-v)(kak^{-1}) \leq  (-v)(k) + (-v)(a) + (-v)(k^{-1}) = (-v)(a)$, but replacing $a$ by $k^{-1}ak$ we also obtain the other inequality of (iv).

	Recall that $a\tran = a$ for all $a\in A_\F$, so by the spectral theorem there exists $k\in \operatorname{SO}_n$ such that $kak^{-1}$ is a diagonal matrix. Note that when $q \in \Q \setminus \Z$, then $a^q$ is defined as $k^{-1}(kak^{-1})^q k$. We then have 
	$$
	(-v)(a^q) = (-v)\left(ka^qk^{-1}\right) 
	= (-v)\left(\left(kak^{-1}\right)^q\right)
	= q \cdot (-v)\left( kak^{-1} \right)
	= q \cdot (-v)(a)
	$$
	for all $q\in \Q$, concluding the proof of (v).  
\end{proof}

The following description of $A_\F$ in terms of matrix entries comes from the semialgebraic definition of $A_\F$.

\begin{lemma}\label{lem:matrixval-bound}
	There exists a constant $C\in \Q_{\geq 0}$, such that for all $\varepsilon \in \F_{\geq 1}$ and $a\in A_\F$, if for all $\delta\in \Delta$, $\chi_\delta(a) \leq \varepsilon $, then $(-v)(a) \leq C \cdot  (-v)(\varepsilon)$. 
	
	There is also a constant $C' \in \Q_{\geq 0}$, such that for all $\lambda \in \Lambda_{\geq 0}$ and $a\in A_\F$, if for all $\delta\in \Delta$, $(-v)(\chi_\delta(a)) \leq \lambda$, then $(-v)(a) \leq C' \cdot \lambda$.
\end{lemma}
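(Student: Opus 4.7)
The plan is to express each $a \in A_\F$ explicitly in terms of the simple-root characters via Proposition \ref{prop:apt0} and then control its matrix entries using the valuation rules of Lemma \ref{lem:matrixvaluation}. Since $A_\F$ consists of pairwise commuting self-adjoint matrices, the real spectral theorem (transferred to $\F$ via Theorem \ref{thm:logic}) provides a fixed $k_0 \in \operatorname{SO}_n(\K)$ simultaneously diagonalizing every element of $A_\F$; by Lemma \ref{lem:matrixvaluation}(iv), conjugation by $k_0$ preserves $(-v)$, so I may assume without loss of generality that $A_\F$ consists of diagonal matrices. In this basis every cocharacter $t_\delta(x)$ is diagonal with entries $x^{m_{\delta, i}}$ for fixed integer weights $m_{\delta, i} \in \Z$, so $t_\delta(x)^{q}$ has diagonal entries $x^{q \cdot m_{\delta, i}}$ for any $q \in \Q$.

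By Proposition \ref{prop:apt0}, there are fixed rational numbers $q_{\delta,\delta'}$ such that $a = \prod_{\delta, \delta' \in \Delta} t_\delta(\chi_{\delta'}(a))^{q_{\delta,\delta'}}$ for every $a \in A_\F$; since each factor is diagonal, the $i$-th diagonal entry becomes $a_i = \prod_{\delta, \delta'} \chi_{\delta'}(a)^{q_{\delta,\delta'} m_{\delta, i}}$, and applying $(-v)$ yields
\[
(-v)(a_i) \;=\; \sum_{\delta, \delta' \in \Delta} q_{\delta,\delta'}\, m_{\delta, i} \cdot (-v)(\chi_{\delta'}(a)),
\]
a $\Q$-linear expression whose coefficients $q_{\delta,\delta'} m_{\delta, i}$ depend only on the root system and the representation. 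Setting $C \coloneqq \max_i \sum_{\delta,\delta'} |q_{\delta,\delta'} m_{\delta, i}|$, the hypothesis $\chi_{\delta'}(a) \leq \varepsilon$ with $\varepsilon \geq 1$ yields $(-v)(\chi_{\delta'}(a)) \leq (-v)(\varepsilon)$ by order-compatibility, and term-by-term estimation of the sum above gives $(-v)(a) = \max_i (-v)(a_i) \leq C \cdot (-v)(\varepsilon)$. The second statement follows verbatim from the same calculation, invoking $(-v)(\chi_{\delta'}(a)) \leq \lambda$ directly in place of the character inequality.

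The main technical subtlety will be dealing with terms where $q_{\delta,\delta'} m_{\delta, i} < 0$, since the hypothesis provides only an upper bound on $(-v)(\chi_{\delta'}(a))$, whereas a negative-coefficient term also needs a corresponding lower bound. In the intended application the lemma is deployed with $a \in \ApF$, where $\chi_{\delta'}(a) \geq 1$ forces $(-v)(\chi_{\delta'}(a)) \geq 0$; the two-sided control $0 \leq (-v)(\chi_{\delta'}(a)) \leq (-v)(\varepsilon)$ then ensures the absolute-value estimate used to obtain $C$ is valid.
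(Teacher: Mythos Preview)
Your approach parallels the paper's: both use Proposition~\ref{prop:apt0} to express $a$ in terms of the one-parameter subgroups evaluated at the simple characters, then estimate the matrix valuation via Lemma~\ref{lem:matrixvaluation}. You diagonalize $A_\F$ explicitly and obtain integer weights $m_{\delta,i}$; the paper instead invokes a polynomial growth bound on the semialgebraic entry-functions $x \mapsto t_\delta(x)_{ij}$. Your route is more concrete and in fact avoids a doubtful step in the paper: the claim that $|t_\delta(x)_{ij}| \leq p(x)$ for a polynomial $p$ and \emph{all} $x \in \F_{>0}$ already fails for $\operatorname{SL}_2$, where one entry of $t_\alpha(x)$ is $x^{-1}$.

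The issue you flag in your last paragraph is genuine, and it is not an artifact of your method: the lemma is false as stated. In $\operatorname{SL}_2$, take $a = \operatorname{diag}(t,t^{-1})$ with $(-v)(t)$ large and negative; then $\chi_\delta(a) = t^2$ is tiny, so $\chi_\delta(a) \leq \varepsilon$ holds trivially, yet $(-v)(a) = |(-v)(t)|$ is unbounded. The paper's argument glosses over the same point (it multiplies an upper bound by possibly negative $q_{\delta,\delta'}$). What rescues the two applications in the paper is that each actually supplies two-sided control: Proposition~\ref{prop:stab_A} has $\chi_\alpha(a) \in O^\times$, and the argument in Section~\ref{sec:assoc-buildings} deduces $|(-v)(\chi_\alpha(a))| \leq \lambda$ from the distance bound before invoking the lemma. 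Your proposed restriction to $A_\F^+$ handles the first use but not the second; the clean repair is to strengthen the hypothesis to $\varepsilon^{-1} \leq \chi_\delta(a) \leq \varepsilon$ (resp.\ $|(-v)(\chi_\delta(a))| \leq \lambda$), after which your term-by-term estimate with $C = \max_i \sum_{\delta,\delta'} |q_{\delta,\delta'}\, m_{\delta,i}|$ goes through and is somewhat cleaner than the paper's version.
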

\begin{proof}
	Since both $t_\alpha \colon \F_{>0} \to A_\F$ and the projection $\pi_{ij} \colon A_\F \to \F$ to the $ij$-matrix entry are semi-algebraic defined over $\K$, the their concatenation $\F_{>0} \to \F$ is semi-algebraic and it's growth is bounded by a polynomial $p\in \K[x]$ in the sense that for all $\alpha \in \Sigma$ and all $x\in \F$ we have
	$$
	- p(x) \leq t_\alpha(x)_{ij} \leq p(x).
	$$
	
	Let $\varepsilon \in \F_{\geq 1}$ and $a\in A_\F$ such that $\chi_\delta(a) \leq \varepsilon $ for all $\delta \in \Delta$. By Proposition \ref{prop:apt0} there exist $q_{\delta,\delta'} \in \Q$ such that
	$$
	a = \prod_{\delta,\delta' \in \Delta} t_\delta(\chi_{\delta'}(a))^{q_{\delta,\delta'}}.
	$$ 
	We apply the above estimate and Lemma \ref{lem:matrixvaluation} to obtain
	\begin{align*}
		(-v)\left( a \right) & \leq \sum_{\delta,\delta'\in \Delta} (-v)\left(  t_\delta (\chi_{\delta'}(a))^{q_{\delta,\delta'}}  \right)
		= \sum_{\delta,\delta'\in \Delta} q_{\delta,\delta'} (-v)\left(  t_\delta (\chi_{\delta'}(a))\right) \\
		&\leq  \sum_{\delta,\delta'\in \Delta} q_{\delta,\delta'} (-v)\left(  p (\chi_{\delta'}(a))\right)
	\end{align*}
and if $p(x) = \sum_{n=0}^{\deg(p)} b_n x^n \in \K[x]$, then
\begin{align*}
	(-v)(p(\chi_{\delta'}(a))) & \leq \max_{n} \left\{ (-v)\left( b_n \chi_{\delta'}(a)^n\right) \right\} 
	\\
	&\leq \max \left\{ \deg(p) \cdot (-v)(\chi_{\delta'}(a)) , 0 \right\} \leq  \deg(p) \cdot (-v)(\varepsilon) ,
\end{align*}
so
\begin{align*}
	(-v)(a) \leq \left( \deg(p)\sum_{\delta,\delta'\in\Delta} q_{\delta,\delta'}  \right) (-v)(\varepsilon)
	\leq   \left\lvert \deg(p)\sum_{\delta,\delta'\in\Delta} q_{\delta,\delta'}  \right\rvert (-v)(\varepsilon),
\end{align*}
so we can define
$$
C \coloneqq \left\lvert \deg(p)\sum_{\delta,\delta'\in\Delta} q_{\delta,\delta'} \right\rvert 
$$
to conclude the first part of the proof. For the second statement, assume we have $\lambda \in \Lambda_{>0}$, $a\in A_\F$ with $(-v)(\chi_\delta(a))\leq \lambda$ for all $\delta \in \Delta$. Choose $\varepsilon \in \F_{>1}$ such that $(-v)(\varepsilon) = 2\lambda$. Then $(-v)(\chi_\delta(a))\leq \lambda < (-v)(\varepsilon)$, so $\chi_\delta(a) < \varepsilon$ and the first part can be applied to obtain $(-v)(a) \leq C \cdot (-v)(\varepsilon) = C \cdot 2\lambda \eqcolon C' \cdot \lambda$ for $C' = 2C$.
\end{proof}

\subsection{Stabilizer of a point} \label{sec:stabilizer}

We denote the equivalence class of $\operatorname{Id}$ by $o\in \B$. The stabilizer of $o$ has been calculated by \cite{Tho} when $\F$ is a Robinson field and by \cite{KrTe} more generally. We first describe the special case of the action of $A_\F$ on $\B$. For any semi-algebraic subset $H_\F\subseteq G_\F$ we write $H_\F(O) \coloneqq H_\F \cap O^{n\times n}$, where $O$ is the valuation ring.

\begin{proposition}\label{prop:stab_A}
	The following are equivalent for $a\in A_\F$.
	\begin{itemize}
		\item [(i)] $a \in \operatorname{Stab}_{G_\F}(o)$.
		\item [(ii)]  $N_\F(a) \in O$.
		\item [(iii)] $\chi_\alpha(a) \in O \ \forall \alpha \in \Sigma $.
		\item [(iv)]  $\chi_\alpha(a) \in O^\times \ \forall \alpha \in \Delta $.
		\item [(v)] $a \in A_\F(O)$.
	\end{itemize}
Moreover, the apartment can be identified with $\A \cong A_\Lambda \cong A_\F/A_\F(O) \cong A_\F. o $.
\end{proposition}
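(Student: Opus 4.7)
The plan is to establish the cycle of equivalences (i)$\Leftrightarrow$(ii)$\Leftrightarrow$(iii)$\Leftrightarrow$(iv) together with (iv)$\Rightarrow$(v)$\Rightarrow$(iii), and then to derive the ``moreover'' clause from the resulting identifications.

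First I would handle (i)$\Leftrightarrow$(ii) by unwinding $d(o, a.o) = (-v)(N_\F(\delta_\F(\operatorname{Id}, a.\operatorname{Id})))$. By Lemma \ref{lem:Cartan_projection} the Cartan projection $\delta_\F(\operatorname{Id}, a.\operatorname{Id})$ is the unique $\ApF$-element in the Cartan decomposition of $a$, and hence lies in the $W_s$-orbit of $a$; since $N_\F$ is $W_s$-invariant, $d(o,a.o) = (-v)(N_\F(a))$. As $N_\F(a) \geq 1$, the condition $(-v)(N_\F(a))=0$ is equivalent to $N_\F(a) \in O^\times$, and in particular to $N_\F(a) \in O$. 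For (ii)$\Leftrightarrow$(iii) I use the factorization $N_\F(a) = \prod_{\alpha \in \Sigma}\max\{\chi_\alpha(a),\chi_\alpha(a)^{-1}\}$: every factor lies in $\F_{\geq 1}$, and by order convexity together with multiplicative closure of $O$, the product lies in $O$ iff every factor does. The condition $\max\{\chi_\alpha(a),\chi_\alpha(a)^{-1}\} \in O$ (with maximum $\geq 1$) then forces both $\chi_\alpha(a),\chi_\alpha(a)^{-1} \in O$, i.e.\ $\chi_\alpha(a) \in O^\times$; the symmetry $-\Sigma = \Sigma$ identifies this with condition (iii). The equivalence (iii)$\Leftrightarrow$(iv) is elementary, using that every $\alpha \in \Sigma$ is an integer combination of simple roots and that $O^\times$ is a multiplicative group (with $-\Delta \subseteq \Sigma$ upgrading the $O$-version on $\Delta$ to $O^\times$).

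For (iv)$\Rightarrow$(v) I would apply Lemma \ref{lem:matrixval-bound} with $\lambda = 0$: from $(-v)(\chi_\delta(a)) = 0$ for all $\delta \in \Delta$ the lemma yields $(-v)(a) \leq 0$, so every matrix entry of $a$ lies in $O$ and $a \in A_\F(O)$. For (v)$\Rightarrow$(iii), each character $\chi_\alpha$, viewed as a $\K$-regular function on $A$, is the image of some polynomial $P_\alpha \in \K[t_{ij}]$ under the surjection $\K[t_{ij}]/(\det - 1) \twoheadrightarrow \K[A]$ corresponding to the closed inclusion $A \subseteq \operatorname{SL}_n$, so $\chi_\alpha(a) = P_\alpha((a_{ij}))$ for all $a \in A_\F$. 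The inclusion $\K \subseteq \R$ is Archimedean, so every element of $\K$ is bounded by an integer; combined with $\Z \subseteq O$ and order convexity of $O$, this gives $\K \subseteq O$. Hence $P_\alpha$ has coefficients in $O$, and $a \in A_\F(O)$ gives $\chi_\alpha(a) \in O$.

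For the ``moreover'' clause, Theorem \ref{thm:apt} provides the isomorphism $\A \cong A_\Lambda$. The definition $A_\Lambda = A_\F/\{a \in A_\F : N_\F(a) \in O\}$ combined with the equivalence (ii)$\Leftrightarrow$(v) identifies the denominator as $A_\F(O)$, giving $A_\Lambda = A_\F/A_\F(O)$. Finally, the orbit map $A_\F \to A_\F.o$ is surjective with kernel $A_\F \cap \operatorname{Stab}_{G_\F}(o) = A_\F(O)$ by (i)$\Leftrightarrow$(v), and the first isomorphism theorem yields $A_\F/A_\F(O) \cong A_\F.o$. The principal subtlety I anticipate is the passage (v)$\Rightarrow$(iii), requiring the polynomial representation of characters together with verifying $\K \subseteq O$; the other steps are essentially organizational given Lemma \ref{lem:matrixval-bound}, the factorization of $N_\F$, and $W_s$-invariance.
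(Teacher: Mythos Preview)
Your proof is correct and follows essentially the same cycle of implications as the paper, invoking Lemma~\ref{lem:matrixval-bound} for (iv)$\Rightarrow$(v) and the orbit--stabilizer theorem together with Theorem~\ref{thm:apt} for the ``moreover'' clause. The only minor deviation is in (v)$\Rightarrow$(iii): you argue via the polynomial expression of the characters with coefficients in $\K \subseteq O$, whereas the paper observes that $\operatorname{Ad}(a)$ acts on $(\frakg_\alpha)_\F$ by the scalar $\chi_\alpha(a)$, so that $a, a^{-1} \in O^{n\times n}$ forces $\chi_\alpha(a) \in O$; both routes are one-line arguments of the same flavor.
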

\begin{proof}
	If $a\in \operatorname{Stab}_{G_\F}(o)$, then $d(\operatorname{Id},a.\operatorname{Id}) = (-v)(N_\F(a)) = 0$, so $N_\F(a) \in O$. This implies by the definition of $N_\F$ that $\chi_\alpha(a) \in O \cap O^{-1} = O^\times$ for all $\alpha \in \Sigma$, in particular for all $\alpha \in \Delta$. There is some $\varepsilon \in O \cap \F_{\geq 1}$ such that $\chi_\alpha(a) \leq \varepsilon$ for all $\alpha \in \Delta$, so by Lemma \ref{lem:matrixval-bound}, the valuations of the matrix entries of $a$ are all bounded by $(-v)(\varepsilon) = 0$, so $a \in A_\F(O)$.
On the other hand, if $a\in A_\F(O)$, then the linear map $\operatorname{Ad}(a) \colon \frakg \to \frakg, X \mapsto aXa^{-1}$ restricts to multiplication by $\chi_\alpha(a)$ on $\frakg_\alpha$, from which can be concluded that $\chi_\alpha(a) \in O$.	Then $N_\F(a) \in O$ and $d(o,a.o) =0$, so $a \in \operatorname{Stab}_{G_\F}(o)$.

By Section \ref{sec:modelapartment}, $\A \cong A_\Lambda \cong A_\F/\{a \in A_\F \colon N(a) \in O \} \cong A_\F / \operatorname{Stab}_{A_\F}(o) = A_\F / A_\F(O)$, explicitly $x\in\A$ corresponds to $a.o=aa\tran = a^2$ if and only if $f(x)=[a^2]$ in the language of Theorem \ref{thm:apt} and by the orbit stabilizer theorem for the action of $A_\F$ on $\B$, we have $A_\F.o \cong A_\F/\operatorname{Stab}_{A_\F}(o) \cong \A$.
\end{proof}

\begin{theorem}\label{thm:stab} 
	The stabilizer of $o\in\B$ in $G_\F$ is $G_\F(O) $.
\end{theorem}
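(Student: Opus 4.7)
The plan is to reduce the statement to the torus case, which has already been handled in Proposition \ref{prop:stab_A}, via the Cartan decomposition $G_\F = K_\F A_\F^+ K_\F$ from \cite[Theorem \ref{I-thm:KAK}]{AppAGRCF}. The bridge that makes this work is the observation that $K_\F$ sits inside $G_\F(O)$ for free: for every $k \in K_\F \subseteq \operatorname{SO}_n(\F)$ the entries satisfy $|k_{ij}| \leq 1$, so $(-v)(k) \leq 0$ by Lemma \ref{lem:matrixvaluation}(iii), which means $k$ has all entries in $O$. Moreover $K_\F \subseteq \operatorname{Stab}_{G_\F}(o)$, since $K_\F$ already stabilizes $\operatorname{Id} \in \X_\F$ by Proposition \ref{prop:trans}(b).

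For the inclusion $G_\F(O) \subseteq \operatorname{Stab}_{G_\F}(o)$, take $g \in G_\F(O)$ and decompose $g = kak'$ with $k, k' \in K_\F$ and $a \in A_\F^+$. Then $a = k^{-1} g (k')^{-1}$, and Lemma \ref{lem:matrixvaluation}(i) together with the fact that $k^{-1}, (k')^{-1}, g$ all have entries in $O$ gives $(-v)(a) \leq 0$, i.e.\ $a \in O^{n \times n}$. Since $g \in \operatorname{SL}_n(O)$ implies $g^{-1} \in \operatorname{SL}_n(O)$ (by Cramer's rule together with $\det(g) = 1$), the same argument applied to $a^{-1} = k' g^{-1} k$ yields $a^{-1} \in O^{n \times n}$, so $a \in A_\F(O)$. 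By Proposition \ref{prop:stab_A}(v)$\Rightarrow$(i) we get $a \in \operatorname{Stab}_{G_\F}(o)$, and since $k$ and $k'$ also stabilize $o$, so does $g = kak'$.

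For the reverse inclusion $\operatorname{Stab}_{G_\F}(o) \subseteq G_\F(O)$, let $g \in \operatorname{Stab}_{G_\F}(o)$ and again write $g = kak'$. Since $k, k' \in K_\F$ stabilize $o$, the element $a = k^{-1} g (k')^{-1}$ lies in $\operatorname{Stab}_{G_\F}(o) \cap A_\F$, so Proposition \ref{prop:stab_A}(i)$\Rightarrow$(v) gives $a \in A_\F(O)$. Then $g = kak'$ is a product of three matrices with entries in $O$, so Lemma \ref{lem:matrixvaluation}(i) yields $g \in G_\F(O)$.

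There is no real obstacle here beyond the bookkeeping above: once the Cartan decomposition and the elementary matrix-entry estimates of Lemma \ref{lem:matrixvaluation} are available, the question has been entirely shifted onto the torus, where Proposition \ref{prop:stab_A} already contains all the non-trivial content (in particular the use of Lemma \ref{lem:matrixval-bound} to pass from a bound on $\chi_\delta(a)$ for $\delta \in \Delta$ to a bound on the matrix entries of $a$). In that sense the present theorem is a corollary of Proposition \ref{prop:stab_A} combined with the Cartan decomposition.
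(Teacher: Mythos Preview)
Your proof is correct and follows essentially the same approach as the paper: both use the Cartan decomposition $g = kak'$ to reduce to the torus case handled in Proposition~\ref{prop:stab_A}, together with the observation that $K_\F = K_\F(O)$. Your argument is slightly more explicit in justifying why $g \in G_\F(O)$ forces $a \in A_\F(O)$ (via $a = k^{-1}g(k')^{-1}$ and submultiplicativity of the matrix valuation), a step the paper asserts without detail; the only cosmetic issue is that Lemma~\ref{lem:matrixvaluation}(i) is stated for elements of $A_\F$, but its proof works verbatim for arbitrary matrices in $\F^{n\times n}$, so your use of it is justified.
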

\begin{proof}	
	Let $g=kak' \in G_\F = K_\F \Ap_\F K_\F$. If $g\in \operatorname{Stab}_{G_\F}(o)$, then $d(\operatorname{Id},g.\operatorname{Id}) = (-v)(N_\F(a)) = 0$. By Proposition \ref{prop:stab_A}, $a \in A_\F(O)$. Now since $K_\F = K_\F(O)$, $g = kak' \in G_\F(O)$.
	
	If on the other hand we start with a $g\in G_\F(O)$, then $a\in \A_\F(O)$, and by Proposition \ref{prop:stab_A}, $N_\F(a) \in O$ and thus also $d(\operatorname{Id},g.\operatorname{Id}) =d(\operatorname{Id},a.\operatorname{Id}) =0 $, hence $g \in \operatorname{Stab}_{G_\F}(o)$.
\end{proof}

As an application of the Iwasawa retraction, Theorem \ref{thm:UAKret}, we can give a group decomposition for the stabilizer of $o$ in $\B$.

\begin{corollary}\label{cor:UAKO}
	There is an Iwasawa decomposition $G_\F(O) = U_\F(O) A_\F(O) K_\F$, meaning that for every $g \in G_\F(O)$ there are unique $u \in U_\F(O)$, $a\in A_\F(O)$, $k\in K_\F=K_\F(O)$ with $g=uak$.
\end{corollary}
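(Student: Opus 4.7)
\textbf{Proof proposal for Corollary \ref{cor:UAKO}.} The plan is to start from the Iwasawa decomposition $G_\F = U_\F A_\F K_\F$ of \cite[Theorem \ref{I-thm:KAU}]{AppAGRCF}, which already gives existence and uniqueness of the factorization $g = uak$ at the level of $G_\F$. So the only real content is to show that if $g \in G_\F(O)$, then both $u \in U_\F(O)$ and $a \in A_\F(O)$.

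First I would reduce the whole statement to showing $a \in A_\F(O)$. Indeed, $K_\F \subseteq \operatorname{SO}_n(\F) \subseteq \operatorname{SL}_n(O)$ since orthogonal matrices have entries bounded by $1$ (hence non-negative valuation), so $K_\F = K_\F(O) \subseteq G_\F(O)$. Once $a \in A_\F(O) \subseteq G_\F(O)$ is established, the identity $u = g k^{-1} a^{-1}$ expresses $u$ as a product in $G_\F(O)$, which is a group (being the intersection of the two groups $G_\F$ and $\operatorname{SL}_n(O)$), and since $u \in U_\F$, this yields $u \in U_\F \cap G_\F(O) = U_\F(O)$.

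To prove $a \in A_\F(O)$, I would use Theorem \ref{thm:stab} to rewrite $g \in G_\F(O)$ as $g \in \operatorname{Stab}_{G_\F}(o)$, i.e.\ $d(o, g.\operatorname{Id}) = 0$ in the pseudo-distance on $\X_\F$. By the very definition of the Iwasawa retraction, $\rho(g.\operatorname{Id}) = \rho(uak.\operatorname{Id}) = a.\operatorname{Id}$, and also $\rho(\operatorname{Id}) = \operatorname{Id}$. Applying Theorem \ref{thm:UAKret} (the retraction is $d$-diminishing) gives
\[
d(o, a.o) \;=\; d\bigl(\rho(o), \rho(g.\operatorname{Id})\bigr) \;\leq\; d(o, g.\operatorname{Id}) \;=\; 0.
\]
Hence $a \in \operatorname{Stab}_{A_\F}(o)$, which by Proposition \ref{prop:stab_A} is exactly $A_\F(O)$.

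I do not expect a serious obstacle here: the substantive work has already been done in Theorems \ref{thm:stab} and \ref{thm:UAKret} and in Proposition \ref{prop:stab_A}. The only thing to be a little careful about is the bookkeeping for the conventions $E_\F(O) = E_\F \cap \operatorname{SL}_n(O)$, in particular that $K_\F \subseteq G_\F(O)$ and that $G_\F(O)$ is genuinely a group, so that inverting $a$ and $k$ stays inside $G_\F(O)$. Uniqueness of $(u,a,k)$ in the corollary is inherited directly from the uniqueness already built into the Iwasawa decomposition of $G_\F$.
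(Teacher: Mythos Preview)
Your proof is correct and follows essentially the same route as the paper: use the Iwasawa retraction and Theorem \ref{thm:UAKret} to force $a \in A_\F(O)$, then recover $u \in U_\F(O)$ from $u = g k^{-1} a^{-1}$ and the fact that $G_\F(O)$ is a group. The only cosmetic point is that you mix notation between $\X_\F$ and $\B$ (writing $d(o, g.\operatorname{Id})$ and $\rho(o)$); keeping everything at the level of $\X_\F$ as $d(\operatorname{Id}, g.\operatorname{Id})$ would be cleaner.
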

\begin{proof}
	Let $g=uak \in G_\F(O) \subseteq  U_\F A_\F K_\F$. We have $\rho(g.\operatorname{Id}) = a.\operatorname{Id}$. Since $g\in G_\F(O)$, we have by Theorem \ref{thm:UAKret}
	\begin{align*}
		d(\operatorname{Id},a.\operatorname{Id}) &= d(\operatorname{Id}, \rho(g.\operatorname{Id}))  \leq  d(\operatorname{Id}, g.\operatorname{Id}) = 0.
	\end{align*}
	This means that $a \in A_\F \cap G_\F(O) = A_\F(O)$. Note that since $K_\F$ stabilizes $\operatorname{Id}\in \X_\F$, $K_\F=K_\F(O)$. Since $G_\F(O)$ is a subgroup of $G_\F$, $u = gk^{-1}a^{-1} \in G_\F(O)$, so $u \in U_{\F}(O)$.
\end{proof}


\section{Verification of the axioms for \texorpdfstring{$\B$}{B}}\label{sec:Bisbuilding} 

We continue in the setting of Section \ref{sec:building_def}: $\K$ and $\F$ are real closed fields such that $\K \subseteq \R \cap \F$ and $\F$ is non-Archimedean with an order compatible valuation $v$. Let $G<\operatorname{SL}_n$ be a semisimple connected self-adjoint algebraic $\K$-group. Let $\B = \X_\F/\!\sim$ as in Section \ref{sec:building} and $\A \cong A_\F.o  \subseteq \B$ as in Proposition \ref{prop:stab_A}. Denote the inclusion by $f_0 \colon \A \to \B$. Explicitly, in terms of Theorem \ref{thm:apt}, we have for all $x\in \A$, $f_0(x) = a.o$ if and only if $f(x)=[a^2]$. We define a set of charts
$$
\Fun = \left\{ g.f_0 \colon \A \to \B \colon g \in G_\F \right\}.
$$
The goal of this section is to show that $(\B,\Fun)$ is an affine $\Lambda$-building in the sense of Section \ref{sec:lambda_building}. Recall that a root system $\Phi$ is called \emph{reduced} if $\alpha \in \Phi$ implies $2\alpha \notin \Phi$. Recall that the root system $\Sigma$ of the Lie group $G_\R$ coincides with the root system $\KPhi$ of the algebraic group $G$ relative to an $\F$-split torus.

\begin{theorem}\label{thm:B_is_building}
	If the root system $\Sigma$ of $G_\R$ is reduced, then the pair $(\B,\Fun)$ is an affine $\Lambda$-building of type $\A = \A(\Sigma^\vee,\Lambda,T)$, where $T=\A$ is the full translation group.
\end{theorem}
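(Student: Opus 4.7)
The plan is to invoke Theorem~\ref{thm:equivalent_axioms}, which reduces the verification to axioms (A1), (A2), (A3), (A4), (TI), and (EC). Three of these are essentially immediate from what has already been established. Axiom (A3) follows from Proposition~\ref{prop:trans}(c): for any $x, y \in \B$ choose $g \in G_\F$ with $g.x = o$ and $g.y \in A_\F^+.o$, so that the chart $g^{-1}.f_0 \in \Fun$ covers both points. The triangle inequality (TI) is Theorem~\ref{thm:pseudodistance} transported to the quotient $\B$. For (A1), the translation part of $W_a = \A \rtimes W_s$ is realized by the action of $A_\F$ on $f_0(\A) = A_\F.o$ via Theorem~\ref{thm:apt}, and the spherical part $W_s$ is realized by elements of $\operatorname{Nor}_{G_\F}(A_\F)$ via Lemma~\ref{lem:actionWs}; in both cases, precomposition with $w \in W_a$ can be absorbed into the $G_\F$-action defining $\Fun$.

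\textbf{Axiom (A2).} The bulk of the work concerns (A2). The plan is to bootstrap from the pointwise stabilizer analysis of Section~\ref{sec:Bisbuilding}. Propositions~\ref{prop:UonA}, \ref{prop:Ualphaconv}, and \ref{prop:Uconv} show that the locus $\{p \in \A : u.p \in \A\}$ for $u \in U_\F$ is a finite intersection of half-apartments, hence $W_a$-convex. Via the mixed Iwasawa decomposition $G_\F = U_\F \cdot \operatorname{Nor}_{G_\F}(A_\F) \cdot G_\F(O)$ of Theorem~\ref{thm:BT_mixed_Iwasawa}, an arbitrary second chart $g.f_0$ can, up to the stabilizer of a point of the intersection and up to the action of $\operatorname{Nor}_{G_\F}(A_\F)$ (which realizes $W_s$), be reduced to the action of a single $u \in U_\F$; the normalizer factor then supplies the required $w \in W_a$ with $f_0|_B = g.f_0 \circ w|_B$, using Theorem~\ref{thm:stab_A} to discard the elliptic factor $A_\F(O) M_\F$ which fixes $\A$ pointwise. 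The reduced-root-system hypothesis enters through Subsection~\ref{sec:BT_rank_1}: Jacobson--Morozov inside the rank-one subgroup $L = \langle (U_\alpha)_\F, (U_{-\alpha})_\F\rangle$ produces the representatives $m(u) \in N_\F$ of affine reflections $r_{\alpha,\ell}$, without which the chart compatibility could not be obtained along walls.

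\textbf{Axioms (A4) and (EC).} For (A4), given two sectors $s_i = g_i.C_0$, apply the mixed Iwasawa decomposition to $g_1^{-1}g_2$ and use Proposition~\ref{prop:Uconv} to locate a $W_a$-convex fixed locus in $\A$ of the $U_\F$-factor; this locus contains subsectors of both $C_0$ and $(g_1^{-1}g_2).C_0$, which via $g_1$ transport to the required subsectors lying in the common apartment $g_1.f_0(\A)$. For (EC), if $f_1(\A) \cap f_2(\A) = H_{\alpha,\ell}^+$, then the reflection element $m(u) \in N_\F$ along the wall $M_{\alpha,\ell}$ produced in Subsection~\ref{sec:BT_rank_1} (again using that $\Sigma$ is reduced) yields the chart $f_3 := m(u).f_1$, whose image is exactly the symmetric difference of $f_1(\A)$ and $f_2(\A)$ together with the wall.

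\textbf{Main obstacle.} The principal difficulty is the delicate coupling between axiom (A2) and Theorem~\ref{thm:BTstab}. As flagged in the introduction, the full identification $\operatorname{Stab}_{G_\F}(\Omega) = U_\Omega^+ U_\Omega^- N_\Omega$ cannot be proved directly for arbitrary $\Omega$; instead the plan is to first establish it for finite $\Omega$ using the mixed Iwasawa decomposition, then to use this restricted form to extract the second half of (A2), and finally to promote this half of (A2) back to the full Theorem~\ref{thm:BTstab}. Navigating this circular-looking dependency, while keeping the rank-one reflections $m(u)$ under sufficient control to match charts on the nose rather than merely up to fixers, is where the reduced-root-system assumption is essential.
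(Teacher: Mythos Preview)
Your overall strategy and the sketches for (A1), (A3), (TI), and (A2) match the paper's approach closely, and your ``main obstacle'' paragraph accurately identifies the logical dependency the paper navigates between Theorem~\ref{thm:BTstab} and axiom (A2).

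There is, however, a genuine error in your sketch of (EC). You propose $f_3 := m(u).f_1$, but $m(u) \in \operatorname{Nor}_{G_\F}(A_\F)$ stabilizes the standard apartment $\A$ as a set, so $m(u).f_1(\A) = f_1(\A)$ and you have not produced a new apartment at all. The paper's construction (Proposition~\ref{prop:EC_alpha}) instead factors $m(u) = u' u u'$ with $u' \in (U_{-\alpha})_\F$ and takes $h := n(u')^{-1}$; the new chart $f_3$ comes from the \emph{root-group} element $u'$, which genuinely folds the apartment along the wall, not from the reflection $m(u)$ itself. The Jacobson--Morozov input is used to control the valuations $\varphi_{-\alpha}(u') = -\varphi_\alpha(u) = -\ell$, which is what pins the fold to the correct wall.

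Your (A4) sketch also diverges from the paper and contains a gap. You propose to use the mixed Iwasawa decomposition and then claim the fixed locus of the $U_\F$-factor ``contains subsectors of both $C_0$ and $(g_1^{-1}g_2).C_0$''. By Proposition~\ref{prop:Uconv} this fixed locus is $\bigcap_{\alpha>0} H_{\alpha,k_\alpha}^+$, which visibly contains a subsector of $C_0$, but there is no reason it should contain a subsector of an arbitrary translate of $C_0$ pointing in a different direction. The paper (Theorem~\ref{thm:A4}) instead uses the \emph{Bruhat} decomposition $G_\F = B_\F W_s B_\F$: writing $g = bkb'$, one first shows every element of $B_\F$ sends some subsector of $s_0$ into $s_0$ (Lemma~\ref{lem:A4:B_acts}), and the $W_s$-representative $k$ is what handles the change of direction between the two sectors inside a single apartment.
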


We will show the theorem by checking the set of axioms (A1), (A2), (A3), (A4), (TI), (EC), as described in Theorem \ref{thm:equivalent_axioms}. Axioms (A1), (A3) and (TI) are treated in Section \ref{sec:axiomsA1A3TI} and follow easily from what we have developed so far.
Before proving the other axioms, we will develop some theory. We will introduce explicit root group valuations to investigate the action of the root groups $(U_\alpha)_\F$. This results in partial results about $W_a$-convexity and allows us to describe the pointwise stabilizers of the fundamental Weyl chamber $C_0$, the entire apartment $\A$ and half-apartments $H_\alpha$. We will then follow some ideas of \cite{BrTi, Lan96}, that allow us to describe the stabilizers of arbitrary finite subsets of $\A$. In Section \ref{sec:A2_subsubsection} we first show the change of charts condition of axiom (A2), which we then use to describe the stabilizers of arbitrary (not necessarily finite) subsets of $\A$, which in turn allows us to conclude the proof of axiom (A2). Finally, axioms (A4) and (EC) are proven in Sections \ref{sec:A4} and \ref{sec:EC}.
 
 We develop much of the theory also for the case where $\Sigma$ may not be reduced, and explicitly mention when reduced $\Sigma$ is needed.
 The assumption that $\Sigma$ is reduced is directly used in the proof of axioms (A2) and (EC). Axiom (A4) uses the assumption indirectly, as it relies on the statement of (A2) in its proof. The remaining axioms (A1), (A3) and (TI) do not need the assumption. Further discussion of this assumption can be found in Section \ref{sec:beyond}.
 An alternative, much simpler proof of axiom (A2) in the case of $G_\F = \operatorname{SL}(n,\F)$ is given in Appendix \ref{sec:appendixSLn}.

\subsection{Axioms (A1), (A3) and (TI)}\label{sec:axiomsA1A3TI}

Three of the axioms follow directly from what we have done.

\begin{lemma}\label{lem:A1}
	The pair $(\B,\Fun)$ satisfies axiom
	\begin{enumerate}
		\item [(A1)] For all $f\in \Fun$, $w\in W_a$, $f \circ w \in \Fun$.
	\end{enumerate}
\end{lemma}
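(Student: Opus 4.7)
The plan is to exploit the $G_\F$-equivariance of the atlas to reduce to the case $f = f_0$, and then handle the semidirect product factors of $W_a = T \rtimes W_s$ separately. Since $(g.f_0) \circ w = g.(f_0 \circ w)$ and $\Fun$ is closed under the $G_\F$-action by definition, it suffices to show $f_0 \circ w \in \Fun$ for every $w \in W_a$. Writing $w = t_x \circ s$ with $t_x \in T$ and $s \in W_s$, I would argue that both factors can be realized by the $G_\F$-action on $\B$.

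For the translation $t_x$ by $x \in \A$, I would choose $a \in A_\F$ with $a.o = f_0(x)$, using the divisibility of $A_\F$ (Proposition \ref{prop:apt0}) to extract the square root implicit in the identification $f_0(x) = a.o \Leftrightarrow f(x) = [a^2]$. Then for any $y \in \A$ with $f_0(y) = b.o$, Corollary \ref{cor:f_additive} together with the commutativity and self-adjointness of $A_\F$ yields $f_0(t_x y) = f_0(x+y) = (ab).o = a.f_0(y)$, i.e.\ $f_0 \circ t_x = a.f_0 \in \Fun$.

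For the spherical part, $W_s$ is generated by the reflections $s_\alpha$, and by Lemma \ref{lem:actionWs} each $s_\alpha$ is implemented on $A_\Lambda$ via conjugation by some $n \in N_\F$. Since $n \in N_\F \subseteq K_\F \subseteq \operatorname{SO}_n(\F)$, $n$ fixes $o$ and satisfies $n\tran = n^{-1}$, so the $G_\F$-action gives $n.(b.o) = nb^2n^{-1} = (nbn^{-1}).o$ for any $b \in A_\F$. This means $f_0 \circ s_\alpha = n.f_0 \in \Fun$, and composing reflections extends the statement to all of $W_s$. Combining with the translation case yields $f_0 \circ w \in \Fun$, proving (A1). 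No serious obstacle arises here: the axiom essentially records the fact that the combinatorial $W_a$-action on the model apartment $\A$ is induced by the geometric $G_\F$-action on $\B$ via the subgroups $A_\F$ and $N_\F$.
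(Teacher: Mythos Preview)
Your proof is correct and follows essentially the same route as the paper's: both reduce to $f_0$, decompose $w$ as a translation followed by a product of reflections, and then realize the translation via some $a \in A_\F$ (using Corollary \ref{cor:f_additive}) and each reflection via some $n_i \in N_\F$ (using Lemma \ref{lem:actionWs}), arriving at $f_0 \circ w = (an).f_0$. The paper carries out the computation as a single chain $f\circ w(y) = gan.f_0(y)$ rather than treating the two factors separately, and it does not pause over the square-root issue (the existence of $a$ with $f_0(x)=a.o$ is immediate from $\A \cong A_\F.o$), but the argument is the same.
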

\begin{proof} 
	Let  $w= (t^x,w_s) \in W_a = T \rtimes W_s$, where $t$ is a translation by $x\in \A$ and $w_s=s_1 \cdots s_k$ is a product of reflections. Let $a\in A_\F$ such that $f_0(x)=a.o$ as in Theorem \ref{thm:apt}. Let $n_i \in N_\F$ that act like $s_i$ as in Lemma \ref{lem:actionWs} and let $n = n_1 \cdots n_k$. Let $g\in G_\F$ with $g.f_0 = f$.
	
	Now for all $y \in \A$, let $b_i\in A_\F$ such that $f_0(s_i \cdots s_k(y))= b_i.o$. Then by Corollary \ref{cor:f_additive} and Lemma \ref{lem:actionWs}
	\begin{align*}
		f\circ w (y) &= g.f_0(t^x(w_s(y))) 
		= g.f_0(x + w_s(y)) 
		= g.ab_1.o = ga.f_0(w_s(y)) \\
		&= ga.f_0(s_1 \cdots s_k(y)) = ga.n_1.b_2.o = gan_1.f_0(s_2 \cdots s_k(y))
		= \cdots \\
		&= gan_1 \cdots n_k.f_0(y) = gan.f_0(y),
	\end{align*} 
so for $\tilde{g} \coloneqq gan \in G_\F$ we have $f \circ w = \tilde{g}.f_0 \in \Fun$, proving axiom (A1).
\end{proof}
\begin{lemma}\label{lem:A3}
	The pair $(\B,\Fun)$ satisfies axiom 
	\begin{enumerate}
		\item [(A3)] For all $p,q\in \B$, there exists $f\in \Fun$ such that $p,q\in f(\A)$.
	\end{enumerate}
\end{lemma}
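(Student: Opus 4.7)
The plan is to lift $p$ and $q$ to representatives in $\X_\F$ and apply Proposition \ref{prop:trans}(c), which already gives us the simultaneous reduction to a common maximal flat. Specifically, choose $x, y \in \X_\F$ with $p = [x]$ and $q = [y]$. By Proposition \ref{prop:trans}(c), there exists $g \in G_\F$ such that $g.x = \operatorname{Id}$ and $g.y \in \ApF.\operatorname{Id} \subseteq A_\F.\operatorname{Id}$. Rearranging, $x \in g^{-1}.A_\F.\operatorname{Id}$ and $y \in g^{-1}.A_\F.\operatorname{Id}$.

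Now consider the chart $f \coloneqq g^{-1}.f_0 \in \Fun$. Its image is $f(\A) = g^{-1}.f_0(\A) = g^{-1}.A_\F.o \subseteq \B$. Since $x, y \in g^{-1}.A_\F.\operatorname{Id}$, passing to the quotient $\B = \X_\F/\!\sim$ and using that $G_\F$ acts on $\B$ by isometries (hence respects the equivalence relation), we obtain $p = [x] \in g^{-1}.A_\F.o = f(\A)$ and $q = [y] \in g^{-1}.A_\F.o = f(\A)$. Thus $f$ is the desired chart.

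There is no real obstacle here: axiom (A3) is essentially a restatement, at the level of $\B$, of the fact from Proposition \ref{prop:trans}(c) that any two points of $\X_\F$ can be simultaneously conjugated into the standard maximal flat $A_\F.\operatorname{Id}$. Once this is observed, a single application of the transitive action of $G_\F$ on $\Fun$ (which permutes charts via $f \mapsto g.f$) produces a chart whose image contains both $p$ and $q$.
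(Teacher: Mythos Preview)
Your proof is correct and follows essentially the same approach as the paper: lift $p,q$ to representatives in $\X_\F$, apply Proposition~\ref{prop:trans}(c) to obtain $g\in G_\F$ with $g.x=\operatorname{Id}$ and $g.y\in A_\F.\operatorname{Id}$, and take the chart $g^{-1}.f_0$. The paper's version is just slightly more terse, omitting the explicit remark about passing to the quotient.
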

\begin{proof}
	This follows from Proposition \ref{prop:trans}(c): if $[x],[y]\in \B$, for points $x,y \in \X_\F$, then there is a $g \in G_\F$ such that $g.x = \operatorname{Id} \in A_\F.\operatorname{Id} $ and $g.y \in A_\F.\operatorname{Id}$. Then $[x],[y] \in g^{-1}.f_0(\A)$ for the chart $g^{-1}.f_0 \in \Fun$.
\end{proof}

Axiom (TI) just states that $\B$ satisfies the triangle inequality. The triangle inequality was proven in Theorem \ref{thm:pseudodistance} using Kostant convexity and Iwasawa-retractions.

\subsection{Valuation properties of \texorpdfstring{$U_\F$}{U(F)}} \label{sec:root_group_valuation}

Recall from \cite[Section \ref{I-sec:U}]{AppAGRCF}, that $U_\F$ has subgroups $(U_\alpha)_\F$ for $\alpha \in \Sigma_{>0}$ defined as $(U_\alpha)_\F = \exp((\frakg_\alpha)_\F \oplus (\frakg_{2\alpha})_\F)$. Since $(U_\alpha)_\F$ is unipotent, the matrix exponential $\exp \colon (\frakg_\alpha)_\F \oplus (\frakg_{2\alpha})_\F \to (U_\alpha)_\F$ and the matrix logarithm 
\begin{align*}
	\log \colon (U_\alpha)_\F &\to (\frakg_\alpha)_\F \oplus (\frakg_{2\alpha})_\F \\
	u & \mapsto \sum_{k=1}^\infty (-1)^{k+1} \frac{(u-\operatorname{Id})^k}{k}
\end{align*}
are just polynomials on $(U_\alpha)_\F$. Viewing $\frakg_\F \subseteq \mathfrak{sl}(n ,\F) \subseteq \F^{n\times n}$, we can speak about the matrix entries $Z_{ij}$ of $Z \in \frakg_\F$. Let as in Section \ref{sec:matrixvaluation}
$$
(-v)(Z) := \max_{ij} \{ (-v)(Z_{ij}) \}.
$$
Inspired by \cite[6.2]{BrTi} and in order to avoid talking about matrix entries too much we introduce the \emph{root group valuations} $\varphi_\alpha$
\begin{align*}
	\varphi_\alpha \colon (U_{\alpha})_\F &\to \Lambda  \cup \{ -\infty \}\\
	\exp(X+X') & \mapsto \max \left\{ \max_{i,j} \left\{(-v)(X_{ij}) \right\}, \frac{1}{2} \max_{i,j} \left\{ (-v)(X'_{ij}) \right\}\right\}
\end{align*}
where $X \in (\frakg_\alpha)_\F$ and $X' \in (\frakg_{2\alpha})_\F$. The following Lemma justifies their name. Readers familiar with Bruhat-Tits theory may notice that we have chosen the opposite sign convention to simplify the notation.

\begin{lemma}\label{lem:BTgroup_valuation}
	For all $\alpha \in \Sigma$ and $u,v \in (U_{\alpha})_\F$
	$$
	\varphi_\alpha(uv) \leq \max \left\{ \varphi_\alpha (u) , \varphi_\alpha(v) \right\}
	$$
	and if $\varphi_\alpha(u) \neq \varphi_\alpha(v)$, then equality holds.
\end{lemma}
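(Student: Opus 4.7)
The strategy is to compute $uv$ directly via the Baker--Campbell--Hausdorff (BCH) formula and to exploit the very restrictive bracket relations inside $(\frakg_\alpha)_\F \oplus (\frakg_{2\alpha})_\F$. Writing $u = \exp(X+X')$ and $v = \exp(Y+Y')$ with $X,Y \in (\frakg_\alpha)_\F$ and $X', Y' \in (\frakg_{2\alpha})_\F$, the key observation is that $(\frakg_\alpha)_\F \oplus (\frakg_{2\alpha})_\F$ is at most $2$-step nilpotent: $[\frakg_\alpha, \frakg_\alpha] \subseteq \frakg_{2\alpha}$, while $[\frakg_\alpha, \frakg_{2\alpha}] \subseteq \frakg_{3\alpha}$ and $[\frakg_{2\alpha}, \frakg_{2\alpha}] \subseteq \frakg_{4\alpha}$ both vanish, since by the integrality condition (R3) no positive integer multiple of $\alpha$ beyond $2\alpha$ can be a root. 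Hence the BCH series terminates after a single commutator:
$$uv = \exp\Bigl( (X+Y) + \bigl(X' + Y' + \tfrac{1}{2}[X,Y]\bigr) \Bigr),$$
with $X+Y \in (\frakg_\alpha)_\F$ and $X'+Y'+\tfrac{1}{2}[X,Y] \in (\frakg_{2\alpha})_\F$. Note that order compatibility of the valuation forces every nonzero integer to have valuation $0$, so the coefficient $\tfrac{1}{2}$ is invisible to the matrix valuation.

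For the main inequality, I will apply Lemma \ref{lem:matrixvaluation}(ii) entrywise to each sum and Lemma \ref{lem:matrixvaluation}(i) to the matrix commutator $[X,Y] = XY - YX$, obtaining
$$(-v)([X,Y]) \leq (-v)(X) + (-v)(Y) \leq \varphi_\alpha(u) + \varphi_\alpha(v) \leq 2\max\{\varphi_\alpha(u), \varphi_\alpha(v)\}.$$
After halving, this bracket contribution stays inside the bound defining $\varphi_\alpha$, and combined with the obvious bounds on $(-v)(X+Y)$ and $(-v)(X'+Y')$ the definition of $\varphi_\alpha$ yields $\varphi_\alpha(uv) \leq \max\{\varphi_\alpha(u), \varphi_\alpha(v)\}$.

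For the strict-inequality clause, assume without loss of generality that $\varphi_\alpha(u) > \varphi_\alpha(v)$ and split according to whether the maximum defining $\varphi_\alpha(u)$ is realized by $(-v)(X)$ or by $\tfrac{1}{2}(-v)(X')$. In the first case $(-v)(X) > (-v)(Y)$, so the strict triangle inequality of $v$, applied at the matrix entry realizing $(-v)(X)$, gives $(-v)(X+Y) = (-v)(X) = \varphi_\alpha(u)$. In the second case $(-v)(X)$, $(-v)(Y)$ and $(-v)(Y')$ are all strictly smaller than $(-v)(X')$, so $(-v)([X,Y]) \leq (-v)(X)+(-v)(Y) < (-v)(X')$ and the same entrywise argument yields $(-v)(X'+Y'+\tfrac{1}{2}[X,Y]) = (-v)(X')$, hence $\varphi_\alpha(uv) = \tfrac{1}{2}(-v)(X') = \varphi_\alpha(u)$. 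Either way $\varphi_\alpha(uv) \geq \varphi_\alpha(u) = \max\{\varphi_\alpha(u),\varphi_\alpha(v)\}$, which together with the earlier inequality forces equality. The only slightly delicate point — and the main thing to check carefully — is that the max-of-entries matrix valuation inherits the strict triangle inequality from $v$, but this follows immediately by inspecting the entry at which one summand dominates.
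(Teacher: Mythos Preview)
Your approach is correct and essentially identical to the paper's: both use the truncated BCH formula $uv=\exp\bigl((X+Y)+(X'+Y'+\tfrac12[X,Y])\bigr)$, bound the commutator via $(-v)([X,Y])\le (-v)(X)+(-v)(Y)$ (the paper states the equivalent $\tfrac12(-v)([X,Y])\le\max\{(-v)(X),(-v)(Y)\}$), and then split the equality case according to which term realises $\varphi_\alpha$ of the dominant element. One sentence needs tightening: in your second case the claim ``$(-v)(X),(-v)(Y)<(-v)(X')$'' can fail when $(-v)(X')<0$, and in any event $a,b<c$ does not give $a+b<c$; what you actually have (and need) is $(-v)(X)<\tfrac12(-v)(X')$ and $(-v)(Y)\le\varphi_\alpha(v)<\tfrac12(-v)(X')$, whence $(-v)(X)+(-v)(Y)<(-v)(X')$, exactly as the paper argues.
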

\begin{proof} We first claim that for all $X,Y \in \frakg_{\alpha}$,
	$$
	\frac{1}{2} (-v)\left(\left[X,Y\right]\right) \leq \max \{ (-v)(X), (-v)(Y) \}.
	$$
	Indeed in matrix entries, $[X,Y]_{ij} = \sum_{k} X_{ik}Y_{kj} - Y_{ik}X_{kj}$, and hence
	\begin{align*}
		\frac{1}{2}(-v)\left(\left[X,Y\right]\right)  &= \frac{1}{2} \max_{ij} \left\{ (-v)\left( \sum_{k} X_{ik}Y_{kj} - Y_{ik}X_{kj}  \right)  \right\} \\
		&\leq \frac{1}{2} \max_{ijk}\left\{ (-v)(X_{ik} Y_{kj}) , (-v)(Y_{ik}X_{kj}) \right\} \\
		& \leq \frac{1}{2} \max_{ijk} \{ (-v)(X_{ik}) + (-v)(Y_{kj}), (-v)(Y_{ik}) + (-v)(X_{kj})  \} \\
		&\leq \frac{1}{2} \max \{ (-v)(X) + (-v)(Y) ,  
		(-v)(Y) + (-v)(X)  \} \\
		&\leq \frac{1}{2} \max \{ 2 (-v)(X), 2(-v)(Y) \} = \max \{ (-v)(X), (-v)(Y) \}.
	\end{align*}
	
	Now let $u = \exp(X+X')$ and $v=\exp(Y+Y')$ for  $X,Y \in (\frakg_\alpha)_\F$ and $X', Y' \in (\frakg_{2\alpha})_\F$. Then
	$$
	u\cdot v = \exp \left( X+Y + X'+Y'+ \frac{1}{2}[X,Y] \right)
	$$
	by the Baker-Campbell-Hausdorff-formula \cite[Proposition \ref{I-prop:BCH}]{AppAGRCF}. By the claim
	\begin{align*}
		\varphi_\alpha(uv) &= \max \left\{ (-v)(X+Y) , \frac{1}{2} (-v)\left(X'+Y' + \frac{1}{2}[X,Y]\right) \right\} \\
		& \leq \max\left\{    (-v)(X), (-v)(Y) , \frac{1}{2}(-v)(X'), \frac{1}{2}(-v)(Y')    \right\} \\
		&= \max \{  \varphi_\alpha(u), \varphi_{\alpha}(v)  \}.
	\end{align*}
	
	Without loss of generality $\varphi_\alpha(u) < \varphi_\alpha(v)$ and we distinguish two cases. If $\varphi_\alpha(v) = (-v)(Y) \geq \frac{1}{2}(-v)(Y')$, then $(-v)(X) < (-v)(Y)$ and $\frac{1}{2}(-v)(X') < (-v)(Y)$. Thus using the claim
	\begin{align*}
		\varphi_\alpha (uv) & = \max \left\{ (-v)(Y), \frac{1}{2} (-v) \left( X' + Y' + \frac{1}{2}[X,Y] \right)\right\} \\
		&= \max \left\{  (-v)(Y), \frac{1}{2}(-v)(X'), \frac{1}{2}(-v)(Y'), (-v)(X), (-v)(Y) \right\} \\
		&		= (-v)(Y) = \varphi_\alpha(v) .
	\end{align*}
	If on the other hand $\varphi_\alpha(v) = \frac{1}{2}(-v)(Y') > (-v)(Y)$, then $(-v)(X)<\frac{1}{2}(-v)(Y')$ and $(-v)(X') <  (-v)(Y')$. Thus using the claim
	\begin{align*}
		(-v)\left(X' + \frac{1}{2}[X,Y]\right) & \leq \max\left\{ (-v)(X') , 2(-v)(X), 2(-v)(Y) \right\} < (-v)(Y')
	\end{align*}
	and
	\begin{align*}
		\varphi_\alpha(uv) &=\max \left\{ (-v)(X+Y) ,\frac{1}{2}(-v)\left(Y'+ X' + \frac{1}{2}[X,Y]    \right)  \right\} \\
		&= \max \left\{ (-v)(X+Y) , \frac{1}{2} (-v)(Y')  \right\} = \frac{1}{2}(-v)(Y') = \varphi_\alpha(v). \qedhere
	\end{align*}
\end{proof} 

\begin{lemma}\label{lem:orthogonal_valuation}
	Let $B_\theta$ be the scalar product defined on $\frakg_\F$ in \cite[Section \ref{I-sec:killing_involutions_decompositions}]{AppAGRCF}, then
	$$
	(-v)(X) = (-v)\left(\sqrt{B_\theta(X,X)}\right)
	$$
	for all $X \in \frakg_\F$. If $X,Y \in \frakg_\F$ are orthogonal with respect to $B_\theta$, then 
	$$
	(-v)(X+Y) = \max\{ (-v)(X), (-v)(Y) \}.
	$$ 
\end{lemma}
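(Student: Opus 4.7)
The plan is to reduce everything to a single observation about valuations on the real closed field $\F$: for non-negative elements $a_1,\dots,a_k \in \F_{\geq 0}$ we have
\[
(-v)(a_1+\cdots+a_k) = \max_i(-v)(a_i),
\]
since $\max_i a_i \leq \sum_i a_i \leq k \cdot \max_i a_i$, order-compatibility turns this sandwich into inequalities of valuations, and $(-v)(k)=0$ because every nonzero element of $\K$ (in particular every positive integer) has valuation zero by the Archimedean property of $\K \subseteq \R \cap \F$. Because $\Lambda$ is divisible, square roots commute with valuation: $(-v)(\sqrt{a}) = \tfrac{1}{2}(-v)(a)$ for $a \in \F_{\geq 0}$, and correspondingly $(-v)(a^2) = 2(-v)(a)$ for all $a \in \F$.

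For the first identity, I would pick a $B_\theta$-orthonormal basis $E_1,\dots,E_d$ of $\frakg_\K$; such a basis exists because $B_\theta|_{\frakg_\K}$ is a positive definite symmetric form on a finite-dimensional vector space over the real closed field $\K$, hence diagonalizable with positive eigenvalues admitting square roots in $\K$. Writing $X = \sum_i X_i E_i$ with $X_i \in \F$ gives $B_\theta(X,X) = \sum_i X_i^2$, so the observation above yields
\[
(-v)\!\left(\sqrt{B_\theta(X,X)}\right) = \tfrac{1}{2}(-v)\!\left(\sum_i X_i^2\right) = \max_i(-v)(X_i).
\]
It remains to match this with the matrix-entry valuation $(-v)(X) = \max_{ij}(-v)(X_{ij})$ from Section \ref{sec:matrixvaluation}. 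Since the linear map between the coordinate vectors $(X_i)$ and $(X_{ij})$, as well as its inverse on the image, are given by $\K$-valued matrices, and every nonzero element of $\K$ has valuation zero, both maxima produce the same element of $\Lambda\cup\{-\infty\}$.

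The orthogonality statement then follows immediately: for $B_\theta$-orthogonal $X,Y$ we have $B_\theta(X+Y,X+Y) = B_\theta(X,X)+B_\theta(Y,Y)$, a sum of two non-negative elements of $\F$, so the first part combined with the sum formula gives
\[
(-v)(X+Y) = \tfrac{1}{2}(-v)\!\left(B_\theta(X,X)+B_\theta(Y,Y)\right) = \max\{(-v)(X),(-v)(Y)\}.
\]

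The only mildly subtle step is the matching of $\max_i(-v)(X_i)$ with $\max_{ij}(-v)(X_{ij})$ under change of basis; the Archimedean hypothesis $\K\subseteq\R\cap\F$ is precisely what guarantees the change-of-basis matrices are valuation-preserving, so no further difficulty arises.
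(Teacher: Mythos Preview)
Your argument is correct. The second part (the orthogonality statement) is identical to the paper's: both expand $B_\theta(X+Y,X+Y)=B_\theta(X,X)+B_\theta(Y,Y)$ and use that a sum of two non-negative elements of $\F$ has valuation equal to the maximum.

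For the first identity the paper takes a slightly different route. Rather than choosing a $B_\theta$-orthonormal basis of $\frakg_\K$ and comparing the two coordinate systems via $\K$-linear change of basis, the paper observes that $(-v)(X)=(-v)(\lVert X\rVert_\infty)$ and then invokes the equivalence of norms on the finite-dimensional real space $\frakg_\R$: there is a constant $k\in\N$ with
\[
\tfrac{1}{k}\sqrt{B_\theta(X,X)}\ \leq\ \lVert X\rVert_\infty\ \leq\ k\sqrt{B_\theta(X,X)},
\]
which transfers to $\F$ by the transfer principle; since $(-v)(k)=0$, the sandwich collapses to the desired equality. Your approach trades the transfer principle for an explicit Gram--Schmidt basis over $\K$, which is arguably more elementary and makes the role of the hypothesis $\K\subseteq\R\cap\F$ (so that $\K$-entries have valuation zero) completely transparent. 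The paper's approach is shorter and hides the linear algebra inside the single phrase ``all norms are equivalent''; both ultimately rest on the same fact that constants coming from $\K$ are invisible to the valuation.
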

\begin{proof}
	We note that $(-v)(X) = (-v)(\lVert X\rVert_\infty)$ with the supremum norm $\lVert \cdot \rVert_\infty$. Since on $\frakg_\R$, all norms are equivalent, we can use the transfer principle to obtain a constant $k \in \N$ such that for all $X \in \frakg_\F$
	$$
	\frac{1}{k} \sqrt{B_\theta(X,X)} \leq \lVert X \rVert_\infty \leq k \sqrt{B_{\theta}(X,X)},
	$$
	from which $(-v)(X) = (-v)\left(\lVert X\rVert_\infty\right) = (-v)\left(\sqrt{B_\theta(X,X)}\right)$ follows.
	
	Now if $X,Y \in \frakg_\F$ are orthogonal, $B_\theta(X,Y) = 0$, then
	\begin{align*}
		(-v)(X+Y) &= (-v)\left(\sqrt{B_\theta (X+Y, X+Y)} \right)= \frac{1}{2}(-v)\left(B_\theta (X,X)+ B_{\theta}(Y,Y)\right) \\ 
		&= \frac{1}{2}\max\left\{ (-v)(B_{\theta}(X,X)), (-v)(B_{\theta}(Y,Y)) \right\} \\
		& = \max\{(-v)(X), (-v)(Y) \}
	\end{align*}
	where we used positive definiteness to see that $B_\theta(X,X)$ and $B_{\theta}(Y,Y)$ do not cancel.
\end{proof}

The root group valuation allows us to describe when an element of $(U_\alpha)_\F$ fixes the base point $o \in \B$.

\begin{lemma}\label{lem:stab_Ualpha}
	Let $\alpha \in \Sigma$ and $u \in (U_\alpha)_\F$. The following are equivalent
	\begin{enumerate}
		\item [(i)] $u.o = o$,
		\item [(ii)] $\varphi_\alpha(u) \leq 0$,
		\item [(iii)] $u = \exp(X + X')$ for some $X\in (\frakg_\alpha)_\F(O), \ X'\in (\frakg_{2\alpha})_\F(O)$,
		\item [(iv)] $\log(u) \in \frakg_\F(O)$.
	\end{enumerate}
Moreover $\varphi_\alpha(u) < 0$ if and only if $(-v)\left((u - \Id)_{ij}\right) < 0$ for all $i,j$.
\end{lemma}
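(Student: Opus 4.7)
The plan is to establish the chain (i) $\Leftrightarrow$ (iv) $\Leftrightarrow$ (iii) $\Leftrightarrow$ (ii), with the moreover following from strict-inequality versions of the same steps. The three key ingredients are Theorem~\ref{thm:stab} identifying $\operatorname{Stab}_{G_\F}(o)$ with $G_\F(O)$, the unipotence of $u$ (which makes $\exp$ and $\log$ mutually inverse \emph{polynomial} maps with $\Q$-coefficients, and $\Q \subseteq O^\times$ since the valuation is trivial on $\Q$), and the $B_\theta$-orthogonality of $(\frakg_\alpha)_\F$ and $(\frakg_{2\alpha})_\F$ which lets me invoke Lemma~\ref{lem:orthogonal_valuation} on $\log(u) = X + X'$.

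The equivalences (ii) $\Leftrightarrow$ (iii) $\Leftrightarrow$ (iv) are mostly unwinding definitions. Writing $u = \exp(X + X')$, the definition $\varphi_\alpha(u) = \max\{(-v)(X), \tfrac12(-v)(X')\}$ is $\leq 0$ iff $(-v)(X) \leq 0$ and $(-v)(X') \leq 0$, iff every matrix entry of $X$ and $X'$ lies in $O$, which proves (ii) $\Leftrightarrow$ (iii). For (iii) $\Leftrightarrow$ (iv), I note that $\theta X' \in (\frakg_{-2\alpha})_\F$, and since the Killing form pairs $\frakg_\beta$ nontrivially only with $\frakg_{-\beta}$, one has $B_\theta(X, X') = -B(X, \theta X') = 0$ (using $\alpha \neq 2\alpha$). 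Hence Lemma~\ref{lem:orthogonal_valuation} gives $(-v)(\log u) = (-v)(X + X') = \max\{(-v)(X), (-v)(X')\}$, so $\log(u) \in \frakg_\F(O)$ iff $X$ and $X'$ are individually $O$-valued.

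For (i) $\Leftrightarrow$ (iv), Theorem~\ref{thm:stab} gives $u.o = o$ iff $u \in G_\F(O)$, equivalently iff every entry of $u - \Id$ lies in $O$. Since $u$ is unipotent, $u - \Id$ is nilpotent of some bounded order $N$, so
$$\log(u) = \sum_{k=1}^{N} (-1)^{k+1}\frac{(u - \Id)^k}{k}, \qquad u - \Id = \sum_{k=1}^{N} \frac{\log(u)^k}{k!}$$
are finite polynomial expressions in each other with rational coefficients. Products and sums of $O$-valued entries remain in $O$, and $k, k! \in \Q \subseteq O^\times$, so both directions preserve the property of having entries in $O$; hence $u - \Id \in O^{n \times n}$ iff $\log(u) \in \frakg_\F(O)$.

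The moreover is the same chain of equivalences with $\leq 0$ replaced by $< 0$: strict negativity passes through the orthogonal identity $(-v)(\log u) = \max\{(-v)(X), (-v)(X')\}$, and through the finite polynomial expressions above (each entry of $(u-\Id)^k/k$ or $\log(u)^k/k!$ has $(-v)$ bounded by $k$ times the maximal entrywise $(-v)$, hence stays strictly negative when that maximum is). So $\varphi_\alpha(u) < 0$ is equivalent to $(-v)((u - \Id)_{ij}) < 0$ for every $i,j$. The only genuinely non-routine step is the $B_\theta$-orthogonality of root spaces for distinct restricted roots, which is classical from the interaction of $\theta$ with the root space decomposition; everything else is bookkeeping with the valuation and Theorem~\ref{thm:stab}.
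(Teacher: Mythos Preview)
Your proof is correct and follows essentially the same route as the paper's: Theorem~\ref{thm:stab} for (i)$\Leftrightarrow$(iv) via the polynomial $\exp$/$\log$, then Lemma~\ref{lem:orthogonal_valuation} together with the $B_\theta$-orthogonality of the root spaces for (iv)$\Leftrightarrow$(iii), with (iii)$\Leftrightarrow$(ii) immediate from the definition of $\varphi_\alpha$. Your treatment is slightly more explicit than the paper's in justifying $\Q\subseteq O^\times$ and in deriving the orthogonality directly from the Killing-form pairing of root spaces (the paper instead cites an external proposition), but the argument is the same.
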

\begin{proof}
	Let $u = \exp(X+X')$ with $X\in (\frakg_\alpha)_\F, X'\in (\frakg_{2\alpha})_\F$. By Theorem \ref{thm:stab}, (i) $u.o = o$ is equivalent to $u \in G_\F(O)$ which by applying the logarithm, which is a polynomial, is equivalent to $X+X' \in \frakg_\F (O)$, (iv). By \cite[Proposition \ref{I-prop:root_decomp}]{AppAGRCF} the root space decomposition is orthogonal with respect to the Killing form. Since $X$ and $X'$ lie orthogonal to each other, $X,X' \in \frakg_\F(O)$ individually (iii), by Lemma \ref{lem:orthogonal_valuation}. It is then clear that all the matrix entries of $X$ and $X'$ lie in $O$, hence (ii) $\varphi_\alpha(u) \leq 0$. All these implications are equivalences.
	
    If $\varphi_\alpha(u) < 0$, then $(-v)(X_{ij})< 0$ and $(-v)(X_{ij}') < 0$. Then
    $$
    (-v)\left((u-\Id)_{ij} \right)= (-v)\left(\left( \sum_{k=1}^\infty \frac{(X+X')^k}{k!} \right)_{ij} \right)< 0.
    $$  
    If $(-v)((u-\Id)_{ij})<0$, then 
    $$
    (-v)((X+X')_{ij}) =(-v)( \log(u)_{ij} ) = (-v)\left( \left(\sum_{k=1}^{\infty} (-1)^{k+1}\frac{(u-\Id)^k}{k} \right)_{ij} \right) < 0,
    $$
    so with Lemma \ref{lem:orthogonal_valuation}, $\varphi_\alpha(u) < 0$.
\end{proof}

\subsection{\texorpdfstring{$W_a$}{W}-convexity for \texorpdfstring{$U_\F$}{U(F)}}\label{sec:Wconvexity_for_U}

We first use the Iwasawa retraction to show that if $p \in \A$ is sent to $q \in \A$ by an element $u \in U_\F$, then $p=q$.

\begin{proposition} \label{prop:UonA}
	For all $u\in U_\F$ and $a \in A_\F$
	$$
	ua.o \in \mathbb{A} \iff ua.o = a.o. 
	$$
\end{proposition}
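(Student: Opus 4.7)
The plan is to use the Iwasawa retraction $\rho \colon \X_\F \to A_\F.\Id$ from Theorem \ref{thm:UAKret} as the essential tool. The reverse implication is immediate, since $a.o \in A_\F.o = \A$, so all the work is in the forward direction.

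First I would observe that $\rho$ descends to a well-defined map $\bar\rho \colon \B \to \A$. Indeed, $\rho$ is $d$-diminishing by Theorem \ref{thm:UAKret}, so if $d(x,y)=0$ in $\X_\F$ then $d(\rho(x),\rho(y))=0$, i.e.\ $\rho(x)\sim\rho(y)$. Moreover $\bar\rho$ remains a retraction onto $\A = A_\F.o$: for $b \in A_\F$, the element $b.\Id$ already has Iwasawa decomposition $\Id \cdot b \cdot \Id \in U_\F A_\F K_\F$, so $\rho(b.\Id) = b.\Id$ and hence $\bar\rho(b.o) = b.o$.

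Next, for $u \in U_\F$ and $a \in A_\F$, the element $ua \in G_\F$ has Iwasawa decomposition $u \cdot a \cdot \Id$, so by definition $\rho(ua.\Id) = a.\Id$, and projecting to $\B$ gives $\bar\rho(ua.o) = a.o$. Now suppose $ua.o \in \A$. Then $ua.o$ lies in the image of $\bar\rho$'s identity section, so $\bar\rho(ua.o) = ua.o$. Combining the two identities yields $ua.o = a.o$, which is the conclusion.

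There is no real obstacle here; the only subtle point is making sure the Iwasawa retraction descends cleanly from $\X_\F$ to $\B$, which is immediate from the $d$-diminishing property. All the heavy lifting (Kostant convexity, the Iwasawa decomposition) has already been packaged into Theorem \ref{thm:UAKret} and Lemma \ref{lem:rhoA}.
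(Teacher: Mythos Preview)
Your proof is correct and follows essentially the same approach as the paper: both use the Iwasawa retraction $\rho$ from Theorem \ref{thm:UAKret}, noting that $\rho(ua.\Id)=a.\Id$ while $\rho$ fixes points of $A_\F.\Id$, and then invoke the $d$-diminishing property to conclude. The paper carries out the inequality $d(b.o,a.o)\le d(b.o,ua.o)=0$ directly at the level of $\X_\F$ rather than first packaging $\rho$ into a descended map $\bar\rho$ on $\B$, but this is only a cosmetic difference.
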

\begin{proof}
	For all $u \in U_\F$ and $a \in A_\F$. If $ua.o = a.o$, then clearly $ua.o \in \mathbb{A}$. We now have to show the converse. So let $ua.o\in \mathbb{A}$, meaning that there exists $b\in  A_\F$ such that $d(ua.o , b.o) =0$. Since the Iwasawa retraction $\rho$ is distance diminishing by Theorem \ref{thm:UAKret}, we have
	\begin{align*}
		d(b.o, a.o) &= d(b.\operatorname{Id},a.\operatorname{Id}) = d(\rho(b.\operatorname{Id}),\rho(u.a.\operatorname{Id})) \\
		&\leq d(b.\operatorname{Id},ua.\operatorname{Id}) = d(b.o, ua.o) = 0,
	\end{align*}
	and thus $a.o = b.o = ua.o \in \A$, as claimed.
\end{proof}
We now show that the set of points in $\A$ that are fixed by an element $u\in (U_\alpha)_\F$ is a half-apartment and in particular $W_a$-convex.

\begin{proposition}\label{prop:Ualphaconv}
	Let $\alpha \in \Sigma$. For $u \in (U_\alpha)_\F$ we have
	$$
	\left\{ p \in \mathbb{A} \colon u.p  \in \mathbb{A}  \right\} = \left\{ a.o \in \A \colon \varphi_\alpha(u) \leq (-v)\left(\chi_\alpha\left(a\right)\right)  \right\}
	$$
	and therefore this set is a half-apartment (with a wall parallel to the wall defined by $(-v)(\chi_\alpha) = 0$) when $u \neq \operatorname{Id}$. 
\end{proposition}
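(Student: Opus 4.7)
The plan is to reduce the left-hand side to a statement about the stabilizer of $o$ using Proposition \ref{prop:UonA}, and then to use the fact that conjugation by $a \in A_\F$ shifts $\varphi_\alpha$ by exactly $(-v)(\chi_\alpha(a))$. Concretely, every $p \in \A$ has the form $p = a.o$ for some $a \in A_\F$. By Proposition \ref{prop:UonA}, the condition $u.(a.o) \in \A$ is equivalent to $ua.o = a.o$, i.e., to $a^{-1}ua \in \operatorname{Stab}_{G_\F}(o)$, which by Theorem \ref{thm:stab} equals $G_\F(O)$. Since conjugation by $A_\F$ preserves each root group, $a^{-1}ua \in (U_\alpha)_\F \cap G_\F(O)$, and the equivalence (i)$\Leftrightarrow$(ii) in Lemma \ref{lem:stab_Ualpha} then says this is equivalent to $\varphi_\alpha(a^{-1}ua) \leq 0$.

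The core computation is the transformation rule
$$\varphi_\alpha(a^{-1}ua) \;=\; \varphi_\alpha(u) - (-v)(\chi_\alpha(a)).$$
Writing $u = \exp(X+X')$ with $X \in (\frakg_\alpha)_\F$ and $X' \in (\frakg_{2\alpha})_\F$, the torus $A_\F$ acts on each root space by the corresponding character, so $a^{-1}Xa = \chi_\alpha(a)^{-1}X$ and $a^{-1}X'a = \chi_{2\alpha}(a)^{-1}X' = \chi_\alpha(a)^{-2}X'$. Hence $a^{-1}ua = \exp\!\bigl(\chi_\alpha(a)^{-1}X + \chi_\alpha(a)^{-2}X'\bigr)$; plugging into the definition of $\varphi_\alpha$ and using that $(-v)$ sends products to sums gives the displayed shift formula. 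Combined with the previous paragraph, $u.p \in \A$ is equivalent to $\varphi_\alpha(u) \leq (-v)(\chi_\alpha(a))$, which proves the set equality.

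It remains to observe that the right-hand side is a well-defined subset of $\A$: if $a.o = b.o$ then $ab^{-1} \in A_\F(O)$ by Proposition \ref{prop:stab_A}, so $(-v)(\chi_\alpha(ab^{-1})) = 0$ and the condition on $(-v)(\chi_\alpha(\cdot))$ does not depend on the representative. When $u \neq \Id$ one has $\varphi_\alpha(u) \in \Lambda$ (not $-\infty$), and under the identification $f_0\colon \A \to A_\F.o$ from Section \ref{sec:Xapartment} the set corresponds to $\{x \in \A \colon 2\varphi_\alpha(u) \leq \alpha(x)\}$, a half-apartment $H_{\alpha,\,2\varphi_\alpha(u)}^+$ whose wall is parallel to $M_{\alpha,0}$. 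The main place to be careful is tracking the factor of $2$ coming from $f_0(x) = a.o \iff f(x) = [a^2]$, but beyond that the argument is a direct calculation once Proposition \ref{prop:UonA} and Lemma \ref{lem:stab_Ualpha} are in hand.
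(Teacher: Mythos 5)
Your argument coincides with the paper's: reduce to the stabilizer of $o$ via Proposition \ref{prop:UonA} and Theorem \ref{thm:stab}, conjugate using $a^{-1}\exp(X+X')a = \exp\!\left(\chi_\alpha(a)^{-1}X + \chi_\alpha(a)^{-2}X'\right)$, and feed the resulting shift $\varphi_\alpha(a^{-1}ua) = \varphi_\alpha(u) - (-v)(\chi_\alpha(a))$ into Lemma \ref{lem:stab_Ualpha}. Your added remarks on well-definedness of the right-hand side and on the factor of $2$ in the identification $f_0(x) = a.o \iff f(x)=[a^2]$ are correct and slightly more careful than the paper, which states the half-apartment conclusion directly in the $(-v)(\chi_\alpha(a))$-coordinates.
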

\begin{proof}
	Let $p=a.o \in \A$ and $u = \exp(X+X')$ with $X\in (\frakg_\alpha)_\F, X'\in (\frakg_{2\alpha})_\F$. We have
	\begin{align*}
		ua.o \in \A \stackrel{\text{Prop. }\ref{prop:UonA}}{\iff} & ua.o = a.o 
		\stackrel{\text{Thm. }\ref{thm:stab}}{\iff} a^{-1}ua \in G_\F(O) \\
		\iff &a^{-1} \exp\left(X + X'\right) a \in G_\F(O) \\
		\stackrel{ 
			\text{
				 \cite[Lemma \ref{I-lem:aexpXa}]{AppAGRCF}
			}
		 }{\iff} &\exp\left( \chi_\alpha\left(a\right)^{-1} X + \chi_\alpha(a)^{-2} X' \right) \in G_\F(O).
	\end{align*}
	Denoting $u' := \exp\left( \chi_\alpha\left(a\right)^{-1} X + \chi_\alpha(a)^{-2} X' \right)  $, by Lemma \ref{lem:stab_Ualpha}, the above are equivalent to ${\varphi_\alpha(u') \leq 0}$. Using the abbreviation $(-v)(Z) := \max_{ij}\{ (-v)(Z_{ij}) \}$ for $Z \in \frakg_\F$, we have
	\begin{align*}
		\varphi_\alpha(u') &= \max \left\{ (-v)\left( \chi_\alpha(a)^{-1}X  \right), \frac{1}{2} (-v)\left( \chi_\alpha(a)^{-2} X'  \right) \right\} \\
		&= \max \left\{  (-v)\left( X  \right), \frac{1}{2} (-v)\left(  X'  \right)  \right\} - (-v)(\chi_\alpha(a)) \\
		&= \varphi_\alpha(u) - (-v)(\chi_\alpha(a)) \leq 0 
	\end{align*}
	Thus we see that $ua.o \in \A$ is equivalent to $\varphi_\alpha(u) \leq (-v)(\chi_\alpha(a))$.
	%
	%
	%
\end{proof}
Before upgrading the previous result to all of $U_\F$, we consider what happens to products in $U_\F$.

\begin{lemma} \label{lem:nn'}
	Let $\eta \in \Sigma_{> 0}, u\in \exp((\frakg_\eta)_\F)$ and $u' \in \exp(\bigoplus_{\alpha > \eta}(\frakg_\alpha)_\F)$. If $uu' \in G_\F(O)$, then $u\in G_\F(O)$ and $u' \in G_\F(O)$.
\end{lemma}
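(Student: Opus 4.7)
The plan is to reduce to an additive statement about the logarithm of $uu'$ and then exploit the orthogonality of the root space decomposition. Since $u' = u^{-1}(uu')$, once $u \in G_\F(O)$ is established, the conclusion $u' \in G_\F(O)$ follows from the fact that $G_\F(O)$ is a group. So the main task is to show $u \in G_\F(O)$.

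Write $u = \exp(X)$ with $X \in (\frakg_\eta)_\F$ and $u' = \exp(Y)$ with $Y \in \bigoplus_{\alpha > \eta}(\frakg_\alpha)_\F$. The Lie algebra $(\frakg_\eta)_\F \oplus \bigoplus_{\alpha > \eta}(\frakg_\alpha)_\F$ is a sum of positive root spaces, hence nilpotent, so the Baker--Campbell--Hausdorff formula (\cite[Proposition \ref{I-prop:BCH}]{AppAGRCF}) terminates and produces $uu' = \exp(Z)$, where $Z$ equals $X + Y$ plus a finite sum of iterated brackets in $X$ and $Y$. Every nontrivial bracket term contains at least one $X$ and at least one $Y$, so by $[\frakg_\alpha,\frakg_\beta]\subseteq \frakg_{\alpha+\beta}$ it lies in some root space $\frakg_\gamma$ with
$$
\gamma = k\eta + \sum_i \alpha_i,\quad k\geq 1,\ \alpha_i > \eta,
$$
and in particular $\gamma > \eta$. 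Therefore $Z = X + Y'$ with $X \in (\frakg_\eta)_\F$ and $Y' \in \bigoplus_{\gamma > \eta}(\frakg_\gamma)_\F$.

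Now $uu' \in G_\F(O)$ is equivalent (via the polynomial matrix logarithm, as in the proof of Lemma \ref{lem:stab_Ualpha}) to $Z \in \frakg_\F(O)$, i.e.\ $(-v)(Z) \leq 0$. Since the root space decomposition is orthogonal with respect to the positive definite form $B_\theta$ (by \cite[Proposition \ref{I-prop:root_decomp}]{AppAGRCF}), the element $X \in (\frakg_\eta)_\F$ is orthogonal to $Y' \in \bigoplus_{\gamma \neq \eta}(\frakg_\gamma)_\F$. Lemma \ref{lem:orthogonal_valuation} then gives
$$
(-v)(Z) = \max\{(-v)(X),(-v)(Y')\},
$$
so in particular $(-v)(X) \leq 0$. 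Hence $X \in (\frakg_\eta)_\F(O)$, and therefore $u = \exp(X) \in G_\F(O)$. Finally $u' = u^{-1}(uu') \in G_\F(O)$, as desired.

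No serious obstacle is anticipated: everything follows once the root grading of the BCH expansion is paired with the $B_\theta$-orthogonality of root spaces. The only small point to verify carefully is that every mixed bracket in BCH lands strictly above $\eta$ in the root order, which is immediate from the observation that such brackets always contain at least one factor $X \in \frakg_\eta$ \emph{and} at least one factor from $Y \in \bigoplus_{\alpha > \eta}\frakg_\alpha$.
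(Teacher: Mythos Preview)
Your proof is correct and follows essentially the same approach as the paper: both use the BCH formula to see that $\log(uu')$ has $\frakg_\eta$-component exactly $X$, then invoke the $B_\theta$-orthogonality of the root space decomposition together with Lemma~\ref{lem:orthogonal_valuation} to conclude $X \in \frakg_\F(O)$, and finish with $u' = u^{-1}(uu')$. Your writeup is slightly more explicit about why the mixed BCH brackets land strictly above $\eta$, but the argument is the same.
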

\begin{proof}
	For $X= \log(u)$ and $Y= \log(u')$, we consider the BCH-formula from \cite[Proposition \ref{I-prop:BCH}]{AppAGRCF}
	$$
	\exp(X)\exp(Y)= \exp\left(X+Y+\frac{1}{2}[X,Y] + \frac{1}{12}\left(\left[X,\left[X,Y\right]\right]+\left[Y,\left[Y,X\right]\right]\right) + \ldots \right)
	$$
	Now if $uu' = \exp(X)\exp(Y) \in G_\F(O)$, then
	$$
	\log(uu') = X+Y+\frac{1}{2}[X,Y] + \frac{1}{12}([X,[X,Y]]+[Y,[Y,X]]) + \ldots \in \frakn_\F(O),
	$$
	where $\frakn_\F = \bigoplus_{\lambda \in \Sigma_{ > 0}} (\frakg_\lambda)_\F$ is an orthogonal direct sum. We note that $X\in (\frakg_{\eta})_\F$ is orthogonal to the remaining terms of $\log(uu')$, hence $X \in (\frakg_\eta)_\F(O)$, see Lemma \ref{lem:orthogonal_valuation}. Thus $u= \exp(X) \in G_\F(O)$ and hence also $u' = u^{-1}uu' \in G_\F(O)$ since $G_\F(O)$ is a group.
\end{proof}

\begin{proposition}\label{prop:Uconv}
	For $u\in U_\F$ there are $k_\alpha \in \Lambda \cup \{-\infty \}$ for $\alpha \in \Sigma_{>0}$ such that
	$$
	\{p \in \A \colon u.p \in \A\} = \left\{ a.o \in \A \colon  k_\alpha \leq (-v)\left(\chi_\alpha \left(a\right)\right) \text{ for all } \alpha \in \Sigma_{>0}   \right\}
	$$
	and therefore the set of fixed points is a finite intersection of half-apartments. If $u=u_1 \cdot \ldots \cdot u_k$ for $u_i \in (U_{\alpha_i})_\F$ with $\Sigma_{>0} = \{ \alpha_1, \ldots , \alpha_k \}$ such that $\alpha_1 > \ldots > \alpha_k$, then $k_{\alpha_i} = \varphi_{\alpha_i}(u_i)$. If $u$ fixes all of $\A$, then $u = \operatorname{Id}$. 
\end{proposition}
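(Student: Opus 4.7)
The plan is to use the factorization $u = u_1 \cdots u_k$ into root-group factors to reduce to the rank-one case covered by Proposition~\ref{prop:Ualphaconv}. Fix the total order on $\Sigma_{>0}$ from Section~\ref{sec:root_system}, writing $\Sigma_{>0} = \{\alpha_1 > \cdots > \alpha_k\}$; any $u \in U_\F$ then decomposes uniquely as $u = u_1 u_2 \cdots u_k$ with $u_i \in (U_{\alpha_i})_\F$. For $p = a.o$ with $a \in A_\F$, Proposition~\ref{prop:UonA} together with Theorem~\ref{thm:stab} shows that $u.p \in \A$ is equivalent to $u.p = a.o$, and hence to $a^{-1}ua \in G_\F(O)$.

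Since $A_\F$ normalizes each root group (\cite[Lemma~\ref{I-lem:aexpXa}]{AppAGRCF}), the conjugate factors as $a^{-1}ua = (a^{-1}u_1 a)(a^{-1}u_2 a) \cdots (a^{-1}u_k a)$ with $a^{-1}u_i a \in (U_{\alpha_i})_\F$. The crux of the proof, and the step I expect to be the main obstacle, is to show that this product lies in $G_\F(O)$ if and only if each factor does. I plan to establish this by iterated application of Lemma~\ref{lem:nn'}, peeling off the lowest-root factor at each stage. The required hypothesis is that, for every index $i$, the partial sum $\bigoplus_{j<i}(\frakg_{\alpha_j})_\F$ is a Lie subalgebra: indeed, if $\beta, \gamma > \alpha_i$ are positive roots with $\beta + \gamma \in \Sigma$, then $\beta + \gamma > 2\alpha_i > \alpha_i$, so $\beta + \gamma = \alpha_\ell$ for some $\ell < i$. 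The Baker--Campbell--Hausdorff formula then puts the prefix $(a^{-1}u_1 a) \cdots (a^{-1}u_{i-1} a)$ in $\exp(\bigoplus_{j<i}(\frakg_{\alpha_j})_\F)$, supplying the hypothesis of Lemma~\ref{lem:nn'}. I also note that the lemma's conclusion is insensitive to whether the lowest-root factor appears on the left or the right of the product, since in either case BCH shows that the $(\frakg_{\alpha_k})_\F$-component of $\log(\cdot)$ is $\log$ of that factor.

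Once the reduction is complete, each condition $a^{-1}u_i a \in G_\F(O)$ is, by Proposition~\ref{prop:Ualphaconv}, equivalent to $\varphi_{\alpha_i}(u_i) \leq (-v)(\chi_{\alpha_i}(a))$. Setting $k_{\alpha_i} := \varphi_{\alpha_i}(u_i) \in \Lambda \cup \{-\infty\}$ yields the asserted description of $\{p \in \A \colon u.p \in \A\}$ as the finite intersection $\bigcap_i H_{\alpha_i, k_{\alpha_i}}^+$. For the final claim: if $u$ fixes all of $\A$, then $k_{\alpha_i} \leq (-v)(\chi_{\alpha_i}(a))$ for every $a \in A_\F$. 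Choosing $a = t_{\alpha_i^\vee}(x)$ and invoking Proposition~\ref{prop:char_cochar_Lambda} gives $(-v)(\chi_{\alpha_i}(a)) = 2(-v)(x)$, which can be made arbitrarily negative in $\Lambda$, forcing $k_{\alpha_i} = -\infty$ and hence $u_i = \operatorname{Id}$; consequently $u = \operatorname{Id}$.
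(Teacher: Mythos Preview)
Your proposal is correct and follows essentially the same route as the paper: factorize $u$ into root-group elements via \cite[Lemma~\ref{I-lem:BCH_consequence}]{AppAGRCF}, conjugate by $a$, apply Lemma~\ref{lem:nn'} iteratively to separate the factors, and then invoke Proposition~\ref{prop:Ualphaconv}. Your treatment is in fact slightly more careful than the paper's in one respect: you explicitly address why Lemma~\ref{lem:nn'} applies even though the lowest-root factor sits on the right in the product $u_1\cdots u_k$ (the paper glosses over this), and your argument for the final claim via $a = t_{\alpha_i}(x)$ makes explicit what the paper leaves implicit.
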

\begin{proof}
	We use \cite[Lemma \ref{I-lem:BCH_consequence}]{AppAGRCF} to write
	$$
	u = u_1 \cdot \ldots \cdot u_k
	$$
	for some $u_i \in (U_{\alpha_i})_\F$ where $\Sigma_{>0}= \{\alpha_1, \ldots ,\alpha_k\}$ with $\alpha_1 > \ldots > \alpha_k$. By Proposition \ref{prop:UonA}, $u.p \in \A$ for $p=a.o\in \A$ if and only if $u.p=p$ and by Theorem \ref{thm:stab} and \cite[Proposition \ref{I-prop:anainN} ]{AppAGRCF}, $a^{-1}ua \in U_\F(O)$. Then
	$$
	a^{-1}ua = a^{-1}u_1a \cdot \ldots \cdot a^{-1}u_ka \in U_\F(O)
	$$ 
	and we can apply Lemma \ref{lem:nn'} repeatedly to obtain $a^{-1}u_1a \in U_\F(O)$, \ldots, $a^{-1}u_ka \in U_\F(O)$. By Proposition \ref{prop:Ualphaconv}, this implies
	$$
	k_{\alpha_i}:=\varphi_{\alpha_i}(u_i) \leq (-v)(\chi_{\alpha_i}(a))
	$$
	for all $\alpha_i \in \Sigma_{>0}$. All the previous implications are equivalences, concluding the description of the fixed point set of $u$. If $u$ fixes all of $\A$, then $a^{-1}ua \in U_\F(O)$ and $a^{-1}u_ia \in (U_{\alpha_i})_\F(O)$ for all $a \in A_\F$. This is only possible if $\varphi_{\alpha_i}(u_i) \leq (-v)(\chi_{\alpha_i}(a))$, so $u_i = \operatorname{Id}$ for all $i$ and thus $u = \operatorname{Id}$.
\end{proof}
As an application of the above, we can conclude that elements of $U_\F(O)$ fix the fundamental Weyl chamber, which is defined as 
$$
C_0 := \{a.o \in \A \colon 0 \leq (-v)(\chi_\alpha (a)) \text{ for all }\alpha \in \Sigma_{>0} \}.
$$

\begin{corollary}\label{cor:NO_fixes_s0}
	Let $u\in U_\F(O)$. Then $u.p = p$ for all $p \in C_0$.
\end{corollary}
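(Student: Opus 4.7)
The plan is to reduce the problem to Proposition \ref{prop:Uconv} and then verify a statement about matrix-integer root groups. Fix $p = a.o \in C_0$; by definition of the fundamental Weyl chamber, $(-v)(\chi_\alpha(a)) \geq 0$ for every $\alpha \in \Sigma_{>0}$. Write $u = u_1 \cdots u_k$ as in Proposition \ref{prop:Uconv}, with $u_i \in (U_{\alpha_i})_\F$ and $\alpha_1 > \cdots > \alpha_k$. If I can establish $\varphi_{\alpha_i}(u_i) \leq 0$ for every $i$, then
$$
\varphi_{\alpha_i}(u_i) \leq 0 \leq (-v)(\chi_{\alpha_i}(a)) \quad \text{for every } i,
$$
so Proposition \ref{prop:Uconv} yields $u.p \in \A$, and Proposition \ref{prop:UonA} upgrades this to $u.p = p$.

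The substance of the proof is thus the claim: whenever $u \in U_\F(O)$ is decomposed as above, each factor $u_i$ already lies in $(U_{\alpha_i})_\F(O)$, which by Lemma \ref{lem:stab_Ualpha} is equivalent to $\varphi_{\alpha_i}(u_i) \leq 0$. I would prove this by induction, peeling off one factor at a time and starting from the smallest root $\alpha_k$. Since $\bigoplus_{\alpha > \alpha_k}(\frakg_\alpha)_\F$ is a Lie subalgebra of $\frakn_\F$ (positive roots exceeding $\alpha_k$ sum to roots still exceeding $\alpha_k$), the Baker--Campbell--Hausdorff formula ensures that $u_1 \cdots u_{k-1} = \exp(Y)$ with $Y$ in this subalgebra. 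Applying Lemma \ref{lem:nn'} to the product $u_1 \cdots u_{k-1} \cdot u_k \in G_\F(O)$ then yields $u_k \in (U_{\alpha_k})_\F(O)$ and $u_1 \cdots u_{k-1} \in U_\F(O)$, after which I iterate on the shorter product.

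The main technical wrinkle I anticipate is an ordering mismatch together with a possible $\frakg_{2\alpha_k}$ contribution inside $(U_{\alpha_k})_\F$: Lemma \ref{lem:nn'} is formulated with the small-root factor on the left, whereas the decomposition of Proposition \ref{prop:Uconv} places it on the right, and its $u$ is an exponential of a single root space rather than $(\frakg_{\alpha_k})_\F \oplus (\frakg_{2\alpha_k})_\F$. Neither is a genuine obstruction: the proof of Lemma \ref{lem:nn'} only uses that iterated brackets between $X \in (\frakg_{\alpha_k})_\F \oplus (\frakg_{2\alpha_k})_\F$ and elements of $\bigoplus_{\alpha > \alpha_k}(\frakg_\alpha)_\F$ land in root spaces strictly above $\alpha_k$, a property insensitive both to the order of the factors and to the presence of $2\alpha_k$; orthogonality of distinct root spaces under $B_\theta$ then allows Lemma \ref{lem:orthogonal_valuation} to separate the valuations of the two components of $\log(u_1\cdots u_k)$. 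Once this is noted, the inductive step goes through verbatim and the corollary follows.
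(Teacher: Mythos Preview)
Your argument is correct, but it duplicates work already packaged in Proposition~\ref{prop:Uconv}. The paper's proof is a two-line application of that proposition: since $u\in U_\F(O)\subseteq G_\F(O)=\operatorname{Stab}_{G_\F}(o)$, the point $o$ lies in the fixed-point set described there, so taking $a=\operatorname{Id}$ gives $k_\alpha\leq (-v)(\chi_\alpha(\operatorname{Id}))=0$ for all $\alpha\in\Sigma_{>0}$; then any $p=a.o\in C_0$ satisfies $(-v)(\chi_\alpha(a))\geq 0\geq k_\alpha$ and Proposition~\ref{prop:Uconv} applies again.

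Your inductive peeling via Lemma~\ref{lem:nn'} is precisely the mechanism inside the proof of Proposition~\ref{prop:Uconv} (specialized to $a=\operatorname{Id}$), so you are re-deriving a consequence of a result you already invoke. The ordering and $\frakg_{2\alpha}$ concerns you raise are legitimate minor points about Lemma~\ref{lem:nn'} and its use in Proposition~\ref{prop:Uconv}, and your resolution is right; but for the corollary itself none of this needs to be revisited once Proposition~\ref{prop:Uconv} is in hand.
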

\begin{proof}
	Elements $u\in U_\F(O)$ fix $o \in \B$, so by Proposition \ref{prop:Uconv},
	$$
	o \in \{p \in \A \colon u.p \in \A\} = \{ a.o \in \A \colon   k_\alpha \leq (-v)\left(\chi_\alpha \left(a\right)\right) \text{ for all } \alpha \in \Sigma_{>0}  \}
	$$
	from which we conclude that $k_\alpha \leq 0$ for all $\alpha \in \Sigma_{>0}$. If now $p=a.o \in C_0$, then $(-v)(\chi_\alpha(a)) \geq 0 \geq k_\alpha$, so applying Proposition \ref{prop:Uconv} again results in $u.p = p$.
\end{proof}

We also obtain that any $u\in U_\F$ can be conjugated by $A_\F$ into $G_\F(O)$. This statement will be useful in the proof of (A4).

\begin{proposition}\label{prop:exists_a_NO}
	For every $u \in U_\F$ there is an $a\in A_\F$ such that $a^{-1}ua \in U_\F(O)$.
\end{proposition}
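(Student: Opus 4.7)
The approach is to reduce the statement to exhibiting a point of $\A$ that is fixed by $u$. Since $A_\F$ normalizes $U_\F$, the element $a^{-1}ua$ automatically lies in $U_\F$ for every $a \in A_\F$, so it is enough to force $a^{-1}ua \in G_\F(O)$; by Theorem \ref{thm:stab} this is equivalent to $u.(a.o) = a.o$, which by Proposition \ref{prop:UonA} is in turn equivalent to the purely apartment-theoretic condition $u.(a.o) \in \A$. The goal therefore becomes: produce some $a \in A_\F$ whose orbit point $a.o$ is mapped into $\A$ by $u$.

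First I would decompose $u = u_1 \cdots u_k$ along the ordered positive roots $\alpha_1 > \cdots > \alpha_k$ of $\Sigma_{>0}$, using \cite[Lemma \ref{I-lem:BCH_consequence}]{AppAGRCF}, with $u_i \in (U_{\alpha_i})_\F$. Proposition \ref{prop:Uconv} then makes the target fixed-point set completely explicit: $u.(a.o) \in \A$ iff $(-v)(\chi_{\alpha_i}(a)) \geq k_i$ for every $i$, where $k_i := \varphi_{\alpha_i}(u_i) \in \Lambda \cup \{-\infty\}$. The remaining task is simply to produce an $a \in A_\F$ satisfying this finite collection of inequalities, one per positive root.

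To do so I would exploit the fact that every positive root $\alpha \in \Sigma_{>0}$ decomposes as $\alpha = \sum_{\delta \in \Delta} n_\delta \delta$ with $n_\delta \in \Z_{\geq 0}$ and $\sum_\delta n_\delta \geq 1$, so that
$$(-v)(\chi_\alpha(a)) = \sum_{\delta \in \Delta} n_\delta \, (-v)(\chi_\delta(a)).$$
Hence it suffices to force $(-v)(\chi_\delta(a)) \geq \mu$ for every simple root $\delta \in \Delta$, where $\mu \in \Lambda$ is chosen to be an upper bound of the finite set $\{k_1, \ldots, k_k\}$. Such an $a$ can be produced directly via the group isomorphism $\psi \colon A_\F \xrightarrow{\sim} \operatorname{Hom}_\Q(L, \F_{>0})$ of Proposition \ref{prop:apt0}: prescribe $\chi_\delta(a) := x$ for every $\delta \in \Delta$, where $x \in \F_{>0}$ is chosen with $(-v)(x) = \mu$. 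Surjectivity of $-v$ onto $\Lambda$ is available from the identification $\Lambda \cong \F^\times/O^\times$ recalled in Section \ref{sec:valued_real_closed_fields}.

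The bulk of the heavy lifting has already been done upstream: Proposition \ref{prop:UonA} (relying on the Iwasawa retraction) converts $u.(a.o) \in \A$ into an actual fixed-point statement, and Proposition \ref{prop:Uconv} describes the fixed-point locus as a finite intersection of half-apartments. Given these, I do not expect any serious obstacle. The subtlest point is merely ensuring compatibility of the system of inequalities indexed by $\Sigma_{>0}$, and the trick of controlling only the simple-root values, together with the non-negativity of the coefficients $n_\delta$, is precisely what guarantees that the finite intersection of half-apartments contains a translate of the fundamental Weyl chamber and is therefore non-empty.
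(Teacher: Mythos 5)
Your proof is correct and follows essentially the same strategy as the paper: both invoke Proposition \ref{prop:Uconv} to reduce the claim to exhibiting a point of $\bigcap_{\alpha>0} H_{\alpha,k_\alpha}^+$, and both then construct such a point explicitly by prescribing the values $(-v)(\chi_\delta(a))$ on the simple roots and exploiting that positive roots are non-negative integer combinations of simple roots (the paper routes the construction through the model apartment and Theorem \ref{thm:apt}, while you work directly in $A_\F$ via Proposition \ref{prop:apt0}, but this is just a change of coordinates). One small point to tighten: for the chain $(-v)(\chi_\alpha(a)) = \sum_{\delta} n_\delta (-v)(\chi_\delta(a)) \geq \mu \geq k_\alpha$ you also need $\mu \geq 0$, since $\sum_\delta n_\delta$ can exceed $1$; this is trivially arranged by taking $\mu := \max\{0, k_1, \ldots, k_k\}$, and the paper's choice of $\lambda$ carries the same implicit requirement.
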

\begin{proof}
	Use Proposition \ref{prop:Uconv} to obtain $k_\alpha \in \Lambda \cup \{-\infty\}$ for $\alpha \in \Sigma_{>0}$ such that $u$ fixes all the points in
	$$
	C:= \bigcap_{\alpha \in \Sigma_{ > 0}} H_{\alpha, k_\alpha}^+.
	$$ 
	We claim that $C$ is non-empty. Indeed, let $c\in \F_{>0}$ with $\lambda \coloneqq (-v)(c) > k_\alpha$ for all $\alpha >0$.
	Consider the fundamental Weyl chamber $D$ as a subset of $\operatorname{Span}_{\Q}(\Sigma^\vee)$ and let $H$ be an element in the interior of $D$, in fact we may assume that for all $\alpha \in \Sigma_{>0}$, $\alpha(H) \geq 1$. If we write $H=\sum_{\delta \in \Delta} q_\delta \delta^\vee \in \operatorname{Span}_\Q(\Sigma^\vee)$ and let
	$
	x \coloneqq \sum_{\delta \in \Delta} q_{\delta} \delta^\vee \otimes \lambda \in \A,
	$
	then 
	\begin{align*}
	\alpha(x) &= \sum_{\delta \in \Delta} q_\delta \alpha(\delta^\vee) \lambda = \lambda \alpha\left( \sum_{\delta \in \Delta} q_\delta \delta^\vee \right)
	= \lambda \alpha(H) \geq \lambda,
    \end{align*}
	see Sections \ref{sec:root_system} and \ref{sec:modelapartment} for definitions. Using the identification $\A \cong A_\Lambda$ from Theorem \ref{thm:apt}, $x\in \A$ corresponds to $[a] \in A_\Lambda$ with $\overline{\chi}_\alpha([a]) \geq \lambda$ for all $\alpha >0$, see Theorem \ref{prop:chi_f}. Any representative $a\in A_\F$ of $[a]\in A_\Lambda$ satisfies $(-v)\left(\chi_\alpha(a) \right) \geq \lambda  > k_\alpha$, so $a.o \in C$.	By \cite[Proposition \ref{I-prop:anainN}]{AppAGRCF}, $a^{-1}ua \in U_\F$ and by Theorem \ref{thm:stab}, $a^{-1}ua.o = a^{-1}a.o=o$ implies $a^{-1}ua \in U_\F(O)$.
\end{proof}

\subsection{Stabilizers of apartment, half-apartments and Weyl-chambers} \label{sec:stab_apt_halfapt_chambers}

In Corollary \ref{cor:NO_fixes_s0} we proved that $U_\F(O)$ fixes the fundamental Weyl chamber $C_0 = \{ a.o \in \A  \colon \chi_\alpha(a) \geq 1 \text{ for all } \alpha \in \Delta \}$, where $\Delta$ is a basis of $\Sigma$. The goal of this subsection is to describe the whole stabilizer of $C_0$ using the group $B_\F=U_\F A_\F M_\F$ from \cite[Section \ref{I-sec:BWB}]{AppAGRCF}. So far, we considered the action of $U_\F$ and $A_\F$, now we continue by investigating the action of $K_\F$. 

To determine which elements of $K_\F$ fix the standard apartment $\A \subseteq \B$, we will use some CAT(0) geometry on the symmetric space $\X_\R$, equipped with the right metric. 

\begin{theorem}[Proposition II.2.2 \cite{BH99}]\label{thm:CAT0_convex}
	If $\gamma, \gamma'$ are two unit-speed geodesics in a CAT(0)-space, then the function 
	$$t \mapsto d(\gamma(t),\gamma'(t))$$ is convex.
\end{theorem}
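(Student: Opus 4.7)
The plan is to exploit the defining property of CAT(0) spaces—that geodesic triangles are at least as thin as their Euclidean comparison triangles—together with a diagonal construction connecting the two geodesics. The argument reduces the two-vertex convexity statement to a simpler one-vertex special case plus a triangle-inequality step.

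First I would establish the special case where both geodesics share a starting point: if $\gamma_1, \gamma_2 \colon [0, T] \to X$ are geodesics with $\gamma_1(0) = \gamma_2(0) = p$, then $d(\gamma_1(t), \gamma_2(t)) \leq (t/T) \cdot d(\gamma_1(T), \gamma_2(T))$. This is immediate from the CAT(0) inequality applied to the geodesic triangle $\triangle(p, \gamma_1(T), \gamma_2(T))$: in the Euclidean comparison triangle with matching side lengths, the points at corresponding parameters lie on the two sides emanating from $\bar{p}$ and, by similar triangles, satisfy the corresponding equality; the CAT(0) comparison upgrades this to the claimed bound in $X$.

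For arbitrary $\gamma, \gamma' \colon [0, T] \to X$, I would introduce the diagonal geodesic $\eta \colon [0, T] \to X$ from $\gamma(0)$ to $\gamma'(T)$, parameterized affinely (so $\eta$ is not unit-speed in general). Applying the one-vertex lemma to the pair $(\gamma, \eta)$, which share the starting point $\gamma(0)$, and separately to the time-reversed pair $(\gamma', \eta)$, which share the endpoint $\gamma'(T)$, yields
$$d(\gamma(t), \eta(t)) \leq \tfrac{t}{T} \cdot d(\gamma(T), \gamma'(T)) \quad \text{and} \quad d(\eta(t), \gamma'(t)) \leq \tfrac{T-t}{T} \cdot d(\gamma(0), \gamma'(0)).$$
The triangle inequality then produces the affine-interpolation bound
$$d(\gamma(t), \gamma'(t)) \leq \tfrac{T-t}{T} \cdot d(\gamma(0), \gamma'(0)) + \tfrac{t}{T} \cdot d(\gamma(T), \gamma'(T)).$$
Finally, applying the same argument to the restrictions $\gamma|_{[s_1, s_2]}$ and $\gamma'|_{[s_1, s_2]}$ over any subinterval promotes this endpoint interpolation to the full convexity inequality at arbitrary intermediate parameters.

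The argument is entirely classical, so there is no serious obstacle. The only mild subtlety worth noting is that the diagonal $\eta$ generally has speed different from $1$; this does not obstruct the comparison argument, since the one-vertex lemma only uses that each geodesic is parameterized affinely along its image, and the Euclidean similar-triangles computation in the comparison triangle is insensitive to a uniform rescaling of the parameter.
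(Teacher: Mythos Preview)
The paper does not supply its own proof of this statement; it is quoted verbatim as Proposition II.2.2 of Bridson--Haefliger and used as a black box. Your argument is correct and is essentially the classical proof that appears in that reference: the one-vertex comparison lemma, the diagonal geodesic, and the triangle inequality combine exactly as you describe.
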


In the non-standard symmetric space $\X_\F$, the elements of $K_\F$ that fix all points of the standard maximal flat $A_\F.\operatorname{Id}$ are exactly $\operatorname{Cen}_{K_\F}(A_\F) =: M_\F$. In the next Proposition we show that these are also exactly the elements in $K_\F$ that fix $\A \subseteq \B$ pointwise.

\begin{proposition}\label{prop:K_fixing_A_is_M}
	Elements $k\in K_\F$ fix $\A \subseteq \B$ pointwise if and only if $k \in M_\F$. 
\end{proposition}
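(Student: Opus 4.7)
The easy direction $k\in M_\F \Rightarrow k$ fixes $\A$ pointwise is immediate from the action formula: using $k\tran = k^{-1}$, $a\tran=a$, and $ka=ak$, one has $k.(a.\Id) = ka a\tran k\tran = ka^2k^{-1} = a^2 = a.\Id$, so $k$ already fixes $A_\F.\Id$ pointwise in $\X_\F$, hence fixes $\A \subseteq \B$ pointwise.

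For the converse, assume $k \in K_\F$ satisfies $k.(a.o)=a.o$ for every $a \in A_\F$. Writing $k.(a.\Id) = (kak^{-1}).\Id$ in $\X_\F$, the Cartan projection $\delta_\F(a.\Id, ka.\Id)$ is the $\ApF$-component of $a^{-1}(kak^{-1})$. The hypothesis together with Proposition~\ref{prop:stab_A} puts this component in $A_\F(O)$; since $K_\F \subseteq G_\F(O)$ and $A_\F(O) \subseteq G_\F(O)$, one obtains $a^{-1}kak^{-1} \in G_\F(O)$ and hence
\[
a^{-1}ka \in G_\F(O) \qquad \text{for every } a\in A_\F.
\]
The task is now to translate this uniform matrix bound into the centralizer statement $k \in M_\F$.

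For this, choose $P \in O_n(\F)$ simultaneously diagonalizing $A_\F$ (existence follows from the spectral theorem for commuting symmetric matrices over the real closed field $\F$). In the resulting basis, $A_\F$ consists of diagonal matrices, the standard representation decomposes into weight lines with characters $\mu_1,\ldots,\mu_n$ of $A_\F$, and the conjugation action of $a \in A_\F$ on $\F^{n\times n}$ sends each matrix unit $E_{ij}$ to $\chi_{ij}(a) E_{ij}$ with $\chi_{ij} = \mu_i \mu_j^{-1}$. Writing $k = \sum_{i,j} k_{ij}E_{ij}$, the bound above reads $\chi_{ij}(a)^{-1}k_{ij} \in O$ for all $i,j$ and all $a\in A_\F$. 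Specializing $a = t_\delta(s)$ for $\delta \in \Delta$ and $s\in\F_{>0}$, the character $\chi_{ij}(t_\delta(s))$ has the form $s^{N_{ij,\delta}}$ for an integer $N_{ij,\delta}$; taking $s\in\F_{>0}$ with $(-v)(s)$ of arbitrarily large absolute value (and of the appropriate sign) forces $k_{ij}=0$ whenever $N_{ij,\delta}\neq 0$ for some $\delta$. By Proposition~\ref{prop:apt0}, the one-parameter subgroups $t_\delta(\F_{>0})$ for $\delta\in\Delta$ generate $A_\F$, so this is equivalent to $k_{ij}=0$ whenever $\mu_i\neq\mu_j$. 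Thus $k$ is block-diagonal with respect to the $A_\F$-weight decomposition of $\F^n$, which is exactly the statement that $k$ centralizes $A_\F$. Combined with $k\in K_\F$, this gives $k \in K_\F \cap \operatorname{Cen}_{G_\F}(A_\F) = M_\F$, as required.

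The main obstacle is the scalar Laurent-polynomial lemma underlying the last step: a finite sum $\sum_p c_p s^p$ with $c_p\in\F$ and distinct rational exponents $p$ that lies in $O$ for every $s\in\F_{>0}$ must have $c_p=0$ for every $p\neq 0$, because isolating the maximal (resp.\ minimal) exponent by letting $(-v)(s)$ be very large (resp.\ very negative) forces a positive-valuation contradiction on the dominant term. This bypasses the CAT(0) machinery of the ambient real symmetric space in favour of a direct valuation argument on matrix entries.
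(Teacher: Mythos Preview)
Your proof is correct and takes a genuinely different route from the paper's.

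The paper proves the proposition by a CAT(0) argument over $\R$ followed by transfer: using convexity of $t\mapsto d(\gamma(t),k.\gamma(t))$ in the Riemannian symmetric space $\X_\R$, it establishes the first-order dichotomy ``$k$ fixes $A_\R.\Id$ pointwise, or $k$ moves some $a.\Id$ arbitrarily far'', transfers this to $\F$, and then rules out the second alternative by choosing $c\notin O$.

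Your argument stays entirely inside the valuation framework. From $k.(a.o)=a.o$ you extract $a^{-1}ka\in G_\F(O)$ for all $a$ (most directly via Theorem~\ref{thm:stab}; your Cartan-projection detour reaches the same conclusion), orthogonally diagonalise $A_\F$ by a matrix $P$ with entries in $[-1,1]_\F\subseteq O$ so that conjugation preserves $O^{n\times n}$, and read off the entrywise bound $\chi_{ij}(a)^{-1}k_{ij}\in O$. Letting $(-v)(s)$ tend to $\pm\infty$ along the cocharacters $t_\delta$ forces $k_{ij}=0$ whenever $\mu_i\neq\mu_j$, hence $k$ centralises $A_\F$. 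This is exactly the kind of entrywise computation the paper itself performs later in Lemma~\ref{lem:K_fixing_pm_mean}, so your approach fits naturally into the existing toolkit and avoids importing the Riemannian geometry of $\X_\R$.

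Two minor remarks. First, the ``Laurent-polynomial lemma'' you state at the end is stronger than what you actually use: each matrix entry gives a single monomial $k_{ij}\,s^{-N_{ij,\delta}}$, not a sum, so only the one-term case is needed. Second, the quickest path to $a^{-1}ka\in G_\F(O)$ is to note $a^{-1}ka\in\operatorname{Stab}_{G_\F}(o)$ and invoke Theorem~\ref{thm:stab} directly, rather than passing through $\delta_\F$ and Proposition~\ref{prop:stab_A}.
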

\begin{proof}
	Recall that the distance on the non-standard symmetric space $\X_\F$ is given by $d  = (-v) \circ N \circ \delta \colon \X_\F \times \X_\F \to \Lambda$. We claim that the first-order formula
	\begin{align*}
		\varphi \colon \quad \forall k \in K \colon &  (\forall a \in A \colon a.\operatorname{Id}=k.a.\operatorname{Id}) \\
		& \vee ( \forall c >0 \ \exists a \in A \colon N(\delta(a'.\operatorname{Id},k.a.\operatorname{Id}))>c )
	\end{align*}
	holds over $\F$. The situation is illustrated in Figure \ref{fig:rotation_statement}. 
	Informally, $\varphi$ states that $k\in K_\F$ either fixes all points in the maximal flat $A_\F.\operatorname{Id}$, or there are points $a.\operatorname{Id}$ that are sent arbitrarily far away by $k$. To prove that $\varphi$ holds over $\F$, it suffices to show $\varphi$ over $\R$ by the transfer principle.
	
	\begin{figure}[h]
		\centering
		\includegraphics[width=0.6\linewidth]{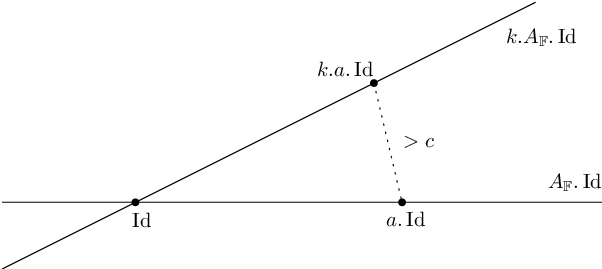}
		\caption{The first-order formula $\varphi$ states that either $k$ fixes all points in $A_\F$ or there are points $a.\operatorname{Id}, k.a.\operatorname{Id}$ whose distance is arbitrarily large. }
		\label{fig:rotation_statement}
	\end{figure}
	
	We note that changing the $W_s$-invariant multiplicative norm $N$ only changes $\X_\R$ up to quasi-isometry, so we may choose $N_\R$ coming from a scalar product, even if $N_\R$ is then not semialgebraic, as the truth of $\varphi$ only depends on $N_\R$ up to equivalency. So consider 
	\begin{align*}
		N_\R \colon A_\R.\operatorname{Id} &\to \R_{\geq 1} \\
		a.\operatorname{Id} &\mapsto \exp\left( \sqrt{B_\theta(\log(a),\log(a))}\right)
	\end{align*}
	for the scalar product $B_\theta$ on $\fraka$. Then by the general theory of symmetric spaces of non-compact type, $\X_\R$ with the distance $d = \log \circ N_\R \circ \delta_\R$ is a complete CAT(0)-space.
	Consider a unit-speed geodesic $\gamma \colon \R \to A_\R.\operatorname{Id} \subseteq \X_\R$ passing through $\gamma(0)=\operatorname{Id}$. Then $k.\gamma$ is also a unit-speed geodesic passing through $\operatorname{Id}$. By Theorem \ref{thm:CAT0_convex}, the function $f \colon t \mapsto d(\gamma(t),k.\gamma(t))$ is convex. Since $f$ is non-negative and $f(0)=0$, $f$ then has to be constant (hence $a.\operatorname{Id}=k.a.\operatorname{Id}$ for all $a\in A_\F$) or eventually be larger than $\log(c)$ for every $c\in \R_{>0}$ (hence there is some $a\in A_\F$ such that $N_\R(\delta_\R(a.\operatorname{Id},k.a.\operatorname{Id}))>c$).
	
	Now that $\varphi$ is established over $\F$, we consider some $k \in K_\F$ that fixes all points of $\A \subseteq \B$. Choosing $c \in \F_{>0}$ with $c\notin O$, we see that the second option in $\varphi$ cannot be true, whence $k.a.\operatorname{Id} = a.\operatorname{Id}$ for all $a\in A_\F$, or equivalently $k\tilde{a}k\tran = \tilde{a}$ for all $\tilde{a}= a^2 \in A_\F$. This means $k \in \operatorname{Cen}_{K_\F}(A_\F)$ using $k\tran = k^{-1}$.
\end{proof}

In Proposition \ref{prop:K_fixes_C0_in_M}, we strengthen the previous result by only requiring $k$ to fix a chamber of $\A$. We first need a preliminary result.

\begin{lemma}\label{lem:K_fixing_pm_mean}
	Let $k\in K_\F$ and $a,b\in A_\F$. If $k.a.o = a.o$, then $k.a^{-1}.o = a^{-1}.o$. If moreover $k.b.o = b.o$, then $k.\sqrt{ab}.o = \sqrt{ab}.o$. 
\end{lemma}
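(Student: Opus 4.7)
The plan is to translate both claims into statements about matrix entries via Theorem \ref{thm:stab}, which identifies the stabilizer of $a.o$ as $aG_\F(O)a^{-1}$: the condition $k.a.o = a.o$ is equivalent to $a^{-1}ka \in G_\F(O)$, that is, every matrix entry of $a^{-1}ka$ lies in the valuation ring $O$.

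For the first claim, I would apply the Cartan involution $\theta\colon g \mapsto (g^T)^{-1}$ to the hypothesis $a^{-1}ka \in G_\F(O)$. Since $G$ is self-adjoint and both transposition and inversion preserve the matrix valuation (inversion uses $\det g = 1$ and Cramer's rule), $\theta$ sends $G_\F(O)$ into itself. Moreover $\theta(a) = a^{-1}$ because $a \in A_\F$ is symmetric positive definite, and $\theta(k) = k$ because $k \in K_\F \subseteq \operatorname{SO}(n,\F)$. Hence $\theta(a^{-1}ka) = aka^{-1} \in G_\F(O)$, which is exactly the condition $k.a^{-1}.o = a^{-1}.o$.

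For the second claim, the idea is simultaneous diagonalization. Since $a$ and $b$ are commuting symmetric positive definite matrices, the transfer principle applied to the real spectral theorem yields an orthogonal $k_0 \in \operatorname{O}(n,\F)$ such that $\tilde{a} \coloneqq k_0 a k_0^{-1}$, $\tilde{b} \coloneqq k_0 b k_0^{-1}$ and $\tilde{c} \coloneqq k_0 c k_0^{-1}$ (with $c = \sqrt{ab}$) are all diagonal, say with positive entries $\alpha_i$, $\beta_i$, $\gamma_i = \sqrt{\alpha_i\beta_i}$. Because $k_0$ has entries of absolute value at most $1$ and $\det k_0 = \pm 1$, the argument of Lemma \ref{lem:matrixvaluation}(iii)--(iv) extends to give $(-v)(k_0 g k_0^{-1}) = (-v)(g)$ for every $g \in \F^{n\times n}$, so conjugation by $k_0$ preserves $G_\F(O)$-membership (even though $k_0$ need not lie in $G_\F$). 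Setting $\tilde{k} = k_0 k k_0^{-1} = (\kappa_{ij})$, the diagonal structure gives
\[
(\tilde{c}^{-1}\tilde{k}\tilde{c})_{ij} \;=\; \sqrt{\frac{\alpha_j\beta_j}{\alpha_i\beta_i}}\, \kappa_{ij},
\]
and splitting the contributions of the $\alpha$'s and $\beta$'s in the valuation yields
\[
(-v)\!\left((\tilde{c}^{-1}\tilde{k}\tilde{c})_{ij}\right) \;=\; \tfrac{1}{2}(-v)\!\left((\tilde{a}^{-1}\tilde{k}\tilde{a})_{ij}\right) + \tfrac{1}{2}(-v)\!\left((\tilde{b}^{-1}\tilde{k}\tilde{b})_{ij}\right) \;\leq\; 0,
\]
both summands being non-positive by hypothesis. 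Hence $c^{-1}kc \in G_\F(O)$, which is the desired $k.\sqrt{ab}.o = \sqrt{ab}.o$.

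The main technical point will be justifying the simultaneous orthogonal diagonalization of $a$ and $b$ over $\F$ (by transfer from $\R$) and verifying that conjugation by the diagonalizing $k_0 \in \operatorname{O}(n,\F)$ — which typically does not lie in $G_\F$ — still preserves the matrix valuation and hence $G_\F(O)$-membership. Both reduce cleanly to the arguments of Lemma \ref{lem:matrixvaluation}, after which the content of the second claim is the elementary midpoint identity $(-v)(\gamma^{-1}\kappa\gamma) = \tfrac{1}{2}(-v)(\alpha^{-1}\kappa\alpha) + \tfrac{1}{2}(-v)(\beta^{-1}\kappa\beta)$ for diagonal $\alpha,\beta,\gamma = \sqrt{\alpha\beta}$.
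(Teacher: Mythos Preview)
Your proposal is correct and follows essentially the same route as the paper: reduce to diagonal $a,b$ via the (simultaneous) spectral theorem over $\F$, then read off the entrywise identity $(-v)\!\bigl(\kappa_{ij}\sqrt{\alpha_j\beta_j/\alpha_i\beta_i}\bigr)=\tfrac12(-v)\!\bigl(\kappa_{ij}\alpha_j/\alpha_i\bigr)+\tfrac12(-v)\!\bigl(\kappa_{ij}\beta_j/\beta_i\bigr)$. Your treatment of the first claim via the Cartan involution $\theta(g)=(g^{\mathsf T})^{-1}$ is a slight streamlining over the paper, which also passes through diagonalization there; but the content is the same transposition argument, and your handling of the conjugation by $k_0\notin G_\F$ matches the paper's passage to $QG_\F Q^{\mathsf T}$.
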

\begin{proof}
	We first assume that $A_\F$ consists of diagonal matrices, so we may write $a = \operatorname{Diag}(a_1, \ldots, a_n)$ and $b = \operatorname{Diag}(b_1,\ldots, b_n)$. Then
	\begin{align*}
		k.a.o = a.o & \iff a^{-1}ka \in G_\F(O) \iff \forall i,j\colon  k_{ij}\frac{a_j}{a_i} \in O \\
		&\iff \forall j,i \colon k\tran_{ij}\frac{a_i}{a_j} \in O \iff ak\tran a^{-1 } \in G_\F(O) \\
		&\iff k\tran.a^{-1}.o = a^{-1}.o \iff a^{-1}.o = k.a^{-1 }.o.
	\end{align*}
	Moreover, if $k.a.o = a.o$ and $k.b.o=b.o$, then $k_{ij}a_j/a_i \cdot k_{ij} b_j/b_i \in O$ for all $i,j$. Then also
	$$
	k_{ij} \frac{\sqrt{a_jb_j}}{\sqrt{a_ib_i}} \in O
	$$
	for all $i,j$, which translates to $k.\sqrt{ab}.o = \sqrt{ab}.o$.
	
	In general, the matrices in $A_\F$ may not be diagonal, but they are symmetric. By the spectral theorem for symmetric matrices, which holds over $\F$ by the transfer principle, $A_\F$ is orthogonally diagonalizable, meaning that there is some $Q \in \operatorname{SO}(n)$ such that $QA_\F Q\tran$ is diagonal. We can then apply the above arguments to the group $QG_\F Q\tran < \operatorname{SL}_n(\F)$. 
	For $k.a.o = a.o$ we obtain
	\begin{align*}
		a^{-1}ka \in G_\F(O) & \iff (QaQ\tran)^{-1} QkQ\tran QaQ\tran \in (QG_\F Q\tran)(O) \\
		&\iff QaQ\tran Qk\tran Q\tran (QaQ\tran)^{-1} \in (QG_\F Q\tran)(O) \\
		&\iff ak\tran a^{-1} \in G_\F(O)
	\end{align*}
	and complete the argument as above. When additionally $b^{-1}kb \in G_\F(O)$ we have
	\begin{align*}
		(QbQ\tran)^{-1} QkQ\tran QbQ\tran \in (QG_\F Q\tran)(O) 
	\end{align*}
	which by the above implies
	\begin{align*}
		(Q\sqrt{ab}Q\tran)^{-1} QkQ\tran Q\sqrt{ab}Q\tran \in (QG_\F Q\tran)(O)
	\end{align*}
	and thus$
	\sqrt{ab}^{-1}k\sqrt{ab} \in G_\F(O).
	$
\end{proof}

Let $C_0 = \{ a.o \in \A  \colon \chi_\alpha(a) \geq 1 \text{ for all } \alpha \in \Delta\}$ be the fundamental Weyl chamber associated to a basis $\Delta$ of $\Sigma$. 

\begin{proposition}\label{prop:K_fixes_C0_in_M}
	Let $k \in K_\F$ such that $k.p = p$ for all $p \in C_0$. Then $k \in M_\F$ and hence $k$ fixes all points in $\A$. In fact, if $k$ fixes all the points in $a.C_0$ for any $a \in A_\F$, then $k \in M_\F$.
\end{proposition}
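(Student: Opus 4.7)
My plan is to show that $k$ actually fixes every point of $\A$ and then invoke Proposition \ref{prop:K_fixing_A_is_M}. Let
\[
F := \{p \in \mathbb{A} : k.p = p\} \subseteq \A.
\]
By the hypothesis, $a.C_0 \subseteq F$ (for the first statement take $a = \operatorname{Id}$).

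The first step is to translate Lemma \ref{lem:K_fixing_pm_mean} into two closure properties of $F$ under the additive group structure of $\mathbb{A} \cong A_\Lambda$: namely, (i) if $p \in F$ then $-p \in F$ (corresponding to $a \mapsto a^{-1}$), and (ii) if $p, q \in F$ then $(p+q)/2 \in F$ (corresponding to $(a,b) \mapsto \sqrt{ab}$). The midpoint operation is well-defined because $\Lambda$ is divisible and hence so is $\mathbb{A} = \operatorname{Span}_{\Q}(\Sigma^\vee) \otimes_\Q \Lambda$. One must check only that these two operations in $A_\F$ correspond, under the identification $f_0 \colon \mathbb{A} \to A_\F.o$ from Proposition \ref{prop:stab_A}, to $-$ and $\tfrac{1}{2}(\cdot + \cdot)$ on $\mathbb{A}$; the factor-of-$2$ convention cancels and both correspondences go through.

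Applying (i) gives $-(a.C_0) \subseteq F$. The key geometric step is to show that every $z \in \mathbb{A}$ arises as a midpoint $(p+q)/2$ with $p \in a.C_0$ and $q \in -(a.C_0)$. Writing $p = a + c$ and $q = -(a + d)$ with $c, d \in C_0$ reduces this to the identity $C_0 - C_0 = \mathbb{A}$: for any $z$ we need $c, d \in C_0$ with $c - d = 2z$. I would prove this using the fundamental coweights $\{\omega_\delta^\vee : \delta \in \Delta\} \subseteq \operatorname{Span}_{\Q}(\Sigma^\vee)$ dual to the basis $\Delta$ (so $\delta'(\omega_\delta^\vee) = \delta_{\delta,\delta'}$). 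In these coordinates $C_0$ is precisely the positive orthant $\{\sum_\delta \omega_\delta^\vee \otimes \mu_\delta : \mu_\delta \geq 0\}$, so writing $2z = \sum_\delta \omega_\delta^\vee \otimes \nu_\delta$ and splitting each $\nu_\delta = \nu_\delta^+ - \nu_\delta^-$ into positive and negative parts (using only that $\Lambda$ is totally ordered) yields the required $c, d \in C_0$.

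This gives $F = \mathbb{A}$, and then Proposition \ref{prop:K_fixing_A_is_M} yields $k \in M_\F$. I do not foresee a significant obstacle: the whole argument rests on Lemma \ref{lem:K_fixing_pm_mean}, Proposition \ref{prop:K_fixing_A_is_M}, and the elementary root-system fact $C_0 - C_0 = \mathbb{A}$; the only subtle point is the bookkeeping between additive and multiplicative notation for $\mathbb{A} \cong A_\Lambda = A_\F/A_\F(O)$.
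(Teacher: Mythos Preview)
Your proposal is correct and follows essentially the same approach as the paper: translate Lemma \ref{lem:K_fixing_pm_mean} into closure of the fixed set $F$ under negation and midpoints, reduce to the fact that $C_0 - C_0 = \A$, and conclude via Proposition \ref{prop:K_fixing_A_is_M}. The only notable difference is in how the key identity $C_0 - C_0 = \A$ is established: the paper proves the multiplicative analogue $A_\F = A_\F^+\cdot (A_\F^+)^{-1}$ over $\R$ by a ball-scaling argument in $\fraka_\R$ and then invokes the transfer principle, whereas you work directly in the model apartment over $\Lambda$ using fundamental coweights to identify $C_0$ with the positive orthant. Your route is slightly more elementary here (it avoids the transfer principle for this step), while the paper's route yields the marginally stronger statement at the level of $A_\F$ rather than $A_\Lambda$; for the present proposition either suffices.
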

\begin{proof}
	We first claim that every element $a\in A_\F$ is of the form $a=a_1a_2^{-1}$ for $a_1.o,a_2.o \in C_0$. To see this, we show that the first-order formula
	$$
	\varphi\colon \quad  \quad \forall a \in A \colon \exists a_1, a_2 \in A \colon a = a_1\cdot a_2^{-1} \wedge  \bigwedge_{\alpha \in \Sigma_{>0}} \chi_\alpha(a_1) \geq 1 \wedge \chi_\alpha(a_2) \geq 1 
	$$
	holds over $\R$ and then apply the transfer principle. Over $\R$ we can transfer the problem to the Lie algebra $\fraka_\R$ using the logarithm. We equip $\fraka$ with the distance defined by the scalar product $B_\theta$.
	Let $H:=\log(a)$ and $R := \sqrt{B_\theta(H,H)}$. 
	Since $\mathfrak{c}_0 := \{ H \in \fraka_\R \colon \alpha(H)>0 \text{ for all }\alpha \in \Sigma_{>0} \}$ contains an open cone, it contains a ball $B_{r}(H')$ for some $r>0$ and $H' \in \mathfrak{c}_0$. Scaling the ball by the factor $R/r$, we obtain that $B_{R}(R/r\cdot H') \subseteq \mathfrak{c}_0$. As in Figure \ref{fig:scaling_statement}, we define $H_1 = R/r\cdot H'$ and $H_2=H_1-H$ which lies on the boundary of $B_{R}(R/r\cdot H')$ and hence also in $\mathfrak{c}_0$. Then $a = \exp(H_1)\exp(H_2)^{-1}$, concluding the proof of $\varphi$ over $\R$ and hence over $\F$.
	
	\begin{figure}[h]
		\centering
		\includegraphics[width=0.45\linewidth]{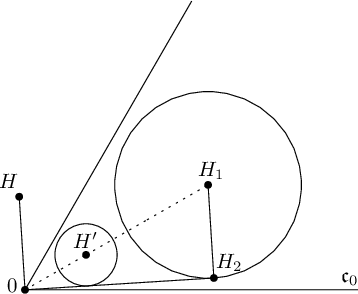}
		\caption{We use that the cone $\mathfrak{c}_0\subseteq \fraka_\mathbb{R}$ contains an open ball, which we can scale to obtain $H_1,H_2 \in \mathfrak{c}_0$ with $H=H_1-H_2$. }
		\label{fig:scaling_statement}
	\end{figure}
	
	If now $k\in K_\F$ fixes $C_0$ pointwise and $p=a.o \in \A$. Then $a=a_1a_2^{-1}$ as above with $a_1.o, a_2.o \in C_0$. Since $C_0$ is a cone, also $a_1^2.o, a_2^2.o \in C_0$, so $k.a_1^2.o = a_1^2.o$ and $k.a_2^2.o=a_2^2.o$. By Lemma \ref{lem:K_fixing_pm_mean}, then $k.a_2^{-2}.o = a_2^{-2}.o$ and 
	$$
	k.a.o = k.\sqrt{a_1^2 a_2^{-2}}.o = \sqrt{a_1^2 a_2^{-2}}.o = a.o,
	$$
	completing the first statement of the proof. If $k$ fixes $a.C_0$ for some $a\in A_\F$, a modification of the above argument similarly implies that $k$ fixes all of $\A$ pointwise.
\end{proof}
%
%

\begin{theorem}\label{thm:NO_fixes_s0}
	The pointwise stabilizer of $C_0$ in $G_\F$ is 
	$$B_\F(O) = U_\F(O)A_\F(O)M_\F.$$
\end{theorem}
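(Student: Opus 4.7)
The plan is to use the Iwasawa decomposition of $G_\F(O)$ from Corollary \ref{cor:UAKO} together with the rigidity statement Proposition \ref{prop:K_fixes_C0_in_M} for the $K_\F$-component. First I would handle the easy inclusion $U_\F(O)A_\F(O)M_\F \subseteq \operatorname{Stab}_{G_\F}(C_0)$: by Corollary \ref{cor:NO_fixes_s0}, $U_\F(O)$ fixes $C_0$ pointwise; for $a \in A_\F(O)$ and any $b.o \in \A$, the isometry $a$ satisfies $d(b.o, ab.o) = d(o, a.o) = 0$ by Proposition \ref{prop:stab_A}, so $A_\F(O)$ fixes all of $\A \supseteq C_0$ pointwise; finally, for $m \in M_\F = \operatorname{Cen}_{K_\F}(A_\F)$ and $b.o \in \A$, we have $m.b.o = mbm^{-1}.m.o = b.o$ since $m$ commutes with $b$ and fixes $o$, so $M_\F$ also fixes $\A$ pointwise.

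For the reverse inclusion, let $g \in \operatorname{Stab}_{G_\F}(C_0)$. Since $o \in C_0$, we have $g.o = o$, so Theorem \ref{thm:stab} gives $g \in G_\F(O)$. I then apply Corollary \ref{cor:UAKO} to write $g = uak$ uniquely with $u \in U_\F(O)$, $a \in A_\F(O)$, and $k \in K_\F = K_\F(O)$. By the first part of the argument, $u$ and $a$ already fix every point of $C_0$, whence $k = a^{-1}u^{-1}g$ fixes $C_0$ pointwise as well. Proposition \ref{prop:K_fixes_C0_in_M} then forces $k \in M_\F$, and therefore $g \in U_\F(O)A_\F(O)M_\F$.

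It remains to verify the equality $B_\F(O) = U_\F(O)A_\F(O)M_\F$. The inclusion $\supseteq$ is immediate since every factor lies in $G_\F(O)$ (using $M_\F \subseteq K_\F = K_\F(O)$). Conversely, if $b = uam \in B_\F \cap G_\F(O)$ with $u \in U_\F$, $a \in A_\F$, $m \in M_\F$, then the Iwasawa decomposition of $b$ inside $G_\F(O)$ provided by Corollary \ref{cor:UAKO} agrees with this factorization by uniqueness of the Iwasawa decomposition \cite[Theorem \ref{I-thm:KAU}]{AppAGRCF}, forcing $u \in U_\F(O)$ and $a \in A_\F(O)$.

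The main point of the argument is really contained in Proposition \ref{prop:K_fixes_C0_in_M}; once that CAT(0)-geometric rigidity statement is available, the rest of the proof is a straightforward bookkeeping of the Iwasawa decomposition restricted to $G_\F(O)$. I do not anticipate further obstacles.
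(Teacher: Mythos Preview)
Your argument is correct and follows essentially the same route as the paper: both proofs use the Iwasawa decomposition $G_\F(O) = U_\F(O)A_\F(O)K_\F$ from Corollary~\ref{cor:UAKO} to reduce to showing that the $K_\F$-component fixes $C_0$, and then invoke Proposition~\ref{prop:K_fixes_C0_in_M} to conclude $k \in M_\F$. You add a bit more detail on the easy inclusion and on the set-theoretic identity $B_\F(O) = U_\F(O)A_\F(O)M_\F$, but the substance is identical.
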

\begin{proof}
	If $g=uak \in  B_\F(O) = U_\F(O)A_\F(O)M_\F$ and $p \in C_0$, then $g.p=uak.p=ua.p=u.p=p$, where the last equality follows from Corollary \ref{cor:NO_fixes_s0}.
	
	If $g$ fixes $C_0$ pointwise, in particular it fixes $o \in C_0$, hence $g\in G_\F(O)$, which can be decomposed to $G_\F(O) = U_\F(O)A_\F(O)K_\F$ by Corollary \ref{cor:UAKO}. Therefore $g=uak$ with $u\in U_\F(O)$ and $a \in  A_\F(O)$. Therefore we have that $k$ fixes $C_0$ pointwise. Proposition \ref{prop:K_fixes_C0_in_M} now concludes the proof by showing $k \in M_\F$.
\end{proof}

Recall that $S <G$ is a maximal $\K$-split torus and $A_\F < S_\F$ is the semialgebraically connected component of the identity. In the following we consider the groups $T_\F := \operatorname{Cen}_{G_\F}(A_\F)$ and $T_\F(O) := T_\F \cap G_\F(O) $. Sometimes (namely when $G$ is quasi-split), $T = \operatorname{Cen}_{G}(S)$ is a maximal algebraic torus, but we do not use this in what follows. 
\begin{lemma}\label{lem:BT_T_is_MA}
	We have $T_\F = \operatorname{Cen}_{G_\F}(A_\F) = M_\F \cdot A_\F$. 
\end{lemma}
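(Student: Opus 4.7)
The plan is to prove the two inclusions separately. The inclusion $M_\F \cdot A_\F \subseteq T_\F$ is immediate: by definition $M_\F = \operatorname{Cen}_{K_\F}(A_\F) \subseteq \operatorname{Cen}_{G_\F}(A_\F) = T_\F$, and $A_\F$ is abelian (being the semialgebraic extension of the connected component of a torus), so $A_\F \subseteq T_\F$. Hence $M_\F \cdot A_\F \subseteq T_\F$.

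For the reverse inclusion, I would invoke the transfer principle (Theorem \ref{thm:logic}). The corresponding statement over the reals is the classical fact that for a real semisimple Lie group with Cartan decomposition $G_\R = K_\R \exp(\frakp_\R)$ and $\fraka_\R \subseteq \frakp_\R$ a maximal abelian subspace with $A_\R = \exp(\fraka_\R)$, one has
$$
\operatorname{Cen}_{G_\R}(A_\R) = \operatorname{Cen}_{K_\R}(A_\R)\cdot A_\R.
$$
This follows because $T_\R = \operatorname{Cen}_{G_\R}(A_\R)$ is stable under the Cartan involution (as $A_\R$ is), so the real Cartan decomposition applied to $T_\R$ gives $T_\R = (T_\R \cap K_\R)\cdot(T_\R \cap \exp(\frakp_\R))$; the maximality of $\fraka_\R$ in $\frakp_\R$ then forces $T_\R \cap \exp(\frakp_\R) = A_\R$, while $T_\R \cap K_\R = \operatorname{Cen}_{K_\R}(A_\R) = M_\R$.

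The next step is to express this decomposition as a first-order sentence over ordered fields with parameters in $\K$. Since $G$, $K$, $A$ are semialgebraic sets defined over $\K \subseteq \R \cap \F$, the sentence
$$
\forall g \in G \colon \bigl(\forall a \in A \colon ga = ag\bigr) \;\Longrightarrow\; \bigl(\exists m \in K, \exists a' \in A \colon g = ma' \wedge \forall a \in A \colon ma = am\bigr)
$$
is a valid first-order formula. It holds over $\R$ by the preceding paragraph, and hence transfers to $\F$ by Theorem \ref{thm:logic}. Unpacking, every $g \in T_\F$ admits a decomposition $g = m a'$ with $m \in K_\F$ centralizing $A_\F$ (so $m \in M_\F$) and $a' \in A_\F$, establishing $T_\F \subseteq M_\F \cdot A_\F$.

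There is no serious obstacle here; the only care needed is to ensure the decomposition statement can be written in the first-order language of ordered fields with parameters in $\K$, which is immediate from the semialgebraic definitions of $G$, $K$ and $A$ used throughout the paper.
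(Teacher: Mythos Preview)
Your proof is correct and takes a genuinely different route from the paper's. You reduce the statement to the classical Lie-theoretic identity $\operatorname{Cen}_{G_\R}(A_\R) = M_\R A_\R$ (which follows from the $\theta$-stability of the centralizer and the global Cartan decomposition) and then transfer it to $\F$ as a first-order sentence in the semialgebraic data $G$, $K$, $A$. This is entirely in the spirit of the companion paper \cite{AppAGRCF} and is shorter and more transparent.

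The paper instead argues internally to the building: it writes $g \in T_\F$ in Iwasawa form $g = uak$, uses that $g$ commutes with $A_\F$ to show that $k$ acts on $\A$ like a unipotent element, then invokes Proposition~\ref{prop:exists_a_NO}, Theorem~\ref{thm:NO_fixes_s0} and Proposition~\ref{prop:K_fixes_C0_in_M} to force $k \in M_\F$ and subsequently $u = \operatorname{Id}$. This has the virtue of being self-contained within the stabilizer machinery just developed (and of not appealing to an unproved real result), but it is considerably more involved. Your approach buys simplicity at the cost of importing a standard fact from real Lie theory; the paper's approach demonstrates that the building-theoretic tools alone suffice. One small quibble: the phrase ``maximality of $\fraka_\R$ in $\frakp_\R$'' is slightly imprecise for deducing $T_\R \cap \exp(\frakp_\R) = A_\R$; what is really used is that the restricted Cartan decomposition of the $\theta$-stable group $T_\R$ gives $T_\R = (T_\R \cap K_\R)\exp(\frakt_\R \cap \frakp_\R)$ and that $\frakt_\R \cap \frakp_\R = \fraka_\R$ from the root space decomposition.
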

\begin{proof}
	The inclusion $\supseteq$ is clear. For the other direction, let $g \in T_\F$ and choose an Iwasawa decomposition $g = nak$ with $n \in U_\F , a\in A_\F, k \in K_\F$, see \cite[Theorem \ref{I-thm:KAU}]{AppAGRCF}. For all $b \in A_\F$ we have $nak.b.o = b.nak.o = bna.o$, and thus $k.b.o = a^{-1}n^{-1}bna.o$. Denoting $\tilde{n} := (a^{-1}n^{-1}a)(a^{-1}bnb^{-1}a) \in U_\F$, where we made use of \cite[Proposition \ref{I-prop:anainN}]{AppAGRCF}. Then $k.b.o = \tilde{n}.b.o$ for all $b \in A_\F$. By Proposition \ref{prop:exists_a_NO} there is a $c\in A_\F$ such that $\tilde{\tilde{n}} := c\tilde{n}c^{-1} \in U_\F(O)$ and hence $\tilde{\tilde{n}}$ fixes the elements of $C_0$ by Theorem \ref{thm:NO_fixes_s0}. In particular, $\tilde{n}$ fixes $c.C_0$ and thus $k$ fixes $c.C_0$, since $k.p = \tilde{n}.p$ for all $p \in \A$. By Lemma \ref{prop:K_fixes_C0_in_M}, we then have $k \in M_\F$. This implies also $\tilde{n}.p=p$ for all $p\in \A$, hence $\tilde{n} = \operatorname{Id}$. Then $bnb^{-1} =n$ for all $b\in A_\F$, not only the ones with $(-v)(\chi_\alpha(b)) = 0$, hence by \cite[Lemma \ref{I-lem:aexpXa}]{AppAGRCF}, $n= \operatorname{Id}$. Thus $g = ak$ with $k \in M_\F$ and $a \in A_\F$.
\end{proof}

\begin{theorem}\label{thm:stab_A}
	The pointwise stabilizer of $\A$ in $G_\F$ is $T_\F(O) = A_\F(O)M_\F$.
\end{theorem}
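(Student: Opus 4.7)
The plan is to combine the stabilizer of the chamber $C_0$ from Theorem \ref{thm:NO_fixes_s0} with the fact that $U_\F$ acts freely on $\A$ from Proposition \ref{prop:Uconv}, and then identify $A_\F(O)M_\F$ with $T_\F(O)$ using Lemma \ref{lem:BT_T_is_MA}.

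First I would verify the easy inclusion: $A_\F(O) M_\F$ fixes $\A$ pointwise. For $M_\F$ this is Proposition \ref{prop:K_fixing_A_is_M}. For $a\in A_\F(O)$ and any $b.o\in\A$, Proposition \ref{prop:stab_A} gives $\overline{\chi}_\alpha([a])=0$ for all $\alpha\in\Sigma$, so $\overline{\chi}_\alpha([ab])=\overline{\chi}_\alpha([b])$ and thus $ab.o=b.o$ in $\A \cong A_\Lambda$. Hence $A_\F(O)M_\F$ is contained in the pointwise stabilizer of $\A$.

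For the reverse inclusion, let $g\in G_\F$ fix $\A$ pointwise. Since $C_0\subseteq\A$, in particular $g$ fixes $C_0$, so by Theorem \ref{thm:NO_fixes_s0} we may write $g=uam$ with $u\in U_\F(O)$, $a\in A_\F(O)$, $m\in M_\F$. By the first step, both $a$ and $m$ fix $\A$ pointwise, so $u=g(am)^{-1}$ also fixes $\A$ pointwise. Proposition \ref{prop:Uconv} then forces $u=\operatorname{Id}$, giving $g=am\in A_\F(O)M_\F$.

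Finally I would check the identification $A_\F(O)M_\F=T_\F(O)$. The inclusion $\subseteq$ follows from $A_\F(O)\subseteq G_\F(O)$, $M_\F\subseteq K_\F\subseteq G_\F(O)$, and $A_\F(O)M_\F\subseteq A_\F M_\F=T_\F$ (by Lemma \ref{lem:BT_T_is_MA}). For $\supseteq$, take $g\in T_\F(O)$ and use Lemma \ref{lem:BT_T_is_MA} to write $g=am$ with $a\in A_\F$, $m\in M_\F$. Since $m\in K_\F$ stabilizes $o$ and $g\in G_\F(O)$ also stabilizes $o$ by Theorem \ref{thm:stab}, the element $a=gm^{-1}$ stabilizes $o$, and Proposition \ref{prop:stab_A} gives $a\in A_\F(O)$.

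No step here looks delicate: the three earlier results do all the heavy lifting. The only minor care needed is in the first paragraph, verifying cleanly that $A_\F(O)$ acts trivially on the whole apartment (not merely on $o$); this is essentially immediate from the definition $A_\Lambda=A_\F/\{N_\F\in O\}$ and the additive compatibility of $\overline{\chi}_\alpha$.
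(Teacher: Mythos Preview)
Your proof is correct and follows essentially the same approach as the paper: use Theorem \ref{thm:NO_fixes_s0} to write $g=uam$, then apply Proposition \ref{prop:Uconv} to force $u=\operatorname{Id}$, and invoke Lemma \ref{lem:BT_T_is_MA} for the identification with $T_\F(O)$. Your version is slightly more explicit than the paper's in justifying the easy inclusion and in deducing $T_\F(O)=A_\F(O)M_\F$ from $T_\F=A_\F M_\F$, but the structure is identical.
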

\begin{proof}
	Elements of $A_\F(O)M_\F$ fix all points in $\A$. If $g\in G_\F$ fixes all points of $\A$, it fixes in particular the points in $C_0$, so $g = uak$ with $u \in U_\F(O), a \in A_\F(O), k \in M_\F$ by Theorem \ref{thm:NO_fixes_s0}. By the description of the stabilizer of $u$ in Proposition \ref{prop:Uconv}, $u$ can only fix all of $\A$ if $u=\Id$, so $g \in A_\F(O)M_\F$. By Lemma \ref{lem:BT_T_is_MA}, $T_\F(O) = A_\F(O)M_\F$.
\end{proof} 

\begin{theorem}\label{thm:NalphaO_fixes_H}
	Let $\alpha \in \Sigma$. The pointwise stabilizer of the half-apartment
	$$
	H_\alpha^+ = \{ a.o \in \A \colon (-v)(\chi_\alpha(a)) \geq 0\}
	$$ 
	in $G_\F$ is $(U_\alpha)_\F(O) A_\F(O) M_\F$.
\end{theorem}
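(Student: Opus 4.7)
The plan is to prove both inclusions separately, following the template established by Theorems \ref{thm:stab_A} and \ref{thm:NO_fixes_s0}. For the easy direction $(U_\alpha)_\F(O) A_\F(O) M_\F \subseteq \operatorname{Stab}_{G_\F}(H_\alpha^+)$, I would use Theorem \ref{thm:stab_A} to dispense with the $A_\F(O)M_\F$ factor (it fixes every point of $\A$, hence of $H_\alpha^+$) and then combine Lemma \ref{lem:stab_Ualpha}, which ensures $\varphi_\alpha(u)\leq 0$ for $u\in (U_\alpha)_\F(O)$, with Proposition \ref{prop:Ualphaconv}, whose fixed-point description $\{b.o : \varphi_\alpha(u) \leq (-v)(\chi_\alpha(b))\}$ then visibly contains $H_\alpha^+$.

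For the reverse inclusion, I would first reduce to the case $\alpha \in \Sigma_{>0}$ by a Weyl-group argument: picking $w\in W_s$ with $w\alpha \in \Sigma_{>0}$ and a representative $n \in N_\F = \operatorname{Nor}_{K_\F}(A_\F) \subseteq K_\F = K_\F(O)$, conjugation by $n$ carries $H_\alpha^+$ to $H_{w\alpha}^+$ and sends the set $(U_\alpha)_\F(O)A_\F(O)M_\F$ to $(U_{w\alpha})_\F(O)A_\F(O)M_\F$, because $n \in G_\F(O)$ normalizes $G_\F(O)$ and conjugates $(U_\alpha)_\F$ to $(U_{w\alpha})_\F$, $A_\F$ to $A_\F$, $M_\F$ to $M_\F$. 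Once $\alpha \in \Sigma_{>0}$, the inclusion $C_0 \subseteq H_\alpha^+$ (immediate since every positive root is a non-negative integral combination of $\Delta$) lets Theorem \ref{thm:NO_fixes_s0} apply, yielding a decomposition $g = uam$ with $u\in U_\F(O)$, $a \in A_\F(O)$, $m \in M_\F$. As $am$ already fixes $\A$ pointwise (Theorem \ref{thm:stab_A}), the burden falls on showing $u \in (U_\alpha)_\F(O)$.

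This is the main obstacle, and I would tackle it by decomposing $u = u_1 \cdots u_k$ with $u_i \in (U_{\alpha_i})_\F$ as in Proposition \ref{prop:Uconv}, whose fixed-point description identifies
$$
\{p \in \A : u.p = p\} = \bigcap_i \{b.o \in \A \colon \varphi_{\alpha_i}(u_i) \leq (-v)(\chi_{\alpha_i}(b))\}.
$$
Requiring $H_\alpha^+$ to lie in this intersection forces, for each $i$, $\varphi_{\alpha_i}(u_i) \leq (-v)(\chi_{\alpha_i}(b))$ for every $b.o \in H_\alpha^+$. Evaluating at $b = \Id$ gives the bound $\varphi_\alpha(u_i) \leq 0$ for the index with $\alpha_i=\alpha$, hence $u_i \in (U_\alpha)_\F(O)$ by Lemma \ref{lem:stab_Ualpha}. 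For every other index, the crucial observation is that $\alpha$ and $\alpha_i$ are linearly independent in $L = \operatorname{Span}_\Q(\Sigma)$ (working in the reduced case; in the non-reduced case one would additionally treat pairs $\{\alpha, 2\alpha\}$ via the monotonicity $(-v)(\chi_{2\alpha}) = 2(-v)(\chi_\alpha)$). Using the identification $\A \cong \operatorname{Hom}_\Q(L,\Lambda)$ of Theorem \ref{thm:apt} and the formula of Proposition \ref{prop:chi_f}, I would extend $\{\alpha, \alpha_i\}$ to a $\Q$-basis of $L$ and produce $b \in A_\F$ with $(-v)(\chi_\alpha(b)) = 0$ and $(-v)(\chi_{\alpha_i}(b))$ equal to any prescribed element of $\Lambda$, in particular arbitrarily negative. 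This forces $\varphi_{\alpha_i}(u_i) = -\infty$, i.e.\ $u_i = \operatorname{Id}$. Consequently $u = u_\alpha \in (U_\alpha)_\F(O)$ and $g \in (U_\alpha)_\F(O) A_\F(O) M_\F$, completing the proof.
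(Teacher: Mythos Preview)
Your proof is correct and follows essentially the same route as the paper: reduce to $\alpha$ positive, invoke Theorem \ref{thm:NO_fixes_s0} via $C_0 \subseteq H_\alpha^+$ to obtain $g=uam$ with $u\in U_\F(O)$, then use the decomposition and fixed-point description of Proposition \ref{prop:Uconv} to show that all factors $u_i$ with $\alpha_i\neq\alpha$ must be trivial. The only cosmetic difference is that the paper achieves the reduction to $\alpha>0$ by simply choosing an order on $\Sigma$ in which $\alpha$ is positive (which is harmless since neither $(U_\alpha)_\F$, $A_\F(O)$, $M_\F$, nor $H_\alpha^+$ depends on that choice), whereas you conjugate by a Weyl-group representative; and where the paper phrases the key step as the geometric observation that $H_\alpha^+ \subseteq H_{\alpha_i,k_{\alpha_i}}^+$ with $k_{\alpha_i}\neq -\infty$ forces $\alpha_i=\alpha$, you make this explicit by constructing points $b.o\in H_\alpha^+$ with $(-v)(\chi_{\alpha_i}(b))$ arbitrarily negative.
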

\begin{proof}
	Without loss of generality, we may assume that $\Sigma$ is equipped with an order in which $\alpha>0$. Then $(U_\alpha)_\F(O) A_\F(O) M_\F \subseteq \operatorname{Stab}_{G_\F}(H_\alpha^+)$ by Proposition \ref{prop:Ualphaconv}. If $g \in \operatorname{Stab}_{G_\F}(H_\alpha^+)$, then $g = uak \in U_\alpha(O) A_\F(O) M_\F$ by Theorem \ref{thm:NO_fixes_s0} since $C_0 \subseteq H_\alpha$, so in fact $u$ fixes $H_\alpha^+$ pointwise. By \cite[Lemma \ref{I-lem:BCH_consequence}]{AppAGRCF}, $u = u_1\cdots u_k$ where $u_i \in U_{\alpha_i}$ such that $\alpha_1 > \ldots > \alpha_k > 0$. Then we can apply the refined version of Proposition \ref{prop:Uconv}, to obtain 
	$$
	\operatorname{Fix}_\A(u) = \left\{ a.o \in \A \colon k_{\alpha_i} \leq (-v)(\chi_{\alpha_i}(a)) \text{ for all } \alpha_i \in \Sigma_{>0} \right\} 
	$$
	where $k_{\alpha_i} = \varphi_{\alpha_i}(u_i)$. In our case, $H_\alpha^+ \subseteq \operatorname{Fix}_\A(u)$ and since when $k_{\alpha_i} \neq -\infty$
	$$
	H_{\alpha}^+ \subseteq \left\{ a.o \in \A \colon k_{\alpha_i} \leq (-v)(\chi_{\alpha_i}(a)) \right\}
	$$ 
	implies $\alpha = \alpha_i$, 
	$k_{\alpha_i} = -\infty$ for all $\alpha_i \neq \alpha$ and $k_\alpha = 0$, so $\varphi_{\alpha_i}(u_i) = -\infty$ implies $u_i = \operatorname{Id}$ whenever $\alpha_i \neq \alpha$, leading to $u \in (U_\alpha)_\F(O)$ concluding the proof.
\end{proof}

\begin{corollary}\label{cor:NalphaO_fixes_H_affine}
	Let $\alpha \in \Sigma$, $\ell \in \Lambda$. The pointwise stabilizer of the affine half-apartment
	$$
	H_{\alpha,\ell}^+ = \left\{ a.o \in \A \colon (-v)(\chi_\alpha(a) ) \geq \ell  \right\}
	$$
	in $G_\F$ is $U_{\alpha, \ell} A_\F(O) M_\F$, where $U_{\alpha, \ell} =  \{ u \in (U_\alpha)_\F \colon u \text{ stabilizes } H_{\alpha, \ell} \text{ pointwise} \}$.
\end{corollary}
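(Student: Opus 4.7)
The plan is to reduce Corollary \ref{cor:NalphaO_fixes_H_affine} to Theorem \ref{thm:NalphaO_fixes_H} by conjugating with an element $b \in A_\F$ that translates the standard half-apartment $H_\alpha^+$ onto the affine half-apartment $H_{\alpha,\ell}^+$.

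First I would choose $b \in A_\F$ with $(-v)(\chi_\alpha(b)) = \ell$. Such a $b$ exists because the character $\chi_\alpha \colon A_\F \to \F_{>0}$ is surjective (this holds over $\R$ as $\chi_\alpha$ is a nontrivial algebraic character of a split torus, and transfers to $\F$) and $-v \colon \F_{>0} \to \Lambda$ is surjective onto its value group; alternatively one may invoke Propositions \ref{prop:char_cochar_Lambda} and \ref{prop:apt1} directly. From the definitions of the two half-apartments one checks immediately that $b.H_\alpha^+ = H_{\alpha,\ell}^+$, so $g \in G_\F$ fixes $H_{\alpha,\ell}^+$ pointwise if and only if $b^{-1}gb$ fixes $H_\alpha^+$ pointwise. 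Applying Theorem \ref{thm:NalphaO_fixes_H} on the translated side yields
$$ \operatorname{Stab}_{G_\F}\left(H_{\alpha,\ell}^+\right) = b \, \bigl( (U_\alpha)_\F(O) \, A_\F(O) \, M_\F \bigr) \, b^{-1}. $$

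Next I would identify the three conjugated factors. Since $A_\F$ is abelian, $b A_\F(O) b^{-1} = A_\F(O)$; and since $M_\F = \operatorname{Cen}_{K_\F}(A_\F)$ centralizes $b$, we have $bM_\F b^{-1} = M_\F$. For the root group factor I would use \cite[Lemma \ref{I-lem:aexpXa}]{AppAGRCF} to write $b\exp(X+X')b^{-1} = \exp(\chi_\alpha(b)X + \chi_\alpha(b)^2 X')$ for $X \in (\frakg_\alpha)_\F$ and $X' \in (\frakg_{2\alpha})_\F$, from which a direct calculation (using Lemma \ref{lem:matrixvaluation}(i) componentwise) yields the key identity $\varphi_\alpha(bub^{-1}) = \varphi_\alpha(u) + \ell$. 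Combined with Proposition \ref{prop:Ualphaconv}, which characterizes $U_{\alpha,\ell}$ as $\{v \in (U_\alpha)_\F \colon \varphi_\alpha(v) \leq \ell\}$, and Lemma \ref{lem:stab_Ualpha}, which characterizes $(U_\alpha)_\F(O)$ as $\{u \colon \varphi_\alpha(u) \leq 0\}$, this identifies $b(U_\alpha)_\F(O)b^{-1} = U_{\alpha,\ell}$, completing the decomposition.

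No serious obstacle is expected; the statement is essentially a translated version of Theorem \ref{thm:NalphaO_fixes_H}. The only technical point worth checking carefully is the additivity $\varphi_\alpha(bub^{-1}) = \varphi_\alpha(u) + \ell$, but this is an immediate consequence of the explicit formula for $\operatorname{Ad}(b)$ on the root spaces $(\frakg_\alpha)_\F$ and $(\frakg_{2\alpha})_\F$ combined with the definition of $\varphi_\alpha$.
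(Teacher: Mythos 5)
Your proposal is correct and takes essentially the same approach as the paper: conjugate by an element $b \in A_\F$ with $(-v)(\chi_\alpha(b)) = \ell$ to translate $H_{\alpha,\ell}^+$ to $H_\alpha^+$, apply Theorem \ref{thm:NalphaO_fixes_H}, and identify $b(U_\alpha)_\F(O)b^{-1} = U_{\alpha,\ell}$ while noting that $A_\F(O)M_\F$ centralizes $b$. The paper's proof is terser on the last identification (arguing directly about which points $aua^{-1}$ fixes), whereas you make the shift $\varphi_\alpha(bub^{-1}) = \varphi_\alpha(u) + \ell$ explicit, but the reasoning is the same.
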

\begin{proof}
	If $g\in G_\F$ stabilizes $H_{\alpha,\ell}^+$ pointwise and $a\in A_\F$ satisfies $(-v)(\chi_\alpha(a)) = \ell$, then $a^{-1} g a \in G_\F$ stabilizes $H_{\alpha, \ell}^+$ pointwise. By Theorem \ref{thm:NalphaO_fixes_H} and since $A_\F(O) M_\F \subseteq \operatorname{Cen}_{G_\F}(A_\F)$, we have $g \in a(U_\alpha)_\F(O) a^{-1} A_\F(O) M_\F$. While elements $u \in (U_\alpha)_\F(O)$ stabilize elements $b.o$ with $(-v)(\chi_\alpha(b)) = 0$ pointwise, $a ua^{-1}$ stabilize elements $ab.o$ with $(-v)(\chi_\alpha(ab)) = \ell + 0 $ pointwise, concluding the proof.
\end{proof}

\subsection{Bruhat-Tits theory for root groups} \label{sec:BT_root_groups}
The goal of Subsections \ref{sec:BT_root_groups}, \ref{sec:BT_rank_1} and \ref{sec:BT_higher_rank} is to describe the pointwise stabilizer of a finite subset $\Omega \subseteq \A$ in Theorem \ref{thm:BTstab_fin}. To obtain this, we first consider points fixed by $(U_\alpha)_\F$ in Section \ref{sec:BT_root_groups}, then points fixed by the rank one subgroups generated by $(U_\alpha)_\F$ and $(U_{-\alpha})_\F$ in Section \ref{sec:BT_rank_1}, before taking on the whole group $G_\F$ in \ref{sec:BT_higher_rank}. These subsections are inspired by the study of stabilizers in \cite[Sections 6 and 7]{BrTi}, for a good English reference see \cite{Lan96}. 

Let $\Omega \subseteq \A$ be any subset of the apartment $\A$. Let
\begin{align*}
	U_{\alpha, \Omega} &:= \left\{ u \in (U_\alpha)_{\F} \colon  u.p = p \text{ for all } p \in \Omega  \right\}
\end{align*}
denote the pointwise stabilizer of $\Omega$ in the group $(U_{\alpha})_\F$. The subscript $\F$ is no longer needed, since there is no corresponding group of $\R$-points. In view of Proposition \ref{prop:UonA}, Proposition \ref{prop:Ualphaconv} can be reformulated.
\begin{lemma}\label{lem:BTUalphaOmega}
	For roots $\alpha \in \Sigma$ and any subset $\Omega \subseteq \A$, we have
	\begin{align*}
		U_{\alpha,\Omega} 
		&= \left\{  u \in (U_\alpha)_\F \colon  \varphi_\alpha(u) \leq (-v)\left(\chi_\alpha\left(a\right)\right) \text{ for all } a.o \in \Omega  \right\} .
	\end{align*}
\end{lemma}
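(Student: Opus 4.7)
The plan is to show that the stated equality is a direct reformulation of Propositions \ref{prop:UonA} and \ref{prop:Ualphaconv}. By the definition of $U_{\alpha,\Omega}$, an element $u \in (U_\alpha)_\F$ lies in $U_{\alpha,\Omega}$ if and only if $u.p = p$ for every $p \in \Omega \subseteq \A$. The first step is to replace the condition $u.p = p$ by the weaker-looking condition $u.p \in \A$.

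To do this, I would invoke Proposition \ref{prop:UonA}, which states that for $u \in U_\F$ and $a \in A_\F$ one has $ua.o \in \A \iff ua.o = a.o$. Since any $p \in \Omega$ has the form $p = a.o$ for some $a \in A_\F$, and since $(U_\alpha)_\F \subseteq U_\F$ for positive $\alpha$, this gives
\[
U_{\alpha,\Omega} = \bigl\{ u \in (U_\alpha)_\F \colon u.p \in \A \text{ for all } p \in \Omega \bigr\}.
\]
For a negative root $\alpha$, the group $(U_\alpha)_\F$ is not contained in $U_\F$, but one can pass through a Weyl group element $n$ that sends $\alpha$ to a positive root $\alpha'$: then $n(U_\alpha)_\F n^{-1} = (U_{\alpha'})_\F$, and since $n$ acts on $\A$, the problem reduces to the positive-root case.

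Finally I would apply Proposition \ref{prop:Ualphaconv}, which characterizes the set $\{p \in \A \colon u.p \in \A\}$ for a fixed $u \in (U_\alpha)_\F$ as the half-apartment $\{a.o \in \A \colon \varphi_\alpha(u) \leq (-v)(\chi_\alpha(a))\}$. Requiring $u.p \in \A$ for every $p = a.o \in \Omega$ is then equivalent to the inequality $\varphi_\alpha(u) \leq (-v)(\chi_\alpha(a))$ holding for every $a.o \in \Omega$, which yields the claimed description of $U_{\alpha,\Omega}$. There is no real obstacle here; the lemma is essentially a packaging of the two earlier propositions into a form that is convenient for the forthcoming rank-one and higher-rank stabilizer calculations.
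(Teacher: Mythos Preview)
Your proposal is correct and matches the paper's approach exactly: the paper does not even give a separate proof, it simply introduces the lemma with the sentence ``In view of Proposition~\ref{prop:UonA}, Proposition~\ref{prop:Ualphaconv} can be reformulated,'' which is precisely the argument you sketch. Your remark on negative roots is a reasonable clarification, though in the paper Proposition~\ref{prop:Ualphaconv} is already stated for arbitrary $\alpha \in \Sigma$ (its proof works with either sign by choosing the order on $\Sigma$ so that $\alpha>0$), so no additional reduction is needed.
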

For any $\ell \in \Lambda$, denote 
$$
U_{\alpha, \ell} := \left\{ u \in (U_\alpha)_\F \colon \varphi_\alpha(u) \leq \ell \right\}.
$$
Note that if $\ell = \min_{a.o \in \Omega} \{ (-v)(\chi_\alpha(a)) \}$ and $\alpha \in \Sigma$, then Lemma \ref{lem:BTUalphaOmega} can be reformulated as
$$
U_{\alpha,\Omega} = U_{\alpha, \ell}.
$$
In the setting of \cite{BrTi}, $\ell = \inf_{a.o \in \Omega} \{ (-v)(\chi_\alpha(a)) \}$ with $U_{\alpha,\Omega} = U_{\alpha,\ell}$ always exists, since they work with $\Lambda=\R$. In our case we have to be more careful. If $\Omega$ is a finite set, this $\ell$ exists. In particular, when $|\Omega| = 1$, we have for any $a\in A_\F$
$$
U_{\alpha,\left\{ a.o \right\}} = U_{\alpha, (-v)(\chi_\alpha(a))}.
$$
\begin{lemma}\label{lem:BTaUa}
	Let $\alpha \in \Sigma, \ell \in \Lambda$ and $a \in A_\F$. Then
	$$
	a U_{\alpha , \ell} a^{-1} = U_{\alpha,\ell + (-v)(\chi_\alpha(a))}.
	$$
\end{lemma}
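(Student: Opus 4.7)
The plan is to directly compute $\varphi_\alpha(aua^{-1})$ for $u \in (U_\alpha)_\F$ and show it differs from $\varphi_\alpha(u)$ by exactly $(-v)(\chi_\alpha(a))$; then the equality of sets follows immediately from the definition of $U_{\alpha,\ell}$.

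First, I would write $u = \exp(X+X')$ with $X \in (\frakg_\alpha)_\F$ and $X' \in (\frakg_{2\alpha})_\F$. The key computational input is the conjugation formula recorded already in the proof of Proposition \ref{prop:Ualphaconv} (coming from \cite[Lemma \ref{I-lem:aexpXa}]{AppAGRCF}), which gives
\[
a u a^{-1} = \exp\!\bigl(\chi_\alpha(a)\,X + \chi_\alpha(a)^{2}\,X'\bigr),
\]
reflecting that $\operatorname{Ad}(a)$ acts on $(\frakg_\alpha)_\F$ by $\chi_\alpha(a)$ and on $(\frakg_{2\alpha})_\F$ by $\chi_\alpha(a)^2$.

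Next, I would compute the root group valuation of this element. Using that $(-v)(cZ) = (-v)(c) + (-v)(Z)$ for a scalar $c \in \F^\times$ and a matrix $Z$ (which is immediate from the definition of $(-v)$ on matrices in Section \ref{sec:matrixvaluation}), I get
\begin{align*}
\varphi_\alpha(aua^{-1})
&= \max\!\left\{ (-v)\bigl(\chi_\alpha(a) X\bigr),\ \tfrac{1}{2}(-v)\bigl(\chi_\alpha(a)^{2} X'\bigr) \right\} \\
&= \max\!\left\{ (-v)(\chi_\alpha(a)) + (-v)(X),\ (-v)(\chi_\alpha(a)) + \tfrac{1}{2}(-v)(X') \right\} \\
&= (-v)(\chi_\alpha(a)) + \varphi_\alpha(u).
\end{align*}

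Finally, $aua^{-1} \in U_{\alpha,\ell+(-v)(\chi_\alpha(a))}$ if and only if $\varphi_\alpha(aua^{-1}) \leq \ell + (-v)(\chi_\alpha(a))$, which by the above identity is equivalent to $\varphi_\alpha(u) \leq \ell$, i.e. $u \in U_{\alpha,\ell}$. This gives both inclusions at once, so $aU_{\alpha,\ell}a^{-1} = U_{\alpha,\ell+(-v)(\chi_\alpha(a))}$. I do not foresee any real obstacle here: the conjugation formula is already in hand, and the computation reduces to the additivity of the matrix valuation under scalar multiplication.
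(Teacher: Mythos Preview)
Your proof is correct. The approach differs from the paper's: you compute $\varphi_\alpha(aua^{-1})$ directly from the conjugation formula on the Lie algebra, essentially repeating the calculation inside Proposition~\ref{prop:Ualphaconv}, whereas the paper argues geometrically by identifying $U_{\alpha,\ell}$ with the stabilizer $U_{\alpha,\{b.o\}}$ of a suitable point $b.o$ (via Lemma~\ref{lem:BTUalphaOmega}) and then observing that conjugation by $a$ corresponds to translating $\Omega = \{b.o\}$ to $a.\Omega = \{ab.o\}$. Your route is shorter and more computational; the paper's route has the advantage of establishing the pattern $aU_{\alpha,\Omega}a^{-1} = U_{\alpha,a.\Omega}$ that recurs in the proofs of Lemmas~\ref{lem:BTkUk} and~\ref{lem:BTmUm}, where no analogous direct valuation computation is available.
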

\begin{proof}
	Let $b \in A_\F$ with $(-v)(\chi_\alpha(b)) = \ell $, the existence of which can be concluded from \cite[Lemma \ref{I-lem:Jacobson_Morozov_oneparam}]{AppAGRCF}. We consider the single element set $\Omega = \{b.o\} $. Then $U_{\alpha, \ell} = U_{\alpha, \Omega}$. Let $u \in U_{\alpha, \ell}$. Then $aua^{-1} \in U_{\alpha, a.\Omega}$ since
	$$
	aua^{-1}.(a.b.o) = au.b.o = a.b.o  
	$$
	and since $aua^{-1} \in (U_\alpha)_\F$, see \cite[Proposition \ref{I-prop:anainN}]{AppAGRCF}. Now since 
	$$
	(-v)(\chi_{\alpha}(ab)) = (-v)(\chi_\alpha(a)) + (-v)(\chi_\alpha(b)) = \ell + (-v)(\chi_\alpha(a)) 
	$$
	we have $aU_{\alpha, \ell}a^{-1} = aU_{\alpha, \Omega}a^{-1} = U_{\alpha, a.\Omega} = U_{\alpha, \ell + (-v)(\chi_\alpha(a))}$.
\end{proof}
\begin{lemma}\label{lem:BTkUk}
	Let $\alpha \in \Sigma, \ell \in \Lambda$ and $k \in M_\F = \operatorname{Cen}_{K_\F}(A_\F)$. Then
	$$
	kU_{\alpha,\ell}k^{-1}= U_{\alpha,\ell}.
	$$
\end{lemma}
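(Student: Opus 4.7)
The plan is to show both containments by reducing the conjugation to an action on the Lie algebra, then using the isometry properties of $M_\F \subseteq K_\F$ established in Lemma \ref{lem:matrixvaluation}.

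First I would verify that $k \in M_\F$ normalizes $(U_\alpha)_\F$, and even normalizes each of the root spaces $(\frakg_\alpha)_\F$ and $(\frakg_{2\alpha})_\F$ individually. Since $k$ centralizes $A_\F$, the endomorphism $\operatorname{Ad}(k) \colon \frakg_\F \to \frakg_\F$ commutes with $\operatorname{Ad}(a)$ for every $a \in A_\F$, and therefore preserves the simultaneous eigenspace decomposition of $\frakg_\F$ under $\operatorname{Ad}(A_\F)$. In particular, $\operatorname{Ad}(k)$ maps $(\frakg_\alpha)_\F$ to itself and $(\frakg_{2\alpha})_\F$ to itself. Consequently, for $u = \exp(X + X')$ with $X \in (\frakg_\alpha)_\F$, $X' \in (\frakg_{2\alpha})_\F$, we get $kuk^{-1} = \exp(kXk^{-1} + kX'k^{-1}) \in (U_\alpha)_\F$ with $kXk^{-1} \in (\frakg_\alpha)_\F$ and $kX'k^{-1} \in (\frakg_{2\alpha})_\F$.

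Next I would compare $\varphi_\alpha(kuk^{-1})$ with $\varphi_\alpha(u)$. Since $k \in M_\F \subseteq K_\F \subseteq \operatorname{SO}_n(\F)$, Lemma \ref{lem:matrixvaluation}(iv) gives $(-v)(kZk^{-1}) = (-v)(Z)$ for any $Z \in \frakg_\F$ (viewed as a matrix). Applied to $X$ and $X'$ separately, this yields
\[
\varphi_\alpha(kuk^{-1}) = \max\left\{(-v)(kXk^{-1}),\, \tfrac{1}{2}(-v)(kX'k^{-1})\right\} = \max\left\{(-v)(X),\, \tfrac{1}{2}(-v)(X')\right\} = \varphi_\alpha(u).
\]
Hence $kU_{\alpha,\ell}k^{-1} \subseteq U_{\alpha,\ell}$ by the characterization in the definition of $U_{\alpha,\ell}$.

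Finally, since $M_\F$ is a group, $k^{-1} \in M_\F$ as well, so applying the inclusion just proven to $k^{-1}$ gives $k^{-1}U_{\alpha,\ell}k \subseteq U_{\alpha,\ell}$, i.e., $U_{\alpha,\ell} \subseteq kU_{\alpha,\ell}k^{-1}$. This yields equality. I do not anticipate any serious obstacle here; the only point that warrants care is verifying that the decomposition $u = \exp(X+X')$ is preserved componentwise by $\operatorname{Ad}(k)$, which is an immediate consequence of $k$ centralizing $A_\F$.
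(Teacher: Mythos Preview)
Your argument is correct but takes a different route from the paper. The paper argues geometrically: it identifies $U_{\alpha,\ell}$ with the single-point stabilizer $U_{\alpha,\Omega}$ for $\Omega = \{b.o\}$ where $(-v)(\chi_\alpha(b)) = \ell$, and then observes that since $k \in M_\F$ fixes every point of $\A$ (in particular $b.o$), one has $kuk^{-1}.b.o = ku.b.o = k.b.o = b.o$, so $kuk^{-1} \in U_{\alpha,\Omega}$. Your approach is instead algebraic: you compute $\varphi_\alpha(kuk^{-1}) = \varphi_\alpha(u)$ directly from the definition, using invariance of the matrix valuation under $\operatorname{SO}_n$-conjugation. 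The paper's method emphasizes the stabilizer interpretation (which is how the lemma is later used), while yours makes explicit that conjugation by $M_\F$ actually preserves the root group valuation $\varphi_\alpha$, a slightly stronger fact.

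One small technical point: Lemma~\ref{lem:matrixvaluation}(iv) is stated only for $a \in A_\F$, not for arbitrary $Z \in \frakg_\F$. The proof given there does work verbatim for any matrix (it only uses parts (i) and (iii)), so your citation is harmless, but you might either note this or instead invoke Lemma~\ref{lem:orthogonal_valuation} together with the $\operatorname{Ad}(K_\F)$-invariance of $B_\theta$ to get $(-v)(kXk^{-1}) = (-v)(X)$ cleanly.
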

\begin{proof}
	Since $k \in M_\F$, it represents the trivial element 
	of the spherical Weyl group acting on the root system. In particular $kU_{\alpha}k^{-1} = 
	U_{\alpha}$. 
	
	Let $b\in A_\F$ with $(-v)(\chi_\alpha(b))= \ell$ and $\Omega = \{b.o\}$. Then $U_{\alpha,\ell} = U_{\alpha, \Omega}$. For any $u\in U_{\alpha,\ell}$ we have
	$$
	kuk^{-1}.b.o = ku.b.o = k.b.o = b.o,
	$$
	hence $kU_{\alpha,\Omega}k^{-1} = U_{\alpha, \Omega}$ concluding the proof.
\end{proof}

Putting the previous two results together shows that $U_{\alpha,\ell}$ is invariant under conjugation by elements in the pointwise stabilizer $T_\F(O)$.
\begin{lemma}\label{lem:BTtUt}
	Let $\alpha \in \Sigma, \ell \in \Lambda$ and $t \in T_\F(O)=M_\F A_\F(O)$. Then
	$$
	t U_{\alpha,\ell} t^{-1} = U_{\alpha,\ell}.
	$$
\end{lemma}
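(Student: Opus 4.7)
The plan is to reduce to the two previous lemmas by decomposing $T_\F(O)$. By Lemma \ref{lem:BT_T_is_MA} and Theorem \ref{thm:stab_A}, we have $T_\F(O) = M_\F A_\F(O)$, so we can write any $t \in T_\F(O)$ as $t = ka$ with $k \in M_\F$ and $a \in A_\F(O)$. Then
\[
tU_{\alpha,\ell}t^{-1} = k\bigl(aU_{\alpha,\ell}a^{-1}\bigr)k^{-1}.
\]

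First I would handle the $A_\F(O)$-part. By Lemma \ref{lem:BTaUa},
\[
aU_{\alpha,\ell}a^{-1} = U_{\alpha,\ell + (-v)(\chi_\alpha(a))}.
\]
The key observation is that $a \in A_\F(O)$ forces $(-v)(\chi_\alpha(a)) = 0$: by Proposition \ref{prop:stab_A}, membership in $A_\F(O)$ is equivalent to $\chi_\alpha(a) \in O^\times$ for all $\alpha \in \Sigma$ (not just for $\alpha \in \Delta$, as the characters of roots are integer combinations of characters of simple roots). Hence $\chi_\alpha(a) \in O^\times$ implies $(-v)(\chi_\alpha(a)) = 0$, so $aU_{\alpha,\ell}a^{-1} = U_{\alpha,\ell}$.

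Finally I would apply Lemma \ref{lem:BTkUk} to the remaining conjugation by $k \in M_\F$, which gives $kU_{\alpha,\ell}k^{-1} = U_{\alpha,\ell}$, yielding $tU_{\alpha,\ell}t^{-1} = U_{\alpha,\ell}$ as required. There is no real obstacle here; the entire content of the lemma is that $A_\F(O)$-conjugation leaves the shift $\ell$ unchanged because the characters of elements in $A_\F(O)$ are units of $O$, and that $M_\F$ was already shown to preserve $U_{\alpha,\ell}$ in Lemma \ref{lem:BTkUk}.
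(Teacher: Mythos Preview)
Your argument is correct and matches the paper's approach exactly: the paper simply states that ``putting the previous two results together'' (i.e.\ Lemmas~\ref{lem:BTaUa} and~\ref{lem:BTkUk}) gives the lemma, and you have spelled out precisely those details, including the observation that $a\in A_\F(O)$ forces $(-v)(\chi_\alpha(a))=0$ via Proposition~\ref{prop:stab_A}.
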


\subsection{Bruhat-Tits theory in rank 1} \label{sec:BT_rank_1}
Our goal in this section is to study the group generated by $U_{\alpha, \Omega}$ and $U_{-\alpha, \Omega}$. For this we will use Jacobson-Morozov in the form of \cite[Proposition \ref{I-prop:Jacobson_Morozov_real_closed}]{AppAGRCF}. Therefore we have to restrict ourselves to reduced root systems from now on. In this subsection we fix $\alpha \in \Sigma$ such that $(\frakg_{2\alpha})_\F = 0$ and $u \in (U_\alpha)_\F$.

When $u\neq \Id $, there is a $t \in \F$ and an $\mathfrak{sl}_2$-triplet $(X,Y,H)$ as in \cite[Proposition \ref{I-prop:Jacobson_Morozov_real_closed}]{AppAGRCF} $u = \exp(tX)$. Up to choosing a different $t\in \F$ we may assume $(-v)(X)=0$, in which case also $(-v)(Y)=0$ since $\exp(tY) = \exp(tX)^\tran$, and $(-v)(H)=0$, as $H=[X,Y]$. Moreover, there is an algebraic group homomorphism $\varphi_\F \colon \operatorname{SL}(2,\F) \to G_\F$ with finite kernel such that
\begin{align*}
	u = \exp(tX) = \varphi_\F \begin{pmatrix}
		1 & t \\ 0& 1
	\end{pmatrix}   \quad \text{and} \quad \exp(tY) = \varphi_\F \begin{pmatrix}
		1 & 0 \\ t & 1
	\end{pmatrix}.
\end{align*}
We note that $t \in O$ if and only if $u \in G_\F(O)$, using Lemma \ref{lem:stab_Ualpha}.
\begin{lemma}\label{lem:BTphiu_is_vt}
	For every $t\in \F$ we have
	$$
	\varphi_\alpha \left( \varphi_\F \begin{pmatrix}
		1 & t \\ 0 & 1
	\end{pmatrix} \right) = (-v)(t) = \varphi_{-\alpha} \left( \varphi_\F \begin{pmatrix}
		1 & 0 \\ t & 1
	\end{pmatrix} \right) .
	$$
\end{lemma}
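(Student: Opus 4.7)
The plan is essentially a direct unwinding of the definition of $\varphi_\alpha$ using the normalizations already set up in the paragraph preceding the lemma. That paragraph fixes an $\mathfrak{sl}_2$-triplet $(X,Y,H)$ with $X \in (\frakg_\alpha)_\F$, $Y \in (\frakg_{-\alpha})_\F$, normalized so that $(-v)(X) = (-v)(Y) = 0$, and identifies
$$
\varphi_\F \begin{pmatrix} 1 & t \\ 0 & 1 \end{pmatrix} = \exp(tX), \qquad \varphi_\F \begin{pmatrix} 1 & 0 \\ t & 1 \end{pmatrix} = \exp(tY).
$$
So the content of the lemma is just that $\varphi_\alpha(\exp(tX)) = (-v)(t)$ and $\varphi_{-\alpha}(\exp(tY)) = (-v)(t)$.

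First I would invoke the standing assumption of Subsection \ref{sec:BT_rank_1} that $(\frakg_{2\alpha})_\F = 0$ (a consequence of $\Sigma$ being reduced applied to $\alpha$, and likewise to $-\alpha$). This collapses the defining maximum of $\varphi_\alpha$ from Subsection \ref{sec:root_group_valuation} to a single term, giving
$$
\varphi_\alpha(\exp(Z)) = \max_{i,j}\bigl\{(-v)(Z_{ij})\bigr\} = (-v)(Z)
$$
for any $Z \in (\frakg_\alpha)_\F$, in the matrix-valuation notation of Subsection \ref{sec:matrixvaluation}. The analogous formula holds for $\varphi_{-\alpha}$ on $(U_{-\alpha})_\F$.

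Applying this to $Z = tX$, the property $v(tX_{ij}) = v(t) + v(X_{ij})$ (valid for every entry, with the convention $v(0) = \infty$) together with the fact that taking a maximum commutes with translation by a constant yields
$$
\varphi_\alpha(\exp(tX)) = \max_{i,j}\bigl\{(-v)(t) + (-v)(X_{ij})\bigr\} = (-v)(t) + (-v)(X) = (-v)(t),
$$
using $(-v)(X) = 0$. The calculation for $\varphi_{-\alpha}(\exp(tY))$ is verbatim the same, with $Y$ in place of $X$ and using $(-v)(Y) = 0$.

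There is no real obstacle: the only subtlety is the boundary case $t = 0$, where both sides must equal $-\infty$. This is consistent because $(-v)(0) = -\infty$ in the extended valuation, and on the group side $\varphi_\alpha(\Id) = -\infty$ lies in the codomain $\Lambda \cup \{-\infty\}$ as defined. The algebraic facts being used (that scaling a matrix by $t$ shifts its matrix valuation by $(-v)(t)$, and that orthogonal direct sum decompositions are valuation-respecting) are already recorded as Lemmas \ref{lem:matrixvaluation} and \ref{lem:orthogonal_valuation}, so no new machinery is required.
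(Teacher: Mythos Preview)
Your proposal is correct and follows exactly the same approach as the paper's proof, which reads in its entirety: ``Since $(-v)(X) := \max_{ij}\{(-v)(X_{ij})\} = 0$, $\varphi_\alpha(u) = (-v)(tX) = (-v)(t)$. Similarly $\varphi_{-\alpha}(\exp(tY)) = (-v)(tY) = (-v)(t)$.'' You have simply unpacked this terse argument in more detail, including the helpful remark on the $t=0$ boundary case.
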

\begin{proof}
	Since $(-v)(X) := \max_{ij}\{(-v)(X_{ij})\} = 0$, $\varphi_\alpha(u) = (-v)(tX) = (-v)(t)$. Similarly $\varphi_{-\alpha}(\exp(tY)) = (-v)(tY) = (-v)(t)$.
\end{proof}

	\begin{lemma}\label{lem:BTm_uuu}
		Let $\ell := (-v)(t)$. Then
		$$
		m(u) := \varphi_\F\begin{pmatrix}
			0 & t \\ -1/t & 0
		\end{pmatrix} = \varphi_\F \left(
		\begin{pmatrix}
			1 & 0 \\ -1/t & 1
		\end{pmatrix}\begin{pmatrix}
			1 & t \\ 0 & 1
		\end{pmatrix}\begin{pmatrix}
			1 & 0 \\ -1/t & 1
		\end{pmatrix},
		\right)
		$$
		in particular $m(u) \in U_{-\alpha,-\ell} U_{\alpha, \ell} U_{-\alpha,-\ell}$.
	\end{lemma}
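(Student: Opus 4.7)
The proof is essentially a two-step verification: first check the matrix identity inside $\mathrm{SL}_2(\F)$, then push it through $\varphi_\F$ and read off the valuation data.

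My plan is to first verify the asserted factorization
$$
\begin{pmatrix} 0 & t \\ -1/t & 0 \end{pmatrix}
= \begin{pmatrix} 1 & 0 \\ -1/t & 1 \end{pmatrix}\begin{pmatrix} 1 & t \\ 0 & 1 \end{pmatrix}\begin{pmatrix} 1 & 0 \\ -1/t & 1 \end{pmatrix}
$$
by direct $2\times 2$ matrix multiplication; it is straightforward since the middle and right factors already multiply to $\bigl(\begin{smallmatrix} 0 & t \\ -1/t & 1 \end{smallmatrix}\bigr)$, and left-multiplying by the first factor clears the lower-right entry. Applying the group homomorphism $\varphi_\F$ then gives the first displayed equality defining $m(u)$.

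For the ``in particular'' statement, I will use Lemma \ref{lem:BTphiu_is_vt} to identify each of the three factors on the right as elements of the appropriate filtration pieces $U_{\pm\alpha, \cdot}$. Concretely, both outer factors equal $\varphi_\F\bigl(\begin{smallmatrix} 1 & 0 \\ -1/t & 1 \end{smallmatrix}\bigr)$, which lies in $(U_{-\alpha})_\F$ with $\varphi_{-\alpha}$-value equal to $(-v)(-1/t)=-(-v)(t)=-\ell$, hence in $U_{-\alpha,-\ell}$; and the middle factor $u=\varphi_\F\bigl(\begin{smallmatrix} 1 & t \\ 0 & 1 \end{smallmatrix}\bigr)$ lies in $(U_\alpha)_\F$ with $\varphi_\alpha$-value $(-v)(t)=\ell$, i.e.\ in $U_{\alpha,\ell}$. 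Combining these memberships with the factorization yields $m(u) \in U_{-\alpha,-\ell}\, U_{\alpha,\ell}\, U_{-\alpha,-\ell}$.

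There is no real obstacle here: the argument is almost entirely a transparent $\mathrm{SL}_2$ computation, and the only subtlety is the sign convention $(-v)(-1/t)=-\ell$, which is handled by the fact that $(-v)$ is a group homomorphism on $\F^\times$ and insensitive to the sign. Since we are also assuming $(\frakg_{2\alpha})_\F=0$ in this subsection, the formula for $\varphi_\alpha$ reduces to the first maximum in its definition, so Lemma \ref{lem:BTphiu_is_vt} applies without any adjustment for a $2\alpha$-component.
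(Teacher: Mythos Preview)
Your proof is correct and follows essentially the same approach as the paper: verify the $2\times 2$ matrix identity by direct computation, apply the homomorphism $\varphi_\F$, and then invoke Lemma~\ref{lem:BTphiu_is_vt} to compute $\varphi_\alpha(u)=\ell$ and $\varphi_{-\alpha}(u')=(-v)(-1/t)=-\ell$ for the outer factors. The paper's proof is simply a more terse version of exactly this argument.
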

	\begin{proof}
		Let 
		$$
		u' := \varphi_\F\begin{pmatrix}
			1 & 0 \\ -1/t & 1
		\end{pmatrix}  \in (U_{-\alpha})_\F.
		$$
		The matrix expression $m(u) = u' u u'$ is a direct calculation, showing $m(u) \in (U_{-\alpha})_\F(U_\alpha )_\F (U_{-\alpha})_\F$. By Lemma \ref{lem:BTphiu_is_vt}, $\varphi_\alpha(u) = \ell$ and $\varphi_{-\alpha}(u') = (-v)(-1/t) = -\ell$, concluding the proof.
	\end{proof}
	The element 
	$$
	m(u) := \varphi_\F \begin{pmatrix}
		0 & t \\ -1/t & 0
	\end{pmatrix} \in G_\F
	$$
	is contained in $ \operatorname{Nor}_{G_\F}(A_\F)$ by \cite[Lemma \ref{I-lem:Jacobson_Morozov_m}]{AppAGRCF} and thus a representative of an element of the affine Weyl group $W_a = \operatorname{Nor}_{G_\F}(A_\F)/\operatorname{Cen}_{G_\F}(A_\F) $. Recall that the affine Weyl group $W_a$ can be identified with $W_a = \A \rtimes W_s $, where $W_s = N_\F/M_\F= \operatorname{Nor}_{K_\F}(A_\F)/\operatorname{Cen}_{K_\F}(A_\F)$ is the spherical Weyl group.
	
	\begin{proposition}\label{prop:BTmu_in_Wa} The action of $m(u)$ decomposes as
		$$
		m(u) = \varphi_\F \begin{pmatrix}
			t & 0 \\ 0 & t^{-1}
		\end{pmatrix} \cdot \varphi_\F \begin{pmatrix}
			0 & 1 \\ -1 & 0 
		\end{pmatrix} =: a_t\cdot m \in A_\F \cdot N_\F 
		$$
		into an affine part represented by $a_t$ and a spherical part represented by $m$.
		The element $m$ represents the reflection $r_\alpha \in W_s$ and for $a_t$ we have $(-v)(\chi_\alpha(a_t)) = 2(-v)(t) = 2\varphi_\alpha(u)$. Thus $m(u)$ represents the affine reflection along the hyperplane
		$$
		\{ a.o \in \A \colon (-v)(\chi_\alpha(a)) = \varphi_\alpha(u) \}.
		$$
	\end{proposition}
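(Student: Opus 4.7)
The plan is to establish each of the three claimed identifications in turn and then combine them to identify the affine action. The decomposition $m(u) = a_t \cdot m$ itself is the immediate matrix identity
\[
\begin{pmatrix} 0 & t \\ -1/t & 0 \end{pmatrix} = \begin{pmatrix} t & 0 \\ 0 & t^{-1} \end{pmatrix}\begin{pmatrix} 0 & 1 \\ -1 & 0 \end{pmatrix}
\]
in $\operatorname{SL}(2,\F)$, after applying $\varphi_\F$. It remains to understand each factor, and then to read off the resulting affine motion on $\A$. Without loss of generality we may assume $t > 0$: replacing $(X,Y)$ by $(-X,-Y)$ preserves the $\mathfrak{sl}_2$-triple and changes the sign of $t$.

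For $a_t$: the diagonal one-parameter subgroup of $\operatorname{SL}_2$ is $\exp(s \cdot \operatorname{diag}(1,-1))$, so $a_t = \exp(\log(t)\, H)$ where $H$ is the neutral element of the $\mathfrak{sl}_2$-triple $(X,Y,H)$ provided by \cite[Proposition \ref{I-prop:Jacobson_Morozov_real_closed}]{AppAGRCF}. Since $\Sigma$ is reduced, $X \in (\frakg_\alpha)_\F$ and $H = [X,Y] \in \fraka_\F$ satisfies $\alpha(H) = 2$, forcing $H = \alpha^\vee$ under the identification $(\fraka^\star)^\star \cong \fraka$. Thus $a_t = t_\alpha(t) \in A_\F$, and Proposition \ref{prop:char_cochar_Lambda} (transferred to $\F$ via the usual exponential formula) gives $\chi_\alpha(a_t) = t^{\alpha(\alpha^\vee)} = t^2$, whence $(-v)(\chi_\alpha(a_t)) = 2(-v)(t) = 2\varphi_\alpha(u)$ using Lemma \ref{lem:BTphiu_is_vt}.

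For $m$: the matrix $\begin{pmatrix} 0 & 1 \\ -1 & 0 \end{pmatrix}$ lies in $\operatorname{SO}(2,\F)$, and $\varphi_\F$ is $\theta$-equivariant (since $Y$ is built from $-\theta(X)$), so $m \in K_\F$. By \cite[Lemma \ref{I-lem:Jacobson_Morozov_m}]{AppAGRCF}, $m \in \operatorname{Nor}_{G_\F}(A_\F)$, hence $m \in N_\F$. To identify its class in $W_s$, note that a direct $\operatorname{SL}_2$-computation shows $m$ inverts $\exp(\R\cdot\alpha^\vee) = \varphi_\F(\operatorname{diag}(\cdot,\cdot))$, while any $a = \exp(H')$ with $H' \in \fraka$ and $\alpha(H') = 0$ centralizes $\varphi_\F(X), \varphi_\F(Y)$ (by $[H',X] = \alpha(H')X = 0$, and similarly for $Y$) and hence commutes with $m$. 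Thus $\operatorname{Ad}(m)$ on $\fraka$ is $-1$ on $\R\alpha^\vee$ and the identity on $\ker\alpha$, which is the defining description of $r_\alpha$.

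Combining: for any $b \in A_\F$, since $m \in K_\F$ fixes $o$,
\[
m(u).(b.o) = a_t \cdot (mbm^{-1}).o,
\]
and by Lemma \ref{lem:actionWs}, $\chi_\gamma(mbm^{-1}) = \chi_\gamma(b)/\chi_\alpha(b)^{\gamma(\alpha^\vee)}$ for every $\gamma \in \Sigma$. Applying $(-v)$ and using step 2, the $\gamma$-coordinate (in the sense of Proposition \ref{prop:chi_f}) of $m(u).(b.o)$ equals $\gamma(b) - \gamma(\alpha^\vee)\bigl(\alpha(b) - \ell\bigr)$ with $\ell := \varphi_\alpha(u)$, which is precisely $\gamma\bigl(r_{\alpha,\ell}(b)\bigr)$ for the affine reflection $r_{\alpha,\ell}(x) = x - (\alpha(x) - \ell)\alpha^\vee$ along $M_{\alpha,\ell}$. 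I expect the main obstacle to be simply the bookkeeping: matching the Jacobson--Morozov normalization $H = \alpha^\vee$ with the coroot conventions of Section \ref{sec:root_system}, and verifying that $m$ acts as $r_\alpha$ rather than a Weyl conjugate thereof. Both reduce, as indicated, to the standard $\operatorname{SL}_2$-calculation transferred to $\F$.
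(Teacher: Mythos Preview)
Your proof is correct and follows essentially the same strategy as the paper: decompose $m(u)$ as a diagonal times the rotation, identify the diagonal part via the coroot one-parameter subgroup (the paper cites \cite[Lemma \ref{I-lem:Jacobson_Morozov_oneparam}]{AppAGRCF} for $\chi_\alpha(a_t)=t^2$, which is the same computation you carry out), and show $m$ acts as $r_\alpha$ by verifying it inverts $(A_{\pm\alpha})_\F$ and fixes $A_\perp=\{a:\chi_\alpha(a)=1\}$. Two small remarks: your use of $\exp(\log(t)H)$ is informal over $\F$ (there is no semialgebraic logarithm), but this is harmless since all you need is the polynomial identity $\chi_\alpha(t_\alpha(t))=t^2$; and for the commutation of $m$ with $A_\perp$, the paper instead writes $m$ as a product of three unipotents in $(U_{\pm\alpha})_\F$ and invokes \cite[Lemma \ref{I-lem:aexpXa}]{AppAGRCF}, whereas you argue at the Lie algebra level via $[H',X]=\alpha(H')X=0$ --- both reach the same conclusion.
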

	\begin{proof}
		The decomposition of $m(u)$ is a direct calculation. We investigate the action of $m$. We may decompose $A_\F = (A_{\pm \alpha})_\F \cdot A_{\perp}$ as a direct product, where
		$$
		(A_{\pm \alpha})_\F = \varphi_\F \left(\left\{ \begin{pmatrix}
			\lambda & 0 \\ 0 & \lambda^{-1}
		\end{pmatrix} \colon \lambda >0 \right\}\right)
		$$
		and
		$$
		A_\perp = \{ a\in A_\F \colon \chi_\alpha(a) = 1 \}.
		$$
		The reflection $r_\alpha \colon \A \to \A$ is defined by $r_\alpha(a_\alpha.o) = a_{\alpha}^{-1}.o$ for all $a_\alpha \in (A_{\pm \alpha})_\F$ and $r_\alpha(a_\perp.o) = a_\perp.o $ for all $a_\perp \in A_\perp$. For $a_\alpha \in (A_{\pm \alpha})_\F$ there is some $\lambda >0$ such that
		\begin{align*}
			m.a_\alpha.o &= m.a_\alpha.m^{-1} . o= \varphi_\F \left(\begin{pmatrix}
				0 & 1 \\ -1 & 0
			\end{pmatrix}  \begin{pmatrix}
				\lambda & 0 \\ 0 & \lambda^{-1}
			\end{pmatrix}\begin{pmatrix}
				0 & -1 \\ 1 & 0
			\end{pmatrix} \right). o \\
			&=  \varphi_\F\begin{pmatrix}
				\lambda^{-1} & 0 \\ 0 & \lambda
			\end{pmatrix}  .o = a_\alpha^{-1}.o.
		\end{align*}
		For $a_\perp \in A_{\perp}$ we use 
		\begin{align*}
			m &= \varphi_\F \begin{pmatrix}
				0 & 1 \\ -1 & 0
			\end{pmatrix} = \varphi_\F \left( \begin{pmatrix}
				1 & 0 \\ -1 & 1
			\end{pmatrix} \begin{pmatrix}
				1 & 1 \\ 0 & 1
			\end{pmatrix} \begin{pmatrix}
				1 & 0 \\ -1 & 1
			\end{pmatrix} \right) \\
			& \in \exp((\frakg_{-\alpha})_\F) \cdot \exp((\frakg_{\alpha})_\F) \cdot \exp((\frakg_{-\alpha})_\F)
		\end{align*}
		and \cite[Lemma \ref{I-lem:aexpXa}]{AppAGRCF} to obtain $m. a_\perp.o = a_\perp. m.o = a_\perp.o$. By \cite[Lemma \ref{I-lem:Jacobson_Morozov_oneparam}]{AppAGRCF} and Lemma \ref{lem:BTphiu_is_vt}, $(-v)(\chi_\alpha(a_t)) = 2(-v)(t) = 2\varphi_\alpha(u)$. A point $a.o = a_\perp a_\alpha.o \in \A$ is fixed by $m(u)$ if and only if
		$$
		a.o = m(u).a.o = a_t m a_\perp a_\alpha m^{-1} .o = a_t a_\perp a_\alpha^{-1}.o 
		$$
		which is the case exactly when $(-v)(\chi_\alpha(a_t a_\alpha^{-2})) =0$, i.e. when $(-v)(\chi_\alpha(a)) = (-v)(t) = \varphi_\alpha(u)$.
	\end{proof}
	We obtain three corollaries from the geometric description above. Recall from Theorem \ref{thm:stab_A} that $\operatorname{Stab}_{G_\F}(\A) = M_\F A_\F(O) = T_\F(O)$.
	
	\begin{lemma}\label{lem:BTphi_stab}
		For any $u_1,u_2 \in (U_\alpha)_\F$, 
		$$
		\varphi_\alpha(u_1) = \varphi_\alpha(u_2) \quad \iff \quad m(u_2)^{-1}m(u_1) \in T_\F(O).
		$$
		For any $u \in (U_\alpha)_\F, u' \in (U_{-\alpha})_\F$, 
		$$
		\varphi_\alpha(u) = -\varphi_{-\alpha}(u') \quad \iff \quad m(u)^{-1}m(u') \in T_\F(O).
		$$
	\end{lemma}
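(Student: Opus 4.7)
The plan is to read off both claims directly from Proposition \ref{prop:BTmu_in_Wa}, which identifies the action of $m(u)$ on $\A$ as the affine reflection along the hyperplane $M_{\alpha,\varphi_\alpha(u)} = \{a.o \in \A \colon (-v)(\chi_\alpha(a)) = \varphi_\alpha(u)\}$, combined with Theorem \ref{thm:stab_A}, which says that the pointwise stabilizer of $\A$ in $G_\F$ is exactly $T_\F(O)$.

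For the first equivalence, I would argue as follows. By Proposition \ref{prop:BTmu_in_Wa}, $m(u_i)$ acts on $\A$ as the affine reflection along $M_{\alpha,\varphi_\alpha(u_i)}$ for $i=1,2$. If $\varphi_\alpha(u_1)=\varphi_\alpha(u_2)$, these two hyperplanes coincide, the reflections coincide, and hence $m(u_2)^{-1}m(u_1)$ fixes $\A$ pointwise; by Theorem \ref{thm:stab_A}, this places it in $T_\F(O)$. Conversely, if $m(u_2)^{-1}m(u_1)\in T_\F(O)$, then Theorem \ref{thm:stab_A} shows that $m(u_1)$ and $m(u_2)$ induce the same map on $\A$. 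Since an affine reflection of $\A$ is determined by its fixed hyperplane (two reflections along parallel hyperplanes differ by a nontrivial translation in the $\alpha^\vee$-direction, which cannot be the identity on $\A$), the hyperplanes $M_{\alpha,\varphi_\alpha(u_1)}$ and $M_{\alpha,\varphi_\alpha(u_2)}$ must coincide, forcing $\varphi_\alpha(u_1)=\varphi_\alpha(u_2)$.

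For the second equivalence, the argument is identical once we apply Proposition \ref{prop:BTmu_in_Wa} with $-\alpha$ in place of $\alpha$ for the element $u'\in (U_{-\alpha})_\F$. This gives that $m(u')$ acts as the reflection along $\{a.o\in\A \colon (-v)(\chi_{-\alpha}(a))=\varphi_{-\alpha}(u')\}$, which coincides with $\{a.o\in\A \colon (-v)(\chi_\alpha(a))=-\varphi_{-\alpha}(u')\}$. Comparing with $m(u)$ along $M_{\alpha,\varphi_\alpha(u)}$, the two reflections agree on $\A$ iff $\varphi_\alpha(u)=-\varphi_{-\alpha}(u')$, and as before this is equivalent to $m(u)^{-1}m(u')\in T_\F(O)$ via Theorem \ref{thm:stab_A}.

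I do not anticipate a hard step here: all the technical work is already packaged in Proposition \ref{prop:BTmu_in_Wa} and Theorem \ref{thm:stab_A}. The only mild subtlety is the uniqueness assertion for affine reflections (that the fixed hyperplane is recovered from the action on $\A$), which one could either state as an elementary fact about the affine Weyl action on $\A$ or verify directly by evaluating the composition of two $\alpha$-reflections on one point and observing that the resulting translation is trivial only when the two hyperplanes coincide.
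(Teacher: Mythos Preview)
Your proposal is correct and follows essentially the same approach as the paper: both arguments use Proposition \ref{prop:BTmu_in_Wa} to identify the action of $m(u)$ as an affine reflection along a hyperplane determined by $\varphi_\alpha(u)$, then invoke Theorem \ref{thm:stab_A} to equate ``same action on $\A$'' with membership in $T_\F(O)$. The paper's proof is simply a terser version of what you wrote, and your added remark about recovering the hyperplane from the reflection (via its fixed-point set, or via the composite being a nontrivial translation) makes explicit a step the paper leaves implicit.
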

	\begin{proof}
		We use the description of the action in Proposition \ref{prop:BTmu_in_Wa}. Both $m(u_1)$ and $m(u_2)$ act on $\A$ by an affine reflection along a hyperplane. They reflect along the same hyperplane if and only if $\varphi_\alpha(u_1) = \varphi_\alpha(u_2)$, which is the case exactly when $m(u_2)^{-1}m(u_1) \in \operatorname{Stab}_{G_\F}(\A)$. 
		
		The fixed hyperplanes of $m(u)$ and $m(u')$ are given respectively by the conditions $(-v)(\chi_\alpha(a)) = \varphi_\alpha(u)$ and $(-v)(\chi_{-\alpha}(a)) = \varphi_{-\alpha}(u')$. The second condition can also be written as $(-v)(\chi_\alpha(a)) = -\varphi_{-\alpha}(u') = \varphi_\alpha(u)$ and hence agrees with the first.
	\end{proof}
	
	\begin{lemma}\label{lem:BTmMm_M}
		We have $m(u) \cdot M_\F \cdot m(u)^{-1} = M_\F$.
	\end{lemma}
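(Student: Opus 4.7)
The plan is to leverage the decomposition $m(u) = a_t \cdot m$ established in Proposition \ref{prop:BTmu_in_Wa}, where $a_t \in A_\F$ and $m = \varphi_\F\begin{pmatrix} 0 & 1 \\ -1 & 0 \end{pmatrix}$ is a representative of the reflection $r_\alpha$. The key point is that $m$ lies in $N_\F = \operatorname{Nor}_{K_\F}(A_\F) \subseteq K_\F$, so the conjugation $\operatorname{Ad}(m(u))$ factors through two much simpler pieces, each of which I expect to preserve $M_\F$ for essentially formal reasons.

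First I would dispatch the $A_\F$-piece: since $M_\F = \operatorname{Cen}_{K_\F}(A_\F)$, every element of $M_\F$ already commutes with $a_t \in A_\F$, so $a_t k a_t^{-1} = k$ for each $k \in M_\F$, and hence $a_t M_\F a_t^{-1} = M_\F$ trivially.

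The substantive step is to show $m M_\F m^{-1} = M_\F$, i.e.\ that $M_\F$ is normal in $N_\F$. Given $k \in M_\F$ and an arbitrary $a \in A_\F$, the element $m^{-1} a m$ again lies in $A_\F$ (since $m \in N_\F$), and is therefore centralized by $k$. A short computation then gives
\[
(mkm^{-1}) \, a \, (mkm^{-1})^{-1} = m k (m^{-1}am) k^{-1} m^{-1} = m (m^{-1}am) m^{-1} = a,
\]
so $mkm^{-1} \in \operatorname{Cen}_{G_\F}(A_\F)$; combined with $m, k \in K_\F$ (whence $mkm^{-1} \in K_\F$), this gives $mkm^{-1} \in M_\F$. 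Applying the same argument to $m^{-1}$ yields the reverse inclusion.

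Putting the two steps together,
\[
m(u) \, M_\F \, m(u)^{-1} = a_t \bigl( m M_\F m^{-1} \bigr) a_t^{-1} = a_t M_\F a_t^{-1} = M_\F,
\]
as desired. There is no real obstacle here beyond correctly invoking Proposition \ref{prop:BTmu_in_Wa} and the definitions of $M_\F$ and $N_\F$; the computation is essentially the standard fact that the centralizer of a torus is normal in its normalizer.
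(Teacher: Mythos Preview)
Your proof is correct and follows essentially the same approach as the paper: both use the decomposition $m(u) = a_t \cdot m \in A_\F \cdot N_\F$ from Proposition~\ref{prop:BTmu_in_Wa}, then reduce to showing $mM_\F m^{-1} = M_\F$ and $a_t M_\F a_t^{-1} = M_\F$. The only difference is that the paper invokes the quotient group structure $W_s = N_\F/M_\F$ to conclude $mM_\F m^{-1} = M_\F$, whereas you verify the normality of $M_\F$ in $N_\F$ by the direct centralizer-in-normalizer computation; these amount to the same standard fact.
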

	\begin{proof}
		Let $m(u)=a_t\cdot m \in A_\F \cdot N_\F$ as in Proposition \ref{prop:BTmu_in_Wa}. In the spherical Weyl group $W_s = N_\F/M_\F$ we have $mM_\F \cdot m^{-1}M_\F = M_\F$. Then 
		\begin{equation*}
				m(u) \cdot M_\F \cdot  m(u)^{-1} =  a_tm M_\F m^{-1}a_t^{-1} = a_t M_\F a_t^{-1} = M_\F. \qedhere
		\end{equation*}
	\end{proof}
	
	\begin{lemma}\label{lem:BTmUm}
		Let
			$a,b \in A_\F$. If $a.o = m(u).b.o$, then $$(-v)(\chi_{\alpha}(a)) = (-v)( t^2 \chi_{\alpha}(b)^{-1} ).$$
			Moreover,
			$$
			m(u)U_{\alpha,\ell}m(u)^{-1} = U_{-\alpha, \ell-2(-v)(t)} = U_{-\alpha, \ell-2\varphi_\alpha(u)}
			$$
			for any $\ell \in \Lambda$.
		\end{lemma}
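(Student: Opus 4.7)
The plan is to exploit the decomposition $m(u) = a_t \cdot m$ from Proposition \ref{prop:BTmu_in_Wa}, where $m = \varphi_\F\begin{pmatrix}0 & 1 \\ -1 & 0\end{pmatrix}$ lies in $N_\F \subseteq K_\F$, represents $r_\alpha \in W_s$, and $a_t \in A_\F$ satisfies $(-v)(\chi_\alpha(a_t)) = 2(-v)(t)$. The auxiliary point underpinning both claims is that since $m \in K_\F \subseteq \operatorname{SO}_n(\F)$, conjugation by $m$ preserves the valuation $(-v)$ on $\frakg_\F$; this follows from Lemma \ref{lem:matrixvaluation}(iv), whose proof goes through verbatim for any matrix in $\F^{n\times n}$, not only for elements of $A_\F$.

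For the first statement, I would compute $m(u).b.o = a_t m b.o = a_t(mbm^{-1})m.o = a_t(mbm^{-1}).o$, using $m.o = o$. Since $m$ represents $r_\alpha$, the conjugate $mbm^{-1} \in A_\F$ satisfies $\chi_\alpha(mbm^{-1}) = \chi_{-\alpha}(b) = \chi_\alpha(b)^{-1}$ (verified directly, e.g. by the explicit $\operatorname{SL}_2$-calculation of Proposition \ref{prop:BTmu_in_Wa}). If $a.o = a_t(mbm^{-1}).o$, then Proposition \ref{prop:stab_A} yields $a^{-1} a_t m b m^{-1} \in A_\F(O)$, so applying $(-v) \circ \chi_\alpha$ gives
$$
(-v)(\chi_\alpha(a)) = (-v)(\chi_\alpha(a_t)) + (-v)(\chi_\alpha(mbm^{-1})) = 2(-v)(t) - (-v)(\chi_\alpha(b)) = (-v)(t^2 \chi_\alpha(b)^{-1}).
$$

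For the second statement, first observe $m(u)(U_\alpha)_\F m(u)^{-1} = (U_{-\alpha})_\F$: indeed $\operatorname{Ad}(m)((\frakg_\alpha)_\F) = (\frakg_{-\alpha})_\F$ and $a_t \in A_\F$ normalizes each root group. For $v = \exp(X) \in U_{\alpha, \ell}$ with $X \in (\frakg_\alpha)_\F$, \cite[Lemma \ref{I-lem:aexpXa}]{AppAGRCF} gives
$$
m(u) v m(u)^{-1} = a_t \exp(\operatorname{Ad}(m)(X)) a_t^{-1} = \exp\bigl(\chi_\alpha(a_t)^{-1} \operatorname{Ad}(m)(X)\bigr),
$$
since $\operatorname{Ad}(m)(X) \in (\frakg_{-\alpha})_\F$ and $\chi_{-\alpha}(a_t) = \chi_\alpha(a_t)^{-1}$. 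By the extended form of Lemma \ref{lem:matrixvaluation}(iv), $(-v)(\operatorname{Ad}(m)(X)) = (-v)(X) = \varphi_\alpha(v)$, hence
$$
\varphi_{-\alpha}(m(u) v m(u)^{-1}) = -(-v)(\chi_\alpha(a_t)) + (-v)(X) = \varphi_\alpha(v) - 2(-v)(t) \leq \ell - 2(-v)(t),
$$
yielding $m(u) U_{\alpha, \ell} m(u)^{-1} \subseteq U_{-\alpha, \ell - 2(-v)(t)}$.

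The reverse inclusion is a symmetric calculation: starting from $v' \in U_{-\alpha, \ell - 2(-v)(t)}$ and conjugating by $m(u)^{-1} = m^{-1} a_t^{-1}$ produces $\varphi_\alpha(m(u)^{-1} v' m(u)) = \varphi_{-\alpha}(v') + 2(-v)(t) \leq \ell$, so $m(u)^{-1} v' m(u) \in U_{\alpha, \ell}$. The final equality $\ell - 2(-v)(t) = \ell - 2\varphi_\alpha(u)$ is immediate from Lemma \ref{lem:BTphiu_is_vt}. The only real subtlety is the quiet use of Lemma \ref{lem:matrixvaluation}(iv) for matrices outside $A_\F$; checking that its proof used no finer property of $A_\F$ than being a subset of $\F^{n\times n}$ is the one verification that requires a careful re-reading.
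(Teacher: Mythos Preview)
Your proof is correct. The first part is essentially identical to the paper's: both use the decomposition $m(u) = a_t m$ from Proposition~\ref{prop:BTmu_in_Wa}, compute $m(u).b.o = a_t(mbm^{-1}).o$, and read off the valuation of $\chi_\alpha$.

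For the second part you take a genuinely different route from the paper. The paper argues geometrically: it takes $\Omega = \{b.o\}$ with $(-v)(\chi_\alpha(b)) = \ell$, so that $U_{\alpha,\ell} = U_{\alpha,\Omega}$, observes that conjugation by $m(u)$ sends $U_{\alpha,\Omega}$ to $U_{-\alpha,\,m(u).\Omega}$, and then invokes the \emph{first part of this very lemma} to compute $(-v)(\chi_{-\alpha}(a)) = \ell - 2(-v)(t)$ for $a.o = m(u).b.o$. You instead compute $\varphi_{-\alpha}(m(u)\,v\,m(u)^{-1})$ directly on the Lie-algebra level, using that $m \in N_\F \subseteq K_\F \subseteq \operatorname{SO}_n(\F)$ so that conjugation by $m$ preserves the matrix valuation $(-v)(X)$. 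Your observation that the proof of Lemma~\ref{lem:matrixvaluation}(iv) uses only (i) and (iii), and thus applies to any $a \in \F^{n\times n}$, is correct and is the only ingredient not already stated explicitly. Your argument is self-contained (it does not circularly invoke the first part of the lemma) and in fact yields the \emph{equality} $\varphi_{-\alpha}(m(u)vm(u)^{-1}) = \varphi_\alpha(v) - 2(-v)(t)$ directly, so the two inclusions need not be treated separately. The paper's approach, by contrast, stays within the $\Omega$-stabilizer formalism developed in Section~\ref{sec:BT_root_groups}, which is closer to the Bruhat--Tits viewpoint and avoids any appeal to the matrix realization beyond what is encoded in $\varphi_\alpha$.
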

		\begin{proof}
			Decomposing $m(u) =a_t \cdot m \in A_\F \cdot N_\F$ as in Proposition \ref{prop:BTmu_in_Wa}, we see $m(u)$ as consisting of a translational element $a_t$ and a representative of an element $w=[m]$ of the spherical Weyl group $W_s$. 
			
			If $a,b\in A_\F$ satisfy $a.o = m(u).b.o = a_t m b m^{-1}.o$, then
			\begin{align*}
				(-v)(\chi_\alpha(a)) &= (-v)(\chi_\alpha(a_t mbm^{-1})) 
				= (-v)(\chi_\alpha(a_t) \chi_\alpha(mbm^{-1})) \\
				&= (-v)(t^{2} \chi_\alpha(b)^{-1}).
			\end{align*}
			where the last equality comes from \cite[Lemma \ref{I-lem:Jacobson_Morozov_oneparam} and \ref{I-lem:Jacobson_Morozov_m}]{AppAGRCF}.
			
			As in the proof of Lemma \ref{lem:BTaUa}, we now consider $\Omega = \{b.o\}$ with $(-v)(\chi_\alpha(b)) = \ell$. Then $U_{\alpha,\ell} = U_{\alpha,\Omega}$. By \cite[Lemma \ref{I-lem:Jacobson_Morozov_m} and \ref{I-lem:aexpXa}]{AppAGRCF}, we have
			\begin{align*}
				m(u)\cdot (U_\alpha)_\F \cdot m(u)^{-1} & = a_t  m  (U_\alpha)_\F  m^{-1}  a_t^{-1} = a_t (U_{-\alpha})_\F a_t^{-1} = (U_{-\alpha})_\F.
			\end{align*}
			Since 
			$$	
			(m(u)\cdot u \cdot m(u)^{-1}).m(u).b.o = m(u).b.o,
			$$
			we have thus $m(u) \cdot U_{\alpha, \Omega} \cdot m(u)^{-1} = U_{-\alpha, m(u).\Omega}$. 
			Now by the comment after Lemma \ref{lem:BTUalphaOmega}, $U_{-\alpha, m(u).\Omega} = U_{-\alpha, \ell'}$ for $\ell' = (-v)(\chi_{-\alpha}(a))$ where $a.o = m(u).b.o$. We have
			$$
			\ell' = (-v)(\chi_{-\alpha}(a)) = -(-v)(\chi_\alpha(a)) = (-v)(\chi_\alpha(b)) - 2(-v)(t) = \ell -2(-v)(t)
			$$
			whence $m(u)U_{\alpha,\ell}m(u)^{-1} = U_{-\alpha,\ell-2(-v)(t)}$.
			%
		\end{proof}
		
		We know from Lemma \ref{lem:BTm_uuu}, that $m(u)\in (U_{-\alpha})_\F u (U_{-\alpha})_\F$. Next, we will show that $m(u)$ is the only element in $\operatorname{Nor}_{G_\F}(A_\F) \cap  (U_{-\alpha})_\F u (U_{-\alpha})_\F$.
		
		\begin{lemma}\label{lem:BTUUUm}
			Let 
			$u',u'' \in (U_{-\alpha})_\F$ such that $u'uu'' \in \operatorname{Nor}_{G_\F}(A_\F)$. Then $u'uu'' = m(u)$ and
			$$
			\varphi_{-\alpha}(u') = - \varphi_{\alpha}(u) = \varphi_{-\alpha}(u'').
			$$
		\end{lemma}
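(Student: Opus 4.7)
The plan is to transfer the problem into $\operatorname{SL}(2,\F)$ via the morphism $\varphi_\F \colon \operatorname{SL}(2,\F) \to G_\F$ introduced before the lemma. We may assume $u \neq \Id$ (otherwise $m(u)$ is undefined), so $u = \varphi_\F\bigl(\begin{smallmatrix} 1 & t \\ 0 & 1 \end{smallmatrix}\bigr)$ for some $t \in \F^\times$. Writing $u' = \varphi_\F\bigl(\begin{smallmatrix} 1 & 0 \\ s' & 1 \end{smallmatrix}\bigr)$ and $u'' = \varphi_\F\bigl(\begin{smallmatrix} 1 & 0 \\ s'' & 1 \end{smallmatrix}\bigr)$ for $s', s'' \in \F$ (using that $\Sigma$ is reduced and that the one-parameter subgroup $s \mapsto \varphi_\F\bigl(\begin{smallmatrix} 1 & 0 \\ s & 1 \end{smallmatrix}\bigr)$ parameterizes $(U_{-\alpha})_\F$), a direct matrix multiplication gives
\[
M \coloneqq \begin{pmatrix} 1 & 0 \\ s' & 1 \end{pmatrix}\begin{pmatrix} 1 & t \\ 0 & 1 \end{pmatrix}\begin{pmatrix} 1 & 0 \\ s'' & 1 \end{pmatrix} = \begin{pmatrix} 1 + t s'' & t \\ s' + (1 + s't)s'' & 1 + s't \end{pmatrix},
\]
with $\varphi_\F(M) = u'uu''$. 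The crux of the argument is to show that $M$ is antidiagonal.

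Set $L \coloneqq \varphi_\F(\operatorname{SL}(2,\F))$, let $D$ be the diagonal torus of $\operatorname{SL}(2,\F)$, and define $A_\perp \coloneqq \{a \in A_\F \colon \chi_\alpha(a) = 1\}$. Because $(U_\alpha)_\F$ and $(U_{-\alpha})_\F$ are both centralized by $A_\perp$, so is $L$; the intersection $L \cap A_\perp$ is therefore contained in the finite center $Z(L) = \varphi_\F(\{\pm I\})$, but $A_\perp$ is connected and torsion-free (by transfer from the real case), forcing $L \cap A_\perp = \{\Id\}$. Combined with the decomposition $A_\F = (A_{\pm\alpha})_\F \cdot A_\perp$, this yields $L \cap A_\F = (A_{\pm\alpha})_\F = \varphi_\F(D)$. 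Since $\varphi_\F(M)$ normalizes $A_\F$ and centralizes $A_\perp$, it normalizes $(A_{\pm\alpha})_\F$, so $\varphi_\F\bigl(M \operatorname{diag}(\lambda,\lambda^{-1}) M^{-1}\bigr) \in (A_{\pm\alpha})_\F$ for every $\lambda \in \F_{>0}$. Because $\ker(\varphi_\F) \subseteq Z(\operatorname{SL}(2,\F)) = \{\pm I\} \subseteq D$, this forces $M \operatorname{diag}(\lambda,\lambda^{-1}) M^{-1} \in D$ for all such $\lambda$; a direct matrix computation (for any $\lambda$ with $\lambda \neq \lambda^{-1}$) then shows $M$ normalizes $D$. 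Hence $M$ is diagonal or antidiagonal, and since $M_{12} = t \neq 0$, the matrix $M$ must be antidiagonal.

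Antidiagonality of $M$ yields the equations $1 + t s'' = 0 = 1 + s't$, so $s' = s'' = -1/t$. Hence $u' = u'' = \varphi_\F\bigl(\begin{smallmatrix} 1 & 0 \\ -1/t & 1 \end{smallmatrix}\bigr)$ and $u'uu'' = m(u)$ by Lemma \ref{lem:BTm_uuu}. Finally, Lemma \ref{lem:BTphiu_is_vt} gives $\varphi_{-\alpha}(u') = (-v)(-1/t) = -(-v)(t) = -\varphi_\alpha(u)$, with the same computation for $u''$. The main obstacle in this approach is the identification $L \cap A_\F = (A_{\pm\alpha})_\F$, which reduces the normalizer condition in $G_\F$ to a normalizer statement in $\operatorname{SL}(2,\F)$ and requires carefully handling both the splitting $A_\F = (A_{\pm\alpha})_\F \cdot A_\perp$ and the finite kernel of $\varphi_\F$; once this is in hand the remainder is elementary linear algebra.
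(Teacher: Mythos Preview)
Your argument has a genuine gap at its first step: you write $u' = \varphi_\F\bigl(\begin{smallmatrix} 1 & 0 \\ s' & 1 \end{smallmatrix}\bigr)$ and $u'' = \varphi_\F\bigl(\begin{smallmatrix} 1 & 0 \\ s'' & 1 \end{smallmatrix}\bigr)$, claiming that the one-parameter subgroup $s \mapsto \varphi_\F\bigl(\begin{smallmatrix} 1 & 0 \\ s & 1 \end{smallmatrix}\bigr)$ parameterizes $(U_{-\alpha})_\F$. This is false in general. The morphism $\varphi_\F$ is the Jacobson--Morozov homomorphism attached to the particular $\mathfrak{sl}_2$-triple $(X,Y,H)$ with $u = \exp(tX)$; its image meets $(U_{-\alpha})_\F$ only in the one-dimensional subgroup $\exp(\F Y)$. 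But $(U_{-\alpha})_\F = \exp((\frakg_{-\alpha})_\F)$, and the root space $\frakg_{-\alpha}$ can have dimension greater than one even when $\Sigma$ is reduced --- e.g.\ for $G = \operatorname{SO}(1,n)$ with $n \geq 3$ one has $\dim \frakg_{\pm\alpha} = n-1$. Reducedness of $\Sigma$ means $2\alpha \notin \Sigma$, not that root spaces are one-dimensional. Since $u',u''$ are arbitrary elements of $(U_{-\alpha})_\F$, there is no a priori reason they lie in $\exp(\F Y)$, so the reduction to a $2\times 2$ matrix computation is unjustified.

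The paper's proof sidesteps this entirely by working geometrically on the apartment. Writing $u'uu'' = (u'\overline{u})\cdot m(u)\cdot(\overline{u}u'')$ with $\overline{u} = \varphi_\F\bigl(\begin{smallmatrix} 1 & 0 \\ 1/t & 1 \end{smallmatrix}\bigr)$, one lets the element act on points $a.o \in \A$, uses Proposition~\ref{prop:UonA} to strip off $\overline{u}u''$ once it fixes $a.o$, and then Lemma~\ref{lem:BTmUm} to track how $m(u)$ moves $a.o$. Varying $a$ forces $\varphi_{-\alpha}(u'\overline{u}) \leq \lambda$ for every $\lambda \in \Lambda$, hence $u'\overline{u} = \Id$; a second pass gives $\overline{u}u'' = \Id$. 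This argument treats $u',u''$ as elements of the full group $(U_{-\alpha})_\F$ throughout, using only the root-group valuation $\varphi_{-\alpha}$, and never needs them to lie in any prescribed one-parameter subgroup. That they turn out to equal $\overline{u}^{-1}$, and hence lie in the image of $\varphi_\F$, is part of the \emph{conclusion}, not an available hypothesis.
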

		\begin{proof}
			Recall that if
			$$
			u = \varphi_\F\begin{pmatrix}
				1 & t \\ 0 & 1
			\end{pmatrix}, \quad \text{and} \quad \overline{u} := \varphi_\F \begin{pmatrix}
				1 & 0 \\ 1/t & 1
			\end{pmatrix},
			$$
			then
			\begin{align*}
				m(u)& = \varphi_\F \left(\begin{pmatrix}
					1 & 0 \\ -1/t & 1
				\end{pmatrix}\begin{pmatrix}
					1 & t \\ 0 & 1
				\end{pmatrix}\begin{pmatrix}
					1 & 0 \\ -1/t & 1
				\end{pmatrix} \right) \\
				&= \overline{u}^{-1}u\overline{u}^{-1} \in (U_{-\alpha})_\F(U_{\alpha})_\F(U_{-\alpha})_\F.
			\end{align*}
			Then $u'uu'' = u'\overline{u} \cdot m(u) \cdot \overline{u} u'' \in \operatorname{Nor}_{G_\F}(A_\F)$ and $\overline{u}u'' \in (U_{-\alpha})_\F$. Now for any $\ell \leq -\varphi_{-\alpha}(\overline{u}u'')$, use \cite[Lemma \ref{I-lem:Jacobson_Morozov_oneparam}]{AppAGRCF} to obtain $a\in A_\F$ such that $(-v)(\chi_\alpha(a)) = \ell $ or equivalently $(-v)(\chi_{-\alpha}(a)) = -\ell$. By Lemma \ref{lem:BTUalphaOmega} this means that $\overline{u}u'' \in U_{-\alpha, -\ell} = U_{-\alpha, \{ a.o\}}$. Let $b \in A_\F$ be such that
			$$
			b.o = u'uu'' . a.o = u'\overline{u} \cdot m(u) \cdot \overline{u}u''.a.o = u'\overline{u} \cdot m(u) \cdot a.o
			$$ 
			and applying Proposition \ref{prop:UonA}, $b.o = m(u).a.o$. Since
			$$
			(-v)(\chi_{-\alpha}(b)) = (-v)(\chi_{\alpha}(a)) -2(-v)(t) = \ell - 2(-v)(t)  
			$$
			by Lemma \ref{lem:BTmUm}, we can use Lemma \ref{lem:BTUalphaOmega} to obtain $u'\overline{u} \in U_{-\alpha, \{ b.o\}} = U_{-\alpha, \ell - 2(-v)(t)}$ for all $\ell \leq -\varphi_{-\alpha}(\overline{u}u'')$. Therefore $\varphi_{-\alpha}(u'\overline{u}) \leq \lambda$ for all $\lambda \in \Lambda$. The only element of $(U_{-\alpha})_\F$ with this property is $u'\overline{u} = \operatorname{Id}$. 
			
			Now for all $a\in A_\F$,
			\begin{align*}
				u'uu''.a.o &= u'\overline{u} m(u) \overline{u}u''.a.o = m(u). \overline{u}u''.a.o \in \A 
			\end{align*}
			and thus $\overline{u}u''.a.o \in \A$, hence $\overline{u}u''.a.o = a.o$ by Proposition \ref{prop:UonA}. The only element of $(U_{-\alpha})_\F$ acting trivially on all of $\A$ is the identity, so $\overline{u}u'' = \operatorname{Id}$. Thus $u'uu'' = \overline{u}^{-1}u\overline{u}^{-1} = m(u)$. Moreover
			\begin{align*}
				\varphi_{-\alpha}(u')&=\varphi_{-\alpha}(u'') = \varphi_{-\alpha}(\overline{u}^{-1}) = (-v)(-1/t) = -(-v)(t) = -\varphi_{\alpha}(u). \qedhere
			\end{align*}	
		\end{proof}
		
		\begin{figure}[h]
			\centering
			\includegraphics[width=1\linewidth]{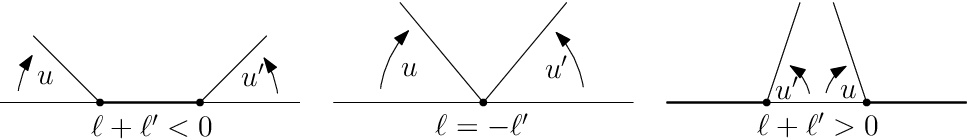}
			\caption{ Illustration of the action of elements $u\in (U_\alpha)_\F, u' \in (U_{-\alpha})_\F$ with $\ell := \varphi_{\alpha}(u) $ and $\ell' := \varphi_{-\alpha}(u')$ in the case where $\ell + \ell' < 0$ (left), $\ell = - \ell'$ (middle) and $\ell + \ell' > 0$. An element $m(u)$ of the affine Weyl group can only be generated by $u,u'$ in the case $\ell =-\ell'$. }
			\label{fig:valuationuu}
		\end{figure}
		
		The actions of $ u\in (U_\alpha)_\F, u' \in (U_{-\alpha})_\F$ depending on their root group valuations $\ell, \ell'$ are illustrated in Figure \ref{fig:valuationuu}. Only when $\ell + \ell' \leq 0$ do $u,u'$ fix a common point. To study the stabilizer we consider the cases $\ell + \ell' < 0$ and $\ell = -\ell'$ separately.
		\begin{lemma}\label{lem:BTuu_usu}
			Let 
			$u'\in (U_{-\alpha})_\F$ such that $\varphi_\alpha(u) + \varphi_{-\alpha}(u') < 0$. Then there is an element $(u_1, t , u_1') \in (U_{\alpha})_\F \times T_\F 
			\times (U_{-\alpha})_\F $ such that $u'u = u_1 t u_1'$. 
			
			Moreover $\varphi_\alpha(u) = \varphi_{\alpha}(u_1)$, $\varphi_{-\alpha}(u') = \varphi_{-\alpha}(u_1')$ and $t \in T_\F(O)$.
		\end{lemma}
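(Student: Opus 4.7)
The plan is to reduce to a matrix computation inside an $\operatorname{SL}(2,\F)$-subgroup via the Jacobson--Morozov embedding $\varphi_\F$ associated with $u$, then perform the classical ``big cell'' $U^+ T U^-$ decomposition. The inequality $\varphi_\alpha(u)+\varphi_{-\alpha}(u')<0$ is exactly what forces the bottom-right entry of the resulting $\operatorname{SL}_2$-matrix to be a unit of $O$, which both makes the UDL factorization possible and places the diagonal factor in $T_\F(O)$.

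Concretely, I would write $u=\varphi_\F\!\left(\begin{smallmatrix}1&\tau\\ 0&1\end{smallmatrix}\right)$ with $(-v)(\tau)=\varphi_\alpha(u)$ using the $\mathfrak{sl}_2$-triple fixed at the start of the subsection, and similarly $u'=\varphi_\F\!\left(\begin{smallmatrix}1&0\\ \sigma&1\end{smallmatrix}\right)$ with $(-v)(\sigma)=\varphi_{-\alpha}(u')$; for the latter one uses that in the rank-one setting $(U_{-\alpha})_\F$ is uniformized by $\F$ via $\varphi_\F$ in a way compatible with $\varphi_{-\alpha}$, as in Lemma~\ref{lem:BTphiu_is_vt} and \cite[Lemma \ref{I-lem:Jacobson_Morozov_oneparam}]{AppAGRCF}. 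The matrix product is
$$
\begin{pmatrix}1&0\\ \sigma&1\end{pmatrix}\begin{pmatrix}1&\tau\\ 0&1\end{pmatrix}=\begin{pmatrix}1&\tau\\ \sigma&1+\sigma\tau\end{pmatrix}.
$$

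Next, the hypothesis gives $(-v)(\sigma\tau)=\varphi_\alpha(u)+\varphi_{-\alpha}(u')<0$, so $\sigma\tau$ lies in the maximal ideal $J$ of $O$, and hence $c:=1+\sigma\tau\in O^\times$. In particular $c\neq 0$, which permits the explicit UDL factorization
$$
\begin{pmatrix}1&\tau\\ \sigma&1+\sigma\tau\end{pmatrix}=\begin{pmatrix}1&\tau/c\\ 0&1\end{pmatrix}\begin{pmatrix}c^{-1}&0\\ 0&c\end{pmatrix}\begin{pmatrix}1&0\\ \sigma/c&1\end{pmatrix}.
$$
Applying $\varphi_\F$ and setting $u_1,\,t,\,u_1'$ to be the three resulting factors produces the desired elements: $u_1\in(U_\alpha)_\F$, $u_1'\in(U_{-\alpha})_\F$, and $t=a_{c^{-1}}\in A_\F\subseteq T_\F$ by Proposition~\ref{prop:BTmu_in_Wa}.

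The stated valuation equalities then follow immediately from $(-v)(c)=0$: indeed $\varphi_\alpha(u_1)=(-v)(\tau/c)=(-v)(\tau)=\varphi_\alpha(u)$ by Lemma~\ref{lem:BTphiu_is_vt}, and symmetrically $\varphi_{-\alpha}(u_1')=\varphi_{-\alpha}(u')$. For $t\in T_\F(O)$, note that $c^{-1}\in O^\times$, so the entries of $a_{c^{-1}}$ lie in $O$; combined with Lemma~\ref{lem:BT_T_is_MA} (which gives $T_\F(O)=M_\F A_\F(O)$) this yields $t\in A_\F(O)\subseteq T_\F(O)$. The one non-routine point of the argument is checking that the single $\varphi_\F$ attached to $u$ can also be used to parametrize a given $u'\in (U_{-\alpha})_\F$; this is the only place where one must lean on the reduced root system assumption and the rank-one structure of $L=\langle(U_\alpha)_\F,(U_{-\alpha})_\F\rangle$, and represents the main technical obstacle in the argument.
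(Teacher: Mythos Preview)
Your approach has a genuine gap that you yourself flag but do not resolve. The Jacobson--Morozov morphism $\varphi_\F$ attached to $u$ sends the lower unitriangular subgroup of $\operatorname{SL}(2,\F)$ onto the one-parameter subgroup $\{\exp(sY):s\in\F\}\subseteq (U_{-\alpha})_\F$. But $(U_{-\alpha})_\F=\exp((\frakg_{-\alpha})_\F)$, and the root space $\frakg_{-\alpha}$ can have dimension strictly greater than $1$ even when $\Sigma$ is reduced (take $G=\operatorname{SO}(n,1)$ with $n\geq 3$, or any non-split semisimple group). A generic $u'\in(U_{-\alpha})_\F$ is therefore \emph{not} of the form $\varphi_\F\left(\begin{smallmatrix}1&0\\ \sigma&1\end{smallmatrix}\right)$ for the $\varphi_\F$ determined by $u$, and there is no single $\operatorname{SL}_2$-subgroup containing both $u$ and $u'$ in general. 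Lemma~\ref{lem:BTphiu_is_vt} does not say that $(U_{-\alpha})_\F$ is uniformized by $\F$; it only computes the root group valuation on the image of $\varphi_\F$. So the explicit UDL factorization, while perfectly correct inside $\operatorname{SL}(2,\F)$, simply does not apply to the given $u'$. Your argument is valid only in the split case.

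The paper avoids this problem entirely: it works abstractly in the rank-one group $(L_{\pm\alpha})_\F$, using its Bruhat decomposition $(B_\alpha)_\F(B_{-\alpha})_\F\amalg m(B_{-\alpha})_\F$. The hypothesis $\varphi_\alpha(u)+\varphi_{-\alpha}(u')<0$ rules out the small cell via the uniqueness Lemma~\ref{lem:BTUUUm}, forcing $u'u\in (U_\alpha)_\F\, T_\F\, (U_{-\alpha})_\F$. The valuation equalities and $t\in T_\F(O)$ are then extracted indirectly by writing $u_1=u_2' m(u_1)u_3'$, applying Lemma~\ref{lem:BTUUUm} once more, and comparing the affine reflections represented by $m(u)$ and $m(u_1)$. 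No single $\operatorname{SL}_2$ is ever asked to contain both $u$ and $u'$.
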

		\begin{proof}
			If $u = \operatorname{Id}$ we can choose $u_1 = \operatorname{Id}, t=\operatorname{Id}$ and $u_1' = u'$. We may thus assume $u \neq \operatorname{Id}$. The element $u'u$ lies in the group $(L_{\pm \alpha})_\F$ defined in \cite[Section \ref{I-sec:rank1}]{AppAGRCF}. As $(L_{\pm \alpha})_\F$ is again a semisimple liear algebraic group, subgroups $(K_{\pm \alpha})_\F$, $(A_{\pm \alpha})_\F$ and $(M_{\pm \alpha})_\F$ can be identified as we did for $G_\F$ in Section \ref{sec:building_def}. By \cite[Corollary \ref{I-cor:levi_Bruhat_alternative}]{AppAGRCF},
			$$
			(L_{\pm \alpha})_\F = (B_{\alpha})_\F \cdot (B_{-\alpha})_\F  \ \amalg \   m \cdot (B_{ -\alpha})_\F,
			$$
			where $(B_{\alpha})_\F := (M_{\pm \alpha})_\F (A_{\pm \alpha})_\F (U_\alpha)_\F$ and $(B_{-\alpha})_\F := (M_{\pm \alpha})_\F (A_{\pm \alpha})_\F (U_{-\alpha})_\F$. We note that by \cite[Lemma \ref{I-lem:levi_fixes_A}]{AppAGRCF} and then Lemma \ref{lem:BT_T_is_MA} that $(M_{\pm \alpha})_\F(A_{\pm \alpha})_\F \subseteq M_\F \cdot A_\F = T_\F 
			$. 
			
			We have that $u'u \notin m \cdot (B_{-\alpha})_\F$, since otherwise there exists $u'' \in (U_{-\alpha})_\F$ such that
			$$
			u'uu'' \in m \cdot (M_{\pm \alpha})_\F (A_{\pm \alpha})_\F  \subseteq \operatorname{Nor}_{G_\F}(A_\F)
			$$
			and hence by Lemma \ref{lem:BTUUUm}, $-\varphi_\alpha(u) = \varphi_{-\alpha}(u')$ contradicting our assumption. Thus
			\begin{align*}
				u'u &\in (B_\alpha)_\F \cdot (B_{-\alpha})_\F = (U_{\alpha})_\F (A_{\pm \alpha})_\F (M_{\pm \alpha})_\F (A_{\pm \alpha})_\F (U_{-\alpha})_\F \\
				& \subseteq (U_{\alpha})_\F \cdot T_\F 
				\cdot (U_{-\alpha})_\F. 
			\end{align*}
			
			Let now $u_1 \in (U_\alpha)_\F,\ u_1' \in (U_{-\alpha})_\F$ and $ t \in (A_{\pm \alpha})_\F(M_{\pm \alpha})_\F (A_{\pm \alpha})_\F$ with $u'u = u_1 t u_1'$.
			If $u_1 = \Id$, $u=(u')^{-1} u_1 tu_1' \in (U_{-\alpha})_\F M_\F A_\F \cap U_{\alpha}$, but since $u$ can not send some halfapartment $H_{-\alpha,\ell}^+$ to a halfapartment $H_{-\alpha,\ell'}^+$ and at the same times fix a halfapartment $H_{\alpha,k}^+$, see Proposition \ref{prop:Ualphaconv} and Corollary \ref{cor:NalphaO_fixes_H_affine}, $u=\Id$ and we can argue as in the beginning of the proof. Thus we may assume that $u_1 \neq \operatorname{Id}$.			
			We now show $\varphi_\alpha(u)=\varphi_{\alpha}(u_1)$. Consider the element $m(u_1) \in \operatorname{Nor}_{G_\F}(A_\F)$. Then
			$$
			m(u_1) \in (U_{-\alpha})_\F u_1 (U_{-\alpha})_\F,
			$$
			so let $u_2', u_3' \in (U_{-\alpha})_\F$ such that $u_1 = u_2' m(u_1) u_3'$ and by Lemma \ref{lem:BTUUUm} $\varphi_{-\alpha}(u_2') = - \varphi_\alpha(u_1) = \varphi_{-\alpha}(u_3')$. Then
			\begin{align*}
				u&= (u')^{-1}u_1 t u_1' = (u')^{-1} u_2' m(u_1) u_3' t u_1' \\
				&= ((u')^{-1}u_2') m(u_1)t (t^{-1} u_3' t u_1') \in (U_{-\alpha})_\F m(u_1) t (U_{-\alpha})_\F 
			\end{align*}
			where we used that $t^{-1}u_3' t \in (U_{-\alpha})_\F$ by \cite[Proposition \ref{I-prop:anainN}]{AppAGRCF} and the fact that $(M_{\pm \alpha})_\F$ acts trivially on the rank 1 root system $\{\alpha, -\alpha\}$. Applying Lemma \ref{lem:BTUUUm} again gives $\varphi_{-\alpha}((u')^{-1}u_2') = - \varphi_\alpha(u) $. Now since $\varphi_\alpha(u) + \varphi_{-\alpha}(u') < 0$, we have 
			$$
			\varphi_{-\alpha}((u')^{-1}) = \varphi_{-\alpha}(u') < -\varphi_\alpha(u) = \varphi_{-\alpha}((u')^{-1}u_2') \leq \max\{ \varphi_{-\alpha}((u')^{-1}), \varphi_{-\alpha}(u_2') \}
			$$
			from which we can infer that $\varphi_{-\alpha}((u')^{-1}) \neq \varphi_{-\alpha}(u_2')$ and thus by the second part of Lemma \ref{lem:BTgroup_valuation}, $\varphi_{-\alpha}((u')^{-1}u_2') = \varphi_{-\alpha}(u_2')$, whence 
			$$
			\varphi_\alpha(u) = - \varphi_{-\alpha}((u')^{-1}u_2') =  -\varphi_{-\alpha}(u_2') = \varphi_\alpha (u_1).
			$$
			Proving $\varphi_{-\alpha}(u') = \varphi_{-\alpha}(u_1')$ works the same way, after replacing $u$ by $u'$, $u_1$ by $u_1'$ and $\alpha$ by $-\alpha$.
			
			From the above discussion, we obtain
			$$
			m(u_1)t = ((u_2')^{-1}u') u ((u_1')^{-1}t^{-1}(u_3')^{-1}t) \in (U_{-\alpha})_\F u (U_{-\alpha})_\F
			$$
			which by the uniqueness statement of Lemma \ref{lem:BTUUUm} implies $m(u_1)t = m(u)$. Since $\varphi_\alpha(u) = \varphi_\alpha(u_1)$, we know by Proposition \ref{prop:BTmu_in_Wa} that both $m(u_1)$ and $m(u)$ represent the same element of the affine Weyl group: the reflection $r_\alpha$ followed by a translation of length $\varphi_\alpha(u) = \varphi_\alpha(u_1)$. This means that $[t] = [m(u_1)^{-1}][m(u)] = [\operatorname{Id}] \in W_a$, in particular $t$ fixes $o$, so $t \in T_\F(O)$.
		\end{proof}
		
		\begin{proposition}\label{prop:BTL_l+l'<0} 
			Let 
			$\ell, \ell' \in \Lambda$ such that $\ell + \ell' <0$. Let 
			$$
			L_{\ell, \ell'} := \langle u \in U_{\alpha, \ell} \cup U_{-\alpha, \ell'} \rangle 
			$$
			and $H_{\ell, \ell'} := L_{\ell,\ell'} \cap T_\F(O) $. Then
			$$
			L_{\ell,\ell'} = U_{\alpha, \ell} \cdot U_{-\alpha, \ell'} \cdot H_{\ell,\ell'} .
			$$
		\end{proposition}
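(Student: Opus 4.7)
The inclusion $U_{\alpha,\ell}\cdot U_{-\alpha,\ell'}\cdot H_{\ell,\ell'} \subseteq L_{\ell,\ell'}$ is immediate from the definitions of $L_{\ell,\ell'}$ and $H_{\ell,\ell'}$. For the reverse inclusion, I would show that the set
\[
S \;:=\; U_{\alpha,\ell}\cdot U_{-\alpha,\ell'}\cdot H_{\ell,\ell'}
\]
is a subgroup of $G_\F$. Since $S$ visibly contains the generating sets $U_{\alpha,\ell}$ and $U_{-\alpha,\ell'}$ of $L_{\ell,\ell'}$ (taking the other two factors to be $\operatorname{Id}$), this yields $L_{\ell,\ell'}\subseteq S$.

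For closure under products, given $u_1 v_1 h_1,\, u_2 v_2 h_2\in S$, I would push the torus element $h_1$ past $u_2$ using Lemma \ref{lem:BTtUt}, writing
\[
u_1 v_1 h_1 u_2 v_2 h_2 \;=\; u_1\,v_1\,(h_1 u_2 h_1^{-1})\,h_1\,v_2\,h_2,
\]
where $h_1 u_2 h_1^{-1}\in U_{\alpha,\ell}$. The hypothesis $\ell+\ell'<0$ is then exactly what is needed to apply Lemma \ref{lem:BTuu_usu} to the middle product $v_1\,(h_1 u_2 h_1^{-1})$, rewriting it as $\tilde u\,t\,\tilde v$ with $\tilde u\in U_{\alpha,\ell}$, $\tilde v\in U_{-\alpha,\ell'}$, $t\in T_\F(O)$. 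Two further applications of Lemma \ref{lem:BTtUt} to move the accumulated torus factors to the far right, together with subgroup closure of $U_{\alpha,\ell}$ and $U_{-\alpha,\ell'}$ (a consequence of Lemma \ref{lem:BTgroup_valuation}), then collapses the expression into canonical form $u' v' h'\in S$.

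Closure under inversion is handled by the same mechanism: $(uvh)^{-1}=h^{-1}v^{-1}u^{-1}$ becomes $v^{\ast} u^{\ast} h^{-1}$ after two applications of Lemma \ref{lem:BTtUt}; another application of Lemma \ref{lem:BTuu_usu} to the product $v^{\ast} u^{\ast}$, followed by one more conjugation to push the new torus factor to the right, returns canonical form.

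The main obstacle I expect is the bookkeeping needed to verify that every torus factor produced along the way lands in $H_{\ell,\ell'}=L_{\ell,\ell'}\cap T_\F(O)$, rather than merely in $T_\F(O)$. The point is that each such $t$ is visibly a word in elements of $U_{\alpha,\ell}\cup U_{-\alpha,\ell'}$; for instance in the key step above, $t = \tilde u^{-1}\, v_1\,(h_1 u_2 h_1^{-1})\,\tilde v^{-1}$, which lies in $L_{\ell,\ell'}$ by definition, so that $t\in L_{\ell,\ell'}\cap T_\F(O)=H_{\ell,\ell'}$. Combined with the normalization property from Lemma \ref{lem:BTtUt}, this makes the entire rewriting stay inside $S$ and concludes the argument.
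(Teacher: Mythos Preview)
Your proposal is correct and follows essentially the same approach as the paper: the key steps are exactly Lemma \ref{lem:BTuu_usu} (to rewrite a product $U_{-\alpha,\ell'}\cdot U_{\alpha,\ell}$ as $U_{\alpha,\ell}\cdot T_\F(O)\cdot U_{-\alpha,\ell'}$) and Lemma \ref{lem:BTtUt} (to push torus factors to the right). The only cosmetic difference is that the paper argues by induction on word length, showing $S$ is closed under left multiplication by a single generator from $U_{\alpha,\ell}\cup U_{-\alpha,\ell'}$, which avoids the separate treatment of inverses and general products; your explicit check that the torus factors lie in $H_{\ell,\ell'}$ (not just $T_\F(O)$) is left implicit in the paper but is indeed necessary.
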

		\begin{proof}
			The inclusion $\supseteq$ is clear. We show the other direction by induction on the word length. If the length of a word in $L_{\ell, \ell'}$ is $1$, then it lies in $U_{\alpha, \ell} \cdot U_{-\alpha, \ell'} \cdot H_{\ell,\ell'}$. It remains to show that for every $u_{\pm \alpha} \in U_{\alpha, \ell} \cup U_{-\alpha,\ell'}$, and $v = u \cdot u' \cdot h \in U_{\alpha, \ell} \cdot U_{-\alpha, \ell'} \cdot H_{\ell,\ell'} $, we have $u_{\pm \alpha} v \in U_{\alpha, \ell} \cdot U_{-\alpha, \ell'} \cdot H_{\ell,\ell'}$. If $u_\alpha \in U_{\alpha,\ell}$, this is straightforward, so let $u_{-\alpha} \in U_{-\alpha,\ell'}$. By Lemma \ref{lem:BTuu_usu}, we can find $(u_1,t_1,u_1') \in U_{\alpha,\ell}\times T_\F(O) \times U_{-\alpha,\ell'}$ such that $u_{-\alpha}u = u_1t_1u_1'$. Then $u_{-\alpha}v = u_1 t_1 u_1' u' h$. By Lemma \ref{lem:BTtUt}, $t_1U_{-\alpha,\ell'} t_1^{-1} = U_{-\alpha,\ell'}$, so $u_{-\alpha}v \in U_{\alpha,\ell}\cdot U_{-\alpha,\ell'} \cdot H_{\ell,\ell'}$ as required.
		\end{proof}
		Next up, we consider the situation $\ell + \ell' = 0$. For this we introduce
		$$
		U_{-\alpha, -\ell+} := \bigcup_{t < -\ell} U_{-\alpha, t} \subseteq U_{-\alpha, -\ell}
		$$
		which is a group by Lemma \ref{lem:BTgroup_valuation}.
		
		\begin{proposition}\label{prop:BTL_l+l'=0} 
			Let 
			$\ell := \varphi_\alpha(u)$ and let $m(u)$ as in \cite[Lemma \ref{I-lem:Jacobson_Morozov_m}]{AppAGRCF}. Let 
			$$
			L_{\ell,- \ell} := \langle \tilde{u} \in U_{\alpha, \ell} \cup U_{-\alpha, -\ell} \rangle 
			$$
			and $H_{\ell, -\ell} := L_{\ell,-\ell} \cap T_\F(O) $. Then
			$$
			L_{\ell,-\ell} = (U_{\alpha,\ell}\cdot H_{\ell, -\ell} \cdot U_{-\alpha,-\ell+}) \cup (U_{\alpha,\ell} \cdot m(u) \cdot H_{\ell,-\ell} \cdot U_{\alpha,\ell}).
			$$
		\end{proposition}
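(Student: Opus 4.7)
For $\supseteq$, note that $m(u) \in U_{-\alpha,-\ell}\cdot U_{\alpha,\ell}\cdot U_{-\alpha,-\ell} \subseteq L_{\ell,-\ell}$ by Lemma \ref{lem:BTm_uuu}, and all other factors defining the right-hand side lie in $L_{\ell,-\ell}$ by construction. For the reverse inclusion, I will set $\mathcal{A} := U_{\alpha,\ell}H_{\ell,-\ell}U_{-\alpha,-\ell+}$ and $\mathcal{B} := U_{\alpha,\ell}\, m(u)\, H_{\ell,-\ell}\, U_{\alpha,\ell}$, and prove by induction on word length in the generators $U_{\alpha,\ell}\cup U_{-\alpha,-\ell}$ that every element of $L_{\ell,-\ell}$ lies in $\mathcal{A}\cup\mathcal{B}$.

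The crucial preliminary step, which is also why the second cell $\mathcal{B}$ appears in the statement, is that every boundary element $u' \in U_{-\alpha,-\ell}$ with $\varphi_{-\alpha}(u') = -\ell$ already lies in $\mathcal{B}$. Indeed, applying the mirror of Lemma \ref{lem:BTm_uuu} with the roles of $\alpha$ and $-\alpha$ swapped writes $m(u') = w_1\,u'\,w_2$ for some $w_1,w_2 \in U_{\alpha,\ell}$ with $\varphi_\alpha(w_i)=\ell$; since $\varphi_\alpha(u) = \ell = -\varphi_{-\alpha}(u')$ and $m(u),m(u') \in L_{\ell,-\ell}$, Lemma \ref{lem:BTphi_stab} yields $m(u)^{-1}m(u') \in T_\F(O)\cap L_{\ell,-\ell} = H_{\ell,-\ell}$, so $u' = w_1^{-1}\,m(u)\,[m(u)^{-1}m(u')]\,w_2^{-1} \in \mathcal{B}$. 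Elements of $U_{\alpha,\ell}$ lie in $\mathcal{A}$ trivially, and elements of $U_{-\alpha,-\ell}$ with valuation strictly below $-\ell$ lie in $U_{-\alpha,-\ell+}\subseteq\mathcal{A}$, so the base case of the induction is settled.

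The induction step requires closure of $\mathcal{A}\cup\mathcal{B}$ under left multiplication by a generator. Left multiplication by $U_{\alpha,\ell}$ is absorbed into the leftmost factor of either cell. For left multiplication by $u_{-\alpha}\in U_{-\alpha,-\ell}$ applied to $x = u_1\,h\,u'_+ \in \mathcal{A}$, I split on the sign of $\varphi_\alpha(u_1)+\varphi_{-\alpha}(u_{-\alpha})$. In the strictly negative case, Lemma \ref{lem:BTuu_usu} rewrites $u_{-\alpha}u_1 = u_1'\,t_1\,u_1''$ with $u_1' \in U_{\alpha,\ell}$, $t_1 \in T_\F(O)\cap L_{\ell,-\ell}=H_{\ell,-\ell}$ and $u_1'' \in U_{-\alpha,-\ell}$; Lemma \ref{lem:BTtUt} commutes $u_1''$ past $h$ and, combining with $u'_+$ via Lemma \ref{lem:BTgroup_valuation}, places the result in $\mathcal{A}$ when the trailing $U_{-\alpha}$-valuation stays strictly below $-\ell$ and in $\mathcal{B}$ when it hits $-\ell$ (the latter via the preliminary step). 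The case $x\in\mathcal{B}$ proceeds symmetrically, exploiting that $m(u)$ normalizes $H_{\ell,-\ell}$ via Lemmas \ref{lem:BTmMm_M} and \ref{lem:BTmUm}.

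The main obstacle is the sum-zero case $\varphi_\alpha(u_1)=\ell$ and $\varphi_{-\alpha}(u_{-\alpha})=-\ell$, where Lemma \ref{lem:BTuu_usu} is silent. To resolve it I would run the preliminary observation in reverse on $u_1$: decompose $u_1 = v_1\,m(u)\,s\,v_2$ with $v_1,v_2\in U_{-\alpha,-\ell}$ of boundary valuation and $s\in H_{\ell,-\ell}$, so that $u_{-\alpha}u_1 = (u_{-\alpha}v_1)\,m(u)\,s\,v_2$ with $u_{-\alpha}v_1 \in U_{-\alpha,-\ell}$. Lemma \ref{lem:BTmUm} then allows me to push $u_{-\alpha}v_1$ through $m(u)$ onto the opposite side as an element of $U_{\alpha,\ell}$, and the remaining factors are collapsed using the previous arguments. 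Tracking the accumulating $T_\F(O)$-conjugations and the valuation of the final $U_{-\alpha,-\ell}$-factor through this procedure is the step requiring greatest care.
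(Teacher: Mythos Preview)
Your overall strategy matches the paper's: write $B:=\mathcal{A}\cup\mathcal{B}$, prove $B\subseteq L_{\ell,-\ell}$ via Lemma~\ref{lem:BTm_uuu}, and then show $B$ absorbs multiplication by generators. The preliminary observation that boundary elements of $U_{-\alpha,-\ell}$ lie in $\mathcal{B}$ is exactly the paper's key ingredient. Where you diverge is in the choice of generators: you attempt closure under left multiplication by $U_{\alpha,\ell}\cup U_{-\alpha,-\ell}$, whereas the paper first observes (via Lemma~\ref{lem:BTmUm}) that $m(u)U_{\alpha,\ell}m(u)^{-1}=U_{-\alpha,-\ell}$, so $L_{\ell,-\ell}=\langle U_{\alpha,\ell},m(u)\rangle$, and then checks closure under right multiplication by $U_{\alpha,\ell}$, $U_{-\alpha,-\ell+}$, $H_{\ell,-\ell}$, and $m(u)$.

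This difference is not cosmetic; it is precisely what makes your sum-zero case problematic. When you decompose $u_1=v_1\,m(u)\,s\,v_2$ with $v_1,v_2$ of boundary valuation and push $u_{-\alpha}v_1$ through $m(u)$, the factor $v_2$ (still with $\varphi_{-\alpha}(v_2)=-\ell$) remains on the right, and the conjugated element $m(u)^{-1}(u_{-\alpha}v_1)m(u)$ may again have $\varphi_\alpha=\ell$. So the expression you must now ``collapse'' contains another $U_{\alpha,\ell}\cdot U_{-\alpha,-\ell}$ product at boundary valuations, which is the same sum-zero obstruction you started with. There is no visible complexity measure that decreases, and the appeal to ``previous arguments'' is circular. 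The paper avoids this entirely: for $\mathcal{B}\cdot m(u)$ one gets $U_{\alpha,\ell}\,m(u)\,H_{\ell,-\ell}\,U_{\alpha,\ell}\,m(u)=U_{\alpha,\ell}\,H_{\ell,-\ell}\,U_{-\alpha,-\ell}\,m(u)^2$, and since $m(u)^2\in H_{\ell,-\ell}$ (Proposition~\ref{prop:BTmu_in_Wa}) this lands in $U_{\alpha,\ell}H_{\ell,-\ell}U_{-\alpha,-\ell}$; then one splits $U_{-\alpha,-\ell}$ into $U_{-\alpha,-\ell+}$ and boundary elements, the latter handled by your preliminary step. Replacing $U_{-\alpha,-\ell}$ by $m(u)$ as generator is the missing idea that terminates the argument.
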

		\begin{proof}
			By Lemma \ref{lem:BTm_uuu}, $m(u) \in U_{-\alpha,-\ell} U_{\alpha, \ell} U_{-\alpha,-\ell}$ and thus
			$$
			B:= (U_{\alpha,\ell}\cdot H_{\ell, -\ell} \cdot U_{-\alpha,-\ell+}) \cup (U_{\alpha,\ell} \cdot m(u) \cdot H_{\ell,-\ell} \cdot U_{\alpha,\ell}) \subseteq L_{\ell,-\ell}.
			$$
			By Lemma \ref{lem:BTmUm}, $m(u) \cdot U_{\alpha, \ell}\cdot m(u)^{-1} = U_{-\alpha, -\ell}$, which shows that $L_{\ell,-\ell}$ is generated by $U_{\alpha,\ell}$ and $m(u)$. Thus to see $L_{\ell,-\ell} \subseteq B$, it suffices to show that $B$ is a group.
			By Proposition \ref{prop:BTL_l+l'<0}, $B\cdot U_{\alpha, \ell} = B$ and $B\cdot U_{-\alpha,-\ell+} = B$. By Lemma \ref{lem:BTtUt}, $B\cdot H_{\ell,-\ell} = B$. It remains to show that $B \cdot m(u) = B$. 
			
			Since $m(u)\cdot U_{\alpha,\ell}\cdot m(u)^{-1} = U_{-\alpha,-\ell}$ and by Lemma \ref{lem:BTmMm_M}, $m(u)\cdot T_\F(O) \cdot m(u)^{-1} = T_\F(O)$, we have
			$$
			(U_{\alpha,\ell} \cdot H_{\ell,-\ell}\cdot U_{-\alpha,-\ell})m(u) = U_{\alpha, \ell} \cdot m(u) \cdot  H_{\ell, -\ell} \cdot U_{\alpha,\ell} \subseteq B
			$$
			and 
			\begin{align*}
				\left( U_{\alpha,\ell}\cdot m(u) \cdot H_{\ell,-\ell} \cdot U_{\alpha,\ell} \right)  m(u)
				= U_{\alpha, \ell} \cdot  H_{\ell, -\ell} \cdot U_{-\alpha,-\ell}\cdot m(u)^2.
			\end{align*}
			By Proposition \ref{prop:BTmu_in_Wa}, $m(u)^{2}$ acts trivially on $\A$, 
			hence it lies in $ \operatorname{Cen}_{G_\F}(A_\F) \cap G_\F(O) = T_\F(O)$, so
			$$
			\left( U_{\alpha,\ell}\cdot m(u) \cdot H_{\ell,-\ell} \cdot U_{\alpha,\ell} \right)  m(u)
			= U_{\alpha, \ell} \cdot  H_{\ell, -\ell} \cdot U_{-\alpha,-\ell}.
			$$
			If $u'\in (U_{-\alpha,-\ell})_\F$ satisfies $\varphi_{-\alpha}(u') = -\ell$, then by Lemma \ref{lem:BTphi_stab}, $m(u)^{-1}m(u') \in T_\F(O) 
			$, so $m(u') \in m(u)\cdot H_{\ell,-\ell}$ since $m(u), m(u') \in L_{\ell,-\ell}$. Thus
			$$
			u' \in U_{\alpha,\ell}\cdot m(u')\cdot U_{\alpha, \ell} \subseteq U_{\alpha,\ell}\cdot  m(u) \cdot H_{\ell,-\ell} \cdot U_{\alpha,\ell}.
			$$
			and
			$$
			U_{-\alpha, -\ell} \subseteq U_{-\alpha,-\ell+} \cup U_{\alpha,\ell}\cdot m(u) \cdot H_{\ell,-\ell} \cdot U_{\alpha,\ell}.
			$$
			This shows 
			\begin{align*}
				&\left( U_{\alpha,\ell}\cdot m(u) \cdot H_{\ell,-\ell} \cdot U_{\alpha,\ell} \right)  m(u)\\
				&\subseteq \left( U_{\alpha, \ell} \cdot  H_{\ell, -\ell} \cdot U_{-\alpha,-\ell + } \right) \cup \left(U_{\alpha, \ell} \cdot  H_{\ell, -\ell} \cdot U_{\alpha,\ell } \cdot m(u)\cdot H_{\ell ,-\ell} \cdot U_{\alpha,\ell}\right) \\
				&= B.
			\end{align*}
			Similarly, one can show $B\cdot m(u)^{-1} \subseteq B$, which concludes the proof of $B= L_{\ell,-\ell}$.
		\end{proof}
		
		\begin{proposition}\label{prop:BTL_l+l'}  
			Let $\Omega \subseteq \A$ be a non-empty finite subset and 
			$$\ell = \min_{a.o \in \Omega} \{ (-v)(\chi_\alpha(a)) \}, \quad \ell' = \min_{a.o \in \Omega} \{ (-v)(\chi_{-\alpha}(a)) \}. 
			$$
			Let 
			$
			L_{\ell,\ell'} := \langle u \in U_{\alpha, \ell} \cup U_{-\alpha, \ell'} \rangle 
			$
			and $N_{\ell, \ell'} := L_{\ell,\ell'} \cap \operatorname{Nor}_{G_\F}(A_\F) $. Then
			$$
			L_{\ell,\ell'} = U_{\alpha, \ell} \cdot U_{-\alpha, \ell'} \cdot N_{\ell,\ell'} .
			$$
		\end{proposition}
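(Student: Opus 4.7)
The plan is to translate the hypothesis on $\Omega$ into the $\ell,\ell'$ setting and then case-split on whether $\ell+\ell'<0$ or $\ell+\ell'=0$, applying Propositions \ref{prop:BTL_l+l'<0} and \ref{prop:BTL_l+l'=0} respectively. By Lemma \ref{lem:BTUalphaOmega}, $U_{\alpha,\Omega}=U_{\alpha,\ell}$ and $U_{-\alpha,\Omega}=U_{-\alpha,\ell'}$, so $L_{\ell,\ell'}$ coincides with the group generated by $U_{\alpha,\Omega}$ and $U_{-\alpha,\Omega}$. Since $\Omega$ is finite and non-empty, $\ell,\ell'\in\Lambda$ and $\ell+\ell'\leq 0$: for each $a.o\in\Omega$ we have $(-v)(\chi_\alpha(a))+(-v)(\chi_{-\alpha}(a))=0$, so $\ell+\ell'=\min_a x_a-\max_a x_a\leq 0$ with $x_a:=(-v)(\chi_\alpha(a))$.

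In the case $\ell+\ell'<0$, Proposition \ref{prop:BTL_l+l'<0} yields $L_{\ell,\ell'}=U_{\alpha,\ell}\cdot U_{-\alpha,\ell'}\cdot H_{\ell,\ell'}$ with $H_{\ell,\ell'}=L_{\ell,\ell'}\cap T_\F(O)$. Since $T_\F(O)\subseteq\operatorname{Nor}_{G_\F}(A_\F)$, we have $H_{\ell,\ell'}\subseteq N_{\ell,\ell'}$, and the task reduces to proving $N_{\ell,\ell'}\subseteq H_{\ell,\ell'}$. Given $n\in N_{\ell,\ell'}$, I would write $n=uu'h$ with $u\in U_{\alpha,\ell}$, $u'\in U_{-\alpha,\ell'}$, $h\in H_{\ell,\ell'}$, and deduce $uu'\in\operatorname{Nor}_{G_\F}(A_\F)$. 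Viewing $uu'=\Id\cdot u\cdot u'\in(U_{-\alpha})_\F\cdot(U_\alpha)_\F\cdot(U_{-\alpha})_\F$, Lemma \ref{lem:BTUUUm} forces, if $u\neq\Id$, the equality $\varphi_{-\alpha}(\Id)=-\varphi_\alpha(u)$; but $\varphi_{-\alpha}(\Id)=-\infty$, which is absurd. So $u=\Id$, and then $u'\in(U_{-\alpha})_\F\cap\operatorname{Nor}_{G_\F}(A_\F)=\{\Id\}$ by the standard fact that a unipotent root subgroup meets the normalizer of $A_\F$ trivially (equivalently, $[u',a]\in U_{-\alpha}\cap A_\F=\{\Id\}$ for all $a\in A_\F$ forces $u'\in T_\F\cap U_{-\alpha}=\{\Id\}$). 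Hence $n=h\in H_{\ell,\ell'}$.

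In the case $\ell'=-\ell$, I would fix $u\in U_{\alpha,\ell}$ with $\varphi_\alpha(u)=\ell$ (which exists since $\ell\in\Lambda$ and $\varphi_\alpha$ surjects onto $\Lambda$) and apply Proposition \ref{prop:BTL_l+l'=0} to obtain
\[L_{\ell,-\ell}=(U_{\alpha,\ell}\cdot H\cdot U_{-\alpha,-\ell+})\cup(U_{\alpha,\ell}\cdot m(u)\cdot H\cdot U_{\alpha,\ell})\]
with $H=H_{\ell,-\ell}$. Both $H$ and $m(u)$ lie in $N_{\ell,-\ell}$ (the latter by Lemma \ref{lem:BTm_uuu} combined with \cite[Lemma \ref{I-lem:Jacobson_Morozov_m}]{AppAGRCF}). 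Using Lemma \ref{lem:BTtUt} to commute $H$ through root-subgroup factors, the first piece rewrites as $U_{\alpha,\ell}\cdot U_{-\alpha,-\ell+}\cdot H\subseteq U_{\alpha,\ell}\cdot U_{-\alpha,-\ell}\cdot N_{\ell,-\ell}$ (since $U_{-\alpha,-\ell+}\subseteq U_{-\alpha,-\ell}$). For the second piece, Lemmas \ref{lem:BTtUt} and \ref{lem:BTmUm} combine to give $m(u)\cdot H\cdot U_{\alpha,\ell}=m(u)\cdot U_{\alpha,\ell}\cdot H=U_{-\alpha,-\ell}\cdot m(u)\cdot H$, placing it inside $U_{\alpha,\ell}\cdot U_{-\alpha,-\ell}\cdot N_{\ell,-\ell}$. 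The reverse inclusion $U_{\alpha,\ell}\cdot U_{-\alpha,-\ell}\cdot N_{\ell,-\ell}\subseteq L_{\ell,-\ell}$ is immediate since each factor is contained in $L_{\ell,-\ell}$.

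The main obstacle, I expect, is the careful bookkeeping in the second case: verifying that the rewriting via Lemmas \ref{lem:BTtUt} and \ref{lem:BTmUm} fits the widened middle factor $U_{-\alpha,-\ell}$ (rather than the strictly smaller $U_{-\alpha,-\ell+}$), and checking that this widening together with absorbing $m(u)$ into $N_{\ell,-\ell}$ converts the two-cell description of Proposition \ref{prop:BTL_l+l'=0} into a single product decomposition. A smaller but necessary care point is the triviality $(U_{\pm\alpha})_\F\cap\operatorname{Nor}_{G_\F}(A_\F)=\{\Id\}$ invoked in the first case.
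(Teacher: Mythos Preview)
Your proof is correct and follows essentially the same approach as the paper: split into the cases $\ell+\ell'<0$ and $\ell+\ell'=0$ and invoke Propositions~\ref{prop:BTL_l+l'<0} and~\ref{prop:BTL_l+l'=0}, then rearrange using Lemmas~\ref{lem:BTtUt} and~\ref{lem:BTmUm}. One remark: in the case $\ell+\ell'<0$ your verification that $N_{\ell,\ell'}=H_{\ell,\ell'}$ is unnecessary, since from $L_{\ell,\ell'}=U_{\alpha,\ell}\,U_{-\alpha,\ell'}\,H_{\ell,\ell'}$ and the trivial inclusions $H_{\ell,\ell'}\subseteq N_{\ell,\ell'}\subseteq L_{\ell,\ell'}$ one gets the sandwich $L_{\ell,\ell'}\subseteq U_{\alpha,\ell}\,U_{-\alpha,\ell'}\,N_{\ell,\ell'}\subseteq L_{\ell,\ell'}$ directly; this is what the paper does.
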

		\begin{proof}
			Since $\Omega \neq \emptyset$, we have $\ell + \ell' \leq 0$. If $\ell+\ell' <0$ we can apply Proposition \ref{prop:BTL_l+l'<0} and are done. Otherwise, we have 
			$$
			L_{\ell,-\ell} = (U_{\alpha,\ell}\cdot H_{\ell, -\ell} \cdot U_{-\alpha,-\ell+}) \cup (U_{\alpha,\ell} \cdot m(u) \cdot H_{\ell,-\ell} \cdot U_{\alpha,\ell})
			$$
			by Proposition \ref{prop:BTL_l+l'=0}. As $H_{\ell,\ell'} \subseteq T_\F(O)$, we can apply Lemma \ref{lem:BTtUt} to obtain $H_{\ell,\ell'}U_{\alpha,\ell} = U_{\alpha,\ell}H_{\ell,\ell'}$ and $H_{\ell,\ell'}U_{-\alpha,\ell'} = U_{-\alpha,\ell'} H_{\ell,\ell'}$. Finally, Lemma \ref{lem:BTmUm} gives $m(u) U_{\alpha, \ell} m(u)^{-1} = U_{-\alpha, \ell- 2 \varphi_\alpha(u)} = U_{-\alpha , -\ell} = U_{-\alpha, \ell'}$.
		\end{proof}
		
		
		We will prove a 'mixed Iwasawa'-decomposition for the rank one subgroup $(L_{\pm \alpha})_\F$ defined in \cite[Section \ref{I-sec:rank1}]{AppAGRCF}. The compact group $K_\F$ in the Iwasawa decomposition is replaced by a subgroup of $\operatorname{Stab}_{(L_{\pm \alpha})_\F}(o)$ and an element $m$ representing the non-trivial element of the spherical Weyl group $W_{\pm \alpha} = \{[\operatorname{Id}],[m]\}$ of $(L_{\pm \alpha})_\F$.
		\begin{proposition}\label{prop:BT_mixed_Iwasawa_rank_1}
			Let $L_o = \langle U_{\alpha,o} , U_{-\alpha, o} \rangle$. Then 
			\begin{align*}
				L_{\pm \alpha} &= (U_\alpha)_\F \cdot (\operatorname{Nor}_{G_\F}(A_\F) \cap (L_{\pm \alpha})_\F)\cdot L_o \\
				=\ & (U_\alpha)_\F \cdot (\operatorname{Cen}_{G_\F}(A_\F) \cap (L_{\pm \alpha})_\F)\cdot L_o  \ \amalg \ (U_\alpha)_\F \cdot (\operatorname{Cen}_{G_\F}(A_\F) \cap (L_{\pm \alpha})_\F)\cdot m \cdot L_o.
			\end{align*}
		\end{proposition}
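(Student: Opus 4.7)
The plan is to start from the Bruhat decomposition \cite[Corollary \ref{I-cor:levi_Bruhat_alternative}]{AppAGRCF},
$$
(L_{\pm\alpha})_\F = (B_\alpha)_\F (B_{-\alpha})_\F \ \amalg \ m (B_{-\alpha})_\F,
$$
and handle the two strata separately. First I would observe that $(M_{\pm\alpha})_\F (A_{\pm\alpha})_\F$ lies inside $T' := \operatorname{Cen}_{G_\F}(A_\F) \cap (L_{\pm\alpha})_\F$ and normalises both $(U_\alpha)_\F$ and $(U_{-\alpha})_\F$, so elements of the first stratum can be collected into the form $g = t u_+ u_-$ and elements of the second into $g = m t u_-$ with $t\in T'$, $u_+ \in (U_\alpha)_\F$, $u_- \in (U_{-\alpha})_\F$. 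I set $N' := \operatorname{Nor}_{G_\F}(A_\F) \cap (L_{\pm\alpha})_\F$ and note $T',\, m,\, mt,\, t\,m(u_-) \in N'$ throughout.

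For the first stratum the argument splits on the sign of $\varphi_{-\alpha}(u_-)$. If $\varphi_{-\alpha}(u_-)\leq 0$ then $u_- \in U_{-\alpha,o} \subseteq L_o$, and conjugating $u_+$ through $t$ gives
$$
g = (tu_+t^{-1})\cdot t\cdot u_- \in (U_\alpha)_\F \cdot T' \cdot L_o.
$$
If $\varphi_{-\alpha}(u_-) > 0$ I would invoke the analogue of Lemma \ref{lem:BTm_uuu} obtained by swapping the roles of $\alpha$ and $-\alpha$, yielding $u_- = \bar w_1\, m(u_-)\,\bar w_2$ with $\bar w_1,\bar w_2 \in U_{\alpha,-\varphi_{-\alpha}(u_-)} \subseteq U_{\alpha,o} \subseteq L_o$. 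Substituting and using $T'$-normalisation to push $t$ past the $(U_\alpha)_\F$-factors produces $g = \tilde u \cdot (t\,m(u_-)) \cdot \bar w_2$ with $\tilde u \in (U_\alpha)_\F$, $t\,m(u_-) \in N'$ and $\bar w_2 \in L_o$.

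The second stratum is easier and does not require decomposing $u_-$: since $m$ conjugates $(U_{-\alpha})_\F$ into $(U_\alpha)_\F$ and $T'$ normalises both root groups, the element $u_+^* := m(tu_-t^{-1})m^{-1}$ lies in $(U_\alpha)_\F$ and the rewriting $g = u_+^* \cdot (mt)$ exhibits $g \in (U_\alpha)_\F \cdot N'$, i.e.\ in $(U_\alpha)_\F \cdot N' \cdot L_o$ with trivial $L_o$-factor. Together with the first stratum this yields the first equality, and the second equality follows by decomposing $N' = T' \amalg mT'$ according to its two cosets of $T'$ (equivalently, the two elements of the rank-one spherical Weyl group of $(L_{\pm\alpha})_\F$) and using $mT' = T'm$.

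The main obstacle is the subcase $\varphi_{-\alpha}(u_-) > 0$ of the first stratum: it cannot be cleared by conjugation alone and genuinely requires the explicit Jacobson--Morozov-based identity of Lemma \ref{lem:BTm_uuu}, so the standing hypothesis that $\Sigma$ is reduced is used here in the background. The remaining manipulations are formal, using only that $T' \subseteq N'$ normalises both root groups and that $m$ exchanges them.
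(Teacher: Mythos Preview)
Your proof is correct and closely parallels the paper's, with one organizational twist: you start from the \emph{alternative} Bruhat decomposition $(L_{\pm\alpha})_\F = (B_\alpha)_\F(B_{-\alpha})_\F \amalg m(B_{-\alpha})_\F$, whereas the paper uses the standard one $(L_{\pm\alpha})_\F = (B_\alpha)_\F \amalg (B_\alpha)_\F m (B_\alpha)_\F$ from \cite[Corollary \ref{I-cor:levi_Bruhat}]{AppAGRCF}. This swaps which stratum is trivial and which requires the case split: in the paper $(B_\alpha)_\F$ is immediate and the work goes into showing $m(B_\alpha)_\F$ lands in the target (splitting on $\varphi_\alpha(u)$ for the $(U_\alpha)_\F$-part), while for you $m(B_{-\alpha})_\F$ is dispatched by a single conjugation and the work goes into $(B_\alpha)_\F(B_{-\alpha})_\F$ (splitting on $\varphi_{-\alpha}(u_-)$). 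In both approaches the substantive step is the same --- invoking Lemma~\ref{lem:BTm_uuu} to rewrite the root group element of positive valuation as $\bar w_1\, m(\,\cdot\,)\,\bar w_2$ with $\bar w_i$ in the opposite root group of negative valuation, hence in $L_o$. Your variant is arguably a touch cleaner since the second stratum really does fall out without a case distinction.

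One minor remark on the second displayed equality: decomposing $N' = T' \amalg T'm$ certainly yields the \emph{union} $(U_\alpha)_\F T' L_o \cup (U_\alpha)_\F T' m L_o$, but disjointness is not automatic from that alone --- for instance any $m(u)$ with $u\in U_{\alpha,o}$ and $\varphi_\alpha(u)=0$ lies in $L_o$ and hence in both pieces. The paper does not argue disjointness either, and only the union is ever used downstream (in the proof of Theorem~\ref{thm:BT_mixed_Iwasawa}), so this is a notational rather than a mathematical point.
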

		\begin{proof}
			It is clear that the inclusion $\supseteq$ holds. For the reverse, we use \cite[Corollary \ref{I-cor:levi_Bruhat}]{AppAGRCF} to write
			$$
			(L_{\pm \alpha})_\F = (B_\alpha)_\F  \ \amalg \ (B_\alpha)_\F m (B_\alpha)_\F
			$$
			where $m$ is a representative of the non-trivial element in the spherical Weyl group $W_{\pm\alpha}$ associated with $(L_{\pm \alpha})_\F$ and $(B_\alpha)_\F = (M_{\pm \alpha})_\F (A_{\pm \alpha})_\F (U_\alpha)_\F$ as in \cite[Section \ref{I-sec:rank1}]{AppAGRCF}. If $g \in (B_\alpha)_\F$, then $g \in U_\alpha (M_{\pm \alpha})_\F (A_{\pm \alpha})_\F \subseteq (U_\alpha)_\F \cdot (\operatorname{Nor}_{G_\F}(A_\F)\cap (L_{\pm\alpha})_\F) $. We claim that
			$$
			m(B_\alpha)_\F = m (M_{\pm\alpha})_\F (A_{\pm \alpha})_\F (U_{\alpha})_\F \subseteq (U_{\alpha})_\F \cdot (\operatorname{Nor}_{G_\F}(A_\F) \cap (L_{\pm \alpha})_\F)\cdot L_o.
			$$
			Assuming the claim we obtain
			\begin{align*}
				(B_\alpha)_\F m (B_\alpha)_\F &\subseteq (B_\alpha)_\F (U_\alpha)_\F \cdot (\operatorname{Nor}_{G_\F}(A_\F) \cap (L_{\pm \alpha})_\F)\cdot L_o \\
				&\subseteq (U_\alpha)_\F \cdot (\operatorname{Nor}_{G_\F}(A_\F) \cap (L_{\pm \alpha})_\F)\cdot L_o
			\end{align*}
			since $(M_{\pm\alpha})_\F$ and $(A_{\pm\alpha})$ normalize $(U_\alpha)_\F$. To prove the claim, we consider an element in $m(M_{\pm \alpha})_\F (A_{\pm \alpha})_\F u$ for $u \in (U_\alpha)_\F$. If $\varphi_\alpha(u)\leq 0$, then $m(M_{\pm \alpha})_\F (A_{\pm \alpha})_\F u \subseteq (U_\alpha)_\F (\operatorname{Nor}_{G_\F}(A_\F) \cap (L_{\pm \alpha})_\F) L_o$ and we are done. If on the other hand $\varphi_\alpha(u) > 0$, then there are $u',u'' \in (U_{-\alpha})_\F$ with $u = u' \cdot m(u) \cdot u''$ and  $\varphi_{-\alpha}(u') = \varphi_{-\alpha}(u'') = -\varphi_\alpha(u) <0$ by Lemma \ref{lem:BTm_uuu}. In particular $u'' \in L_o$ and
			\begin{align*}
				m(M_{\pm \alpha})_\F (A_{\pm \alpha})_\F u &= m(M_{\pm \alpha})_\F (A_{\pm \alpha})_\F u' \cdot m(u) \cdot u'' \\
				&= m\tilde{u}'m^{-1} \cdot m(M_{\pm \alpha})_\F (A_{\pm \alpha})_\F m(u) \cdot u'' \\
				&\subseteq (U_\alpha)_\F \cdot (\operatorname{Nor}_{G_\F}(A_\F) \cap (L_{\pm \alpha})_\F ) \cdot L_o,
			\end{align*}
			where $\tilde{u}' \in (U_{-\alpha})_\F$ is a $(M_{\pm \alpha})_\F (A_{\pm \alpha})_\F$-conjugate of $u'$.
		\end{proof}
		The result of Proposition \ref{prop:BTL_l+l'} still holds, when $\Omega =\emptyset$.
		\begin{lemma}\label{lem:BTL_l+l'=-infty}
			Let $\alpha \in \Sigma$, $L = \langle (U_{\alpha})_\F , (U_{-\alpha})_\F \rangle$ and 
			$N = \operatorname{Nor}_{G_\F}(A_\F) \cap L$. Then
			$$
			L = (U_{\alpha})_\F(U_{-\alpha})_\F N.
			$$
		\end{lemma}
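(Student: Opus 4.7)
The inclusion $L \supseteq (U_\alpha)_\F(U_{-\alpha})_\F N$ is immediate. For the reverse, set $B := (U_\alpha)_\F (U_{-\alpha})_\F N$. Taking the other factors to be $\operatorname{Id}$, we see $B$ contains both generating sets $(U_\alpha)_\F$ and $(U_{-\alpha})_\F$ of $L$, so it is enough to check that $B$ is closed under left multiplication by any element of $(U_\alpha)_\F \cup (U_{-\alpha})_\F$. The case of $(U_\alpha)_\F$ is trivial since $u_0 \cdot (uvn) = (u_0 u) v n$.

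The crucial step is the following key claim: for every $w \in (U_{-\alpha})_\F$ and $u \in (U_\alpha)_\F$, we have $wu \in B$. If $w = \operatorname{Id}$ or $u = \operatorname{Id}$ this is immediate, so assume both are nontrivial. Applying the symmetric analog of Lemma~\ref{lem:BTm_uuu} (obtained by swapping $\alpha \leftrightarrow -\alpha$ in the $\operatorname{SL}_2$-calculation) gives a decomposition $w = u_1 m(w) u_2$ with $u_1, u_2 \in (U_\alpha)_\F$, where $m(w) \in \operatorname{Nor}_{G_\F}(A_\F)$ is defined by the analogous matrix formula. Since $m(w) = u_1^{-1} w u_2^{-1}$ is a product of elements of $L$, we get $m(w) \in L \cap \operatorname{Nor}_{G_\F}(A_\F) = N$. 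Setting $\tilde{u} := u_2 u \in (U_\alpha)_\F$, we rewrite
$$
wu \;=\; u_1 \, m(w) \, \tilde{u} \;=\; u_1 \cdot \bigl( m(w) \tilde{u} \, m(w)^{-1} \bigr) \cdot m(w),
$$
and the symmetric analog of Lemma~\ref{lem:BTmUm} yields $m(w)(U_\alpha)_\F m(w)^{-1} = (U_{-\alpha})_\F$, placing $wu$ in $(U_\alpha)_\F(U_{-\alpha})_\F N = B$.

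To finish, consider a general $b = uvn \in B$ and $w \in (U_{-\alpha})_\F$. By the key claim, $wu = u_0 v_0 n_0 \in B$, so
$$
wb \;=\; u_0 v_0 n_0 v n \;=\; u_0 v_0 \bigl( n_0 v n_0^{-1}\bigr) (n_0 n).
$$
Because $N \subseteq L \subseteq (L_{\pm\alpha})_\F$, the element $n_0$ represents either the trivial element or the nontrivial reflection $r_\alpha$ of the rank-one Weyl group, so conjugation by $n_0$ either preserves the two root groups or swaps them. In the preserving case $v_0(n_0 v n_0^{-1}) \in (U_{-\alpha})_\F$ and $wb \in B$ directly. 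In the swapping case $n_0 v n_0^{-1} =: u_1 \in (U_\alpha)_\F$; applying the key claim once more to $v_0 u_1 = u' v' n' \in B$ gives
$$
wb \;=\; u_0 (v_0 u_1)(n_0 n) \;=\; (u_0 u') v' (n' n_0 n) \in B.
$$
The main obstacle is setting up the symmetric analogs of Lemmas~\ref{lem:BTm_uuu} and~\ref{lem:BTmUm} for $w \in (U_{-\alpha})_\F$, which is only notationally delicate since the underlying $\operatorname{SL}_2$-computation is symmetric under $\alpha \leftrightarrow -\alpha$; verifying that the constructed $m(w)$ lies in $L$ (and thus in $N$) is what makes the argument go through.
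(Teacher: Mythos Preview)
Your proof is correct and takes essentially the same approach as the paper: both use the decomposition of a nontrivial element of $(U_{-\alpha})_\F$ as $u_1\, m(w)\, u_2$ with $u_i \in (U_\alpha)_\F$ (the symmetric analog of Lemma~\ref{lem:BTm_uuu}) to rewrite a product in $(U_{-\alpha})_\F(U_\alpha)_\F$ in the form $(U_\alpha)_\F(U_{-\alpha})_\F N$. The paper organizes this as a direct induction on word length, processing the two rightmost letters at each step, which keeps the $N$-factor at the far right throughout and thereby avoids your case split on whether $n_0$ preserves or swaps the root groups (and the second invocation of the key claim in the swapping case).
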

		\begin{proof}
			Clearly $\supseteq$. We consider the word length of an element $g \in L$. If $g$ consists of at most two letters, it is clearly part of $(U_{\alpha})_\F(U_{-\alpha})_\F N$, unless $g = u'u$ for $u'\in (U_{-\alpha})_\F$ and $u\in (U_\alpha)_\F$. In this case, we write $u' = u'' m(u) u'''$ for $u'',u''' \in (U_{\alpha})_\F$. Then $\bar{u} := m(u) u''' u m(u)^{-1} \in (U_{-\alpha})_\F$ by Lemma \ref{lem:BTmUm} and we have $g = u'' \bar{u} m(u) \in (U_{\alpha})_\F(U_{-\alpha})_\F N$.
			
			If $g$ contains at least three letters, write $g = u_n u_{n-1} \ldots u_{2} u_1$, where $u_i$ are elements in $(U_{\alpha})_\F, (U_{-\alpha})_\F$ alternating. Write $u_2 = u_2' m(u_2) u_2''$ with $u_2',u_2'' \in  (U_{-\beta})_\F$, where $u_2\in (U_{\beta})_\F$ for $\beta \in \{\pm \alpha\}$. Using Lemma \ref{lem:BTmUm} again we obtain $\bar{u} := m(u_2) u_2'' u_1 m(u_2)^{-1} \in (U_{\beta})_\F $, and
			$g = u_n \ldots u_4 (u_3 u_2') \bar{u} m(u)$, so $g \in h N $ for some $h \in L$ with a smaller word length. Applying induction gives the result.
		\end{proof}

		The following description of the stabilizer of a subset $\Omega \subseteq \A$ under the action of the rank 1 subgroup $L$ will not be used later, but is included for completeness. 
		
		\begin{proposition}\label{prop:stab_L_Omega}
			Let $\alpha \in \Sigma$, $\Omega \subseteq \A$ non-empty finite, $L = \langle (U_{\alpha})_\F , (U_{-\alpha})_\F \rangle$, $N  = \operatorname{Nor}_{G_\F}(A_\F) \cap L$ and $N_\Omega = \{g \in N \colon n.p = p \text{ for all } p \in \Omega\}$. Then
			$$
			\operatorname{Stab}_{L} (\Omega) = \langle U_{\alpha,\Omega} , U_{-\alpha,\Omega} , N_\Omega \rangle =  U_{\alpha, \Omega} U_{-\alpha,\Omega} N_\Omega.
			$$
		\end{proposition}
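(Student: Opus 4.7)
The inclusion $\supseteq$ is immediate, as each of $U_{\alpha,\Omega}$, $U_{-\alpha,\Omega}$, $N_\Omega$ fixes $\Omega$ pointwise by definition. Set $\ell = \min_{a.o \in \Omega}(-v)(\chi_\alpha(a))$ and $\ell' = \min_{a.o \in \Omega}(-v)(\chi_{-\alpha}(a))$, so that by the remark after Lemma \ref{lem:BTUalphaOmega} we have $U_{\alpha,\Omega} = U_{\alpha,\ell}$ and $U_{-\alpha,\Omega} = U_{-\alpha,\ell'}$; moreover $\ell + \ell' \leq 0$, with equality precisely when $\Omega$ lies in a single wall perpendicular to $\alpha$. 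Writing $H \coloneqq U_{\alpha,\Omega} U_{-\alpha,\Omega} N_\Omega$, the plan is to show first that $H$ is a subgroup contained in $\operatorname{Stab}_L(\Omega)$, and then that $\operatorname{Stab}_L(\Omega) \subseteq H$.

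For the first part, the crucial observation is that $N_\Omega$ normalizes the pair $\{U_{\alpha,\Omega}, U_{-\alpha,\Omega}\}$. Indeed, every $n \in N_\Omega$ acts on $\A$ as an affine Weyl-group element fixing the nonempty set $\Omega$, hence either trivially or as a reflection across an affine wall containing $\Omega$. In the trivial case $n \in T_\F(O) \cap L$ and Lemma \ref{lem:BTtUt} gives $n U_{\pm \alpha, \Omega} n^{-1} = U_{\pm \alpha, \Omega}$. In the reflection case the wall must be $\{(-v)(\chi_\alpha) = \ell\}$, forcing $\ell' = -\ell$, and Lemma \ref{lem:BTmUm} gives $n U_{\alpha,\Omega} n^{-1} = U_{-\alpha,\Omega}$. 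Combining this with the swap $U_{-\alpha,\Omega} U_{\alpha,\Omega} \subseteq L_{\ell,\ell'} = U_{\alpha,\Omega} U_{-\alpha,\Omega} N_{\ell,\ell'} \subseteq H$ from Proposition \ref{prop:BTL_l+l'} (together with $N_{\ell,\ell'} \subseteq N_\Omega$), a direct calculation shows $H \cdot H \subseteq H$; closure under inverses is analogous. Hence $H$ is a subgroup, and since it contains all three generators, $H = \langle U_{\alpha,\Omega}, U_{-\alpha,\Omega}, N_\Omega \rangle \subseteq \operatorname{Stab}_L(\Omega)$.

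For the reverse inclusion we first identify the single-point stabilizer $\operatorname{Stab}_L(p) = L_{\ell_p, -\ell_p}$ for $p = a_p.o \in \A$. Conjugating by $a_p$ reduces to $\operatorname{Stab}_L(o) = L_o$, which via the Jacobson--Morozov homomorphism $\varphi_\F \colon \operatorname{SL}_2(\F) \to L$ reduces to the classical statement that $\operatorname{SL}_2(O)$ is generated by its upper and lower triangular unipotents (proven by Gauss elimination over $O$). Granting this, pick $p_0 \in \Omega$ with $\ell_{p_0} = \ell$, which exists since $\Omega$ is finite. Then any $g \in \operatorname{Stab}_L(\Omega) \subseteq \operatorname{Stab}_L(p_0) = L_{\ell, -\ell}$, and Proposition \ref{prop:BTL_l+l'=0} yields two cases. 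In the translation-type case $g = u_+ h u_-$ with $u_+ \in U_{\alpha,\ell}$, $h \in T_\F(O)$, $u_- \in U_{-\alpha, -\ell+}$, note that $u_+ \in U_{\alpha,\ell} = U_{\alpha,\Omega}$ already fixes all of $\Omega$; hence $u_- = h^{-1} u_+^{-1} g$ also fixes $\Omega$, so $u_- \in U_{-\alpha,\Omega}$, and rewriting $g = u_+ (h u_- h^{-1}) h$ via Lemma \ref{lem:BTtUt} yields $g \in U_{\alpha,\Omega} U_{-\alpha,\Omega} N_\Omega$. In the reflection-type case $g = u_+ m(u) h u_+'$, for any $p \in \Omega$ we have $u_+'.p = p$ and $h.p = p$, so $u_+.(m(u).p) = p$; by Proposition \ref{prop:Ualphaconv}, $u_+$ can map a point of $\A$ back into $\A$ only by fixing it, forcing $m(u).p = p$, i.e., $p$ lies on the reflection wall. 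This forces $\ell_p = \ell$ for all $p \in \Omega$, hence $\ell' = -\ell$ and $m(u) \in N_\Omega$; then $g = u_+ (m(u) u_+' m(u)^{-1}) m(u) h$, with $m(u) u_+' m(u)^{-1} \in U_{-\alpha,-\ell} = U_{-\alpha,\Omega}$ by Lemma \ref{lem:BTmUm}, giving the desired decomposition.

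The main obstacle is the single-point identification $\operatorname{Stab}_L(o) = L_o$. Beyond the classical $\operatorname{SL}_2(O)$ generation argument, one must track the reduction through the (possibly nontrivial but finite and central) kernel of $\varphi_\F$, and separately verify that elements $n \in N \cap \operatorname{Stab}_{G_\F}(o)$ whose action on $\A$ is a reflection through the wall $\{(-v)(\chi_\alpha) = 0\}$ lie in $L_o$; this last point follows from Lemma \ref{lem:BTm_uuu} since for such $n$ one may write $n = m(u)$ with $\varphi_\alpha(u) = 0$, and then the accompanying $(U_{-\alpha})_\F$-factors in that lemma automatically lie in $U_{-\alpha,o} \subseteq L_o$.
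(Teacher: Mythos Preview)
Your argument has a genuine gap at the step ``$\operatorname{Stab}_L(o) = L_o$ reduces via the Jacobson--Morozov homomorphism $\varphi_\F \colon \operatorname{SL}_2(\F) \to L$ to the classical statement that $\operatorname{SL}_2(O)$ is generated by its unipotents.'' The homomorphism $\varphi_\F$ from \cite[Proposition \ref{I-prop:Jacobson_Morozov_real_closed}]{AppAGRCF} is attached to a \emph{single} nilpotent $X \in (\frakg_\alpha)_\F$; its image meets $(U_\alpha)_\F$ only in the one-parameter group $\{\exp(tX) : t \in \F\}$. When $\dim \frakg_\alpha > 1$ (e.g.\ $G = \operatorname{SO}(n,1)$ with $n \geq 3$, where the root system is reduced but $(U_\alpha)_\F \cong \F^{n-1}$), this image is a proper subgroup of $L$, so knowing the stabilizer inside $\varphi_\F(\operatorname{SL}_2(\F))$ says nothing about the full $\operatorname{Stab}_L(o)$. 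Your closing paragraph worries about the kernel of $\varphi_\F$ but not about its failure to surject, which is the actual obstruction.

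The paper circumvents this entirely. Rather than computing any point stabilizer in $L$, it uses the global decomposition $L = (U_\alpha)_\F (U_{-\alpha})_\F N$ (Lemma \ref{lem:BTL_l+l'=-infty}), writes an arbitrary $g \in \operatorname{Stab}_L(\Omega)$ as $g = uu'n$, and then runs a trichotomy on $\varphi_\alpha(u)$: if $\varphi_\alpha(u) \leq \ell$ one peels off $u \in U_{\alpha,\Omega}$ directly; if $\varphi_\alpha(u) \geq -\ell'$ one replaces $u$ by $u'' m(u) u'''$ via Lemma \ref{lem:BTm_uuu} and reorganizes; and the intermediate range $\ell < \varphi_\alpha(u) < -\ell'$ is shown to be impossible by comparing the fixed-point behaviour of $u$ and $u'$ at the two extremal points of $\Omega$ realizing $\ell$ and $\ell'$. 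This avoids any appeal to an $\operatorname{SL}_2$-identification of $L$ and works uniformly in $\dim \frakg_\alpha$. Your Proposition \ref{prop:BTL_l+l'=0} case analysis in the second half is close in spirit to the paper's second case, and your first paragraph (showing $H$ is a group) is correct but unnecessary: once $\operatorname{Stab}_L(\Omega) \subseteq U_{\alpha,\Omega} U_{-\alpha,\Omega} N_\Omega$ is established, both equalities follow from the trivial inclusions.
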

		\begin{proof}
			The inclusions $\supseteq$ are clear. Let 
			$$
			\ell = \min_{a.o \in \Omega} \{ (-v)(\chi_\alpha(a)) \} \quad \text{and} \quad \ell' = \min_{a.o \in \Omega} \{ (-v)(\chi_{-\alpha}(a)) \}.
			$$
			Then $\ell \leq -\ell'$ since $\Omega \neq \emptyset$, $U_{\alpha,\ell} = U_{\alpha,\Omega}$ and $U_{-\alpha, \ell'} = U_{-\alpha,\Omega}$. Now if $g\in \operatorname{Stab}_L(\Omega)$, then there are $u\in (U_\alpha)_\F , u' \in (U_{-\alpha})_\F , n \in N$ such that $g.p = uu'n.p = p$ for all $p\in \Omega$, by Lemma \ref{lem:BTL_l+l'=-infty}. We distinguish three cases for $\varphi_\alpha(u)$. If $\varphi_{\alpha}(u) \leq \ell$, then $u \in U_{\alpha,\ell}$ and $u'n.p = u^{-1}.p = p$ for all $p \in \Omega$, so $u' \in U_{-\alpha, \ell'}$ and $n\in N_\Omega$ by Proposition \ref{prop:UonA}, so $g \in U_{\alpha,\ell} U_{-\alpha,\ell'} N_{\Omega}$.
			
			If $\varphi_\alpha(u) \geq -\ell'$, then we write $u = u'' m(u) u''' $ for some $u'',u''' \in (U_{-\alpha})_\F $ with $\varphi_{-\alpha}(u'') = \varphi_{-\alpha}(u''') = -\varphi_\alpha(u) \leq \ell'$. Abbreviating $\bar{u}: = m(u) u'''u' m(u)^{-1} \in (U_{\alpha})_\F $ we obtain
			$g = uu'n = u'' \bar{u} m(u) n \in U_{-\alpha,\ell'} \cdot (U_{\alpha})_\F  \cdot N$ by Lemma \ref{lem:BTmUm}. Then $\bar{u}m(u)n.p = p$ for all $p \in \Omega$, whence $\varphi_{\alpha}(\bar{u}) \leq \ell$ by Proposition \ref{prop:UonA} and then $m(u)n.p = p$ for all $p\in \Omega$, so $m(u)n \in N_\Omega$. Then $g \in U_{\alpha,\ell}  U_{-\alpha,\ell'} N_\Omega$.
			
			We claim that the final case $\ell < \varphi_\alpha(u) < -\ell'$ cannot happen. For this we first notice that for $p\in \Omega$ we have $u.p = p$ if and only if $u'n.p = p$, if and only if $u'.p = p$. Now let $a_1,a_2 \in A_\F$ with $(-v)(\chi_\alpha(a_1)) = \ell$ and $(-v)(\chi_{\alpha}(a_2)) = -\ell'$. Now $u.a_1.o \neq a_1.o$, so $u'.a_1.o \neq a_1.o$, so $\varphi_{-\alpha}(u') > -\ell$, but $u.a_2.o = a_2.o$, so $u'.a_2.o = a_2.o$, so $\varphi_{-\alpha}(u')\leq \ell'$. But this is not possible, since then $-\ell \leq \varphi_{-\alpha}(u') \leq \ell'$ contradicting $\ell < \ell'$.
			We have shown that $\operatorname{Stab}_{L}(\Omega) \subseteq U_{\alpha,\ell} U_{-\alpha,\ell'} N_{\Omega}$.  
		\end{proof}
		
\subsection{Bruhat-Tits theory in higher rank} \label{sec:BT_higher_rank}
		
		In this section, we continue to assume that $\Sigma$ is a reduced root system. Let $>$ be an order on $\Sigma$ and $\Omega \subseteq \A$. Now for any subset $\Theta \subseteq \Sigma_{>0}$ closed under addition, let
		$$
		U_{\Theta,\Omega} :=\left\{ u \in \exp\left( \bigoplus_{\alpha \in \Theta} (\frakg_\alpha)_\F \right) \colon  u.p=p \text{ for all } p \in \Omega \right\} 
		$$
		and in particular
		$$
		U_{\Omega}^+ := U_{\Sigma_{>0}, \Omega} = \left\{ u \in U_{\F} \colon u.p=p \text{ for all } p\in \Omega\right\}
		$$
		for $\Theta = \Sigma_{>0}$. Note that $U_{\emptyset}^+ = U_\F$. Analogously we define
		$$
		U_{\Omega}^- := \left\{ u \in \exp \left( \bigoplus_{\alpha <0} (\frakg_\alpha)_\F \right) \colon  u.p=p \text{ for all } p\in \Omega \right\}.
		$$
		Let
		$$
		N_\Omega : = \left\{ n \in \operatorname{Nor}_{G_\F}(A_\F) \colon n.p = p \text{ for all } p \in \Omega \right\}.
		$$
		When $\Omega = \{p\}$ consists of a single point, we abbreviate the notation by omitting the brackets such as in 
		$U_{p}^+ := U_{\{p\}}^+$, $U_{p}^- := U_{\{p\}}^-$ and $N_p := N_{\{p\}}$. The goal of this subsection is to prove in Theorem \ref{thm:BTstab_fin} that if $\Omega$ is finite, then the pointwise stabilizer of $\Omega$ satisfies
		$$
		\operatorname{Stab}_{G_\F}(\Omega) = U_{\Omega}^+ U_\Omega^- N_\Omega.
		$$
		\begin{proposition}\label{prop:BTUOmega}
			For any subset $\Omega \subseteq \A$ and subset $\Theta \subseteq \Sigma_{>0}$ closed under addition,
			$
			U_{\Theta, \Omega} = \left\langle u \in U_{\alpha,\Omega} \colon \alpha \in \Theta \right\rangle
			$
			and in particular, 
			$
			U_\Omega^+ = \langle  u \in U_{\alpha, \Omega} \colon  \alpha >0 \rangle.
			$
			More precisely, if $\Sigma_{>0} = \left\{ \alpha_1, \ldots , \alpha_r \right\}$ with $\alpha_1 < \ldots < \alpha_r$, then the product map
			\begin{align*}
				\prod_{\alpha \in \Sigma_{>0}} U_{\alpha, \Omega} & \to U_\Omega^+ \\
				(u_{\alpha_1}, \ldots , u_{\alpha_r}) & \mapsto u_{\alpha_1} \cdots u_{\alpha_r}
			\end{align*}
			is a bijection.
		\end{proposition}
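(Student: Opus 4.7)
The plan is to reduce everything to the unique decomposition of elements of $U_\F$ into root factors, together with the fixed-point description already proved in Proposition \ref{prop:Uconv}.

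First I would handle the statement for $\Theta = \Sigma_{>0}$, which contains the bijectivity of the product map. Enumerate $\Sigma_{>0}=\{\alpha_1,\ldots,\alpha_r\}$ with $\alpha_1<\cdots<\alpha_r$. By \cite[Lemma \ref{I-lem:BCH_consequence}]{AppAGRCF}, every $u\in U_\F$ can be written uniquely as $u=u_{\alpha_1}\cdots u_{\alpha_r}$ with $u_{\alpha_i}\in (U_{\alpha_i})_\F$; this already gives that the product map $\prod_\alpha (U_\alpha)_\F\to U_\F$ is a bijection. Injectivity of the restricted product map $\prod_\alpha U_{\alpha,\Omega}\to U_\Omega^+$ is then immediate. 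For surjectivity, let $u\in U_\Omega^+$ and write $u=u_{\alpha_1}\cdots u_{\alpha_r}$ as above. Fix an arbitrary $p=a.o\in\Omega$. Applying Proposition \ref{prop:Uconv} to $u$ with respect to $\{p\}$, one has $u.p=p$ if and only if $\varphi_{\alpha_i}(u_{\alpha_i})\le (-v)(\chi_{\alpha_i}(a))$ for every $i$. Since this holds for each $p\in\Omega$, each factor $u_{\alpha_i}$ fixes every point of $\Omega$, i.e.\ $u_{\alpha_i}\in U_{\alpha_i,\Omega}$, which shows both the surjectivity of the product map and the equality $U_\Omega^+=\langle u\in U_{\alpha,\Omega}\colon \alpha>0\rangle$.

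Next I would deduce the general $\Theta$-statement. The inclusion $\langle U_{\alpha,\Omega}\colon \alpha\in\Theta\rangle\subseteq U_{\Theta,\Omega}$ is clear since each $U_{\alpha,\Omega}$ lies in $U_{\Theta,\Omega}$ and the latter is a group (it is the intersection of the subgroup $\exp(\bigoplus_{\alpha\in\Theta}(\frakg_\alpha)_\F)$ with the stabilizer of $\Omega$; this subgroup property uses that $\Theta$ is closed under addition together with $[\frakg_\alpha,\frakg_\beta]\subseteq \frakg_{\alpha+\beta}$ and the BCH formula). For the reverse inclusion, let $u\in U_{\Theta,\Omega}$ and again write $u=u_{\alpha_1}\cdots u_{\alpha_r}$ via the full BCH decomposition in $U_\F$. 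I need to argue that $u_{\alpha_i}=\operatorname{Id}$ whenever $\alpha_i\notin\Theta$: this follows from the uniqueness of the decomposition, because $u\in \exp(\bigoplus_{\alpha\in\Theta}(\frakg_\alpha)_\F)$ also admits a product decomposition using only factors indexed by $\Theta$ (obtained by iterating BCH, which never exits $\bigoplus_{\alpha\in\Theta}(\frakg_\alpha)_\F$ because $\Theta$ is closed under addition), and this must coincide with the full decomposition. Then Proposition \ref{prop:Uconv} applied as before yields $u_{\alpha_i}\in U_{\alpha_i,\Omega}$ for each $\alpha_i\in\Theta$, so $u$ lies in the group generated by these factors.

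The only delicate point is the uniqueness argument: I must make sure that the BCH-produced decomposition of an element of $\exp(\bigoplus_{\alpha\in\Theta}(\frakg_\alpha)_\F)$ really coincides with its full $U_\F$-decomposition, i.e.\ that the $\Theta$-factors are zero outside $\Theta$. This is essentially a matter of recording, at each step of the inductive ordering in \cite[Lemma \ref{I-lem:BCH_consequence}]{AppAGRCF}, that all higher-order commutators of elements in $\bigoplus_{\alpha\in\Theta}(\frakg_\alpha)_\F$ remain in that subspace. Apart from this bookkeeping, the proof is a direct application of Proposition \ref{prop:Uconv}.
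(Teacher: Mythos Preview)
Your proposal is correct and follows essentially the same route as the paper: decompose $u$ into root factors via the BCH lemma and then show each factor fixes $\Omega$, the paper doing this by repeated application of Lemma~\ref{lem:nn'} and you by quoting Proposition~\ref{prop:Uconv} (whose proof is exactly that repeated application). The only cosmetic differences are that the paper applies the BCH decomposition directly inside $\Theta$ rather than arguing afterwards that the non-$\Theta$ factors vanish, and that it explicitly notes the order reversal between the increasing order in the statement and the decreasing order required by Lemma~\ref{lem:nn'}/Proposition~\ref{prop:Uconv}, which you should also flag.
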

		\begin{proof}
			For any $\Theta = \{\alpha_1, \ldots, \alpha_r\} \subseteq \Sigma_{>0}$ closed under addition with $\alpha_1 > \ldots > \alpha_k$, the image of $\prod_{\alpha \in \Theta} U_{\alpha,\Omega} $ under the product map is contained in $U_{\Theta,\Omega}$. On the other hand, if we start with $u \in U_{\Theta, \Omega}$, we can apply \cite[Lemma \ref{I-lem:BCH_consequence}]{AppAGRCF} to obtain a unique $(u_1, \cdots , u_k) \in (U_{\alpha_1}) \times \ldots (U_{\alpha_k})_\F $ with $u = u_1 \cdot \ldots \cdot u_k$. A point $a.o \in \Omega$ is fixed by $u$ if and only if $a^{-1}ua \in U_\F(O)$, so
			$$
			a^{-1}u_1a \cdot \ldots \cdot a^{-1}u_k a \in U_\F(O).
			$$
			We can apply Lemma \ref{lem:nn'} repeatedly to obtain $a^{-1}u_1a \in U_\F(O)$, \ldots, $a^{-1}u_ka \in U_\F(O)$ for all $a.o \in \Omega$. This exactly means that $u_i \in U_{\alpha,\Omega}$. This shows that the product map is surjective. Note that the order in the statement of the Proposition is the inverse of the one used in the proof, but one follows from the other by applying the inverse.
		\end{proof}
		We notice that the previous propositions hold for any chosen order on $\Sigma$. We may in particular invert the order to obtain
		\begin{align*}
			U_\Omega^- = \langle u \in U_{\alpha,\Omega} \colon \alpha < 0 \rangle 
		\end{align*}
		from Proposition \ref{prop:BTUOmega}. 
		We now define the groups
		$$
		P_\Omega := \langle  U_{\alpha,\Omega} \colon \alpha \in \Sigma \rangle \quad \text{ and } \quad 
		\hat{P}_\Omega := \langle N_\Omega, U_{\alpha,\Omega}\colon \alpha \in \Sigma \rangle
		$$
		generated by all elements of $U_{\alpha, \Omega}$, not just those with positive $\alpha$. The following is a generalization of Lemma \ref{prop:BTL_l+l'} to higher rank.
		\begin{proposition}\label{prop:BTPOmegaUUN} 
			Let $\Omega \subseteq \A$ finite. Then $P_\Omega$ and $\hat{P}_\Omega$ decompose as
			\begin{align*}
				P_\Omega &= U_\Omega^- \cdot U_\Omega^+ \cdot(\operatorname{Nor}_{G_\F}(A_\F)\cap P_{\Omega}) \\
				\hat{P}_\Omega &= U_\Omega^- \cdot U_\Omega^+ \cdot N_{\Omega}.
			\end{align*}
		\end{proposition}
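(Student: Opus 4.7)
The inclusions $\supseteq$ are immediate from the definitions: every $U_{\alpha,\Omega}$ lies in $U^+_\Omega$ or $U^-_\Omega \subseteq P_\Omega$ according to the sign of $\alpha$, and $N_\Omega$ is a generator of $\hat P_\Omega$. Since each generator $U_{\alpha,\Omega}$ fixes $\Omega$ pointwise, we have $P_\Omega \subseteq \operatorname{Stab}_{G_\F}(\Omega)$, and in particular $\operatorname{Nor}_{G_\F}(A_\F) \cap P_\Omega \subseteq N_\Omega$. Moreover, conjugation by $n \in N_\Omega$ sends $U_{\alpha,\Omega}$ to $U_{w(\alpha),\Omega}$, where $w$ is the Weyl-group image of $n$, so $N_\Omega$ normalizes $P_\Omega$ and hence $\hat P_\Omega = P_\Omega \cdot N_\Omega$. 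Consequently, the decomposition of $\hat P_\Omega$ follows from the one for $P_\Omega$ by right multiplication with $N_\Omega$, and it suffices to prove $P_\Omega \subseteq B := U^-_\Omega \cdot U^+_\Omega \cdot (\operatorname{Nor}_{G_\F}(A_\F) \cap P_\Omega)$.

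I would prove the latter by induction on the word length of $g \in P_\Omega$ as a product of generators in $\bigcup_{\alpha \in \Sigma} U_{\alpha,\Omega}$. Writing $g = u^- u^+ m \in B$ by the inductive hypothesis and multiplying by $u_\alpha \in U_{\alpha,\Omega}$, set $\beta := w(\alpha)$ (with $w$ the Weyl image of $m$) and $u'_\beta := m u_\alpha m^{-1} \in U_{\beta,\Omega}$. Then $g \cdot u_\alpha = u^- u^+ u'_\beta m$, reducing the task to showing $u^+ u'_\beta \in B$. The case $\beta > 0$ is immediate since $U^+_\Omega$ is a group (Proposition \ref{prop:BTUOmega}).

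The main case $\beta < 0$ is handled by a secondary induction. Write $u^+ = u_{\alpha_1} \cdots u_{\alpha_r}$ in a fixed height-decreasing order using Proposition \ref{prop:BTUOmega}, and slide $u'_\beta$ leftward past each $u_{\alpha_i}$ via the identity $u_{\alpha_i} u'_\beta = u'_\beta u_{\alpha_i} [u_{\alpha_i}, u'_\beta]^{-1}$. By the Baker--Campbell--Hausdorff formula (as in Lemma \ref{lem:nn'}), the commutator $[u_{\alpha_i}, u'_\beta]$ lies in the product of root groups $U_{i \alpha_i + j\beta, \Omega}$ with $i, j \geq 1$ and $i\alpha_i + j\beta \in \Sigma$; these correction terms split into positive and negative parts which can be reabsorbed into the $U^+_\Omega$ and $U^-_\Omega$ factors by the inductive hypothesis. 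In the degenerate case $\alpha_i = -\beta$ (which is the unique root on the line through $\alpha_i$, since $\Sigma$ is reduced), no root-group commutator is available; instead I would invoke the rank-$1$ result Proposition \ref{prop:BTL_l+l'} applied to $\langle (U_{\alpha_i})_\F, (U_{-\alpha_i})_\F\rangle$ to rewrite $u_{\alpha_i} u'_\beta$ as an element of $U_{-\alpha_i,\Omega} \cdot U_{\alpha_i,\Omega} \cdot (\operatorname{Nor}_{G_\F}(A_\F) \cap P_\Omega)$, which already lies in $B$.

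The main obstacle will be the bookkeeping through these iterated swaps: each rank-$1$ exchange introduces a normalizer element whose membership in $\operatorname{Nor}_{G_\F}(A_\F) \cap P_\Omega$ must be tracked (using that the $N$-element it produces arises from a product of root group elements, hence lies in $P_\Omega$), and each non-degenerate commutator introduces new root-group factors of both signs whose heights must be controlled to guarantee termination. The natural inductive invariant is a multiset of heights of positive roots with non-trivial components, which decreases either via the rank-$1$ exchange (eliminating the pair $\{\alpha_i,-\alpha_i\}$) or via the non-degenerate commutator step (where $i\alpha_i + j\beta$ pairs with roots of strictly greater height on the relevant side), closing the induction.
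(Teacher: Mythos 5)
Your $\supseteq$ direction and the reduction of $\hat P_\Omega$ to $P_\Omega$ via $N_\Omega$-normalization are correct and agree with the paper. But your main argument — sliding $u'_\beta$ ($\beta<0$) leftward through a fixed normal form $u^+ = u_{\alpha_1}\cdots u_{\alpha_r}$ by Chevalley-type commutator relations — is a genuinely different route from the paper, and as it stands it has a real gap.

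The paper first proves that the set $B := U_\Omega^- \cdot U_\Omega^+ \cdot(\operatorname{Nor}_{G_\F}(A_\F)\cap P_{\Omega})$ is \emph{independent of the chamber} defining $\Sigma_{>0}$: for two orders related by a simple reflection $r_\alpha$, it reorders the products using Proposition \ref{prop:BTUOmega} and applies the rank-$1$ exchange of Proposition \ref{prop:BTL_l+l'} exactly once at the pair $\{\alpha,-\alpha\}$; transitivity of $W_s$ on chambers does the rest. With this in hand the word-length induction is one line: for a new generator $u_1 \in U_{\alpha,\Omega}$, choose the chamber with $\alpha<0$, so $u_1 u \in U_\Omega^- \cdot B = B$. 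No sliding and no commutator bookkeeping is ever needed. You never isolate or prove this chamber-independence statement, which is why you are forced into the harder right-multiplication/sliding route.

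The gap in your sliding argument is the termination invariant. First, when you pass $u'_\beta$ ($\beta<0$) across $u_{\alpha_i}$ ($\alpha_i>0$), the correction terms lie in roots $i\alpha_i + j\beta$ with $i,j\geq 1$; for $j\geq 2$ a \emph{negative} such root can have absolute height $\geq |\operatorname{ht}(\beta)|$, so your proposed multiset-of-heights quantity need not decrease. Second, in the degenerate step where you invoke the rank-$1$ exchange, the resulting normalizer element $n$ sits at the \emph{right} end of the factors you have already passed; pushing $n$ to the right conjugates those factors by the reflection $r_{\alpha_i}$, which can flip some of them from positive to negative roots and re-inject new negative factors into the middle of the word — potentially restarting the whole sliding process. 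Neither effect is controlled by the invariant you propose, and the acknowledgement of the difficulty does not resolve it. If you want a direct commutator proof you would need a genuinely different descent quantity (e.g. lexicographic on $(\ell(w), \text{number of negative factors})$ in the spirit of Bruhat--Tits $6.1$), but it is far simpler to first establish the chamber-independence lemma as the paper does.
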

		\begin{proof}
			The inclusions $\supseteq$ are clear. Recall that the set $\Sigma_{>0}$ is determined by a basis $\Delta \subseteq \Sigma$, or equivalently a chamber. After choosing a chamber, an ordering $>$ of $\Sigma$ can be obtained by choosing an order $\alpha_1 < \alpha_2 < \ldots < \alpha_r$ on $\Delta = \{ \alpha_1, \alpha_2, \ldots , \alpha_r \}$; the ordering on $\Sigma$ is then the lexicographical ordering \cite[VI.1.6, p. 174-175]{Bou08}. 
			
			For the $\subseteq$ direction of the description of $P_\Omega$, we will first show that 
			$$
			U_\Omega^- \cdot U_\Omega^+ \cdot(\operatorname{Nor}_{G_\F}(A_\F)\cap P_{\Omega})
			$$
			is independent of the chamber defining $\Sigma_{>0}$.
			Let $<_1, <_2$ be two orderings on $\Sigma$ whose chambers $C_1, C_2$ are related by a reflection determined by a simple root $\alpha \in \Delta_1 \subseteq \Sigma$, so $\Delta_2 = r_\alpha(\Delta_1)$. We may then assume that $<_1$ and $<_2$ are determined by lexicographical orderings such that $0 <_1 \alpha <_1 \beta$ for all $\beta \in \Delta_{1} \setminus \{\alpha\}$ and $0 <_2 -\alpha <_2 \beta$ for all $\beta \in \Delta_2\setminus \{\alpha\}$. We notice that then $\Sigma_{>_1 0} \setminus \{\alpha\} = \Sigma_{>_2 0} \setminus \{-\alpha\}$, since 
			for $\beta \in \Sigma_{>_1 0}\setminus \{\alpha\}$ there are $\lambda_{\delta} \in \Z_{\geq 0}$, at least one of which is strictly positive for some $\delta \in \Delta_1 \setminus \{\alpha\}$, such that
			\begin{align*}
				\beta &= \sum_{\delta\in \Delta_1} \lambda_\delta \delta 
				= \sum_{\delta \in \Delta_1} \lambda_\delta \left(r_\alpha(\delta) - 2\frac{\langle \delta, \alpha\rangle}{\langle \alpha, \alpha\rangle} r_\alpha(\alpha)\right) \\
				&= \sum_{\delta \in \Delta_1 \setminus \{ \alpha\}} \lambda_\delta r_\alpha(\delta) + \left(\sum_{\delta \in \Delta_1 \setminus \{\alpha\}} - 2 \frac{\langle\delta,\alpha\rangle}{\langle \alpha, \alpha \rangle} \lambda_\delta - \lambda_\alpha \right) r_\alpha(\alpha)  \\
				& \in \sum_{\delta \in \Delta_1\setminus \{\alpha\}} \mathbb{Z}_{\geq 0} r_\alpha(\delta) + \mathbb{Z} r_\alpha(\alpha) = \sum_{\delta \in \Delta_2\setminus \{-\alpha \}} \mathbb{Z}_{\geq 0} \delta + \mathbb{Z} (-\alpha) ,
			\end{align*}
			where we first used that $\langle \delta, \alpha \rangle$ is nonpositive for all $\delta \in \Delta_1 \setminus \{\alpha\}$. Bases of a root system have the property that any element of the root system written in that basis has either all non-negative or all non-positive coefficients. Since there is a strictly positive coefficient for the element $\beta$ written in the basis $\Delta_2$ as above, all coefficients have to be non-negative, so $\beta \in \Sigma_{>_20} \setminus \{-\alpha\}$.  
			It now suffices to show that 
			$$
			U_\Omega^- \cdot U_\Omega^+ \cdot(\operatorname{Nor}_{G_\F}(A_\F)\cap P_{\Omega})
			$$ is the same for $<_1$ and $<_2$ to deduce that it is independent of the chamber used to define $\Sigma_{>0}$, since the reflections determined by the simple roots $\Delta$ generate the spherical Weyl group which acts transitively on the chambers.
			
			We abbreviate $Y := (\operatorname{Nor}_{G_\F}(A_\F)\cap P_{\Omega})$. We use Proposition \ref{prop:BTUOmega} repeatedly to obtain 
			\begin{align*}
				U_{\Sigma_{<_1 0},\Omega} \cdot U_{\Sigma_{>_1 0},\Omega} \cdot Y 
				&= \prod_{\beta <_1 0} U_{\beta, \Omega} \cdot \prod_{\beta >_1 0} U_{\beta, \Omega} \cdot Y \\
				&= \prod_{\substack{\beta <_1 0\\ -\alpha \neq \beta}} U_{\beta, \Omega} \cdot  U_{-\alpha, \Omega} \cdot \prod_{\substack{\beta >_1 0\\ \alpha \neq \beta}} U_{\beta, \Omega}  \cdot U_{\alpha, \Omega} \cdot Y \\
				&= \prod_{\substack{\beta <_1 0\\ -\alpha \neq \beta}} U_{\beta, \Omega} \cdot U_{\Sigma_{>_2 0},\Omega}  \cdot U_{\alpha, \Omega} \cdot Y \\
				&= \prod_{\substack{\beta <_1 0\\ -\alpha \neq \beta}} U_{\beta, \Omega} \cdot  \prod_{\substack{\beta >_1 0\\ \alpha \neq \beta}} U_{\beta, \Omega} \cdot U_{-\alpha, \Omega} \cdot U_{\alpha, \Omega} \cdot Y 
			\end{align*}
			at which point we invoke Lemma \ref{prop:BTL_l+l'} to continue
			\begin{align*}
				&= \prod_{\substack{\beta <_1 0\\ -\alpha \neq \beta}} U_{\beta, \Omega} \cdot  \prod_{\substack{\beta >_1 0\\ \alpha \neq \beta}} U_{\beta, \Omega} \cdot U_{\alpha, \Omega} \cdot U_{-\alpha, \Omega} \cdot Y \\
				&= \prod_{\substack{\beta <_1 0\\ -\alpha \neq \beta}} U_{\beta, \Omega} \cdot  \prod_{\beta >_1 0} U_{\beta, \Omega} \cdot U_{-\alpha, \Omega} \cdot Y \\
				&= \prod_{\substack{\beta <_1 0\\ -\alpha \neq \beta}} U_{\beta, \Omega} \cdot  U_{\alpha, \Omega}  \cdot \prod_{\substack{\beta >_1 0\\ \alpha \neq \beta}} U_{\beta, \Omega} \cdot  U_{-\alpha, \Omega} \cdot Y \\
				&=  \prod_{\beta <_2 0} U_{\beta, \Omega}  \cdot \prod_{\beta >_2 0} U_{\beta, \Omega} \cdot Y \\
				&= U_{\Sigma_{<_2 0}, \Omega} \cdot U_{\Sigma_{>_2 0},\Omega} \cdot Y.
			\end{align*}
			Now that we have shown that it is independent of the chamber defining the order on $\Sigma$, we show the direction $\subseteq$ by induction on word length. The base case is clear. Now let $u_1 \in U_{\alpha, \Omega}$ for some $\alpha \in \Sigma$ and let $u \in P_\Omega$. We may choose the order on $\Sigma$ such that $u_1 \in U_\Omega^-$. Then by the induction assumption, we have $u \in U_\Omega^- \cdot U_\Omega^+ \cdot Y$ and hence also $u_1u \in U_\Omega^- \cdot U_\Omega^+ \cdot Y$.
			
			It remains to show $\hat{P} \subseteq U_\Omega^- U_{\Omega}^+ N_\Omega$. We use the fact that for every $n \in N_\Omega$ we have $n U_{\alpha,\Omega} n^{-1} = U_{[n](\alpha),n.\Omega} U_{[n](\alpha), \Omega}$ where $[n]$ is the representative of the spherical Weyl group corresponding to $n$. This can be used to show
			$$
			\hat{P}_\Omega = \langle N_\Omega, U_{\alpha,\Omega} \colon \alpha \in \Sigma \rangle = \langle U_{\alpha, \Omega} \colon \alpha \in \Sigma \rangle \cdot N_\Omega = P_\Omega N_\Omega = U_\Omega^- U_\Omega^+ N_\Omega, 
			$$ 
			making use of the description of $P_\Omega$.
		\end{proof}
		
		Next, we will obtain a 'mixed Iwasawa'-decomposition for the group $G_\F$, upgrading the result of Proposition \ref{prop:BT_mixed_Iwasawa_rank_1} in rank 1.
		\begin{theorem}\label{thm:BT_mixed_Iwasawa}
			For every order on the root system $\Sigma$,
			$G_\F = U^+ \cdot \operatorname{Nor}_{G_\F}(A_\F) \cdot \hat{P}_o$.
		\end{theorem}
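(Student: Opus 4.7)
The plan is to prove $G_\F = U^+ \cdot N \cdot \hat{P}_o$, with $N := \operatorname{Nor}_{G_\F}(A_\F)$, by reducing to the rank-one statement of Proposition \ref{prop:BT_mixed_Iwasawa_rank_1} via the Bruhat decomposition of $G_\F$. Denote $S := U^+ \cdot N \cdot \hat{P}_o$; the goal is to show $G_\F \subseteq S$.

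I would begin with the Bruhat decomposition
\[
G_\F = \bigsqcup_{w \in W_s} U^+ \cdot n_w \cdot T_\F \cdot U^+,
\]
where $n_w \in N$ represents $w \in W_s$ and $T_\F = \operatorname{Cen}_{G_\F}(A_\F) \subseteq N$. Using the standard splitting $n_w^{-1} U^+ n_w = U^+_{w^{-1}} \cdot U^-_{w^{-1}}$, in which $U^+_{w^{-1}}$ and $U^-_{w^{-1}}$ collect those root groups $U_{w^{-1}(\alpha)}$ (for $\alpha \in \Sigma_{>0}$) whose indexing root is respectively positive or negative, and absorbing $U^+_{w^{-1}}$ into the neighbouring $U^+$, each Bruhat cell rewrites as $U^+ \cdot (T_\F n_w) \cdot U^-_{w^{-1}}$. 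Since $T_\F n_w \subseteq N$, the theorem reduces to showing $U^-_{w^{-1}} \subseteq S$ for every $w \in W_s$.

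For a single element $u \in U_\beta$ with $\beta < 0$, Lemma \ref{lem:BTm_uuu} places $u$ in $S$ directly: if $\varphi_\beta(u) \leq 0$, then $u \in U_{\beta,o} \subseteq \hat{P}_o \subseteq S$; otherwise there is $v \in U_{-\beta}$ with $\varphi_{-\beta}(v) = -\varphi_\beta(u) < 0$ such that $u = v^{-1} \cdot m(u) \cdot v^{-1}$, in which the leftmost $v^{-1}$ lies in $U^+$ (because $-\beta > 0$), the middle factor $m(u)$ lies in $N$, and the rightmost $v^{-1}$ lies in $U_{-\beta,o} \subseteq \hat{P}_o$.

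The main technical obstacle is extending this to the product $U^-_{w^{-1}}$, which is not manifestly in $S$ since $S$ is not visibly closed under multiplication. I would address this by induction on the number of negative-root-group factors, showing that $S$ is stable under left multiplication by each root group $U_\alpha$: the case $\alpha > 0$ is immediate because $U_\alpha \subseteq U^+$, while for $\alpha < 0$ one combines the single-element decomposition above with a careful commutation argument, using that conjugation by elements of $N$ permutes root groups (via the Weyl action) and that elements of $\hat{P}_o$ decompose as $U_o^- \cdot U_o^+ \cdot N_o$ by Proposition \ref{prop:BTPOmegaUUN}, with commutators absorbed via \cite[Lemma \ref{I-lem:BCH_consequence}]{AppAGRCF}. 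Once this stability is established, $\operatorname{Id} \in S$ yields $U^-_{w^{-1}} \subseteq S$, completing the proof.
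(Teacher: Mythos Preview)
Your approach is essentially the paper's: both arguments show that $S = U^+ \cdot N \cdot \hat{P}_o$ is stable under left multiplication by every root group, whence $G_\F \cdot S \subseteq S$ and so $G_\F \subseteq S$. Your opening Bruhat reduction is therefore a detour---once you commit to proving left-stability of $S$, the cell-by-cell analysis of $U^-_{w^{-1}}$ is superfluous.

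The real content lies in the step you label ``a careful commutation argument,'' and here your sketch understates what is needed. The paper first reduces to \emph{simple} $\alpha$ (using $U^- = \langle (U_{-\alpha})_\F : \alpha \in \Delta \rangle$), then chooses an order on $\Sigma$ making $\alpha$ the smallest positive root. This is what makes the commutation work: with $U_\alpha' := \langle (U_\beta)_\F : \beta > 0,\ \beta \neq \alpha \rangle$, one has both $U^+ = U_\alpha' \cdot (U_\alpha)_\F$ and, crucially, $(U_{-\alpha})_\F \cdot U_\alpha' = U_\alpha' \cdot (U_{-\alpha})_\F$, because $-\alpha$ together with $\Sigma_{>0}\setminus\{\alpha\}$ is again a closed positive system. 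For a non-simple negative root this normalization fails, and the generic BCH-style commutator absorption you invoke would produce terms in further negative root groups that you cannot control. After the commutation, the paper applies the rank-one mixed Iwasawa decomposition (Proposition~\ref{prop:BT_mixed_Iwasawa_rank_1}) to $(U_{-\alpha})_\F (U_\alpha)_\F$, and then still needs a non-obvious endgame: for $u' \in (U_{-\alpha})_\F$ and $n \in N$, it shows $u' n \in S$ by conjugating through $n$ to a root $\beta = [n^{-1}](\alpha)$ and applying rank-one mixed Iwasawa a second time for $\beta$. Your outline does not isolate either the reduction to simple roots or this second rank-one step, and without them the ``careful commutation'' does not close.
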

		\begin{proof}
			We abbreviate $\tilde{G} := U^+ \cdot \operatorname{Nor}_{G_\F}(A_\F) \cdot \hat{P}_o$. It suffices to show that $g\tilde{G} \subseteq \tilde{G}$ for every $g\in G_\F$, since then $G_\F \subseteq G_\F \tilde{G} \subseteq \tilde{G} \subseteq G_\F$. By \cite[Theorem \ref{I-thm:BWB}]{AppAGRCF}, and the fact that non-trivial elements of the spherical Weyl group $W_s$ of the form $m(u)$ can be obtained from elements in $U^+$ and $U^-$, $G_\F = \langle U^+, U^-, T_\F  \rangle$, where 
			\begin{align*}
				U^+ &= \langle (U_\alpha)_\F \colon \alpha >0 \rangle \\
				U^- &= \langle (U_\alpha)_\F \colon \alpha <0 \rangle \\
				T_\F &= \operatorname{Cen}_{G_\F} (A_\F)
			\end{align*}
			as earlier. Since $U^+ \tilde{G} \subseteq \tilde{G}$ and $T_\F \tilde{G} \subseteq \tilde{G}$, it suffices to show $U^- \tilde{G} \subseteq \tilde{G}$. Since equality holds in $[(\frakg_\alpha)_\F, (\frakg_\beta)_\F] = (\frakg_{\alpha + \beta})_\F$, 
			$$
			U^- = \langle (U_{-\alpha})_\F \colon \alpha \in \Delta \rangle
			$$
			for the basis $\Delta \subseteq \Sigma$ associated to the chosen order, compare \cite[Remark 14.5(2)]{Bor}. Therefore it suffices to show $(U_{-\alpha})_\F \tilde{G} \subseteq \tilde{G}$ for all $\alpha\in \Delta$ to show the theorem.
			
			Now let $\alpha \in \Delta$ and consider the complete lexicographical order on $\Sigma$ such that $\alpha < \delta$ for all $\delta \in \Delta$. Let $U_\alpha' := \langle (U_\beta)_\F \colon \beta >0, \beta \neq \alpha\rangle \subseteq U^+$. Then by \cite[Lemma \ref{I-lem:BCH_normalizer}]{AppAGRCF}, $U^+ = (U_\alpha)_\F \cdot U_\alpha' = U_\alpha' \cdot (U_\alpha)_\F$. We note that when we instead consider the order with basis $r_\alpha(\Delta)$, $-\alpha$ is the smallest positive element as in the proof of Proposition \ref{prop:BTUOmega} and then \cite[Lemma \ref{I-lem:BCH_normalizer}]{AppAGRCF} gives $(U_{-\alpha})_\F \cdot U_\alpha' = U_\alpha' \cdot (U_{-\alpha})_\F$. We use Proposition \ref{prop:BT_mixed_Iwasawa_rank_1} to show
			\begin{align*}
				(U_{-\alpha})_\F \tilde{G} &= (U_{-\alpha})_\F U_\alpha' (U_\alpha)_\F \operatorname{Nor}_{G_\F}(A_\F) \hat{P}_o \\
				&= U_\alpha' (U_{-\alpha})_\F (U_\alpha)_\F \operatorname{Nor}_{G_\F}(A_\F) \hat{P}_o \\
				&\subseteq U_\alpha' (U_\alpha)_\F (\operatorname{Nor}_{G_\F}(A_\F) \cap (L_{\pm\alpha})_\F) L_o \cdot \operatorname{Nor}_{G_\F}(A_\F) \hat{P}_o \\
				&\subseteq  U^+ T_\F L_o \cdot \operatorname{Nor}_{G_\F}(A_\F) \hat{P}_o \ \amalg \ U^+ T_\F  m L_o \cdot \operatorname{Nor}_{G_\F}(A_\F) \hat{P}_o,
			\end{align*}
			where $L_o = \langle U_{\alpha, o}, U_{-\alpha, o} \rangle$ and $m\in \operatorname{Nor}_{G_\F}(A_\F)$ represents the reflection $r_\alpha$ in the spherical Weyl group $W_s$. By Lemma \ref{prop:BTL_l+l'}, $L_o \subseteq U_{\alpha, o} U_{-\alpha,o} \operatorname{Nor}_{G_\F}(A_\F)$ and $L_o \subseteq U_{-\alpha, o} U_{\alpha,o} \operatorname{Nor}_{G_\F}(A_\F)$. Then
			\begin{align*}
				(U_{-\alpha})_\F \tilde{G} & \subseteq U^+ T_\F (U_\alpha)_\F (U_{-\alpha})_\F \operatorname{Nor}_{G_\F}(A_\F) \hat{P}_o \ \amalg \ U^+ T_\F m (U_{-\alpha})_\F (U_{\alpha})_\F \operatorname{Nor}_{G_\F}(A_\F) \hat{P}_o \\
				&=  U^+ T_\F (U_\alpha)_\F (U_{-\alpha})_\F \operatorname{Nor}_{G_\F}(A_\F) \hat{P}_o \\
				&= U^+ (U_\alpha)_\F (U_{-\alpha})_\F T_\F \operatorname{Nor}_{G_\F}(A_\F) \hat{P}_o \\
				&= U^+ (U_{-\alpha})_\F \operatorname{Nor}_{G_\F}(A_\F) \hat{P}_o
			\end{align*}
			where we used $m (U_{\alpha})_\F m^{-1} = (U_{-\alpha})_\F$ and $m (U_{-\alpha})_\F m^{-1} = (U_{\alpha})_\F$. We claim that for every $u' \in (U_{-\alpha})_\F$ and $n \in \operatorname{Nor}_{G_\F}(A_\F)$, $u'n \in \tilde{G}$. To see this, consider $\beta = [n^{-1}](\alpha)$ and $v := n^{-1}u'n\in (U_{[n^{-1}](-\alpha)})_\F = (U_{-\beta})_\F $. Then we can apply the rank one mixed Iwasawa decomposition, Proposition \ref{prop:BT_mixed_Iwasawa_rank_1}, for $\beta$ to obtain 
			\begin{align*}
				u'n = nv &\in n(L_{\pm \beta})_\F \subseteq n(U_{\beta})_\F \operatorname{Nor}_{G_\F}(A_\F) \hat{P}_o \\
				&= n (U_{\beta})_\F n^{-1} n \operatorname{Nor}_{G_\F}(A_\F) \hat{P}_o  \\
				&= (U_\alpha)_\F \operatorname{Nor}_{G_\F}(A_\F) \hat{P}_o \subseteq \tilde{G}.
			\end{align*}
			Assuming the claim, $(U_{-\alpha})_\F \tilde{G} \subseteq U^+ (U_{-\alpha})_\F \operatorname{Nor}_{G_\F}(A_\F) \hat{P}_o \subseteq U^+ \tilde{G} \hat{P}_o = \tilde{G}$ completes the proof. 
		\end{proof}
		
		The mixed Iwasawa decomposition can be used to show that $\hat{P}_o = \operatorname{Stab}_{G_\F}(o)$. In fact we will eventually show that $\hat{P}_\Omega$ is the whole pointwise stabilizer of $\Omega$ in Theorem \ref{thm:BTstab}.
		\begin{proposition}\label{prop:BTstab_point}
			The stabilizer of a single point $p \in \A$ satisfies
			$$
			\operatorname{Stab}_{G_\F}(p) = \hat{P}_p = \langle N_{p} , U_{\alpha, p} \colon \alpha \in \Sigma \rangle = U_p^- U_{p}^+ N_p.
			$$
		\end{proposition}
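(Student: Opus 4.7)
The inclusion $\operatorname{Stab}_{G_\F}(p) \supseteq \hat{P}_p$ is immediate, since each generator $N_p$ and $U_{\alpha,p}$ of $\hat{P}_p$ is by definition contained in $\operatorname{Stab}_{G_\F}(p)$; and the equality $\hat{P}_p = U_p^- U_p^+ N_p$ is Proposition~\ref{prop:BTPOmegaUUN} applied to the singleton $\Omega = \{p\}$. So the substance of the proof is the reverse inclusion $\operatorname{Stab}_{G_\F}(p) \subseteq \hat{P}_p$.

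My plan is first to reduce to the case $p = o$. Writing $p = a.o$ for some $a \in A_\F$ gives $\operatorname{Stab}_{G_\F}(p) = a\, \operatorname{Stab}_{G_\F}(o)\, a^{-1}$ by orbit-stabilizer. Lemma~\ref{lem:BTaUa} sends $U_{\alpha,o}$ to $U_{\alpha,p}$, and since $a \in \operatorname{Cen}_{G_\F}(A_\F) \subseteq \operatorname{Nor}_{G_\F}(A_\F)$, conjugation by $a$ preserves $\operatorname{Nor}_{G_\F}(A_\F)$ and so sends $N_o$ onto $N_p$. Thus $a\hat{P}_o a^{-1} = \hat{P}_p$, and it suffices to treat $p=o$.

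For $p = o$, Theorem~\ref{thm:stab} identifies $\operatorname{Stab}_{G_\F}(o) = G_\F(O)$. The key tool is the mixed Iwasawa decomposition (Theorem~\ref{thm:BT_mixed_Iwasawa}): every $g \in G_\F(O)$ factors as $g = u n h$ with $u \in U_\F$, $n \in \operatorname{Nor}_{G_\F}(A_\F)$, and $h \in \hat{P}_o$. Since $h.o = o$, this yields $u n.o = o$. To invoke Proposition~\ref{prop:UonA} I need $n.o \in \A$: combining $W_s = N_\F/M_\F$ with $\operatorname{Cen}_{G_\F}(A_\F) = M_\F A_\F$ from Lemma~\ref{lem:BT_T_is_MA} gives $\operatorname{Nor}_{G_\F}(A_\F) = N_\F A_\F$, so $n = n_0 a'$ with $n_0 \in N_\F \subseteq K_\F$ and $a' \in A_\F$; then $n.o = n_0.(a'.o) \in n_0.\A = \A$, since $n_0$ fixes $o$ and normalizes $A_\F$. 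Proposition~\ref{prop:UonA} then forces $un.o = n.o$, hence $n.o = o$, so $n \in N_o \subseteq \hat{P}_o$. This in turn gives $u.o = o$, putting $u \in U_\F \cap G_\F(O) = U_o^+ \subseteq \hat{P}_o$ by Proposition~\ref{prop:BTUOmega}, and therefore $g = unh \in \hat{P}_o$.

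The only step requiring genuine care is verifying $n.o \in \A$: the mixed Iwasawa decomposition on its own provides no control over how the $\operatorname{Nor}_{G_\F}(A_\F)$-factor displaces $o$, and without this intermediate fact Proposition~\ref{prop:UonA} cannot be brought to bear. Once the Bruhat-type structure $\operatorname{Nor}_{G_\F}(A_\F) = N_\F A_\F$ is in place, the remainder of the argument is essentially formal.
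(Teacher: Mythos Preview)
Your proof is correct and follows essentially the same route as the paper: reduce to $p=o$ by conjugation, apply the mixed Iwasawa decomposition $g=unh$, then use Proposition~\ref{prop:UonA} to force $n\in N_o$ and $u\in U_o^+$. The one place where you are more careful than the paper is in explicitly verifying $n.o\in\A$ before invoking Proposition~\ref{prop:UonA}; the paper simply writes ``$un.o=n.o$ by Proposition~\ref{prop:UonA}'' and leaves implicit that $\operatorname{Nor}_{G_\F}(A_\F)$ preserves $\A$ (which it later records as the easy direction of Proposition~\ref{thm:stab'_A}). Your justification via $\operatorname{Nor}_{G_\F}(A_\F)=N_\F A_\F$ is fine and does not introduce circularity, since it only uses $\operatorname{Nor}_{G_\F}(A_\F)/\operatorname{Cen}_{G_\F}(A_\F)\cong W_s$ together with Lemma~\ref{lem:BT_T_is_MA}, both available at this point.
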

		\begin{proof} The two expressions on the right coincide with $\hat{P}_p$, see Proposition \ref{prop:BTPOmegaUUN}.	The inclusion $\operatorname{Stab}_{G_\F}(p)  \supseteq \hat{P}_p$ is clear. For the other direction, we first consider $p=o$ and use the mixed Iwasawa decomposition, Theorem \ref{thm:BT_mixed_Iwasawa}, $G_\F = U^+ \operatorname{Nor}_{G_\F}(A_\F) \hat{P}_o$. For $g \in \operatorname{Stab}_{G_\F}(o)$, let $u \in U^+, n \in \operatorname{Nor}_{G_\F}(A_\F)$ and $p \in \hat{P}_o$ with $g=unp$. Then $o = g.o = unp.o = un.o = n.o$ by Proposition \ref{prop:UonA}. Thus $n \in N_o$ and hence also $u \in U_o^+ = \langle U_{\alpha,o} \colon \alpha>0 \rangle$.
			
			For general $p=a.o\in \A$ and elements $g\in G_\F$, $g.p=p$ if and only if $a^{-1}ga.o=o$. Use Proposition \ref{prop:BTPOmegaUUN} to write $a^{-1}ga = uu'n$ as a product of elements $u\in U^+_o, u' \in U_o^-$ and $n \in N_o$. We note that $aua^{-1} \in U_p^+, au'a^{-1} \in U_p^-$ and $ana^{-1} \in N_p$, whence $g \in \hat{P}_p$.
		\end{proof}

		We state a Lemma that allows us to prove that $\hat{P}_\Omega$ is the pointwise stabilizer of any finite subset $\Omega\subseteq \A$. 
		\begin{lemma}\label{lem:BTstab_dir_lemma}
			For $p,q\in \A$ there is an order on $\Sigma$ such that $U_q^+ \subseteq U_p^+$.
		\end{lemma}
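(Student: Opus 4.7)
The plan is to reduce the statement to a combinatorial fact about root systems, namely that the closed Weyl chambers cover the apartment. By Proposition \ref{prop:BTUOmega}, for any order on $\Sigma$ one has $U_r^+ = \prod_{\alpha > 0} U_{\alpha, r}$ for $r \in \{p, q\}$, so it is enough to produce an order with $U_{\alpha, q} \subseteq U_{\alpha, p}$ for every positive root $\alpha$. Writing $p = a.o$ and $q = b.o$, Lemma \ref{lem:BTUalphaOmega} reformulates this inclusion as $(-v)(\chi_\alpha(b)) \leq (-v)(\chi_\alpha(a))$. Identifying $\A$ with the model apartment via Theorem \ref{thm:apt} and Proposition \ref{prop:chi_f}, and letting $x \in \A$ denote the difference of the preimages of $p$ and $q$, the condition rewrites as $\alpha(x) \geq 0$ for every $\alpha \in \Sigma_{>0}$; in other words, $x$ must lie in the closed fundamental Weyl chamber for the sought order.

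To produce a basis with this property, I would directly construct the corresponding positive system of $\Sigma$. Set
\[
\Sigma^+_x := \{\alpha \in \Sigma : \alpha(x) > 0\}, \qquad \Sigma^0_x := \{\alpha \in \Sigma : \alpha(x) = 0\}.
\]
The set $\Sigma^0_x$ is symmetric and closed under its own reflections, since for $\alpha, \beta \in \Sigma^0_x$ we have $r_\alpha(\beta)(x) = \beta(x) - \alpha^\vee(\beta)\,\alpha(x) = 0$. It is therefore a crystallographic root system in its $\Z$-span and admits a positive system $\Sigma^{0,+}_x$. Define $P := \Sigma^+_x \cup \Sigma^{0,+}_x$. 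A trichotomy on the sign of $\alpha(x)$ shows $\Sigma = P \sqcup (-P)$: if $\alpha(x) > 0$ then $\alpha \in P$ and $-\alpha \notin P$, similarly if $\alpha(x) < 0$, and if $\alpha(x) = 0$ then exactly one of $\pm\alpha$ lies in $\Sigma^{0,+}_x$. A short case analysis, using $(\alpha+\beta)(x) = \alpha(x) + \beta(x)$ and splitting on whether the summands belong to $\Sigma^+_x$ or $\Sigma^{0,+}_x$, shows that $P$ is closed under addition whenever the sum lies in $\Sigma$. By the standard bijection between positive systems and bases of a root system \cite[VI.1.7]{Bou08}, $P = \Sigma_{>0}$ for a unique basis $\Delta$, and $\alpha(x) \geq 0$ holds for every $\alpha \in P$ by construction, as required.

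The only subtlety compared with the familiar real-valued setting is that $\Lambda$ need not embed into $\R$, so one cannot simply perturb $x$ into general position and read off a positive system from strict sign conditions on the roots. The purely combinatorial construction above sidesteps this difficulty, and I foresee no further obstacles.
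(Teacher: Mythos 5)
Your proof is correct, but it takes a genuinely different route from the paper's. The paper argues group-theoretically: it first settles the base case $q=o$, $p\in C_0$ by invoking Theorem \ref{thm:NO_fixes_s0}, which says $U_\F(O)=U_o^+$ fixes $C_0$ pointwise, and then reduces the general case by conjugating through an element $n\in\operatorname{Nor}_{G_\F}(A_\F)$ that carries $q$ to $o$ and $p$ into $C_0$, pulling the order on $\Sigma$ back along $[n]$. You instead unwind the claim via Proposition \ref{prop:BTUOmega} and Lemma \ref{lem:BTUalphaOmega} into the system of inequalities $\alpha(x)\geq 0$ on the apartment difference $x$, and then construct the required positive system $P=\Sigma^+_x\cup\Sigma^{0,+}_x$ directly, checking by hand that $P\sqcup(-P)=\Sigma$ and that $P$ is closed under addition (the case $\alpha,\beta\in\Sigma^{0,+}_x$ being covered because $\Sigma^0_x$ is itself a root system). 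Both arguments ultimately rest on the same combinatorial fact — that every point of $\A$ lies in some closed Weyl chamber — the paper using it implicitly through the Weyl group element hidden in $n$, and you using it explicitly by building the chamber from the sign pattern of the roots at $x$. Your version is more self-contained: it does not require Theorem \ref{thm:NO_fixes_s0} and never needs to realize the Weyl group element inside $\operatorname{Nor}_{G_\F}(A_\F)$; the paper's version is shorter given the lemmas already in play. Your closing remark about $\Lambda$ not embedding in $\R$ correctly pinpoints why a perturbation-into-general-position argument is unavailable and the construction must be purely combinatorial.
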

		\begin{proof}
			If $q=o$ and $p$ lies in the fundamental Weyl chamber $C_0 \subseteq \A$, then we can take the standard order $>$ associated to $C_0$. Elements $g\in U_q^+ = U_\F(O)$ stabilize all of $C_0$ pointwise, see Theorem \ref{thm:NO_fixes_s0}, so $U_q^+ \subseteq U_p^+$.
			
			In general, there is an element $n \in \operatorname{Nor}_{G_\F}(A_\F)$ with $n.q = 0$ and $n.p \in C_0$ ($n$ is a translation by $-q$ followed by a representative of a unique element in the spherical Weyl group). Then by the above $U_{n.q}^+ \subseteq U_{n.p}^+$ with respect to the standard order associated to $C_0$. If $>_2$ and $+_2$ denote the order with positive roots $\Sigma_{>_2 0} = \left\{ \alpha \in \Sigma \colon [n^{-1}](\alpha) > 0 \right\} $, then $U_{n.p}^+ =n^{-1} U_{p}^{+_2} n$, so
			\begin{align*}
	U_q^{+_2} &= n U_{n.q}^+ n^{-1} \subseteq n U_{n.p}^+ n^{-1}= U_p^{+_2}. \qedhere
			\end{align*}
			
		\end{proof}
		
		\begin{theorem}\label{thm:BTstab_fin}
			Let $\Omega \subseteq \A$ be a finite subset. Then the pointwise stabilizer of $\Omega$ satisfies
			$$
			\operatorname{Stab}_{G_\F}(\Omega) = \hat{P}_\Omega = \langle N_\Omega, U_{\alpha,\Omega} \colon \alpha \in \Sigma \rangle =  U_\Omega^+ U_\Omega^- N_\Omega.
			$$
		\end{theorem}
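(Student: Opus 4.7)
The proof will proceed by induction on $|\Omega|$. The inclusion $\hat{P}_\Omega \subseteq \operatorname{Stab}_{G_\F}(\Omega)$ is immediate from the definitions, and the equalities among the three descriptions of $\hat{P}_\Omega$ (including the reversal to $U_\Omega^+ U_\Omega^- N_\Omega$, which is obtained by reversing the order on $\Sigma$, an operation that leaves $\hat{P}_\Omega$ unchanged) are supplied by Proposition \ref{prop:BTPOmegaUUN}. Thus only the inclusion $\operatorname{Stab}_{G_\F}(\Omega) \subseteq \hat{P}_\Omega$ requires proof, and the base case $|\Omega| = 1$ is exactly Proposition \ref{prop:BTstab_point}.

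For the inductive step, pick $q \in \Omega$ and set $\Omega' := \Omega \setminus \{q\}$. Given $g \in \operatorname{Stab}_{G_\F}(\Omega)$, the inductive hypothesis yields $g = u^- u^+ n$ with $u^\pm \in U_{\Omega'}^\pm$ and $n \in N_{\Omega'}$. Set $q' := n.q \in \A$; then $g.q = q$ becomes $u^- u^+.q' = q$. The plan is to show $q' = q$: once this is established, $n$ fixes $q$ and so lies in $N_q \cap N_{\Omega'} = N_\Omega$; moreover $u^- u^+$ fixes $q$, so Proposition \ref{prop:BTstab_point} applied to $q$, combined with the uniqueness of the open-cell factorization $U^- \cdot T_\F \cdot U^+$ in $G_\F$, forces $u^\pm \in U_q^\pm$. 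The unique root-group factorization of Proposition \ref{prop:BTUOmega} then places each factor $u_\alpha^\pm$ in $U_{\alpha, \Omega'} \cap U_{\alpha, q} = U_{\alpha, \Omega}$, yielding $u^\pm \in U_\Omega^\pm$ and $g \in U_\Omega^- U_\Omega^+ N_\Omega = \hat{P}_\Omega$.

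To obtain the key identity $q' = q$, I would apply Proposition \ref{prop:UonA} twice: once in the stated form for $U_\F = U^+$, and once in its opposite form for $U^-$, obtained by reversing the order on $\Sigma$ and introducing an opposite Iwasawa retraction $\rho^- \colon \X_\F \to A_\F.o$ built from the decomposition $G_\F = U^- A_\F K_\F$ and satisfying the analog of Theorem \ref{thm:UAKret}. Chasing $u^- u^+.q' = q \in \A$ through $\rho^-$ (using $U^-$-invariance to eliminate $u^-$ and the $d$-diminishing property to squeeze distances) should force the intermediate point $u^+.q'$ into $\A$; Proposition \ref{prop:UonA} for $U^+$ then gives $u^+.q' = q'$, hence $u^-.q' = q$, and the opposite version for $U^-$ yields $q' = q$.

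The main technical obstacle I anticipate is verifying the opposite-order versions of Theorem \ref{thm:UAKret} and Proposition \ref{prop:UonA} and executing the retraction chase cleanly, in particular ensuring that the intermediate point really lies in $\A$. A likely more transparent alternative is to appeal directly to the Bruhat decomposition $G_\F = \bigsqcup_{w \in W_s} U^- T_\F U^+ w$: the element $u^- u^+$ lies in the open cell $U^- \cdot \operatorname{Id} \cdot U^+$ where the factorization is unique, and combining this uniqueness with the action-level statement $u^- u^+.q' \in \A$ should pin down both $q' = q$ and $u^\pm \in U_q^\pm$ in a single step, bypassing the need to introduce $\rho^-$ at all.
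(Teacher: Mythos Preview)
Your induction framework and base case match the paper, but the inductive step has a real gap. From $u^- u^+.q' = q$ with $q,q' \in \A$ you want $q' = q$; neither of your sketches establishes this. The retraction chase gives $\rho^-(u^+.q') = q$ and $\rho(u^+.q') = q'$, but neither forces the intermediate point $u^+.q'$ into $\A$, and the distance-diminishing property produces only trivial inequalities here. Open-cell uniqueness says the factorization $u^- \cdot \operatorname{Id} \cdot u^+$ is unique in $U^- T_\F U^+$, but that is a statement about the group element, not about its action sending one apartment point to another; it does not by itself pin down $q' = q$. (The claim $q'=q$ is in fact true, but proving it amounts to an extension of Lemma~\ref{lem:U+U-fix_U+fix} to products $\tilde u^- c\,\tilde u^+$ with $c \in A_\F$, which is extra work you have not sketched and which the paper only develops after this theorem.)

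The paper avoids this difficulty entirely with one idea you are missing: Lemma~\ref{lem:BTstab_dir_lemma}. Given the new point $q$ and some $p \in \Omega'$, choose the order on $\Sigma$ so that $U_p^+ \subseteq U_q^+$, and factor with $N$ on the \emph{left}, $g = n\,u^- u^+$ (Proposition~\ref{prop:BTPOmegaUUN} allows this). Then $u^+ \in U_{\Omega'}^+ \subseteq U_p^+ \subseteq U_q^+$ already fixes $q$, so $g.q = q$ becomes $u^-.q = n^{-1}.q \in \A$, and a single application of Proposition~\ref{prop:UonA} (for $U^-$) yields $u^-.q = q$ and hence $n.q = q$. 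No opposite retraction, no $q'$, no open-cell argument is needed.
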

		\begin{proof}
			We claim that
			$$
			\hat{P}_{\Omega \cup \{p\}} = \hat{P}_\Omega \cap \hat{P}_p
			$$
			for all non-empty, finite $\Omega \subseteq \A$ and $p \in \A$. The inclusion $\subseteq$ is clear. Now let $g \in \hat{P}_\Omega \cap \hat{P}_p$. For some $q \in \Omega$, choose an ordering of $\Sigma$ such that $U_q^+ \subseteq U_p^+$ by Lemma \ref{lem:BTstab_dir_lemma}. 
			Now use Proposition \ref{prop:BTPOmegaUUN} to write $g = nu'u$ with $n \in N_{\Omega}$, $u' \in U_{\Omega}^-$, $u\in U_{\Omega}^+$. We have $u \in U_{\Omega}^+ \subseteq U_{q}^+ \subseteq U_{p}^+$, so $gu^{-1} = nu' \in \hat{P}_p$. Since now $u'.p = n^{-1}.p$, we have $u'.p = p$ by Proposition \ref{prop:UonA}. This means that all three elements $u, u'$ and $n$ fix $\Omega \cup \{p\}$, so $g \in N_{\Omega \cup \{p\}} U_{\Omega \cup \{p\}}^- U_{\Omega \cup \{p\}}^+ = \hat{P}_{\Omega \cup \{p\}}$. 
			
			We now show $\operatorname{Stab}(\Omega) = \hat{P}_{\Omega} $ by induction over the size of $\Omega$. If $|\Omega|=1$, this is Proposition \ref{prop:BTstab_point}. Now assume the statement holds for $\Omega$. Given any $p\in \A$, we have
			$$
			\operatorname{Stab}_{G_\F}(\Omega \cup \{p\}) = \operatorname{Stab}_{G_\F}(\Omega) \cap \operatorname{Stab}_{G_\F}(p) 
			= \hat{P}_\Omega \cap \hat{P}_p = \hat{P}_{\Omega \cup \{p\}},
			$$
			where we used the induction assumption and the claim.
		\end{proof}
		
\subsection{Axiom (A2)} \label{sec:A2_subsubsection}
		
		In this section we assume $\Sigma$ to be reduced. We are now equipped to prove axiom (A2). We first show a special case.
		
		\begin{proposition}\label{prop:A2_b}
			Let $g\in G_\F$ and $\Omega = g^{-1}\A \cap \A$. Then there exists an element $w\in W_a$ such that for all $p \in \Omega$, $g.p = w(p)$.
		\end{proposition}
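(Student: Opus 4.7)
The plan is to obtain $w \in W_a$ as the image of a normalizer element, built from the Bruhat--Tits stabilizer description (Theorem \ref{thm:BTstab_fin}) after reducing to the case where $g$ fixes a chosen point of $\Omega$. If $\Omega = \emptyset$ the statement is vacuous; otherwise I will fix $p_0 \in \Omega$ and, using transitivity of $A_\F$ on $\A$, choose $a \in A_\F$ with $a.p_0 = g.p_0$. Replacing $g$ by $a^{-1}g$ does not alter $\Omega$ (since $a^{-1}$ preserves $\A$) and only composes the target $w$ with the translation given by $a$, so I may assume $g \in \operatorname{Stab}_{G_\F}(p_0)$. Proposition \ref{prop:BTstab_point} then yields a decomposition $g = u^+u^-n$ with $u^\pm \in U_{p_0}^\pm$ and $n \in N_{p_0}$.

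Since $n \in \operatorname{Nor}_{G_\F}(A_\F)$ acts on $\A$ as an element of $W_a$, the proposition will follow once I show $g.p = n.p$ for every $p \in \Omega$, i.e.\ that $u^+u^-$ fixes $n.\Omega \subseteq \A$ pointwise. After conjugating by an element of $A_\F$ carrying $p_0$ to $o$ (which only changes the candidate $w$ by a conjugate in $W_a$), the problem becomes the following core assertion: if $u^+ \in U_\F(O)$, $u^- \in U_\F^-(O)$ and $q \in \A$ satisfies $u^+u^-.q \in \A$, then $u^+u^-.q = q$ in $\B$. Here I plan to exploit the $U_\F$-invariance of the Iwasawa retraction $\rho$ from Theorem \ref{thm:UAKret}, which gives $\rho(u^+u^-.q) = \rho(u^-.q)$; under the hypothesis this identifies $u^+u^-.q$ with $\rho(u^-.q)$, so writing $q = b.o$ and Iwasawa-decomposing $u^-b = u'a'k'$ reduces the claim to $a'.o = b.o$, which I would extract from the $d$-diminishing estimate for $\rho$ together with the fact that $u^-$ fixes $o$.

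The trickiest point, and the one I expect to absorb the bulk of the work, is the final valuation bookkeeping: bounding $d(b^{-1}u^-b.o, o)$ for a general $b \in A_\F$ requires root-group estimates similar in spirit to Proposition \ref{prop:Ualphaconv}, because conjugation by $b$ can push $u^-$ out of $G_\F(O)$. Should this direct route turn out to be too delicate in full generality, a robust fallback is induction on finite subsets $\Omega_0 \subseteq \Omega$: at each stage Theorem \ref{thm:BTstab_fin} applied to $\Omega_0$ supplies a normalizer representative $n_{\Omega_0}$ matching $g$ on $\Omega_0$, and one checks that these representatives are consistent modulo $T_\F(O)$ (the pointwise stabilizer of $\A$ given by Theorem \ref{thm:stab_A}) on their common domains, so that they patch into the single element $w \in W_a$ required on all of $\Omega$.
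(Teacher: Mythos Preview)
Your direct route breaks down at the ``core assertion'': it is simply false that $u^+\in U_\F(O)$, $u^-\in U_\F^-(O)$ and $u^+u^-.q\in\A$ force $u^+u^-.q=q$. In $\operatorname{SL}_2(\F)$ take $t\in\F$ with $\lambda:=(-v)(t)>0$ and $\epsilon\in\F$ with $(-v)(\epsilon)<-2\lambda$, and set
\[
u^+=\begin{pmatrix}1&1\\0&1\end{pmatrix},\qquad u^-=\begin{pmatrix}1&0\\-1+\epsilon&1\end{pmatrix},\qquad g=u^+u^-=\begin{pmatrix}\epsilon&1\\-1+\epsilon&1\end{pmatrix}.
\]
Both $u^\pm$ lie in $G_\F(O)$, so $g.o=o$ and the decomposition of Proposition~\ref{prop:BTstab_point} is $g=u^+\cdot u^-\cdot\operatorname{Id}$, i.e.\ $n=\operatorname{Id}$. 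For $q=\operatorname{diag}(t,t^{-1}).o$ one checks directly that $\operatorname{diag}(t,t^{-1})\,g\,\operatorname{diag}(t,t^{-1})\in G_\F(O)$, hence $g.q=\operatorname{diag}(t^{-1},t).o\in\A$ but $g.q\neq q$. In other words $g$ acts on $\{o,q\}\subseteq\Omega$ as the reflection $r_\alpha$, while your chosen $n$ is the identity. The problem is structural: the decomposition $g=u^+u^-n$ of Proposition~\ref{prop:BTstab_point} is highly non-unique, and the $n$ coming from a single base point need not represent the correct Weyl element on the rest of $\Omega$. Your Iwasawa-retraction argument cannot rescue this, precisely because (as you note) $b^{-1}u^-b$ is typically outside $G_\F(O)$.

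Your fallback is in fact the paper's actual strategy, but the phrase ``one checks that these representatives are consistent modulo $T_\F(O)$'' hides the entire content of the proof and is, as stated, false: the example above shows that $n_{\{o\}}$ and $n_{\{o,q\}}$ differ by a nontrivial spherical Weyl element, not merely by $T_\F(O)$. The paper handles this in two genuinely nontrivial steps. First, the induction $N_{g,Y}\neq\emptyset\Rightarrow N_{g,Y\cup\{p\}}\neq\emptyset$ uses Lemma~\ref{lem:BTstab_dir_lemma} to align orders so that $U_Y^+\subseteq U_{\{p\}}^+$, and then the fact that $U^-U^+\cap\operatorname{Nor}_{G_\F}(A_\F)$ acts trivially on $\A$ to manufacture a common normalizer. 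Second, the passage from finite $Y$ to all of $\Omega$ exploits the \emph{finiteness of $W_s$}: one picks a finite $Y_0\subseteq\Omega$ minimizing $|\pi(N_{g,Y_0})|$ (where $\pi$ is projection to $W_s$), and then argues that enlarging $Y_0$ cannot shrink this image further, forcing $N_{g,Y_0}=N_{g,\Omega}$. Neither mechanism appears in your sketch; both are essential.
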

		\begin{proof}
			We may assume that $\Omega \neq \emptyset$. For subsets $Y \subseteq \Omega$ we consider
			$$
			N_{g,Y} := \left\{ n \in \operatorname{Nor}_{G_\F}(A_\F) \colon  g.p = n.p \text{ for all } p\in Y \right\},
			$$
			so that our goal is to prove that $N_{g,\Omega} \neq \emptyset$.
			If $Y=\{p\}$ is a singleton, then $N_{g,\left\{p\right\}}$ is non-empty, since if $p=a.o$ and $g.p = b.o$ for $a,b\in A_\F$, then $ba^{-1} \in N_{\{p\}}$. We will now show by induction on the size of $Y$, that $N_{g,Y} \neq \emptyset$ for all \emph{finite} sets $Y \subseteq \Omega$.
			
			Let $Y\subseteq \Omega$ finite and $p \in \Omega$. Assume that there are $n_{Y} \in N_{g,Y}$ and $n_p \in N_{g,\{p\}} $, then $g^{-1}n_Y$ and $g^{-1}n_p $ lie in the pointwise stabilizers $\operatorname{Stab}_{G_\F}(Y)$ and $ \operatorname{Stab}_{G_\F}(\{p\})$. We may choose an order on $\Sigma$ such that $U_{Y}^+ \subseteq U_{\{p\}}^+$, by making sure that the defining chamber for the	order based at some point in $Y$ contains $p$, see Lemma \ref{lem:BTstab_dir_lemma}.
			Then, by Theorem \ref{thm:BTstab_fin},
			\begin{align*}
				n_Y^{-1} n_p &\in N_Y U_Y^- U_Y^+ U_{\{p\}}^+ U_{\{p\}}^- N_{\{p\}} 
				=  N_Y U_{Y}^- U_{\{p\}}^+ U_{\{p\}}^- N_{\{p\}}\\
				&=  N_Y U_{Y}^- U_{\{p\}}^- U_{\{p\}}^+ N_{\{p\}}
				\subseteq N_Y U^- U^+ N_{\{p\}}
			\end{align*}
			Let $n_Y' \in N_Y, n_p' \in N_{\{p\}}$ such that $(n_Y')^{-1}n_Y^{-1}n_pn_p' \in U^{-} U^{+} \cap \operatorname{Nor}_{G_\F}(A_\F)$. 
			Since every element of $U^+$ stabilizes some affine chamber pointwise, so does every element of $U^-U^+ \cap \operatorname{Nor}_{G_\F}(A_\F)$, by Proposition \ref{prop:UonA}. The element of the affine Weyl group represented by $(n_Y')^{-1}n_Y^{-1}n_pn_p'$ thus acts trivially on some affine chamber, hence acts trivially on all of $\A$. 
			Therefore $[n_Yn_Y'] = [n_pn_p'] \in W_a$ and $ n_Y n_Y' \in N_{g,Y} \cap N_{g,\{p\}} = N_{g,Y\cup \{p\}}$, concluding the induction.
			
			Recall that the affine Weyl group $W_a = W_s \ltimes \A$ is isomorphic to the quotient $\operatorname{Nor}_{G_\F}(A_\F) / \operatorname{Cen}_{G_\F}(A_\F)$. Let $\pi \colon \operatorname{Nor}_{G_\F}(A_\F) \to W_s$ be the induced map to the finite group $W_s$. Let $Y_0 \subseteq \Omega$ be a finite subset, so that $|\pi(N_{g,Y_0})|$ is minimal. For any $p \in \Omega$,
			$$
			N_{g,Y_0 \cup \{p\}} = N_{g,Y_0} \cap N_{g,\{p\}} \subseteq N_{g,Y_0}
			$$
			and thus by minimality $\pi(N_{g,Y_0 \cup \{p\}}) = \pi(N_{g,Y_0})$. We claim that in fact $N_{g,Y_0 \cup \{p\}} = N_{g,Y_0}$ for every $p \in \Omega$.
			Pick $n_0\in N_{g,Y_0}$. 
			We decompose $w := [n_0] =  (t_a,\pi(n_0)) \in W_a = \A \ltimes W_s$, so that $t_a$ is translation given by multiplication of some $a\in A_\F$. For any $p \in \Omega$, there exists $n' \in N_{g,Y_0 \cup \{p\}}$ such that $\pi(n_0) = \pi(n')$ and we decompose similarly $w' := [n'] = (t_{a'},\pi(n')) \in W_a$. Acting on some $q\in Y_0$, we have
			$$
			a.(\pi(n_0)(q)) = n_0.q = g.q = n'.q = a'.(\pi(n')(q)) = a'.(\pi(n_0)(q)),
			$$
			which implies $t_a = t_{a'}$ and thus $w= (t_a,\pi(n_0)) = (t_{a}', \pi(n')) = w'$. For any $p \in \Omega$ we thus have
			$$
			n_0.p = w(p) = w'(p) =  n'.p = g.p,
			$$
			so $n_0 \in N_{g, Y_0\cup\{p\}}$ as required. Finally
			$$
			N_{g,\Omega} 
			= \bigcap_{p \in \Omega} N_{g, Y_0\cup \{p\}} = N_{g,Y_0} \neq \emptyset,
			$$
			so taking any $n \in N_{g,Y_0}$ provides the required element $w = [n] \in W_a$ with $w(p) = g.p$ for all $p \in \Omega$.
		\end{proof}
		From the above proof, we can also extract the following Lemma, by taking $g=\operatorname{Id}$ and noting that then $N_\Omega =  N_{g,\Omega} =  N_{g,Y_0} = N_{Y_0}$.
		\begin{lemma}\label{lem:BTNY0=NOmega}
			Let $\Omega \subseteq \A$. There exists a finite subset $Y_0 \subseteq \Omega$ such that $N_{Y_{0}} = N_{\Omega}$.
		\end{lemma}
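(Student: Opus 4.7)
The plan is to harvest the second half of the proof of Proposition \ref{prop:A2_b} almost verbatim, specialized to $g = \operatorname{Id}$, at which point $N_{g,Y} = N_Y$ for every $Y \subseteq \A$. The statement is trivial when $\Omega = \emptyset$, so I will assume $\Omega \neq \emptyset$. The key structural fact I would exploit is that the spherical Weyl group $W_s$ is finite, so if $\pi \colon \operatorname{Nor}_{G_\F}(A_\F) \to W_s$ denotes the projection, the subsets $\pi(N_Y) \subseteq W_s$ indexed by finite $Y \subseteq \Omega$ take only finitely many values.

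First, I would pick a finite subset $Y_0 \subseteq \Omega$ (which can be taken non-empty) that minimizes $|\pi(N_{Y_0})|$. For any $p \in \Omega$, monotonicity gives $N_{Y_0 \cup \{p\}} \subseteq N_{Y_0}$, hence $\pi(N_{Y_0 \cup \{p\}}) \subseteq \pi(N_{Y_0})$, and minimality forces equality of images. I would then strengthen this to equality $N_{Y_0 \cup \{p\}} = N_{Y_0}$ of the groups themselves: given $n_0 \in N_{Y_0}$, choose $n' \in N_{Y_0 \cup \{p\}}$ with $\pi(n') = \pi(n_0)$, decompose $[n_0] = (t_a, \pi(n_0))$ and $[n'] = (t_{a'}, \pi(n'))$ in $W_a = \A \rtimes W_s$, and use that $n_0$ and $n'$ agree on any $q \in Y_0$ to conclude $t_a = t_{a'}$, so $[n_0] = [n']$ as elements of $W_a$. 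In particular $n_0.p = n'.p = p$, hence $n_0 \in N_{Y_0 \cup \{p\}}$.

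With this claim in hand I would close the argument by writing
$$
N_\Omega \;=\; \bigcap_{p \in \Omega} N_{Y_0 \cup \{p\}} \;=\; N_{Y_0}.
$$
The one subtle point, which I view as the main thing to double-check rather than a serious obstacle, is that a single point $q \in Y_0$ really does determine the translational part of an element of $W_a$ once its spherical part is fixed; this is immediate from $W_a = \A \rtimes W_s$ acting on $\A$ affinely, so long as $Y_0$ is non-empty, which is exactly why the case distinction $\Omega = \emptyset$ had to be handled separately.
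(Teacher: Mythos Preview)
Your proposal is correct and matches the paper's own proof essentially verbatim: the paper states this lemma immediately after Proposition~\ref{prop:A2_b} and proves it by the one-line remark that taking $g=\operatorname{Id}$ in that argument gives $N_\Omega = N_{g,\Omega} = N_{g,Y_0} = N_{Y_0}$. Your explicit handling of the case $\Omega=\emptyset$ and the remark that $Y_0$ must be non-empty for the translation argument are appropriate clarifications.
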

		To be able to prove the $W_a$-convexity part of axiom (A2), we want to describe the stabilizer of a (possibly infinite) subset $\Omega$ as in Theorem \ref{thm:BTstab_fin}. For this we first need a Lemma.
		
		\begin{lemma}\label{lem:U+U-fix_U+fix}
			Let $u^+ \in U^+$, $u^- \in U^-$ and $p\in \A$. If $u^+.p = u^-.p$, then $u^+.p = p = u^-.p$.
		\end{lemma}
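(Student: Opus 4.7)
Writing $p = a.o$ with $a \in A_\F$ and setting $v^\pm := a^{-1} u^\pm a \in U^\pm$ (Proposition \ref{I-prop:anainN} of \cite{AppAGRCF}), the hypothesis becomes $v^+.o = v^-.o$ and the conclusion reduces to $v^+.o = o$. Rename $v^\pm$ back to $u^\pm$. Since $u^+\in U^+$ has Iwasawa decomposition $u^+\cdot\operatorname{Id}\cdot\operatorname{Id}$, one has $\rho(u^+.\operatorname{Id})=\operatorname{Id}$, and the $d$-diminishing property of $\rho$ (Theorem \ref{thm:UAKret}) applied to $d(u^+.\operatorname{Id},u^-.\operatorname{Id})=0$ gives $d(\operatorname{Id},\rho(u^-.\operatorname{Id}))=0$. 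Writing the Iwasawa decomposition $u^- = \bar u^+\bar a\bar k$ with $\bar u^+\in U^+$, $\bar a\in A_\F$, $\bar k\in K_\F$, Proposition \ref{prop:stab_A} then forces $\bar a\in A_\F(O)$.

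\textbf{Matrix argument.} The crux of the proof is promoting $\bar a \in A_\F(O)$ into $u^-\in U^-(O)$. Working inside $\operatorname{SL}_n(\F)$ via $G_\F<\operatorname{SL}_n$, with coordinates chosen so that $A_\F$ is diagonal, we have $u^-(u^-)\tran = \bar u^+ \bar a^2 (\bar u^+)\tran$. Since $\bar u^+$ is upper unit-triangular and $\bar a^2$ is diagonal with entries in $O^\times$, a short calculation identifies the trailing $(n-k+1)\times(n-k+1)$ principal minor of $u^-(u^-)\tran$ with the product $\prod_{j=k}^n\bar a_{jj}^2\in O^\times$. On the other hand, the Cauchy--Binet formula combined with the lower unit-triangular structure of $u^-$ expresses this same minor as
\[
1 \;+\; \sum_{S\neq\{k,\ldots,n\}}\det\!\bigl((u^-)_{\{k,\ldots,n\},S}\bigr)^{\!2},
\]
the isolated $1$ arising from the principal submatrix $S=\{k,\ldots,n\}$. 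All summands on the right are non-negative, so order-compatibility of $v$ upgrades the ultrametric inequality into the equality $(-v)(\mathrm{sum}) = \max_S(-v)(\det(\cdot)^2)$; the fact that the sum lies in $O^\times$ therefore forces every minor $\det((u^-)_{\{k,\ldots,n\},S})$ into $O$. Taking $S=\{s\}\cup\{k+1,\ldots,n\}$ with $s\leq k$ and expanding along the first row (whose only potentially nonzero entry is $(u^-)_{k,s}$) identifies this minor with $(u^-)_{k,s}$, so every entry of $u^-$ lies in $O$. Hence $u^-\in U^-(O)$, giving $u^-.o = o$ and thus $u^+.o = u^-.o = o$.

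\textbf{Main obstacle.} The $d$-diminishing property of $\rho$ controls only the $A$-component $\bar a$ of the Iwasawa decomposition of $u^-$; the $U^+$-component $\bar u^+$ is a priori unconstrained, and attempts to extract $u^-\in U^-(O)$ purely from group-theoretic manipulations (Bruhat decomposition, swapping the roles of $U^+$ and $U^-$, applying $\rho^-$) turn out to be circular. The Cauchy--Binet computation is what rigidly propagates the valuation control on $\bar a$ through the trailing minors of $u^-(u^-)\tran$ to every entry of $u^-$; it crucially uses that $\F$ is real closed (so sums of squares are non-negative) together with the order-compatibility of $v$.
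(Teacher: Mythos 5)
Your proof is correct in its main steps and takes a genuinely different route from the paper's. The paper works directly from $(u^+)^{-1}u^-\in G_\F(O)$: it passes to the adjoint representation $\operatorname{Ad}\colon G_\F\to\operatorname{GL}(\frakg_\F)$, where upper/lower triangularity of $\operatorname{Ad}(U^\pm)$ is supplied by a cited structural lemma, applies the general valuation-ring fact that an upper unit-triangular matrix times a lower unit-triangular matrix with entries in $O$ forces both factors into $O^{n\times n}$, and returns via the polynomial inverse of $\operatorname{Ad}|_{U^+}$. You instead use the $d$-diminishing Iwasawa retraction to pin down the $A_\F$-component $\bar a\in A_\F(O)$ of $u^-$, the Cartan involution through $u^-(u^-)\tran=\bar u^+\bar a^2(\bar u^+)\tran$, and a Cauchy--Binet computation of trailing principal minors exploiting positivity of squares in the real closed field. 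The paper's triangular-matrix lemma is purely valuation-theoretic (it works over any valuation ring), whereas your Cauchy--Binet step is tied to the ordered and real-closed structure; on the other hand, you avoid the detour through $\operatorname{Ad}$ and use only machinery (the retraction $\rho$) already developed for the triangle inequality.

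There is a gap in your reduction step: choosing coordinates so that $A_\F$ is diagonal does not, by itself, make the elements of $U^\pm\subset\operatorname{SL}_n(\F)$ upper or lower unit-triangular, and you rely on this for $\bar u^+$ when computing the trailing minor from the right-hand side, and for $u^-$ in the Cauchy--Binet reduction and the final minor expansion. The claim is true but requires an argument: after a further permutation of the standard basis that sorts the diagonal entries of a fixed regular dominant element $H_0\in\fraka$ in decreasing order, any pair $(i,j)$ contributing to a root space $\frakg_\alpha$ with $\alpha>0$ satisfies $(H_0)_{ii}-(H_0)_{jj}=\alpha(H_0)>0$ and hence $i<j$, so each such $\frakg_\alpha$ is strictly upper triangular, $U^+$ is upper unit-triangular, and dually $U^-$ is lower unit-triangular; one also needs to note that the conjugating orthogonal matrix has entries in $O$ so that $G_\F(O)$ is preserved. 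This is precisely the analogue, for the defining representation, of the lemma the paper cites for $\operatorname{Ad}$; without spelling it out, the matrix manipulations in your second step rest on an unstated hypothesis.
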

		\begin{proof}
			We start by noting that if $a \in \F^{n\times n}$ is an upper triangular matrix and $b \in \F^{n\times n}$ is a lower triangular matrix, both with ones on the diagonal such that $ab \in O^{n\times n}$, then $a, b \in O^{n\times n}$, as can be checked by matrix calculations. 
			
			We will now prove the Lemma in the case that $p = o \in \B$. We know that $u^+.p = u^-.p$ which is equivalent to $(u^+)^{-1}u^- \in G_\F(O)$. The adjoint map $\operatorname{Ad} \colon G_\F \to \operatorname{GL}(\frakg_\F)$ is a $\K$-morphism that sends elements of $U^+$ to upper triangular matrices and elements of $U^-$ to lower triangular matrices, see \cite[Lemma \ref{I-lem:kan_form}]{AppAGRCF}. Moreover, since $\operatorname{Ad}(g)X = gXg^{-1}$ for $X \in \frakg_\F$, we can use Lemma \ref{lem:orthogonal_valuation} on a basis to see that $\operatorname{Ad}(g)$ is defined by polynomials and if $g \in G_\F(O)$, then these polynomials have coefficients in $O$. 
			Thus $\operatorname{Ad}(G_\F(O)) \subseteq \operatorname{Ad}(G_\F)(O)$ and $\operatorname{Ad}((u^+)^{-1})\operatorname{Ad}(u^-) \in O^{n\times n}$ and by the preceding remark, $\operatorname{Ad}(u^+) \in O^{n\times n}$ and $\operatorname{Ad}(u^-) \in O^{n\times n}$.
			
			While $\operatorname{Ad} \colon G_\F \to \operatorname{GL}(\frakg_\F)$ may have a nontrivial finite kernel, its restriction $\operatorname{Ad}|_{U^+} \colon U^+ \to \operatorname{GL}(\frakg_\F) $ is an isomorphism onto its image, since the exponential map gives an isomorphism $U^+ \cong \bigoplus_{\alpha>0} \frakg_\alpha$ and since $\operatorname{ad}$ restricted to the Lie algebra of $U^+$ is an isomorphism to its image defined over $\K$. Then, the inverse map $\operatorname{Ad}(U^+) \to U^+$ is also defined by polynomials with coefficients in $\K$, so $u^+ \in O^{n\times n}$. Similarly $u^- \in O^{n\times n}$. This means that $u^+.o = o$ and $u^-.o = o$ as required.
			
			If now $p = a.o$ for some $a \in A_\F$, then $(u^+)^{-1}u^-.p = p$ if and only if $$a^{-1}(u^+)^{-1}aa^{-1}u^- a \in G_\F(O).$$
			By the above argument $a^{-1}u^+ a$, $a^{-1}u^-a \in G_\F(O)$ and thus $u^+.p = p$ and $u^-.p = p$.
		\end{proof}
		
		We upgrade the description of $\hat{P}_{\Omega}$ as the pointwise stabilizer of a finite subset in Theorem \ref{thm:BTstab_fin}, to arbitrary subsets $\Omega \subseteq \A$. 
		\begin{theorem}\label{thm:BTstab}
			The pointwise stabilizer of any subset $\Omega \subseteq \A$ satisfies
			$$
			\operatorname{Stab}_{G_\F}(\Omega) = \hat{P}_\Omega = U_\Omega^+ U_{\Omega}^- N_{\Omega} .
			$$
		\end{theorem}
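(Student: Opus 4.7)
The inclusions $U_\Omega^+ U_\Omega^- N_\Omega \subseteq \hat{P}_\Omega \subseteq \operatorname{Stab}_{G_\F}(\Omega)$ are immediate from the definitions: the factors on the left lie in the generating set of $\hat{P}_\Omega$, and every generator of $\hat{P}_\Omega$ fixes $\Omega$ pointwise. So the real task is to prove the remaining inclusion
\[
\operatorname{Stab}_{G_\F}(\Omega) \subseteq U_\Omega^+ U_\Omega^- N_\Omega,
\]
reducing from arbitrary $\Omega$ to the finite case already handled in Theorem \ref{thm:BTstab_fin}.

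The plan is to use Lemma \ref{lem:BTNY0=NOmega} to choose a finite subset $Y_0 \subseteq \Omega$ with $N_{Y_0} = N_\Omega$. Given $g \in \operatorname{Stab}_{G_\F}(\Omega)$, certainly $g \in \operatorname{Stab}_{G_\F}(Y_0)$, and applying Theorem \ref{thm:BTstab_fin} to the finite set $Y_0$ gives a factorization $g = u^+ u^- n$ with $u^+ \in U_{Y_0}^+$, $u^- \in U_{Y_0}^-$, and $n \in N_{Y_0} = N_\Omega$. In particular $n$ fixes every point of $\Omega$.

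Now I would upgrade $u^+$ and $u^-$ from stabilizing $Y_0$ to stabilizing all of $\Omega$. For any $p \in \Omega$, from $g.p = p$ and $n.p = p$ we get $u^+ u^-.p = p$, hence
\[
u^-.p = (u^+)^{-1}.p.
\]
Since $(u^+)^{-1} \in U^+$ and $u^- \in U^-$, Lemma \ref{lem:U+U-fix_U+fix} applies and forces $(u^+)^{-1}.p = p$ and $u^-.p = p$, so both $u^+$ and $u^-$ fix $p$. As $p \in \Omega$ was arbitrary, $u^+ \in U_\Omega^+$ and $u^- \in U_\Omega^-$, so $g = u^+ u^- n \in U_\Omega^+ U_\Omega^- N_\Omega$, which completes the proof.

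The only non-trivial input beyond the finite case is the separation Lemma \ref{lem:U+U-fix_U+fix}, which allows us to split a fixed-point condition on a product $u^+ u^-$ into individual fixed-point conditions; the rest is an almost formal bookkeeping argument once the finite $Y_0$ with $N_{Y_0} = N_\Omega$ has been produced. I do not anticipate any real obstacle here — the main subtlety is remembering that $\hat{P}_\Omega$ is a priori only a subgroup defined by generators, so the equality $\hat{P}_\Omega = U_\Omega^+ U_\Omega^- N_\Omega$ is not formal in the infinite case and is obtained as a byproduct of the above argument applied to any $g \in \hat{P}_\Omega \subseteq \operatorname{Stab}_{G_\F}(\Omega)$.
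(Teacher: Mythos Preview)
Your proof is correct and follows essentially the same approach as the paper: pick a finite $Y_0\subseteq\Omega$ with $N_{Y_0}=N_\Omega$ via Lemma~\ref{lem:BTNY0=NOmega}, factor $g=u^+u^-n$ using Theorem~\ref{thm:BTstab_fin}, and then apply Lemma~\ref{lem:U+U-fix_U+fix} pointwise to upgrade $u^+,u^-$ from $U_{Y_0}^\pm$ to $U_\Omega^\pm$. Your formulation $u^-.p=(u^+)^{-1}.p$ is in fact slightly cleaner than the paper's, which writes $u^+.p=u^-.p$ directly; your version makes explicit that one applies the lemma with $(u^+)^{-1}\in U^+$.
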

		\begin{proof} The inclusions $\supseteq$ are clear. We apply Lemma \ref{lem:BTNY0=NOmega} to obtain a finite subset $Y_0 \subseteq \Omega$ such that $N_{Y_0} = N_{\Omega}$. Then $\operatorname{Stab}(\Omega) \subseteq \operatorname{Stab}(Y_0) = U_{Y_0}^+ U_{Y_0}^- N_{Y_0}$ by Theorem \ref{thm:BTstab_fin}. Let $g\in \operatorname{Stab}(\Omega)$ and $u^+ \in U_{Y_0}^+, u^- \in  U_{Y_0}^-, n \in N_{Y_0} = N_\Omega$ with $g=u^+u^-n$. Then $u^+ u^- = gn^{-1} \in \operatorname{Stab}(\Omega)$. Thus, $(u^+).p = u^-.p$ for all $p\in \Omega$ and by Lemma \ref{lem:U+U-fix_U+fix}, $u^+.p = p = u^-.p$, in particular $u^+ \in U_\Omega^+$ and $u^- \in U_{\Omega}^-$. We now know
			$$
			\operatorname{Stab}_{G_\F}(\Omega) \subseteq U_\Omega^+ U_\Omega^- N_\Omega \subseteq \hat{P}_\Omega \subseteq \operatorname{Stab}_{G_\F}(\Omega)
			$$
			concluding the proof.
		\end{proof}
		We now prove a special case of $W_a$-convexity.
		
		\begin{proposition}\label{prop:A2_a}
			Let $g\in G_\F$. Then $\Omega = g^{-1}\A \cap \A$ is a finite intersection of affine half-apartments. 
		\end{proposition}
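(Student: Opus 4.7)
The plan is to identify $\Omega$ with the fixed-point set in $\A$ of a single element of $G_\F$, apply the pointwise-stabilizer decomposition of Theorem \ref{thm:BTstab} to this element, and then describe each factor's fixed set separately.

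First, by Proposition \ref{prop:A2_b} there exists $w \in W_a$ with $g.p = w(p)$ for all $p \in \Omega$; choose a representative $n \in \operatorname{Nor}_{G_\F}(A_\F)$ of $w$ and set $h := n^{-1}g$. Then $h$ fixes every point of $\Omega$, and conversely, if $q \in \A$ satisfies $h.q = q$ then $g.q = n.q \in \A$, so $q \in \Omega$. Hence $\Omega$ coincides with the fixed set of $h$ in $\A$, reducing the problem to describing that fixed set.

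Next, since $h \in \operatorname{Stab}_{G_\F}(\Omega)$, Theorem \ref{thm:BTstab} produces a decomposition $h = u^+ u^- n'$ with $u^+ \in U_\Omega^+$, $u^- \in U_\Omega^-$, and $n' \in N_\Omega$, each of which fixes $\Omega$ pointwise. I then claim
\[
\Omega \;=\; \{p\in\A : u^+.p = p\} \cap \{p\in\A : u^-.p = p\} \cap \{p\in\A : n'.p = p\},
\]
where $\subseteq$ is immediate from the stabilizer assumptions and $\supseteq$ follows from $h = u^+u^-n'$: any $p$ fixed by all three factors is fixed by $h$, hence lies in $\Omega$.

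It remains to verify that each of the three sets on the right is a finite intersection of affine half-apartments. The first two are handled directly by Proposition \ref{prop:Uconv}, applied once with the chosen order on $\Sigma$ and once with the opposite order (which exchanges the roles of $U^+$ and $U^-$). The main obstacle is the third factor: $n'$ induces some $[n'] = (t_a, w_s) \in W_a = \A \rtimes W_s$, and I must show that the fixed set of such an affine isometry on $\A \cong \operatorname{Span}_\Q(\Sigma^\vee) \otimes_\Q \Lambda$ is a finite intersection of walls. When $w_s = \operatorname{Id}$, the existence of a fixed point in $\Omega$ forces $a = 0$, so $\{p : n'.p = p\} = \A$ is the empty intersection. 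When $w_s \neq \operatorname{Id}$, the classical result from finite reflection group theory that the pointwise stabilizer of any subspace is generated by reflections implies that the $1$-eigenspace of $w_s$ in $\operatorname{Span}_\Q(\Sigma^\vee)$ is an intersection of finitely many reflection hyperplanes $M_{\alpha_i}$; combined with the existence of a fixed point of $[n']$, this shows that $\{p : n'.p = p\}$ is a finite intersection of affine walls $M_{\alpha_i,\lambda_i} = H_{\alpha_i,\lambda_i}^+ \cap H_{\alpha_i,\lambda_i}^-$. The degenerate case $\Omega = \emptyset$ (as treated vacuously in Proposition \ref{prop:A2_b}) can be exhibited by, e.g., $H_{\alpha,0}^+ \cap H_{-\alpha,1}^+ = \emptyset$.
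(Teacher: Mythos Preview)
Your argument is correct and shares the paper's overall strategy: start from Proposition~\ref{prop:A2_b} to obtain an element $h$ fixing $\Omega$ pointwise, then decompose it via Theorem~\ref{thm:BTstab} as $u^+u^-n'$, and finally describe fixed sets using Proposition~\ref{prop:Uconv}. The difference lies entirely in how the $N_\Omega$-factor $n'$ is handled. The paper simply absorbs it: replacing $n$ by $\tilde n := n(n')^{-1}$ one gets $g^{-1}\tilde n = u^+u^-$, and then (using Lemma~\ref{lem:U+U-fix_U+fix} together with $u^+u^-\A \cap \A = g^{-1}\tilde n\A \cap \A = g^{-1}\A \cap \A$) one sees directly that $\Omega$ equals the intersection of the fixed sets of $u^+$ and $u^-$, so only two applications of Proposition~\ref{prop:Uconv} are needed and no separate analysis of $n'$ is required. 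You instead keep $n'$ as a third factor and invoke the Steinberg-type fact that the $1$-eigenspace of an element of a finite reflection group is an intersection of reflecting hyperplanes, which after tensoring with $\Lambda$ and translating by a fixed point $p_0\in\Omega$ exhibits $\operatorname{Fix}_\A(n')$ as a finite intersection of affine walls $M_{\alpha_i,\lambda_i} = H_{\alpha_i,\lambda_i}^+\cap H_{-\alpha_i,-\lambda_i}^+$. Both routes are valid; the paper's absorption trick is a bit shorter and keeps the argument self-contained, while your approach treats the three factors uniformly at the cost of importing one standard Coxeter-group result.
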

		\begin{proof}
			By Proposition \ref{prop:A2_b}, we know that there is an $n\in \operatorname{Nor}_{G_\F}(A_\F)$ such that $g^{-1}n \in \operatorname{Stab}(\Omega)$ pointwise and by Theorem \ref{thm:BTstab} $g^{-1}n \in U_{\Omega}^+ U_{\Omega}^- N_{\Omega}$. So let $u^+ \in  U_{\Omega}^+, u^- \in  U_{\Omega}^- $ and $n' \in  N_{\Omega}$ with $g^{-1} n = u^+ u^- n'$. We note that for $\tilde{n} := n(n')^{-1}$ we have $g^{-1}\tilde{n} = u^+ u^- $.
			
			Recall that the affine half-space given by $\alpha \in \Sigma$ and $k\in \Lambda$ is
			$$
			H_{\alpha,k}^+ = \left\{ a.o \in \A \colon (-v)(\chi_\alpha(a)) \geq k  \right\}.
			$$
			In Proposition \ref{prop:Uconv}, we showed that for $u^+$ there are $k_\alpha \in \Lambda \cup \{-\infty\}$ for every $\alpha >0$ such that 
			$$
			\left\{ p\in \A \colon u^+.p \in \A \right\} = \bigcap_{\alpha >0} H_{\alpha, k_\alpha}^+.
			$$	
			where we take the convention that $H_{\alpha, -\infty}^+ = \A$. By changing the order on $\Sigma$, we similarly obtain for $u^-$ some $k_\alpha\in \Lambda \cup \{-\infty\}$ for $\alpha <0$ such that
			$$
			\left\{ p\in \A \colon u^-.p \in \A \right\} = \bigcap_{\alpha <0} H_{\alpha, k_\alpha}^+.
			$$
			We now show that
			$$
			\Omega = \bigcap_{\alpha \in \Sigma} H_{\alpha, k_\alpha}^+.
			$$
			By Lemma \ref{lem:U+U-fix_U+fix}, for any $p\in \Omega$ we have $u^+.p =p$  and $u^{-}.p=p$, hence $p \in \bigcap H_{\alpha, k_\alpha}^+$. If on the other hand $p \in \bigcap H_{\alpha, k_\alpha}^+$, then $u^+u^-.p=u^+.p=p$, in particular $p \in u^+u^- \A \cap \A = g^{-1}\tilde{n}\A \cap \A = g^{-1} \A \cap \A = \Omega$. This concludes the proof that $\Omega$ is a finite intersection of half-spaces.
		\end{proof}
		
		We now put together Propositions \ref{prop:A2_b} and \ref{prop:A2_a} to prove axiom (A2).
		\begin{theorem}\label{thm:A2}
			Axiom (A2) holds: 
			\begin{enumerate}
				\item [(A2)] For every $f,f' \in \Fun$, the set $B := f^{-1}(f(\A) \cap f'(\A)) \subseteq \A$ is a finite intersection of affine half-apartments and there is $w\in W_a$ such that $f|_B = f'\circ w |_{B}$. 
			\end{enumerate}
		\end{theorem}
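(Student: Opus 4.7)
The plan is to deduce Theorem \ref{thm:A2} from Propositions \ref{prop:A2_a} and \ref{prop:A2_b} by a straightforward reduction using the homogeneity of the atlas. Since every chart in $\Fun$ is of the form $g \cdot f_0$ for some $g \in G_\F$, we can write $f = g_1 f_0$ and $f' = g_2 f_0$. Setting $g \coloneqq g_1^{-1}g_2 \in G_\F$, we compute
$$
f(\A) \cap f'(\A) = g_1(\A \cap g \A),
$$
(where on the right $\A$ denotes the subset $f_0(\A) \subseteq \B$), and therefore
$$
B = f^{-1}(f(\A) \cap f'(\A)) = f_0^{-1}(\A \cap g\A),
$$
so the general case reduces to the special case $f = f_0$ and $f' = g f_0$.

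For the $W_a$-convexity statement, I would apply Proposition \ref{prop:A2_a} to the element $g^{-1} \in G_\F$: the set $(g^{-1})^{-1}\A \cap \A = g\A \cap \A$ is a finite intersection of affine half-apartments in $\A \subseteq \B$. Transporting through the bijection $f_0 \colon \mathbb{A} \to \A$ then exhibits $B$ as a finite intersection of affine half-apartments in the model apartment.

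For the change-of-chart relation, unpacking the equation $f|_B = f' \circ w|_B$ shows that we need a single $w \in W_a$ such that $g^{-1}\cdot f_0(p) = f_0(w(p))$ for every $p \in B$. I would then apply Proposition \ref{prop:A2_b} to $g^{-1}$: there exists $w \in W_a$ such that $g^{-1}\cdot q = w(q)$ for all $q \in g\A \cap \A$. Since any $p \in B$ satisfies $f_0(p) \in \A \cap g\A$, substituting $q = f_0(p)$ yields $g^{-1}\cdot f_0(p) = w(f_0(p)) = f_0(w(p))$, using that the identification $f_0 \colon \mathbb{A} \xrightarrow{\sim} f_0(\mathbb{A})$ is $W_a$-equivariant by construction (cf.\ the proof of axiom (A1) in Lemma \ref{lem:A1}).

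The substantive content has already been packaged into Propositions \ref{prop:A2_a} and \ref{prop:A2_b}, which in turn rely on the stabilizer description of Theorem \ref{thm:BTstab} and the mixed Iwasawa decomposition. The present proof of (A2) is therefore a compact bookkeeping argument; the only mild subtlety is keeping straight the two roles of $\A$ (as model apartment $\mathbb{A}$ and as image $f_0(\mathbb{A}) \subseteq \B$) and noting that $w \in W_a$ acts compatibly on both sides of this identification, so no additional obstacle arises at this stage.
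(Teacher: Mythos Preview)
Your proposal is correct and follows essentially the same approach as the paper: reduce to the standard chart via $f = g_1 f_0$, $f' = g_2 f_0$, then invoke Propositions \ref{prop:A2_a} and \ref{prop:A2_b}. The only cosmetic difference is that the paper sets $g \coloneqq (h')^{-1}h$ (so that $B = g^{-1}\A \cap \A$ directly), whereas you set $g \coloneqq g_1^{-1}g_2$ and then apply the propositions to $g^{-1}$; these are trivially equivalent.
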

		\begin{proof}
			By definition of $\Fun$, there are $h,h' \in G_\F$ such that $f=h.f_0$ and $f' = h'.f_0$, where $f_0 \colon \A \to \B$ is the inclusion. Then for $g:= (h')^{-1}h$ we have $B = g^{-1}\A \cap \A$ since
			$$
			f(g^{-1}\A \cap \A) = h.(g^{-1}\A \cap \A) = h'.\A \cap h.\A = f'(\A) \cap f(\A) = f(B).
			$$ 
			By Proposition \ref{prop:A2_a}, $B$ is a finite intersection of affine half-apartments. By Proposition \ref{prop:A2_b}, there is $w\in W_a$ such that $g.p = w(p)$ for all $p\in B$, so that for all $p \in B$
			$$
			f(p) = h.p = h'g.p = h'.w(p) = h'.f_0(w(p)) = (f' \circ w)(p),
			$$
			concluding the proof.
		\end{proof}
		
		Now we are able to describe the (not necessarily pointwise) stabilizer of $\A$.
		
		\begin{proposition}\label{thm:stab'_A}
			The (not necessarily pointwise) stabilizer of $\A$ is $\operatorname{Stab}_{G_\F}'(\A) = \operatorname{Nor}_{G_\F}(A_F) = A_\F N_\F$. 
		\end{proposition}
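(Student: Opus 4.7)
The plan is a short three-step argument that uses the heavy machinery already developed, most notably Proposition \ref{prop:A2_b} (change-of-chart via an element of $W_a$) and Theorem \ref{thm:stab_A} (pointwise stabilizer of $\A$).

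First I would dispatch the easy inclusion $A_\F N_\F \subseteq \operatorname{Stab}'_{G_\F}(\A)$. For $a\in A_\F$, left multiplication in $A_\F$ permutes $A_\F.o = \A$. For $n\in N_\F = \operatorname{Nor}_{K_\F}(A_\F)$ and any $b\in A_\F$, one has $n.(b.o) = (nbn^{-1}).(n.o) = (nbn^{-1}).o \in A_\F.o = \A$, since $n$ normalizes $A_\F$ and $n\in K_\F = \operatorname{Stab}_{G_\F}(o)$.

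Next I would establish the algebraic identity $\operatorname{Nor}_{G_\F}(A_\F) = A_\F N_\F$. The inclusion $\supseteq$ is clear. For $\subseteq$, conjugation of $A_\F$ by $g\in \operatorname{Nor}_{G_\F}(A_\F)$ induces an automorphism of $A_\F$, hence an element of the spherical Weyl group $W_s \cong N_\F/M_\F$ (using \cite[Proposition \ref{I-prop:weylgroups}]{AppAGRCF}). Choosing $n\in N_\F$ representing the same element of $W_s$, the product $n^{-1}g$ centralizes $A_\F$, so by Lemma \ref{lem:BT_T_is_MA} lies in $T_\F = A_\F M_\F$. Since $M_\F \subseteq N_\F$, we conclude $g \in A_\F M_\F \cdot N_\F = A_\F N_\F$.

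Finally, for the non-obvious inclusion $\operatorname{Stab}'_{G_\F}(\A) \subseteq \operatorname{Nor}_{G_\F}(A_\F)$, I would take $g\in G_\F$ with $g.\A = \A$, so that $g^{-1}\A \cap \A = \A$. Applying Proposition \ref{prop:A2_b} to this $g$ gives $w\in W_a$ such that $g.p = w(p)$ for every $p\in \A$. Let $n\in \operatorname{Nor}_{G_\F}(A_\F)$ be a representative of $w$ under the identification $W_a = \operatorname{Nor}_{G_\F}(A_\F)/T_\F(O)$. Then $n^{-1}g$ fixes $\A$ pointwise, so by Theorem \ref{thm:stab_A}, $n^{-1}g \in A_\F(O)M_\F \subseteq \operatorname{Nor}_{G_\F}(A_\F)$, whence $g \in \operatorname{Nor}_{G_\F}(A_\F)$. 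Combining the three steps yields the desired equality
\[
\operatorname{Stab}'_{G_\F}(\A) = \operatorname{Nor}_{G_\F}(A_\F) = A_\F N_\F.
\]
There is no real obstacle: the statement is essentially a clean corollary of axiom (A2) and the computation of the pointwise stabilizer. The only subtle point is to ensure that the element $w\in W_a$ produced by Proposition \ref{prop:A2_b} is realized by an element of $\operatorname{Nor}_{G_\F}(A_\F)$, which is precisely the content of the standard identification $W_a \cong \operatorname{Nor}_{G_\F}(A_\F)/T_\F(O)$ used throughout the paper.
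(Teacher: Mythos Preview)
Your proof is correct and follows essentially the same route as the paper: invoke the change-of-charts part of axiom (A2) (equivalently Proposition \ref{prop:A2_b}) to realize $g|_\A$ by some $w\in W_a$, lift $w$ to an element of $A_\F N_\F$ (the paper does this via the construction in Lemma \ref{lem:A1}), and then apply Theorem \ref{thm:stab_A} to the quotient. Your additional paragraph verifying $\operatorname{Nor}_{G_\F}(A_\F)=A_\F N_\F$ via Lemma \ref{lem:BT_T_is_MA} is a welcome elaboration that the paper leaves implicit.
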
 
	\begin{proof}
		The inclusions $\supseteq$ follow directly from the definitions. Now let $g\in \operatorname{Stab}_{G_\F}(A_\F)$. By axiom (A2), 
		there exists $w\in W_a$ such that $g.f_0 = f_0 \circ w$. Let $a\in A_\F$ and $k\in N_\F$ with $ak.f_0 = f_0 \circ w = g.f_0$. Then $k^{-1}a^{-1}g \in A_\F(O)M_\F$ by Theorem \ref{thm:stab_A}. Then $g\in A_\F N_\F A_\F(O) M_\F = A_\F N_\F$. 
	\end{proof}

\subsection{Axiom (A4)}\label{sec:A4}
		
		In this section the root system $\Sigma$ does not need to be reduced, except for being able to apply axiom (A2) in Lemma \ref{lem:subsector_nice}. Axiom (A4) is a statement about sectors. Let $s_0 = f_0(C_0) \subseteq \B$ be the fundamental sector corresponding to the fundamental Weyl chamber $C_0 \subseteq \A$. All sectors are of the form $g.s_0$ for some $g\in G_\F$. If a sector $s$ is a subset of another sector $s'$, then $s$ is called a \emph{subsector} of $s'$. 
		
		From (A2) we get that subsectors of $s_0$ are of the form $a.s_0$ for $a\in A_\F$.
		\begin{lemma}\label{lem:subsector_nice}
			For every subsector $s \subseteq s_0$ there exists $a\in A_\F$ such that $s=a.s_0$.
		\end{lemma}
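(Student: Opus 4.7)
The plan is to use axiom (A2) (Theorem \ref{thm:A2}) to realize the inclusion $s \subseteq s_0$ via an element of $W_a$, check that this element must be a pure translation, and identify that translation with the action of some $a \in A_\F$.

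Since $s$ is a sector, $s = g.s_0$ for some $g \in G_\F$. Because $s \subseteq s_0 \subseteq \A$, the chamber $C_0$ lies in $(g.f_0)^{-1}(g.\A \cap \A)$, so Theorem \ref{thm:A2} furnishes $w \in W_a$ with $g.p = w(p)$ for every $p \in C_0$. Consequently $s = w(C_0)$ inside $\A$.

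Next, decompose $w = t_v \circ w_s$ in $W_a = T \rtimes W_s$, with $t_v$ the translation by some $v \in \A$, so that $s = w_s(C_0) + v \subseteq C_0$. We claim $w_s = \operatorname{Id}$. Otherwise $w_s^{-1}$ does not preserve $\Sigma_{>0}$, so some $\alpha \in \Delta$ satisfies $w_s^{-1}(\alpha) = -\gamma$ with $\gamma \in \Sigma_{>0}$. Writing $\gamma = \sum_{\delta \in \Delta} c_\delta \delta$ with $c_\delta \in \Z_{\geq 0}$, pick $\delta$ with $c_\delta > 0$; for each $\mu \in \Lambda_{\geq 0}$ choose $x_\mu \in C_0$ with $\delta(x_\mu) = \mu$ and $\delta'(x_\mu) = 0$ for $\delta' \in \Delta \setminus \{\delta\}$. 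Then $\alpha(w_s(x_\mu)) = -\gamma(x_\mu) = -c_\delta \mu$, so the containment $w_s(x_\mu) + v \in C_0$ would force $c_\delta \mu \leq \alpha(v)$ for every $\mu \in \Lambda_{\geq 0}$, contradicting that $\Lambda_{\geq 0}$ is unbounded above. Hence $w = t_v$ is a pure translation.

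Finally, to realize $t_v$ as the action of some $a \in A_\F$, write $f(v) = [b]$ with $b \in A_\F$ via Theorem \ref{thm:apt}, and invoke the transfer principle (the first-order statement that every element of the connected Lie group $A_\R$ admits a square root) to obtain $a \in A_\F$ with $a^2 = b$. The defining relation $f_0(x) = a.o \iff f(x) = [a^2]$, combined with Corollary \ref{cor:f_additive}, shows that the action of $a$ on $A_\F.o$ matches translation by $v$ on $\A$. Therefore $s = a.s_0$.

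The main geometric input is ruling out nontrivial $w_s$ in the third paragraph, where the $\Lambda$-unboundedness of $\Lambda_{\geq 0}$ is what makes the argument go through despite $\Lambda$ being possibly non-Archimedean; the remaining steps are bookkeeping between $\A$, $A_\Lambda$, and $A_\F$ via already-established identifications.
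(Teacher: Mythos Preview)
Your proof is correct and follows essentially the same approach as the paper: use axiom (A2) to realize $s = w(C_0)$ for some $w = (t_v, w_s) \in W_a$, rule out a nontrivial spherical part via an unboundedness argument in $\Lambda$, and then identify the translation with the action of some $a \in A_\F$. The paper phrases the unboundedness argument more geometrically (if $w_s(C_0) \neq C_0$ then some $\alpha > 0$ takes arbitrarily negative values on $w_s(C_0)$, contradicting $t_v(w_s(C_0)) \subseteq C_0$), whereas you construct explicit witnesses $x_\mu$; both arguments exploit the same fact. You are also more careful than the paper about the square-root bookkeeping between $\A$, $A_\Lambda$, and $A_\F$, which the paper leaves implicit.
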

		\begin{proof}
			A general subsector of $s_0$ is of the form $g.s_0$ for some $g\in G_\F$. By Axiom (A2) we know that $s_0 \to g.s_0$ is realized by an element $w\in W_a$ of the affine Weyl group, $g.b.o = w(b.o)$ for all $b.o \in s_0$. We decompose $w = ( t_a,w_s) \in \A \rtimes W_s$ for some $a\in A_\F$. Since $o\in s_0$ and $a.o = w(o) \in g.s_0 \subseteq s_0$, we know that $\chi_\alpha(a) \geq 1$ for all $\alpha >0$. We also know that $w_s(s_0)$ is one of the finitely many sectors based at $o$. If $w_s(s_0) \neq s_0$, then there is some $\alpha >0$ with $\chi_\alpha(b) \leq 1$ for all $b.o \in w_s(s_0)$. Since $w_s(s_0)$ is a cone with open interior, there are $b.o\in w_s(s_0)$ with arbitrary negative $\chi_\alpha(b)$, in particular there is some $b.o\in w_s(s_0)$ with $(-v)(\chi_\alpha(b)) < (-v)(\chi_\alpha(a)^{-1})$, so that $(-v)(\chi_\alpha(ab)) < 0$. But this contradicts $a.w_s(s_0) \subseteq s_0$, since $a.b.o \notin s_0$. We conclude that $w_s(s_0) = s_0$ and thus $g.s_0 = a.s_0$.
		\end{proof}

		While studying the model apartment $\A$, Bennett \cite[Prop 2.9]{Ben1} proved the following lemma.
		
		\begin{lemma}
			For all $p \in \A$ there is a $q\in \A$ such that $q+C_0 \subseteq (p+C_0) \cap C_0$.
		\end{lemma}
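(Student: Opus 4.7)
The plan is to reduce the containment $q + C_0 \subseteq (p + C_0) \cap C_0$ to the two simpler conditions $q \in C_0$ and $q - p \in C_0$, and then construct such a $q$ explicitly using the fundamental coweights. Since $C_0$ is closed under addition (it is the intersection of the half-spaces $H_{\delta,0}^+$ for $\delta \in \Delta$, each of which is closed under addition) and $p + C_0 = p + C_0$, once $q, q-p \in C_0$ are chosen, every $c \in C_0$ satisfies $q + c \in C_0$ and $q + c - p = (q-p) + c \in C_0$, which is exactly what is required.

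So it suffices to show that $C_0 \cap (p + C_0) \neq \emptyset$. I would first observe that because every positive root is a non-negative integer combination of simple roots, one has
$$C_0 = \{ x \in \A : \delta(x) \geq 0 \text{ for all } \delta \in \Delta \},$$
so that $q \in C_0 \cap (p + C_0)$ is equivalent to $\delta(q) \geq \max\{0, \delta(p)\}$ for every $\delta \in \Delta$, which makes sense since $\Lambda$ is totally ordered.

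The main content of the proof is then to verify that such a $q$ actually exists in $\A$. For this I would use the fundamental coweights $\omega_\delta^\vee \in \operatorname{Span}_\Q(\Sigma^\vee)$ associated with the basis $\Delta = \{\delta_1, \dots, \delta_r\}$, which are characterized by $\delta_j(\omega_{\delta_i}^\vee) = \delta_{ij}$ (these exist because the Cartan matrix is invertible over $\Q$, and the fact that $\Lambda$ is divisible lets us work inside $\A \cong \operatorname{Span}_\Q(\Sigma^\vee) \otimes_\Q \Lambda$). Setting
$$q := \sum_{\delta \in \Delta} \omega_\delta^\vee \otimes \lambda_\delta \in \A, \qquad \lambda_\delta := \max\{0, \delta(p)\} \in \Lambda,$$
one computes directly $\delta_j(q) = \lambda_{\delta_j}$, so both $\delta_j(q) \geq 0$ and $\delta_j(q) - \delta_j(p) = \max\{-\delta_j(p), 0\} \geq 0$ hold. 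Hence $q \in C_0$ and $q - p \in C_0$, which finishes the argument.

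No single step is really an obstacle here; the only thing to be careful about is that the linear-algebra construction of $q$ via the dual basis $\{\omega_\delta^\vee\}$ is legitimate over the ordered abelian group $\Lambda$ (not just over a field), which is why the identification $\A \cong \operatorname{Span}_\Q(\Sigma^\vee) \otimes_\Q \Lambda$ from Section \ref{sec:modelapartment} is invoked.
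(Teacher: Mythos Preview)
The paper does not supply its own proof of this lemma; it simply attributes the result to Bennett \cite[Prop.~2.9]{Ben1}. Your argument is correct: the reduction to finding $q$ with $q \in C_0$ and $q - p \in C_0$ is valid since $C_0$ is closed under addition, and the explicit construction via fundamental coweights does the job because in the paper's setting $\Lambda$ is divisible (Section~\ref{sec:valued_real_closed_fields}), so that $\A \cong \operatorname{Span}_\Q(\Sigma^\vee) \otimes_\Q \Lambda$ and the $\omega_\delta^\vee$ genuinely produce elements of $\A$.

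One remark worth making: Bennett's lemma is stated for arbitrary ordered abelian groups $\Lambda$, whereas your construction needs the $\omega_\delta^\vee$ to lie in the model apartment, and this can fail when $\Lambda$ is not a $\Q$-vector space (the fundamental coweights are only rational, not integral, combinations of simple coroots in general). This is harmless here, since the lemma is only invoked in Section~\ref{sec:A4}, where $\Lambda$ comes from a valuation on a real closed field and is therefore divisible. If you wanted a $\Lambda$-agnostic argument, you would replace the dual-basis construction by a scaling trick: fix any integral element $H \in \operatorname{Span}_\Z(\Sigma^\vee)$ in the open Weyl chamber (say with $\delta(H) \geq 1$ for all $\delta \in \Delta$), pick a single $\lambda \in \Lambda$ with $\lambda \geq \delta(p)$ for every $\delta \in \Delta$, and set $q = H \otimes \lambda$. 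This is exactly the manoeuvre carried out in the proof of Proposition~\ref{prop:exists_a_NO}.
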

		In our setting this lemma translates in terms of $A_\F.o = \A\subseteq \B$.
		\begin{lemma} \label{lem:A4:a_sub}
			For all $a\in A_\F$ there is a $b\in A_\F$ such that $b.s_0 \subseteq a.s_0 \cap s_0$.
		\end{lemma}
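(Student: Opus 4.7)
The plan is to translate the geometric inclusion $b.s_0 \subseteq a.s_0 \cap s_0$ into an explicit condition on the characters $\chi_\alpha$ for positive roots $\alpha$, then use the fact that every positive root is a non-negative integer combination of simple roots to reduce to a finite system of inequalities on simple roots, which we can solve by choosing $b$ via the one-parameter subgroups $t_\delta$ for $\delta \in \Delta$.

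First I would unravel the definitions. Using the identification $\A \cong A_\F/A_\F(O)$ and Theorem \ref{thm:apt}, the sector $b.s_0$ is
$$
b.s_0 = \left\{ c.o \in \A \colon (-v)(\chi_\alpha(c)) \geq (-v)(\chi_\alpha(b)) \text{ for all } \alpha \in \Sigma_{>0} \right\}.
$$
Consequently $b.s_0 \subseteq a.s_0 \cap s_0$ is equivalent to the condition
$$
(-v)(\chi_\alpha(b)) \geq \max\{0,\, (-v)(\chi_\alpha(a))\} \quad \text{for all } \alpha \in \Sigma_{>0}.
$$

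Next I would reduce this to a condition on simple roots. Pick $\lambda \in \Lambda$ with $\lambda \geq 0$ and $\lambda \geq (-v)(\chi_\delta(a))$ for every $\delta \in \Delta$; such $\lambda$ exists since $\Delta$ is finite. Since $A_\F \cong \operatorname{Hom}_{\Q}(\operatorname{Span}_\Q(\Sigma),\F_{>0})$ by Proposition \ref{prop:apt0}, or equivalently since the one-parameter subgroups $t_\delta$ together with Proposition \ref{prop:char_cochar_Lambda} let us prescribe the simple-root valuations, I can choose $b \in A_\F$ with $(-v)(\chi_\delta(b)) = \lambda$ for every $\delta \in \Delta$ (for instance take $b = \prod_{j,k} t_{\delta_j}(x_k)^{B_{jk}^{-1}}$ for any $x_k \in \F_{>0}$ with $(-v)(x_k) = \lambda$, where $B$ is the Cartan matrix of $\Sigma$).

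To finish, for an arbitrary $\alpha \in \Sigma_{>0}$ write $\alpha = \sum_{\delta \in \Delta} n_\delta \delta$ with $n_\delta \in \Z_{\geq 0}$. Then
$$
(-v)(\chi_\alpha(b)) = \sum_{\delta \in \Delta} n_\delta (-v)(\chi_\delta(b)) = \lambda \sum_{\delta \in \Delta} n_\delta,
$$
and since $\lambda \geq 0$ this is $\geq 0$, while
$$
(-v)(\chi_\alpha(a)) = \sum_{\delta \in \Delta} n_\delta (-v)(\chi_\delta(a)) \leq \sum_{\delta \in \Delta} n_\delta \lambda = (-v)(\chi_\alpha(b))
$$
using $n_\delta \geq 0$ and $(-v)(\chi_\delta(a)) \leq \lambda$. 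Thus $(-v)(\chi_\alpha(b)) \geq \max\{0,(-v)(\chi_\alpha(a))\}$, which yields $b.s_0 \subseteq a.s_0 \cap s_0$.

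There is essentially no obstacle here: the argument is a direct calculation in characters. The only point requiring a small amount of care is the existence of $b$ with the prescribed simple-root valuations, which is handled by the structural results of Section \ref{sec:Xapartment} (surjectivity of $\psi$ in Proposition \ref{prop:apt0} and compatibility with $(-v)$ in Proposition \ref{prop:char_cochar_Lambda}).
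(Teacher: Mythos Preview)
Your proof is correct. The translation of $b.s_0 \subseteq a.s_0 \cap s_0$ into the character inequalities $(-v)(\chi_\alpha(b)) \geq \max\{0,(-v)(\chi_\alpha(a))\}$ for all $\alpha \in \Sigma_{>0}$ is right, the reduction to simple roots via $\alpha = \sum_{\delta \in \Delta} n_\delta \delta$ with $n_\delta \in \Z_{\geq 0}$ is valid, and the construction of $b$ via Proposition~\ref{prop:apt0} (together with surjectivity of $(-v)$ onto $\Lambda$) does produce an element with the prescribed simple-root valuations.

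The paper does not actually prove this lemma: it records it as the direct translation, under the identification $\A \cong A_\F.o$ of Theorem~\ref{thm:apt} and Proposition~\ref{prop:stab_A}, of a result of Bennett \cite[Prop.~2.9]{Ben1} proved for abstract model apartments. Your argument is a genuinely different route in that it is self-contained and explicit: rather than invoking the abstract apartment result, you solve the finite system of $\Lambda$-valued inequalities directly using the character/cocharacter description of $A_\F$ developed in Section~\ref{sec:Xapartment}. The payoff of your approach is that it avoids an external reference and makes the construction of $b$ completely concrete; the paper's approach has the advantage of separating the purely combinatorial statement about model apartments from the group-theoretic identification, but at the cost of a citation.
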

		
		We also get a slightly more general statement, illustrated in Figure \ref{fig:subsectors_lemma}.
		\begin{lemma}\label{lem:A4:a_sub_cor}
			For all subsectors $s' \subseteq s_0$ and for all $a\in A_\F$ there is a subsector $s \subseteq a.s' \cap s_0$.
		\end{lemma}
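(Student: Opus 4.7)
The plan is to reduce this statement directly to Lemma \ref{lem:A4:a_sub} by invoking the classification of subsectors of $s_0$. By Lemma \ref{lem:subsector_nice}, any subsector $s' \subseteq s_0$ is of the form $s' = c.s_0$ for some $c \in A_\F$. Since $A_\F$ is abelian, we obtain $a.s' = a.c.s_0 = (ac).s_0$, so the problem becomes: given $a' := ac \in A_\F$, find a subsector contained in $a'.s_0 \cap s_0$.

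This is precisely what Lemma \ref{lem:A4:a_sub} delivers: there exists $b \in A_\F$ with $b.s_0 \subseteq a'.s_0 \cap s_0 = a.s' \cap s_0$. Setting $s := b.s_0$ concludes the proof, as $s$ is a sector by construction (it is a translate of the fundamental sector under the action of $A_\F$) and is contained in the required intersection.

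There is no significant obstacle here; the statement is a direct corollary obtained by noting that Lemma \ref{lem:A4:a_sub} is stated for $s' = s_0$ but the general case reduces to it thanks to the transitive $A_\F$-action on subsectors (Lemma \ref{lem:subsector_nice}) and the commutativity of $A_\F$. The proof occupies only a few lines.
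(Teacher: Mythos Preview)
Your proof is correct and essentially identical to the paper's: write $s' = \tilde{a}.s_0$ via Lemma \ref{lem:subsector_nice}, then apply Lemma \ref{lem:A4:a_sub} to $a\tilde{a}$ to obtain $b.s_0 \subseteq a\tilde{a}.s_0 \cap s_0 = a.s' \cap s_0$. The only difference is notational.
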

		\begin{proof}
			Let $s'=\tilde{a}.s_0$ for some $\tilde{a} \in A_\F$, see Lemma \ref{lem:subsector_nice}. We apply Lemma \ref{lem:A4:a_sub} with $a\tilde{a}$ to get $b\in A_\F$ with $s:= b.s_0 \subseteq a\tilde{a}.s_0 \cap s_0 = a.s' \cap s_0$. 
		\end{proof}
		
		\begin{figure}[h]
			\centering
			\includegraphics[width=0.5\linewidth]{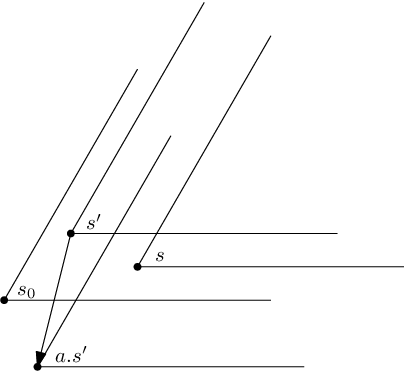}
			\caption{Lemma \ref{lem:A4:a_sub_cor} states that for every subsector $s' \subseteq s_0$ and $a \in A_\F$ there is a sector $s$ contained in both $s'$ and $a.s'$.   }
			\label{fig:subsectors_lemma}
		\end{figure}

		We now use Proposition \ref{prop:exists_a_NO} to show that while elements of $U_\F$ may not fix $s_0$ itself, they at least fix a subsector.
		\begin{lemma}\label{lem:N_fixes_subsector}
			For every $u\in U_\F$ there is a subsector $s\subseteq s_0$ with $u.p=p$ for all $p\in s$.	
		\end{lemma}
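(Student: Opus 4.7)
The plan is to combine Proposition \ref{prop:exists_a_NO} with Corollary \ref{cor:NO_fixes_s0} to show that $u$ fixes a conjugate $a.s_0$ of the fundamental sector pointwise, and then to intersect $a.s_0$ with $s_0$ using Lemma \ref{lem:A4:a_sub} to obtain a subsector of $s_0$ on which $u$ acts trivially.

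More concretely, given $u\in U_\F$, Proposition \ref{prop:exists_a_NO} provides an element $a\in A_\F$ such that $a^{-1}ua \in U_\F(O)$. Corollary \ref{cor:NO_fixes_s0} then guarantees that $a^{-1}ua$ fixes every point of the fundamental Weyl chamber $C_0 \subseteq \A$, and hence (identifying $s_0 = f_0(C_0)$ under the chart inclusion) fixes $s_0 \subseteq \B$ pointwise. Conjugating back, for every $p \in a.s_0$ we have $u.p = a(a^{-1}ua)a^{-1}.p = a.a^{-1}.p = p$, so $u$ fixes every point of the sector $a.s_0$.

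It remains to arrange that the fixed sector is actually a subsector of $s_0$. Applying Lemma \ref{lem:A4:a_sub} to the element $a$, we obtain $b\in A_\F$ with $b.s_0 \subseteq a.s_0 \cap s_0$. Setting $s \coloneqq b.s_0$, we see that $s$ is a subsector of $s_0$ and $s \subseteq a.s_0$, so $u.p = p$ for all $p \in s$. No step here presents a real obstacle; the only subtlety worth double-checking is that ``fixes $C_0$ pointwise'' in Corollary \ref{cor:NO_fixes_s0} translates to ``fixes $s_0$ pointwise'' under the identification $f_0 \colon \A \to \B$, which is immediate since $f_0$ is the inclusion and $s_0$ is by definition $f_0(C_0)$.
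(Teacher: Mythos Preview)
Your proof is correct and follows essentially the same approach as the paper: use Proposition \ref{prop:exists_a_NO} to conjugate $u$ into $U_\F(O)$, apply Corollary \ref{cor:NO_fixes_s0} to see that $u$ fixes $a.s_0$ pointwise, and then invoke Lemma \ref{lem:A4:a_sub} to extract a subsector contained in both $a.s_0$ and $s_0$.
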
 
		\begin{proof}
			Let $a\in A_\F$ with $ a^{-1}ua \in U_\F(O)$ as in Proposition \ref{prop:exists_a_NO}. For all $b.o \in s_0$ we have $a^{-1}ua.b.o = b.o$ by Corollary \ref{cor:NO_fixes_s0}.
			Therefore $u$ fixes $a.s_0$ pointwise. We don't know whether $a.s_0$ is a subsector of $s_0$, but we can apply Lemma \ref{lem:A4:a_sub} to find a subsector $s$ of $s_0$, which is also a subsector of $a.s_0$ and therefore is fixed pointwise by $u$. 
		\end{proof}
		Now that we understand the action of $U_\F$ better, let us turn to the group $B_\F=M_\F A_\F U_\F$ introduced in \cite[Section \ref{I-sec:BWB}]{AppAGRCF}, where $M_\F=\operatorname{Cen}_{K_\F}(A_\F) ,
		A_\F$ and $U_\F$ are as before.
		\begin{lemma}\label{lem:A4:B_acts}
			For all $b\in B_\F = M_\F A_\F U_\F$ there is a subsector $s\subseteq s_0$ with $b.s \subseteq s_0$.
		\end{lemma}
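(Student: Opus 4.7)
Write $b = mau$ with $m \in M_\F$, $a \in A_\F$ and $u \in U_\F$. I will find a subsector $s \subseteq s_0$ that is simultaneously fixed pointwise by $u$, fixed pointwise by $m$, and mapped into $s_0$ by $a$. Once such $s$ is constructed, we will have $b.s = mau.s = ma.s = a.s \subseteq s_0$, where the second equality uses $u.s = s$ and the third equality uses that $m \in M_\F$ fixes $\A$ pointwise by Theorem \ref{thm:stab_A}, together with $a.s \subseteq \A$.

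The first step is to handle $u$. By Lemma \ref{lem:N_fixes_subsector} there is a subsector $s' \subseteq s_0$ on which $u$ acts trivially, and by Lemma \ref{lem:subsector_nice} we may write $s' = c.s_0$ for some $c \in A_\F$. It then suffices to find a subsector $s$ of $s_0$ with $s \subseteq c.s_0 \cap a^{-1}.s_0$; indeed $s \subseteq c.s_0 = s'$ ensures $u.s = s$ pointwise, and $s \subseteq a^{-1}.s_0$ is precisely the condition $a.s \subseteq s_0$.

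To construct such $s$, the plan is to iterate Lemma \ref{lem:A4:a_sub}. First apply it to $c$ to obtain $b_1 \in A_\F$ with $b_1.s_0 \subseteq c.s_0 \cap s_0$. Since Lemma \ref{lem:A4:a_sub} is invariant under translating the base of the sector, we can apply it again, now taking the role of $s_0$ to be played by $b_1.s_0$ and the role of $a$ by $a^{-1}c^{-1}b_1$: this yields $b_2 \in A_\F$ such that $b_2.s_0 \subseteq (a^{-1}.s_0) \cap (b_1.s_0) \subseteq a^{-1}.s_0 \cap c.s_0 \cap s_0$. Setting $s := b_2.s_0$ completes the construction.

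There is no real obstacle here; the only mild point of care is that Lemma \ref{lem:A4:a_sub} is stated with respect to the fixed base sector $s_0$, and we use it twice after translating the base point, which is justified by the $A_\F$-equivariance of the construction (or equivalently by invoking the slightly more general Lemma \ref{lem:A4:a_sub_cor} with $s' = b_1.s_0$ viewed as a subsector of $s_0$). Putting the three pieces together yields $s \subseteq s_0$ with $b.s \subseteq s_0$, as required.
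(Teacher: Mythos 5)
Your proposal is correct and follows essentially the same strategy as the paper: decompose $b$, use Lemma \ref{lem:N_fixes_subsector} to absorb the $U_\F$-part into a fixed subsector, then use the subsector-shrinking lemmas to also place the sector inside $a^{-1}.s_0$, and observe that $M_\F$ acts trivially on $\A$. The only difference is the order of the decomposition: the paper writes $b=mua$, which turns the required condition into $s\subseteq a^{-1}.s'\cap s_0$ so that a single application of Lemma \ref{lem:A4:a_sub_cor} suffices, whereas your choice $b=mau$ leads to $s\subseteq c.s_0\cap a^{-1}.s_0$ (two sectors based at distinct points) and hence to the slightly longer two-step iteration of Lemma \ref{lem:A4:a_sub}, which you carry out correctly.
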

		\begin{proof}
			We first recall the elements of $M_\F$ fix all of $\A$ pointwise. Let $m\in M_\F$, $u \in U_\F$ and $a\in A_\F$ such that $b=mua$. Using Lemma \ref{lem:N_fixes_subsector}, we find a subsector $s'\subseteq s_0$ which is fixed by $u$ pointwise. Applying Lemma \ref{lem:A4:a_sub_cor} to $s'$ and $a^{-1}$ we get a subsector $s\subseteq s_0 \cap a^{-1}.s'$. We now have
			$$
			b.s \subseteq b.a^{-1}.s' = m u .s' = m .s' = s' \subseteq s_0,
			$$
			as claimed.
		\end{proof}
		We are now ready to prove axiom (A4) using the Bruhat decomposition $G_\F=B_\F W_sB_\F$ \cite[Theorem \ref{I-thm:BWB}]{AppAGRCF}.
		\begin{theorem}\label{thm:A4}
			Axiom 
			\begin{itemize}
				\item [(A4)] For any sectors $s_1,s_2 \subseteq \B$ there are subsectors $s_1' \subseteq s_1, s_2' \subseteq s_2$ such that there is an $f\in \Fun$ with $s_1', s_2' \subseteq f(\A)$.
			\end{itemize}
			holds for $(\B,\Fun)$. 
		\end{theorem}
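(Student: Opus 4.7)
The plan is to reduce to the case $s_2 = s_0$ by exploiting the $G_\F$-action on charts, invoke the Bruhat decomposition $G_\F = B_\F N_\F B_\F$ from \cite[Theorem \ref{I-thm:BWB}]{AppAGRCF} on the element taking $s_0$ to $s_1$, and then apply Lemma \ref{lem:A4:B_acts} twice to produce the two required subsectors within one apartment $f(\A)$.

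First I would write $s_i = g_i.s_0$ for some $g_i \in G_\F$. Replacing $(s_1, s_2)$ by $(g_2^{-1}.s_1, g_2^{-1}.s_2)$ and any candidate chart $\tilde{f}$ by $g_2.\tilde{f} \in \Fun$, I may assume $s_2 = s_0$ and $s_1 = g.s_0$ for some $g \in G_\F$. Next, use Bruhat to write $g = b_1 n b_2$ with $b_1, b_2 \in B_\F$ and $n \in N_\F$, and set $f := b_1.f_0 \in \Fun$, so that $f(\A) = b_1.\A$. For the subsector of $s_2$: since $B_\F$ is a group, $b_1^{-1} \in B_\F$, and Lemma \ref{lem:A4:B_acts} applied to $b_1^{-1}$ produces a subsector $s_2' \subseteq s_0 = s_2$ with $b_1^{-1}.s_2' \subseteq s_0 \subseteq \A$, hence $s_2' \subseteq b_1.\A = f(\A)$. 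For the subsector of $s_1$: apply Lemma \ref{lem:A4:B_acts} to $b_2$ to obtain a subsector $t \subseteq s_0$ with $b_2.t \subseteq s_0$, and set $s_1' := b_1 n b_2.t$. Then $s_1'$ is a subsector of $s_1 = b_1 n b_2.s_0$ because it is the image of the sector $t$ under a single group element. Since $n \in N_\F = \operatorname{Nor}_{K_\F}(A_\F)$ fixes $o$ and normalizes $A_\F$, we have $n.(a.o) = (n a n^{-1}).o \in \A$ for all $a \in A_\F$, so $n.\A = \A$; in particular $n.s_0 \subseteq \A$, yielding $s_1' \subseteq b_1 n.s_0 \subseteq b_1.\A = f(\A)$.

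The main obstacle, once the Bruhat reduction is in place, is purely structural bookkeeping: one must verify that $n.s_0$ is indeed a sector contained in $\A$, which follows because $n$ represents an element of the spherical Weyl group and thus permutes the sectors based at $o$ within $\A$, and one must check that $s_1'$ and $s_2'$ are subsectors in the intended sense (images of subsectors of $s_0$ under the chosen group element). No new ideas beyond Lemma \ref{lem:A4:B_acts} and the Bruhat decomposition are required; the entire argument is a controlled exchange of the $B_\F$-factors across $n$ using the sector-absorbing property established earlier in the section.
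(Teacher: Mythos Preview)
Your proof is correct and follows the same overall strategy as the paper: reduce via the $G_\F$-action to the pair $(s_0, g.s_0)$, use the Bruhat decomposition $g = b_1 n b_2$, and take $f = b_1.f_0$. The only difference is in how the two subsectors are produced. You apply Lemma \ref{lem:A4:B_acts} twice, once to $b_1^{-1}$ and once to $b_2$, yielding independent subsectors $s_2' \subseteq s_0$ and $s_1' = g.t \subseteq g.s_0$; the paper instead insists on $s_1' = g.s_2'$, which forces a more delicate construction: it further decomposes $b' = m a' u a$ with $u \in U_\F(O)$ via Proposition \ref{prop:exists_a_NO}, invokes Corollary \ref{cor:NO_fixes_s0} to control $u$, and combines Lemma \ref{lem:A4:B_acts} with Lemma \ref{lem:A4:a_sub_cor} to build a single $s$ satisfying both constraints. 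Your argument is the cleaner one for verifying (A4) as stated, since the axiom does not require the two subsectors to be $g$-related.
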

		
		\begin{proof}
			The action of $G_\F$ on the sectors is transitive by definition and we may hence assume without loss of generality that one of the sectors in (A4) is $s_0$ and the other is given by $g.s_0$ for some $g \in G_\F$. We have to prove 
			\begin{itemize}
				\item [(A4)'] For all $g\in G_\F$, there are subsectors $s \subseteq s_0, s' \subseteq g.s_0$ such that there is an $f\in \Fun$ with $s, s' \subseteq f(\A)$.
			\end{itemize}
			\begin{figure}[h]
				\centering
				\includegraphics[width=0.8\linewidth]{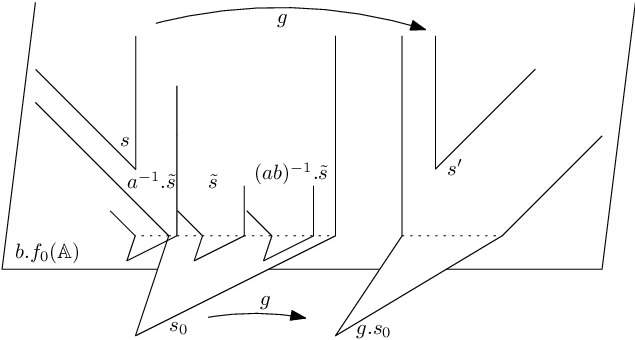}
				\caption{Axiom (A4) states that while the sectors $s_0,g.s_0$ may not lie in a common flat, they contain subsectors $s,s'$ contained in a common flat $f(\A)$. }
				\label{fig:A4proof}
			\end{figure}
			The situation is illustrated in Figure \ref{fig:A4proof}. The Bruhat-decomposition \cite[Theorem \ref{I-thm:BWB}]{AppAGRCF}, states that $G_\F$ is a disjoint union 
			$$
			G_\F = \bigcup_{w=[k] \in W_s} B_\F k B_\F
			$$
			of double cosets. 
			Let $g \in G_\F$ and $b,b' \in B_\F, k \in N_\F := \operatorname{Nor}_{K_\F}(A_\F)$ with $g=bkb'$. We further decompose $b'=ma'ua$ for $m\in M_\F, u\in U_\F(O)$ and $a,a'\in A_\F$ using $B_\F=M_\F U_\F A_\F$ and Proposition \ref{prop:exists_a_NO}. Since $(ab)^{-1} \in B_\F$, apply Lemma \ref{lem:A4:B_acts} to obtain a subsector $\tilde{s} \subseteq s_0$ with $(ab)^{-1}.\tilde{s} \subset s_0$. We apply Lemma \ref{lem:A4:a_sub_cor} to $\tilde{s}$ and $a^{-1}\in A_\F$ to obtain a subsector $s \subset a^{-1}.\tilde{s} \cap s_0$. We claim that $s$ and $s':=g.s$ are the required subsectors in (A4)' and that $f= b.f_0$ defines the common apartment.
			
			We have by construction that $s\subseteq s_0$ and clearly $s'=g.s \subseteq g.s_0$. It remains to show that $s$ and $s'$ are in the apartment $b.f_0(\A)$. We see
			$$
			b^{-1}.s \subseteq b^{-1}.a^{-1}.\tilde{s} \subseteq s_0 \subseteq f_0(\A)
			$$
			hence $s \subseteq b.f_0(\A)$ and
			\begin{align*}
				s' &= g.s = bkma'ua.s \subseteq bkma'u.\tilde{s} = bkma'.\tilde{s} \\
				&\subseteq bkma.f_0(\A) = bkm.f_0(\A) = bk.f_0(\A) = b.f_0(\A),
			\end{align*}
			where we used Corollary \ref{cor:NO_fixes_s0} for $u.\tilde{s} = \tilde{s}$ since $\tilde{s} \subseteq s_0$ and $u\in U_\F(O)$ . 
		\end{proof}

\subsection{Axiom (EC)}\label{sec:EC}
		In this section, the Jacobson-Morozov-construction as well as axiom (A2) is used, so we require $\Sigma$ to be reduced. We recall that a \emph{half-apartment in the model apartment} is a set of the form
		$$
		H_{\alpha,\ell}^+ = \left\{ a.o \in \A \colon (-v)(\chi_\alpha(a)) \geq \ell \right\}
		$$
		for some $\alpha \in \Sigma$ and some $\ell \in \Lambda$. A \emph{half-apartment in the building} is $g.H_\alpha^+$ for any $g \in G_\F$. We first use axiom (A2) to make sure that if a half-apartment of the building is included in $\A$, then it is an affine half-apartment in the model apartment.
		\begin{lemma}\label{lem:subhalfapartment_nice}
			Let $g\in G_\F, \alpha \in \Sigma, \ell \in \Lambda$ such that $g.H_{\alpha,\ell}^+ \subseteq \A$. Then there is $\alpha' \in \Sigma, \ell' \in \Lambda$ such that 
			$$
			g.H_{\alpha,\ell}^+ = H_{\alpha', \ell'}^+ .
			$$
		\end{lemma}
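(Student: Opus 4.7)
The plan is to apply axiom (A2), which has just been established in Theorem \ref{thm:A2}, to the pair of charts $(f_0, g^{-1}.f_0)$ and to observe that an element of the affine Weyl group automatically sends an affine half-apartment to an affine half-apartment.

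By the hypothesis $g.H_{\alpha,\ell}^+ \subseteq \A$, we have $H_{\alpha,\ell}^+ \subseteq g^{-1}.\A \cap \A$. Axiom (A2) applied to the charts $f_0$ and $g^{-1}.f_0$ produces $w\in W_a$ satisfying $f_0|_{B} = g^{-1}.f_0 \circ w|_{B}$, where $B = f_0^{-1}(\A \cap g^{-1}.\A)$. Identifying $\A$ with its image under $f_0$, this equation says that $g.p = w(p)$ for every $p\in B$. Since $H_{\alpha,\ell}^+ \subseteq B$, we conclude
$$
g.H_{\alpha,\ell}^+ = w(H_{\alpha,\ell}^+).
$$
It therefore suffices to verify that the right-hand side is an affine half-apartment of $\A$.

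For this, decompose $w = (t_c, w_s) \in T \rtimes W_s$, so that $w(x) = c + w_s(x)$ for all $x\in \A$. Using the identification of Theorem \ref{thm:apt} together with Proposition \ref{prop:chi_f}, we may write $H_{\alpha,\ell}^+ = \{ x\in \A \colon \alpha(x) \geq \ell\}$. For $y = w(x)$ the condition $\alpha(x)\geq \ell$ becomes
$$
\alpha\left(w_s^{-1}(y - c)\right) = (w_s\alpha)(y) - (w_s\alpha)(c) \geq \ell,
$$
since $w_s$ is a linear isometry of $\A$. Setting $\alpha' \coloneqq w_s(\alpha) \in \Sigma$ (using that $W_s$ preserves $\Sigma$) and $\ell' \coloneqq \ell + \alpha'(c) \in \Lambda$, we obtain
$$
g.H_{\alpha,\ell}^+ = w(H_{\alpha,\ell}^+) = H_{\alpha', \ell'}^+,
$$
completing the argument.

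There is no real obstacle here: the only input beyond bookkeeping is the existence of the Weyl-group element $w$, which is exactly the content of the change-of-charts part of axiom (A2). The translation of the explicit description of $H_{\alpha,\ell}^+$ through the affine Weyl action is formal, and the fact that $W_s$ preserves $\Sigma$ is built into the definition of a root system.
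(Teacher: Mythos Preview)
Your proof is correct and follows essentially the same approach as the paper: both apply axiom (A2) to obtain $w\in W_a$ with $g.H_{\alpha,\ell}^+ = w(H_{\alpha,\ell}^+)$, then decompose $w=(t_c,w_s)$ to identify the image as $H_{\alpha',\ell'}^+$ with $\alpha'=w_s(\alpha)$. Your argument is in fact slightly more explicit than the paper's, carefully justifying the inclusion $H_{\alpha,\ell}^+\subseteq B$ and the computation of $\ell'$.
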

		\begin{proof}
			By axiom (A2), Theorem \ref{thm:A2}, there is $w \in W_a$ such that $g.H_\alpha^+ = w(H_\alpha^+)$. We decompose $w = (t_a, w_s) \in \A \rtimes W_s = W_a$ and define $\alpha' := w_s(\alpha)$. Then $g.H_{\alpha,\ell}^+ = w(H_{\alpha,\ell}^+) = a.H_{\alpha',\ell}^+ = H_{\alpha', \ell'}^+$ for $\ell' := (-v)(\chi_{\alpha'}(a)) + \ell'$. 
		\end{proof}
		We start by proving axiom
		\begin{enumerate}
			\item [(EC)] For $f_1,f_2 \in \Fun$, if $f_1(\A)\cap f_2(\A)$ is a half-apartment, then there exists $f_3 \in \Fun$ such that $f_i(\A)\cap f_3(\A)$ are half-apartments for $i\in \{1,2\}$. Moreover $f_3(\A)$ is the symmetric difference of $f_1(\A)$ and $f_2(\A)$ together with the boundary wall of $f_1(\A) \cap f_2(\A)$.
		\end{enumerate}
		in the special case where 
		$$
		f_1(\A)\cap f_2(\A) = H_{\alpha,\ell}^+ :=  \left\{ a.o \in \A \colon (-v)(\chi_\alpha(a)) \geq \ell \right\}
		$$
		for some $\alpha \in \Sigma$ and $\ell \in \Lambda$, before deducing the full statement in Theorem \ref{thm:EC}. The situation is illustrated in Figure \ref{fig:EC}.
		\begin{proposition}\label{prop:EC_alpha}
			Let $g \in G_\F$ such that $g^{-1}.\A \cap \A = H_{\alpha, \ell}^+$ for some $\alpha \in \Sigma, \ell \in \Lambda$. Then there exists an $h\in G_\F$ such that $h^{-1}.\A \cap \A = H_{\alpha,\ell}^-$ and $ h.H_{\alpha,\ell}^+ = g.H_{\alpha,\ell}^-$. Moreover there is some $n\in \operatorname{Nor}_{G_\F}(A_\F)$ such that pointwise $g.H_{\alpha,\ell}^+ = n.H_{\alpha,\ell}^+$ and $h.H_{\alpha,\ell}^- = n.H_{\alpha,\ell}^-$.
		\end{proposition}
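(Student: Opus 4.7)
The plan is to use axiom (A2) to produce an element $n\in \operatorname{Nor}_{G_\F}(A_\F)$ realizing the action of $g$ on $H_{\alpha,\ell}^+$, and then to modify $g$ by a carefully chosen element of the root group $(U_{-\alpha})_\F$ to obtain $h$. By (A2) (Theorem \ref{thm:A2}) there is such an $n$, so $n^{-1}g$ pointwise fixes $H_{\alpha,\ell}^+$; applying Corollary \ref{cor:NalphaO_fixes_H_affine}, I write
$$
g = n \cdot u \cdot t, \qquad u \in (U_\alpha)_\F \text{ with } \varphi_\alpha(u) \leq \ell, \quad t \in A_\F(O)M_\F,
$$
where $t$ fixes $\A$ pointwise by Theorem \ref{thm:stab_A}.

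The crucial observation—and the step I expect to be the only genuinely non-bookkeeping moment—is that the hypothesis $g^{-1}.\A\cap\A = H_{\alpha,\ell}^+$ forces $\varphi_\alpha(u) = \ell$ exactly. Since $n$ preserves $\A$ setwise and $t$ fixes it pointwise, Proposition \ref{prop:UonA} shows that for $p\in\A$ the condition $g.p\in\A$ is equivalent to $u.p=p$. Combining with Proposition \ref{prop:Ualphaconv} identifies $g^{-1}.\A\cap\A$ with $\operatorname{Fix}_\A(u) = H_{\alpha,\varphi_\alpha(u)}^+$, and comparing with the hypothesis yields $u \neq \operatorname{Id}$ and $\varphi_\alpha(u) = \ell$. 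This pins down $u$ as being exactly at the wall $M_{\alpha,\ell}$, which is what will make the rank-one Jacobson--Morozov machinery of Section \ref{sec:BT_rank_1} applicable.

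Because $\Sigma$ is reduced we have $(\frakg_{2\alpha})_\F = 0$, so Lemma \ref{lem:BTm_uuu} decomposes $m(u) = v_1 \cdot u \cdot v_2$ with $v_1,v_2\in (U_{-\alpha})_\F$ and $\varphi_{-\alpha}(v_i) = -\ell$. I then define
$$
h := n\cdot v_1^{-1}.
$$
The three conclusions will follow by direct computation. Since $n^{-1}.\A = \A$, one has $h^{-1}.\A\cap\A = v_1.\A\cap\A$, which Propositions \ref{prop:UonA} and \ref{prop:Ualphaconv} identify with $\operatorname{Fix}_\A(v_1) = H_{\alpha,\ell}^-$, giving claim $(1)$. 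For claim $(2)$, the identity $u = v_1^{-1} m(u) v_2^{-1}$, together with $v_2^{-1}$ fixing $H_{\alpha,\ell}^-$ pointwise (since $\varphi_{-\alpha}(v_2^{-1}) = -\ell$) and $m(u)$ acting as the reflection along $M_{\alpha,\ell}$ by Proposition \ref{prop:BTmu_in_Wa}, yields $u.H_{\alpha,\ell}^- = v_1^{-1}.H_{\alpha,\ell}^+$, whence
$$
g.H_{\alpha,\ell}^- = n u t.H_{\alpha,\ell}^- = n u.H_{\alpha,\ell}^- = n v_1^{-1}.H_{\alpha,\ell}^+ = h.H_{\alpha,\ell}^+.
$$
Finally, the \emph{moreover} clause is witnessed by the same $n$: by construction $g$ agrees with $n$ pointwise on $H_{\alpha,\ell}^+$, and $h = n v_1^{-1}$ agrees with $n$ pointwise on $H_{\alpha,\ell}^-$ since $v_1^{-1}$ fixes $H_{\alpha,\ell}^-$ pointwise.
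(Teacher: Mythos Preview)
Your proof is correct and follows essentially the same route as the paper: obtain $n$ via (A2), write $n^{-1}g$ as a root-group element $u$ times a pointwise $\A$-stabilizer via Corollary \ref{cor:NalphaO_fixes_H_affine}, then set $h = n v_1^{-1}$ using the Jacobson--Morozov decomposition $m(u) = v_1 u v_2$ from Lemma \ref{lem:BTm_uuu}. Your explicit justification that $\varphi_\alpha(u) = \ell$ (which the paper asserts without comment) and your direct computation $h^{-1}.\A\cap\A = v_1.\A\cap\A = \operatorname{Fix}_\A(v_1) = H_{\alpha,\ell}^-$ are in fact cleaner than the paper's more roundabout verification of the latter.
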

		\begin{proof}
			Since $g.H_{\alpha,\ell}^+ = \A \cap g.\A \subseteq \A$, we can apply axiom (A2), Theorem \ref{thm:A2}, to obtain $n \in \operatorname{Nor}_{G_\F}(A_\F)$ such that $g.H_{\alpha,\ell}^+ = n.H_{\alpha,\ell}^+$ pointwise. Hence $n^{-1}g$ fixes $H_{\alpha,\ell}^+$ pointwise and by Corollary \ref{cor:NalphaO_fixes_H_affine}, $n^{-1}g \in U_{\alpha, \ell} A_\F(O) M_\F$. So let $u \in U_{\alpha, \ell}$ such that $n^{-1}g.p = u.p$ for all $p \in \A$. By Lemma \ref{lem:BTm_uuu}, the Jacobson-Morozov-homomorphism can be used to define
			\begin{align*}
				u' = \varphi_\F \begin{pmatrix}
					1 & 0 \\ - 1/t & 0
				\end{pmatrix} \in (U_{-\alpha})_\F
			\end{align*}
			with $\varphi_{-\alpha} (u') = -\varphi_\alpha(u) = -\ell$ and $m(u) = u'uu' \in \operatorname{Nor}_{G_\F}(A_\F)$ which acts as the reflection along $M_{\alpha, \ell} = \{ a.o \in \A \colon (-v)(\chi_\alpha(a)) = \ell \}$ by Proposition \ref{prop:BTmu_in_Wa}. For $h := n(u')^{-1} \in G_\F$ we then have
			$$
			h.H_{\alpha,\ell}^+ = n (u')^{-1} m(u) .H_{\alpha,\ell}^- = n u u' .H_{\alpha,\ell}^- = nu.H_{\alpha,\ell}^- = g.H_{\alpha,\ell}^-
			$$
			with $(\A \cap h.H_{\alpha,\ell}^+) = g.M_{\alpha,\ell} = n.M_{\alpha,\ell} = h.M_{\alpha,\ell} \subseteq h.H_{\alpha,\ell}^-$ and therefore 
			$$
			h.(h^{-1}.\A\cap \A) = \A \cap h.\A = (\A \cap h.H_{\alpha,\ell}^+ )\cup(\A \cap h.H_{\alpha,\ell}^-) = \A \cap h.H_{\alpha,\ell}^- = n.H_{\alpha,\ell}^-
			$$
			which implies $h^{-1}.\A \cap \A = h^{-1}n.H_{\alpha,\ell}^- = u'.H_{\alpha,\ell}^- = H_{\alpha,\ell}^- $ as required.
		\end{proof}
		
		\begin{figure}[h]
			\centering
			\includegraphics[width=0.5\linewidth]{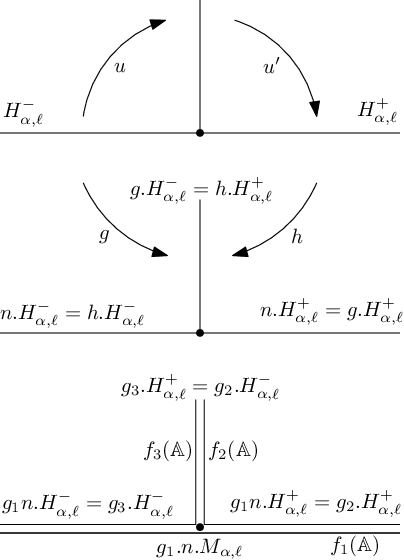}
			\caption{Axiom (EC) states that if $f_1(\A) \cap f_2(\A)$ is a half-apartment, then there exists $f_3 \in \F$ situated as illustrated. Proposition \ref{prop:EC_alpha} deals with the case where the half-apartment is contained in $\A$, illustrated in the first two parts. Theorem \ref{thm:EC} then tackles the general case of axiom (EC). }
			\label{fig:EC}
		\end{figure}

		\begin{theorem}\label{thm:EC}
			Axiom 
			\begin{enumerate}
				\item [(EC)] For $f_1,f_2 \in \Fun$, if $f_1(\A)\cap f_2(\A)$ is a half-apartment, then there exists $f_3 \in \Fun$ such that $f_i(\A)\cap f_3(\A)$ are half-apartments for $i\in \{1,2\}$. Moreover $f_3(\A) = f_1(\A)\cap f_3(\A) \cup f_2(\A)\cap f_3(\A)$ and $\partial (f_1(\A)\cap f_2(\A) ) = f_1(\A)\cap f_3(\A) \cap f_2(\A)\cap f_3(\A)$. 
			\end{enumerate}
			holds.
		\end{theorem}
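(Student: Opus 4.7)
The strategy is to reduce the general case to the restricted setup already handled by Proposition \ref{prop:EC_alpha}, via the $G_\F$-action. Writing $f_1 = g_1.f_0$ and $f_2 = g_2.f_0$ and pre-composing everything with $g_1^{-1}$ (which permutes $\Fun$ and preserves intersection half-apartments), I may assume $f_1 = f_0$ and $f_2 = g.f_0$ with $g := g_1^{-1}g_2$. The hypothesis that $\A \cap g.\A$ is a half-apartment of the building contained in $\A$, together with Lemma \ref{lem:subhalfapartment_nice}, then gives that $g^{-1}.\A \cap \A = g^{-1}.(\A \cap g.\A)$ is also a half-apartment inside $\A$ and hence equals $H_{\alpha,\ell}^+$ in standard coordinates for some $\alpha \in \Sigma$, $\ell \in \Lambda$, placing us exactly in the hypothesis of Proposition \ref{prop:EC_alpha}.

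Applying that proposition yields $h \in G_\F$ with $h^{-1}.\A \cap \A = H_{\alpha,\ell}^-$ and $h.H_{\alpha,\ell}^+ = g.H_{\alpha,\ell}^-$, together with an $n \in \operatorname{Nor}_{G_\F}(A_\F)$ for which $g.H_{\alpha,\ell}^+ = n.H_{\alpha,\ell}^+$ and $h.H_{\alpha,\ell}^- = n.H_{\alpha,\ell}^-$; in particular the three relevant walls coincide as $g.M_{\alpha,\ell} = h.M_{\alpha,\ell} = n.M_{\alpha,\ell}$. I then take $f_3 := h.f_0 \in \Fun$ (equivalently $g_1 h.f_0$ in the original notation).

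The first required intersection is immediate: $f_0(\A) \cap f_3(\A) = \A \cap h.\A = h.H_{\alpha,\ell}^-$, a half-apartment. For the second, $f_2(\A) \cap f_3(\A) = g.\A \cap h.\A$, the inclusion $h.H_{\alpha,\ell}^+ = g.H_{\alpha,\ell}^- \subseteq g.\A \cap h.\A$ is clear, and for the reverse I write $p \in g.\A \cap h.\A$ as $p = h.q$ with $q \in H_{\alpha,\ell}^+ \cup H_{\alpha,\ell}^-$: if $q \in H_{\alpha,\ell}^+$ then $p \in h.H_{\alpha,\ell}^+ = g.H_{\alpha,\ell}^-$ directly, while if $q \in H_{\alpha,\ell}^-$ then $p \in h.H_{\alpha,\ell}^- \subseteq \A$ and $p \in g.\A$, so $p \in \A \cap g.\A = g.H_{\alpha,\ell}^+$; combined with $p \in h.H_{\alpha,\ell}^- = n.H_{\alpha,\ell}^-$ and $g.H_{\alpha,\ell}^+ = n.H_{\alpha,\ell}^+$, this forces $p \in n.M_{\alpha,\ell} \subseteq g.H_{\alpha,\ell}^-$.

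The remaining conditions of (EC) then read off: $f_3(\A) = h.\A = h.H_{\alpha,\ell}^- \cup h.H_{\alpha,\ell}^+ = (f_0(\A) \cap f_3(\A)) \cup (f_2(\A) \cap f_3(\A))$, and the common wall satisfies $(f_0(\A) \cap f_3(\A)) \cap (f_2(\A) \cap f_3(\A)) = h.M_{\alpha,\ell} = g.M_{\alpha,\ell} = \partial(g.H_{\alpha,\ell}^+) = \partial(f_0(\A) \cap f_2(\A))$. The only non-routine point is the case analysis pinning down $g.\A \cap h.\A$ exactly, and it hinges on the fact from Proposition \ref{prop:EC_alpha} that $g$ and $h$ act as the \emph{same} normalizer element $n$ on the complementary half-apartments $H_{\alpha,\ell}^+$ and $H_{\alpha,\ell}^-$, so that $g.H_{\alpha,\ell}^+$ and $h.H_{\alpha,\ell}^-$ lie on opposite sides of the shared wall $n.M_{\alpha,\ell}$.
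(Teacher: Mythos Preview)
Your proof is correct and follows essentially the same route as the paper: reduce via the $G_\F$-action to the standard chart, invoke Lemma \ref{lem:subhalfapartment_nice} to identify $g^{-1}.\A \cap \A$ as some $H_{\alpha,\ell}^+$, apply Proposition \ref{prop:EC_alpha}, and set $f_3 = h.f_0$ (i.e.\ $g_1h.f_0$ before reduction). Your case analysis verifying $g.\A \cap h.\A = h.H_{\alpha,\ell}^+$ exactly is in fact more explicit than the paper's argument, which asserts the corresponding equality $f_2(\A)\cap f_3(\A) = f_3(H_{\alpha,\ell}^+)$ directly from the three displayed identities; your use of the common normalizer $n$ to force points in the overlap onto the wall $n.M_{\alpha,\ell}$ makes the reverse inclusion transparent.
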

		\begin{proof}
			Let $g_1, g_2 \in G_\F$ such that $f_1 = g_1.f_0$ and $f_2 = g_2.f_0$. For $g := g_1^{-1} g_2$, we have $f_2 = g_1 g.f_0$. Since $f_1(\A)\cap f_2(\A)$ is a half-apartment, so is
			$$
			(g_2)^{-1}(f_1(\A)\cap f_2(\A)) = g_2^{-1}(g_1.\A \cap g_2.\A) = g^{-1}.\A \cap \A \subseteq \A
			$$
			and by Lemma \ref{lem:subhalfapartment_nice}, there is some $\alpha \in \Sigma$ and $\ell \in \Lambda$ such that $g^{-1}.\A \cap \A = H_{\alpha, \ell}^+$. Now we use Proposition \ref{prop:EC_alpha} to obtain $h\in G_\F$ and $n\in\operatorname{Nor}_{G_\F}(A_\F)$ with $g.H_{\alpha,\ell}^+ = n.H_{\alpha,\ell}^+$, $h.H_{\alpha,\ell}^- = n.H_{\alpha,\ell}^-$ and $h.H_{\alpha,\ell}^+ = g.H_{\alpha,\ell}^-$. We define $g_3 := g_1h $ and $f_3 = g_3.f_0$.
			We have as in Figure \ref{fig:EC}.
			\begin{align*}
				g_3.H_{\alpha,\ell}^- &= g_1h.H_{\alpha,\ell}^- = g_1.n.H_{\alpha,\ell}^- \\
				g_2.H_{\alpha,\ell}^+ &= g_1g.H_{\alpha,\ell}^+ = g_1.n.H_{\alpha,\ell}^+ \\
				g_3.H_{\alpha,\ell}^+ &= g_1h.H_{\alpha,\ell}^+ = g_1g.H_{\alpha,\ell}^- = g_2.H_{\alpha,\ell}^- 
			\end{align*}
			so $f_1(\A)\cap f_3(\A) = f_3(H_{\alpha,\ell}^-)$ and $f_2(\A)\cap f_3(\A) = f_3(H_{\alpha,\ell}^+)$ are half-apartments and
			$$
			f_3(\A) = f_3(H_{\alpha,\ell}^+) \cup f_3(H_{\alpha,\ell}^-) = f_1(\A)\cap f_3(\A) \cup f_2(\A)\cap f_3(\A).
			$$
			The wall of the half-apartment $f_1(\A)\cap f_2(\A) = g_2.H_{\alpha,\ell}^+ $ is given by 
			\begin{align*}
				\partial (f_1(\A)\cap f_2(\A) ) &= g_2.M_{\alpha,\ell} = g_1g.M_{\alpha,\ell} = g_1.n.M_{\alpha,\ell} =
				g_1n.H_{\alpha,\ell}^+ \cap g_1n.H_{\alpha,\ell}^- \\
				&= f_1(\A)\cap f_2(\A) \cap f_1(\A)\cap f_3(\A). \qedhere
			\end{align*}
		\end{proof} 
		
		\noindent
		This concludes the proof of the last remaining axiom and the proof of Theorem \ref{thm:B_is_building}.

\subsection{Beyond reduced root systems}\label{sec:beyond}
		
		The main theorem of this article relies on the assumption that the root system $\Sigma$ is reduced.
		\begin{reptheorem}{thm:B_is_building}
			If the root system $\Sigma$ is reduced, then the pair $(\B,\Fun)$ is an affine $\Lambda$-building.
		\end{reptheorem}
		We remark that $\B$ is defined for any self-adjoint semisimple linear algebraic $\K$-group, independent of its root system. We expect Theorem \ref{thm:B_is_building} to still hold without the assumption on $\Sigma$. 
		\begin{question}
			Is the pair $(\B,\Fun)$ an affine $\Lambda$-building even when $\Sigma$ is not reduced?
		\end{question}
		We outline here how our proof relies on the assumption, how the assumption cannot be removed using our strategy and a possible alternative proof strategy that might be of use to eliminate the assumption. 
		
		In our proof, the assumption first comes up in the Jacobson-Morozov Lemma, both in the setting of Lie algebras, see \cite[Lemma \ref{I-lem:JM_basic}]{AppAGRCF} and in the semialgebraic setting, see \cite[Proposition \ref{I-prop:Jacobson_Morozov_real_closed}]{AppAGRCF}. Explicit calculations in $\mathfrak{su}_{1,2}$ show that \cite[Lemma \ref{I-lem:JM_basic}]{AppAGRCF} does not hold and similarly, \cite[Proposition \ref{I-prop:Jacobson_Morozov_real_closed}]{AppAGRCF} does not hold for $\operatorname{SU}(1,2)$. For given $\alpha \in \Sigma$ and $X\in \frakg_\alpha \oplus \frakg_{2\alpha}$, the task of the Jacobson-Morozov Lemma is to find $Y \in \frakg_{-\alpha}\oplus \frakg_{-2\alpha}$ and $H\in \fraka \subseteq \frakg_0$ such that $(X,Y,H)$ is an $\mathfrak{sl}_2$-triplet. While $Y$ can be found as a multiple of $\theta(X)$, there is no guarantee that $H :=[X,Y] \in \fraka$ when $\frakg_{2\alpha}\neq 0$.
		
		In the proof of Theorem \ref{thm:B_is_building}, the Jacobson-Morozov Lemma is used to associate to each $u \in (U_\alpha)_\F$ an element $m(u) \in \operatorname{Nor}_{G_\F}(A_\F)$ representing a reflection as in Proposition \ref{prop:BTmu_in_Wa}. We expect the following construction in the rank one subgroup $L_{\pm \alpha}$ to give a definition of $m(u)$ also when $\Sigma$ is not reduced. We use the Bruhat-decomposition of $L_{\pm\alpha}$.
		
		\begin{corollary}\label{cor:levi_Bruhat}
			Let $(B_{\alpha})_\F := (M_{\pm \alpha})_\F (A_{\pm \alpha})_\F (U_\alpha)_\F$. Then there is a representative $m \in (N_{\pm \alpha})_\F$ of the unique non-trivial element in $W_{\pm \alpha}$, so that
			$$
			(L_{\pm \alpha})_\F = (B_{\alpha})_\F \ \amalg \   (B_{\alpha})_\F \cdot m \cdot (B_{ \alpha})_\F.
			$$
		\end{corollary}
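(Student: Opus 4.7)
The plan is to obtain this Corollary as a direct specialization of the general Bruhat decomposition \cite[Theorem \ref{I-thm:BWB}]{AppAGRCF} to the rank one subgroup $(L_{\pm \alpha})_\F$. From \cite[Section \ref{I-sec:rank1}]{AppAGRCF}, the group $(L_{\pm \alpha})_\F$ is itself a semisimple connected self-adjoint algebraic $\K$-group, and it carries the same internal structure as $G_\F$: a maximal $\K$-split torus $(A_{\pm \alpha})_\F$, its centralizer $(M_{\pm \alpha})_\F (A_{\pm \alpha})_\F$ in the compact part, and its normalizer $(N_{\pm \alpha})_\F$ producing the Weyl group $W_{\pm \alpha} = (N_{\pm \alpha})_\F / (M_{\pm \alpha})_\F$. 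Hence the hypotheses of \cite[Theorem \ref{I-thm:BWB}]{AppAGRCF} are satisfied for $(L_{\pm \alpha})_\F$.

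The key observation is that the root system of $(L_{\pm \alpha})_\F$ relative to $(A_{\pm \alpha})_\F$ is either $\{\pm \alpha\}$ or (when $2\alpha \in \Sigma$) $\{\pm \alpha, \pm 2\alpha\}$; in both cases it has rank one, and the spherical Weyl group $W_{\pm \alpha}$ consists of exactly two elements: the identity and one reflection. Choosing the positive system determined by $\alpha$ identifies the Borel with $(B_\alpha)_\F = (M_{\pm \alpha})_\F (A_{\pm \alpha})_\F (U_\alpha)_\F$, where $(U_\alpha)_\F = \exp((\frakg_\alpha)_\F \oplus (\frakg_{2\alpha})_\F)$ collects all positive root spaces.

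Applying \cite[Theorem \ref{I-thm:BWB}]{AppAGRCF} now yields
$$
(L_{\pm \alpha})_\F = \bigsqcup_{w \in W_{\pm \alpha}} (B_\alpha)_\F \cdot w \cdot (B_\alpha)_\F
= (B_\alpha)_\F \ \amalg \ (B_\alpha)_\F \cdot m \cdot (B_\alpha)_\F,
$$
for any lift $m \in (N_{\pm \alpha})_\F$ of the non-trivial element of $W_{\pm \alpha}$, which exists since Weyl group elements always admit representatives in the normalizer (as already used throughout for $G_\F$). No serious obstacle is expected; the only care needed is to keep track that $(B_\alpha)_\F$ in the sense of \cite[Section \ref{I-sec:BWB}]{AppAGRCF} applied to $(L_{\pm \alpha})_\F$ agrees with the explicit product $(M_{\pm \alpha})_\F (A_{\pm \alpha})_\F (U_\alpha)_\F$, which follows from \cite[Section \ref{I-sec:rank1}]{AppAGRCF}.
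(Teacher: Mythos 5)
Your proof is correct and follows what is almost certainly the intended argument: the paper itself gives no local proof of Corollary~\ref{cor:levi_Bruhat} (it is a convenience restatement of Corollary~\ref{I-cor:levi_Bruhat} from~\cite{AppAGRCF}, which is cited directly in the proof of Proposition~\ref{prop:BT_mixed_Iwasawa_rank_1}), and your derivation---specializing the general Bruhat decomposition $G_\F = \coprod_{w\in W_s} B_\F\, w\, B_\F$ to the rank one group $(L_{\pm\alpha})_\F$---is the natural one. The ingredients you invoke are all verified elsewhere in the paper: that $(L_{\pm\alpha})_\F$ is again a semisimple self-adjoint connected $\K$-group with the same internal structure ($(K_{\pm\alpha})_\F, (A_{\pm\alpha})_\F, (M_{\pm\alpha})_\F$, etc.) is explicitly stated in the proof of Lemma~\ref{lem:BTuu_usu}; the restricted root system of $(L_{\pm\alpha})_\F$ relative to $(A_{\pm\alpha})_\F$ is $\{\pm\alpha\}$ or $\{\pm\alpha,\pm 2\alpha\}$, hence rank one with $|W_{\pm\alpha}|=2$; and $(B_\alpha)_\F = (M_{\pm\alpha})_\F(A_{\pm\alpha})_\F(U_\alpha)_\F$ is precisely the rank one analogue of $B_\F = M_\F A_\F U_\F$ defined for $G_\F$. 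The only cosmetic point worth flagging is that you might explicitly note $(U_\alpha)_\F = \exp((\frakg_\alpha)_\F\oplus(\frakg_{2\alpha})_\F)$ already collects \emph{all} the positive restricted root spaces of $(L_{\pm\alpha})_\F$, so the unipotent radical of $(B_\alpha)_\F$ is exactly $(U_\alpha)_\F$ and the identification with the Borel of the Bruhat decomposition is clean even when $2\alpha\in\Sigma$; you gesture at this but it deserves one line.
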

		If $u \in (U_\alpha)_\F$, let $u \in (B_{-\alpha})_\F \ \amalg \   (B_{-\alpha})_\F \cdot m \cdot (B_{ -\alpha})_\F$ and since $(U_{\alpha})_\F \cap (B_{-\alpha})_\F = \{\operatorname{Id}\}$, we can find $b,b' \in (B_{-\alpha})_\F$ such that $u = bmb'$. Writing $b = \bar{u}\bar{a}\bar{m}$ and $b' = \bar{m}'\bar{a}'\bar{u}'$ with $\bar{m}, \bar{m}' \in (M_{\pm \alpha})_\F, \bar{a},\bar{a}' \in (A_{\pm \alpha})_\F$ and $\bar{u}, \bar{u}'\in (U_{-\alpha})_\F$, we define
		$$
		m(u):= \bar{a} \bar{m} m \bar{m}' \bar{a}' = \bar{u}^{-1} u (\bar{u}')^{-1}  \in \operatorname{Nor}_{(L_{\pm\alpha})_\F }((A_{\pm \alpha})_\F ) \cap (U_{-\alpha})_\F (U_{\alpha})_\F (U_{-\alpha})_\F.
		$$
		We note that as a consequence of \cite[Lemma \ref{I-lem:levi_fixes_A}]{AppAGRCF}, $\operatorname{Nor}_{(L_{\pm\alpha})_\F }((A_{\pm \alpha})_\F ) \subseteq \operatorname{Nor}_{G_\F}(A_\F)$. In the case where $\Sigma$ is reduced, the uniqueness statement of Lemma \ref{lem:BTUUUm} then shows that $m(u)$ defined here agrees with $m(u)$ defined via the Jacobson-Morozov Lemma. This suggests that the definition of $m(u)$ as outlined here should be used when $\Sigma$ is not reduced. However, our proof also relies on the explicit description of $m(u)$ as $m(u) = u'uu''$ with $\varphi_{-\alpha}(u') = \varphi_{-\alpha}(u'') = -\varphi_\alpha(u)$, see the second part of Lemma \ref{lem:BTUUUm}, which follows from the explicit description of the root group valuation in Lemma \ref{lem:BTphiu_is_vt} relying heavily on the Jacobson-Morozov description. One way to allow a similar level of understanding of the root group valuations in the case of non-reduced root systems could be to do a case by case analysis of all rank one groups. 
		
		In the work of Bruhat Tits, the existence of such a $m(u)$ is related to axiom (DR4) of a \emph{donné radicielles} \cite[(6.1.1)]{BrTi}, which holds for even more general groups \cite[(6.1.3)c)]{BrTi} and \cite{BoTi65}. However, it is not clear how directly this is useful here, as we defined $\varphi_\alpha$ in terms of matrix entries and we do not work with a Chevalley-basis.

\section{Residue building and building at infinity}\label{sec:assoc-buildings}

In this section, we discuss the residue building and the spherical building at infinity of $\B$.
Let $(\B,\Fun)$ be an affine $\Lambda$-building. Recall that the \emph{Weyl chamber associated to a basis} $\Delta \subseteq \Phi$ is the set
$$
C_\Delta = \left\{ x \in \mathbb{A} \colon \alpha(x) \geq 0 \text{ for all } \alpha \in \Delta \right\}.
$$ 
For $J\subseteq \Delta$, the subsets
$$
C_J = \left\{x \in \A \colon \begin{array}{ll}
\alpha(x) \geq 0 & \text{for }  \alpha \in J \\
\alpha(x) = 0 & \text{for } \alpha \in \Delta \setminus J
\end{array}\right\}
$$
are called \emph{Weyl simplices}. For a chart $f \in \Fun$, $f(C_\Delta)$ and $f(C_J)$ are also called \emph{sectors} and \emph{sector facets based at} $f(0)$. We say that two sector facets $f(C_J)$, $f'(C_J')$ both based at some point $p \in \B$ \emph{share the same germ} if there exists $\varepsilon \in \Lambda_{>0}$ such that $f(C_J \cap B_{\varepsilon}(0)) = f'(C_J' \cap B_{\varepsilon}(0))$. The equivalence class of all sector facets sharing the same germ as some sector facet $s$ is called the \emph{germ of} $s$ and denoted by $\Delta_p s$. The set of all germs based at $p$ is a spherical building \cite[Theorem 5.17]{Sch09}, called the \emph{residue building $\Delta_p\B$}. Every apartment $A\subseteq \B$ containing $p$ determines an apartment $\Delta_p A$ in $\Delta_p \B$.

On the large scale, we call two sector facets $s,s'$ \emph{parallel} if there exists $s=s_1, s_2 , \ldots , s_n=s'$ sector facets such that $s_i=f_i(C_J)$ and $s_{i+1}=f_i \circ t_i (C_J)$ for some charts $f_i\in \Fun$ and translations $t_i\in W_a$. The parallel class of a sector facet $s$ is called \emph{simplex at infinity} and is denoted by $\partial_\infty s$. The set of simplices at infinity is a spherical building \cite[Section 3.4]{Ben1}, called the \emph{building at infinity} $\partial_{\infty}\B$. There is a one-to-one correspondence of apartments $A\subseteq \B$ and apartments $\partial_\infty A$ in $\partial_\infty \B$.

\subsection{Residue building}

Let $\B$ be the affine $\Lambda$-building defined in Section \ref{sec:building_def}, $C_0 \subseteq \A$ the fundamental Weyl chamber and $k \coloneqq O / J$ the residue field as defined in Section \ref{sec:residue_field}. Recall that a group acts \emph{strongly transitively} on the spherical building $\Delta_0\B$, if it acts transitively on pairs $(\Delta_os,\Delta_o A)$, where $s$ is a sector based at $o$ contained in an apartment $A$ in $\B$. In this section, we will prove the following characterization of the residue building.

\begin{theorem}\label{thm:germ_building}
	The group $G_k$ acts strongly transitively on the residue building $\Delta_o\B$, which can therefore be identified with the spherical building associated to the BN-pair $B=B_k = U_kA_kM_k =  \operatorname{Stab}_{G_k}(\Delta_oC_0)$ and $N=\operatorname{Nor}_{G_k}(A_k) = A_kN_k = \operatorname{Stab}_{G_k}(\Delta_o \mathbb{A})$.
\end{theorem}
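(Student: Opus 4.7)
The plan is to define a $G_k$-action on $\Delta_o \B$ via the reduction map $\pi \colon G_\F(O) \twoheadrightarrow G_k$ (which makes sense because $\operatorname{Stab}_{G_\F}(o) = G_\F(O)$ by Theorem \ref{thm:stab}), verify strong transitivity directly, and compute the stabilizers of $\Delta_o C_0$ and $\Delta_o \A$ in order to identify them with $B_k$ and $N_k$ after reduction.

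First I would show that $\ker \pi = G_\F(J) \coloneqq \{g \in G_\F(O) : g \equiv \operatorname{Id} \pmod{J}\}$ acts trivially on $\Delta_o \B$. Applying Corollary \ref{cor:UAKO} to $G_\F(J)$, it suffices to check that each factor $U_\F(J)$, $A_\F(J)$, $K_\F(J)$ fixes a genuine neighborhood of $o$ in every apartment through $o$. For a unipotent $u \in U_\F(J)$, Lemma \ref{lem:stab_Ualpha} gives $\varphi_\alpha(u_\alpha) < 0$ strictly for each positive root component, so Proposition \ref{prop:Uconv} exhibits the fixed set of $u$ as an intersection of half-apartments $H^+_{\alpha, k_\alpha}$ with $k_\alpha < 0$, which contains an honest neighborhood of $o$. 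The other factors are handled analogously. Combined with the fact that every element of $G_k$ lifts to $G_\F(O)$ (using surjectivity of $O \twoheadrightarrow k$ applied to the polynomial equations defining $G$), this yields a well-defined action of $G_k$ on $\Delta_o \B$.

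For strong transitivity, I would use (A3), (A4) and (A2) to show that $G_\F(O)$ already acts transitively on pairs (chamber germ at $o$, apartment germ at $o$ containing it): any apartment germ containing $o$ can be transported to $\Delta_o \A$ by an element of $G_\F(O)$, and within the fixed apartment the spherical Weyl group $W_s = N_\F/M_\F$, whose representatives lie in $N_\F \subseteq K_\F \subseteq G_\F(O)$, acts transitively on chamber germs. For the stabilizer of $\Delta_o C_0$: an element $g \in G_\F$ fixing this germ also fixes $o$, so $g \in G_\F(O) = U_\F(O)A_\F(O)K_\F$, and a case-by-case analysis shows the stabilizer is $B_\F(O) = U_\F(O)A_\F(O)M_\F$. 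Indeed, for the unipotent factor Proposition \ref{prop:Uconv} implies that fixing a germ of $C_0$ at its apex already forces $k_\alpha \leq 0$ for all positive roots and therefore fixing all of $C_0$, while the $K_\F$-factor is controlled by Proposition \ref{prop:K_fixes_C0_in_M}. Under $\pi$ this maps onto $B_k = U_k A_k M_k$. For $\Delta_o \A$: axiom (A2) identifies the germ-stabilizer in $G_\F(O)$ with elements whose chart-change is an affine Weyl group element stabilizing $o$; such an element must reduce to a spherical reflection. Consequently the stabilizer is $\operatorname{Nor}_{G_\F}(A_\F) \cap G_\F(O) = A_\F(O) N_\F$, which reduces onto $A_k N_k = \operatorname{Nor}_{G_k}(A_k)$.

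The main obstacle will be the stabilizer computation for $\Delta_o \A$: an element $g \in G_\F(O)$ fixing this germ need not fix all of $\A$, so one must argue carefully that the $W_a$-element $w$ from axiom (A2) implementing the chart change on $g^{-1}.\A \cap \A$ actually fixes $o$. A subtlety is that $W_a$-convex sets containing a neighborhood of $o$ need not equal $\A$ (e.g. $H^+_{\alpha,-1}$ is a half-apartment containing $o$ in its interior), so one cannot simply conclude that $g$ stabilizes $\A$ globally. Instead, one shows that the translational part of $w$ must be trivial since any non-trivial translation of $\A$ displaces the germ $\Delta_o \A$, contradicting our assumption. Once this is in place and the BN-pair axioms for $(B_k, N_k)$ in $G_k$ are verified (partly as a consequence of the strong transitivity itself, as in the proof of Theorem \ref{thm:building_at_infty}), the identification of $\Delta_o \B$ with the spherical building of $G_k$ follows from the standard correspondence between strongly transitive group actions on buildings and BN-pairs.
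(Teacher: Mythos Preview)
Your overall strategy matches the paper's: define the $G_k$-action via the reduction $\pi \colon G_\F(O) \twoheadrightarrow G_k$, show $\ker\pi$ acts trivially, establish strong transitivity, and compute the two stabilizers. However, there is a genuine gap in your stabilizer computation for $\Delta_o C_0$.

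You claim that the $G_\F(O)$-stabilizer of the germ $\Delta_o C_0$ equals $B_\F(O) = U_\F(O)A_\F(O)M_\F$. This is false. Take any $u' \in (U_{-\alpha})_\F$ with $\varphi_{-\alpha}(u') < 0$ strictly; by Proposition~\ref{prop:Ualphaconv} such $u'$ fixes the half-apartment $\{a.o : (-v)(\chi_\alpha(a)) \leq -\varphi_{-\alpha}(u')\}$, which contains a full neighbourhood of $o$ and in particular the germ $\Delta_o C_0$. Yet $u' \notin B_\F(O)$, since $B_\F \cap (U_{-\alpha})_\F = \{\operatorname{Id}\}$. Consequently, when you write $g = uak$ via Corollary~\ref{cor:UAKO} and argue that the $K_\F$-factor is controlled by Proposition~\ref{prop:K_fixes_C0_in_M}, this fails: that proposition needs $k$ to fix \emph{all} of $C_0$, not merely its germ, and the example just given shows that the $k$-component of a germ-stabilizer need not lie in $M_\F$. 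The same issue recurs for $\Delta_o\A$: its $G_\F(O)$-stabilizer is strictly larger than $A_\F(O)N_\F$.

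The paper resolves this by working at the level of $G_k$ rather than $G_\F(O)$. For a lift $g\in G_\F(O)$ of $\overline{g}\in\operatorname{Stab}_{G_k}(\Delta_o C_0)$, axiom~(A2) gives a neighbourhood $\Omega = f_0(C_0\cap B_\varepsilon(0))$ fixed pointwise, and Theorem~\ref{thm:BTstab} yields $g\in U_\Omega^+ U_\Omega^- N_\Omega$. The key observation (Lemma~\ref{lem:neg_kernel}) is that $U_\Omega^- \subseteq \ker\pi$: since $\Omega$ contains a point $a.o$ with $(-v)(\chi_\alpha(a))>0$ for all $\alpha>0$, every $u\in U_{\Omega}^-$ has $\varphi_{-\alpha}$ strictly negative and hence reduces to the identity. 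Thus $\overline{g}=\pi(g)\in U_k^+ A_k M_k = B_k$. For $\Delta_o\A$ the same argument kills both $U_\Omega^+$ and $U_\Omega^-$ under $\pi$. So the extra elements you are missing in the $G_\F(O)$-stabilizer are precisely those absorbed by $\ker\pi$, and the computation must be carried out after reduction, not before.

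Two smaller remarks. First, for strong transitivity the paper invokes axiom~(GG) from \cite{BeScSt} (any two sector germs at the same point lie in a common apartment); your appeal to (A4) concerns sectors at infinity and does not directly give a common apartment for germs. Second, your kernel argument via Iwasawa needs to produce a uniform $\varepsilon$ such that $g$ fixes $B_\varepsilon(o)$ in all of $\B$, not just in $\A$; the paper does this directly via the matrix-valuation estimate of Lemma~\ref{lem:matrixval-bound}, which is cleaner than handling the $K_\F(J)$-factor separately.
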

By \cite[Theorem 4.3.7]{EnPr05}, the residue field $k$ of a real closed field $\F$ is real closed. 
For any semialgebraic set $V_\F \subseteq \F^{n\times n}$, the entrywise reduction $\pi \colon V_\F(O) \coloneq V_\F \cap O^{n \times n} \to V_k$ is well-defined and surjective \cite[Proposition 2.18]{BIPP23arxiv}, in particular we obtain a surjective group homomorphism
\begin{align*}
\pi \colon	G_\F(O) \twoheadrightarrow G_k.
\end{align*}
By Theorem \ref{thm:stab}, $G_\F(O) = \operatorname{Stab}_{G_\F}(o)$ acts on the set of sector facets based at $o$. Since it acts by isometries, it preserves the equivalence relation of sharing the same germ, hence $G_\F(O)$ acts on the residue building $\Delta_o\B$. The following Lemma implies that this action descends to an action of $G_k$ on $\Delta_o \B$.

\begin{lemma}
	The subgroup $\operatorname{ker}( G_\F(O) \twoheadrightarrow G_k )$ acts trivially on $\Delta_o\B$.
\end{lemma}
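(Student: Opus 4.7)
The plan is to establish the stronger statement that every $g \in \ker(\pi)$ fixes a neighborhood of $o$ in $\A$ pointwise: there exists $\varepsilon > 0$ in $\Lambda$ (depending on $g$) such that $g.p = p$ for all $p \in B_\varepsilon(o) \cap \A$. Every sector facet $s$ based at $o$ can be written as $h.f_0(C_J)$ for some $h \in G_\F(O) = \operatorname{Stab}_{G_\F}(o)$; since $\ker(\pi)$ is normal in $G_\F(O)$, the conjugate $h^{-1}gh$ lies again in $\ker(\pi)$, so it suffices to prove the claim for sectors of the form $C_J \subseteq \A$. Pointwise fixing of $B_\varepsilon(o) \cap \A$ then implies $g.s \cap B_\varepsilon(o) = s \cap B_\varepsilon(o)$, giving $\Delta_o(g.s) = \Delta_o s$.

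To obtain such an $\varepsilon$, I would first decompose $g = uak$ using the Iwasawa decomposition of Corollary \ref{cor:UAKO}, with unique factors $u \in U_\F(O)$, $a \in A_\F(O)$, $k \in K_\F$. Because the residue field is itself real closed by \cite[Theorem 4.3.7]{EnPr05}, the analogous Iwasawa decomposition holds uniquely in $G_k$ (with the trivial valuation on the residue field). Applying $\pi$ to $g = uak$ and using uniqueness yields $\pi(u) = \pi(a) = \pi(k) = \Id$, so each Iwasawa factor already lies in $\ker(\pi)$; it suffices to handle them separately.

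The $A_\F$-factor is immediate, since $a \in A_\F(O)$ fixes all of $\A$ by Theorem \ref{thm:stab_A}. For the unipotent factor, decompose $u = \prod_{\alpha > 0} u_\alpha$ via \cite[Lemma \ref{I-lem:BCH_consequence}]{AppAGRCF} with $u_\alpha \in (U_\alpha)_\F$. Since $u - \Id$ has entries in the maximal ideal $J$, so does $\log u$, and because the root-space decomposition is orthogonal with respect to the Killing form (Lemma \ref{lem:orthogonal_valuation}), each component of $\log u$ in $(\frakg_\alpha)_\F \oplus (\frakg_{2\alpha})_\F$ also has entries in $J$. This forces $\varphi_\alpha(u_\alpha) < 0$ for every $\alpha > 0$, and Proposition \ref{prop:Uconv} then shows that $u$ fixes $B_\varepsilon(o) \cap \A$ pointwise for $\varepsilon := \min_{\alpha > 0}(-\varphi_\alpha(u_\alpha)) > 0$.

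The main obstacle is the $K_\F$-factor. Writing $k = \Id + Y$ with $Y$ entries in $J$, set $\delta := (-v)(Y) < 0$ in $\Lambda$. By Theorem \ref{thm:stab}, $k$ fixes $a.o \in \A$ precisely when $a^{-1}ka \in G_\F(O)$; after passing to coordinates in which $A_\F$ is diagonal (via the spectral theorem, as in Lemma \ref{lem:K_fixing_pm_mean}), one computes $(a^{-1}ka - \Id)_{ij} = a_i^{-1} Y_{ij} a_j$. The required condition $(-v)(a_i^{-1} Y_{ij} a_j) \leq 0$ rewrites as $(-v)(Y_{ij}) \leq (-v)(a_i) - (-v)(a_j)$, which is satisfied whenever the spread $\max_{i,j}|(-v)(a_i) - (-v)(a_j)|$ is at most $-\delta$. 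Applying Lemma \ref{lem:matrixval-bound} to both $a$ and $a^{-1}$ bounds $|(-v)(a_i)|$ linearly in $d(o, a.o)$, so choosing $\varepsilon > 0$ small enough (for instance $\varepsilon := -\delta/(2C')$ with $C'$ the constant of that lemma) ensures every $a.o \in B_\varepsilon(o) \cap \A$ meets the condition, yielding the desired pointwise fixing and completing the proof.
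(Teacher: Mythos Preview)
Your argument is correct but takes a longer route than the paper's. The paper proceeds without any Iwasawa splitting or reduction to $\A$: it shows directly that $g$ fixes an entire metric ball in $\B$. Writing an arbitrary $p = ka.o$ via the Cartan decomposition and bounding $(-v)(ka), (-v)((ka)^{-1}) \leq C'\lambda$ through Lemma~\ref{lem:matrixval-bound}, the single identity
\[
(ka)^{-1}g(ka) = (ka)^{-1}(g-\Id)(ka) + \Id
\]
lands in $G_\F(O)$ as soon as $(-v)(g-\Id) \leq -2C'\lambda$. This is essentially the same matrix estimate you carry out for the $K_\F$-factor, but applied once to $g$ itself rather than to its pieces; the normality reduction and the separate handling of $u$, $a$, $k$ are then unnecessary, though your decomposition does give a finer picture of how $\ker(\pi)$ sits inside each factor.

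One step in your version should be tightened: for the unipotent part, the $\frakg_\alpha$-component of $\log u$ is \emph{not} $\log u_\alpha$ in general (the BCH commutators contribute to higher root spaces), so ``component of $\log u$ in $J$'' does not immediately give $\varphi_\alpha(u_\alpha)<0$. Either peel off the lowest root inductively as in Lemma~\ref{lem:nn'}, or, more cleanly, apply the uniqueness of the product decomposition over the real closed residue field $k$ to $\pi(u)=\prod_\alpha\pi(u_\alpha)=\Id$ to force $\pi(u_\alpha)=\Id$, and then invoke the last clause of Lemma~\ref{lem:stab_Ualpha}.
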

\begin{proof}
	Let $g\in \ker( G_\F(O) \twoheadrightarrow G_k )$, which means that the matrix $g-\operatorname{Id}$ has entries in the maximal ideal $J$. In the language of Section \ref{sec:matrixvaluation}, where we defined $(-v)(M) \coloneqq \max_{ij}\{(-v)(M_{ij})\}$ for $M\in \F^{n\times n}$ this means $(-v)(g-\operatorname{Id}) < 0$.
	Now let $C' \in \Q_{\geq 0}$ be the constant from Lemma \ref{lem:matrixval-bound} such that whenever $a\in A_\F$ satisfies $(-v)(\chi_\delta(a)) \leq \lambda $ for all $\delta \in \Delta$ and some $\lambda \in \Lambda_{\geq 0}$, then $(-v)(a) \leq C'\ \lambda$. Let $\lambda \in \Lambda_{>0}$ be such that  
	$$
	(-v)(g-\operatorname{Id}) \leq - 2 C' \lambda.
	$$ 
	We will show that $g$ fixes all points $p\in \B$ with $d(p,o) \leq \lambda$, which then shows the result.
	
	Let $a\in A_\F$, $k \in K_\F$ so that $p = ka.o$ by the Cartan decomposition \cite[Theorem \ref{I-thm:KAK}]{AppAGRCF}. We have
	\begin{align*}
		\lambda \geq  d(p.o,o) &= d(a.o,o) = \sum_{\alpha \in \Sigma} \left\lvert (-v)(\chi_\alpha(a))\right\rvert ,
	\end{align*} 
so in particular $(-v)(\chi_\delta(a)) \leq \lambda$ for all $\delta \in \Delta$. Thus $(-v)(a) \leq C' \lambda$. By Lemma \ref{lem:matrixvaluation}, 
$$
(-v)(ka) \leq (-v)(k) + (-v)(a) = (-v)(a) \leq C' \lambda
$$
and since $ka$ has determinant $1$, the inverse is given by the adjugate matrix whose entries have the same valuations, so
$$
(-v)\left((ka)^{-1}\right) = (-v)(ka)  \leq C'\lambda.
$$
Now $g$ fixes $p$ if and only if $gka.o = ka.o$, or equivalently $(ka)^{-1}gka \in G_\F(O)$ by Theorem \ref{thm:stab}, and 
\begin{align*}
	(-v)\left( (ka)^{-1}gka \right) &= (-v)\left( (ka)^{-1}(g - \operatorname{Id}) ka + (ka)^{-1}\operatorname{Id}ka   \right) \\
	&\leq \max \left\{ (-v)\left( (ka)^{-1}(g - \operatorname{Id}) ka   \right) , (-v)(\operatorname{Id})\right\} \\
	&\leq \max \left\{ (-v)\left((ka)^{-1}\right) + (-v)(g-\operatorname{Id})  + (-v)(ka) , 0 \right\} \\
	& \leq  \max \left\{ C'\lambda -2C'\lambda + C'\lambda , 0 \right\} = 0
\end{align*}
where we used Lemma \ref{lem:matrixvaluation}.
\end{proof}

The following Lemma will help us with the computation of the stabilizers.

\begin{lemma}\label{lem:neg_kernel}
Let $\Sigma_{>0}$ be the positive roots with respect to some order. If $u\in U_\F^+$ stabilizes $o$ and a point $a.o \in \A$ with $\chi_{\alpha}(a) \in J$, for all $\alpha \in \Sigma_{>0}$, then $u \in \operatorname{ker}(\pi \colon G_\F(O) \twoheadrightarrow G_k)$.
\end{lemma}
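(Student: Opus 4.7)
The plan is to reduce to the individual root factors of $u$ and to exploit the final assertion of Lemma \ref{lem:stab_Ualpha} characterizing when $\varphi_\alpha(u) < 0$. First I would use the BCH-type decomposition \cite[Lemma \ref{I-lem:BCH_consequence}]{AppAGRCF} to write
\[
u = u_1 \cdots u_k, \qquad u_i \in (U_{\alpha_i})_\F,
\]
where $\Sigma_{>0} = \{\alpha_1,\ldots,\alpha_k\}$ is ordered $\alpha_1 > \cdots > \alpha_k$. The hypothesis $u.o = o$ together with Theorem \ref{thm:stab} gives $u \in U_\F(O)$, and the argument already used in the proof of Proposition \ref{prop:Uconv} (conjugating by elements of $A_\F$ and applying Lemma \ref{lem:nn'} repeatedly) forces each factor to satisfy $u_i \in (U_{\alpha_i})_\F(O)$, equivalently $\varphi_{\alpha_i}(u_i) \leq 0$.

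Next I would feed in the extra fixed point $a.o$. The hypothesis $\chi_\alpha(a) \in J$ for every $\alpha \in \Sigma_{>0}$ means $(-v)(\chi_\alpha(a)) < 0$ for all $\alpha \in \Sigma_{>0}$. Applying Proposition \ref{prop:Uconv} to the equation $u.(a.o) = a.o$ yields
\[
\varphi_{\alpha_i}(u_i) \leq (-v)(\chi_{\alpha_i}(a)) < 0
\]
for every $i$. So each $u_i$ has strictly negative root group valuation.

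Finally I would invoke the "moreover" part of Lemma \ref{lem:stab_Ualpha}: $\varphi_{\alpha_i}(u_i) < 0$ is equivalent to $(-v)\bigl((u_i - \operatorname{Id})_{jk}\bigr) < 0$ for every matrix entry, i.e.\ $u_i - \operatorname{Id} \in J^{n\times n}$, which is to say $\pi(u_i) = \operatorname{Id}$ in $G_k$. Since $\pi$ is a group homomorphism on $G_\F(O)$,
\[
\pi(u) = \pi(u_1)\cdots \pi(u_k) = \operatorname{Id},
\]
so $u \in \ker(\pi)$, as required. No step looks like a real obstacle here; the only mildly delicate point is ensuring the BCH decomposition behaves well with $G_\F(O)$, but this is precisely what Lemma \ref{lem:nn'} and the machinery in Proposition \ref{prop:Uconv} were designed for.
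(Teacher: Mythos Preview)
Your proof is correct and follows essentially the same route as the paper's: decompose $u$ into root-group factors, use the fixed point $a.o$ with $(-v)(\chi_\alpha(a))<0$ to force $\varphi_{\alpha_i}(u_i)<0$, invoke the last part of Lemma~\ref{lem:stab_Ualpha} to get $u_i-\operatorname{Id}\in J^{n\times n}$, and then pass to the product. The paper packages the decomposition via Proposition~\ref{prop:BTUOmega} (with $\Omega=\{o,a.o\}$) rather than Proposition~\ref{prop:Uconv}, and handles the product step by the identity $u'u''-\operatorname{Id}=(u'-\operatorname{Id})(u''-\operatorname{Id})+(u'-\operatorname{Id})+(u''-\operatorname{Id})$ instead of appealing to $\pi$ being a homomorphism, but these are cosmetic differences.
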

\begin{proof}
	Let $\Omega = \{o,a.o\}$, then $U_\Omega^{+} = \langle u' \in U_{\alpha,\Omega} \colon \alpha > 0 \rangle$ by Proposition \ref{prop:BTUOmega}. For all $\alpha\in \Sigma_{>0}$, $u' \in U_{\alpha,\Omega}$, we have
	$
	\varphi_{\alpha}(u') \leq (-v)(\chi_{\alpha}(a)) < 0,
	$
	in particular $(u'-\Id)_{ij} \in J$ for all $i,j$ by Lemma \ref{lem:stab_Ualpha}. Since $u'u''-\Id = (u'-\Id)(u''-\Id) + (u' - \Id) + (u'' - \Id)$, we have $(u-\Id)_{ij} \in J$ for all $u\in U^+$. In particular $\pi(u) = \Id \in G_k$, which is what we claimed.
\end{proof}

\begin{proof}[Proof of Theorem \ref{thm:germ_building}]
	Let $s,s'$ be sectors based at $o$ contained in apartments $A,A' \subseteq \B$. To check that $G_k$ acts strongly transitively on $\Delta_o \B$, it suffices to find an element $h\in G_k$ with $h.\Delta_o s = \Delta_o s'$ and $h.\Delta_o A = \Delta_o A'$. 
	By \cite{BeScSt}, the following axiom holds for affine $\Lambda$-buildings.
	\begin{itemize}
		\item [(GG)] Any two germs of sectors based at the same vertex are contained in a common apartment.
	\end{itemize}
    We apply (GG) to find an apartment $A'' \subseteq \B$ that contains the germs $\Delta_o s$ and $\Delta_o s'$. 
    Since $G_F$ acts transitively on apartments in $\B$, $G_\F(O)$ acts transitively on apartments containing $o$. Therefore $G_\F(O)$ and $G_k$ also act transitively on germs of apartments in $\Delta_o \B$.
    Let $g \in G_k$ be such that $g.\Delta_o A = \Delta_o A''$ and up to precomposing with an element in $N_k$, we may assume that $g$ stabilizes $\Delta_o s$. Similarly, let $g'\in G_k$ such that $g'.\Delta_oA'' = \Delta_o A'$ with $g'.\Delta_o s' = \Delta_o s'$. Let $g''\in G_k$ be an element that stabilizes the apartment $\Delta_oA''$, but such that $g''.\Delta_os = \Delta_o s'$. Note that $ h \coloneq g' g'' g \in G_k$ satisfies $h.\Delta_o s = \Delta_o s'$ and $h.\Delta_o A = \Delta_o A''$, proving that $G_k$ acts strongly transitively on $\Delta_o \B$. 
    
    By \cite[(5.2), (5.3) and Remark 2]{Ron} the strongly transitive action of $G_k$ determines the BN-pair $B= \operatorname{Stab}_{G_k}(\Delta_oC_0)$ and $N=\operatorname{Stab}_{G_k}(\Delta_o \mathbb{A})$, which in turn determines a building, which is isomorphic to $\Delta_o \B$.
    
    We now compute $\operatorname{Stab}_{G_k}(\Delta_0s_0)$ and $\operatorname{Stab}_{G_k}(\Delta_{o}\A)$. Let $\overline{g} \in \operatorname{Stab}_{G_k}(\Delta_os_0)$. By \cite[Proposition 2.18]{BIPP23arxiv}, there is a $g\in \operatorname{Stab}_{G_\F(O)}(\Delta_0s_0)$ with $\pi(g)=\overline{g}$. There exists an $\varepsilon \in \Lambda_{>0}$ such that $g.f_0(C_0 \cap B_{\varepsilon}(0)) = f_0(C_0 \cap B_{\varepsilon}(0))$. Actually by Axiom (A2) and the fact that $W_s$ acts simply transitively on Weyl-chambers, $g$ stabilizes $\Omega \coloneqq f_0(C_0 \cap B_{\varepsilon}(0))$ pointwise. 
    By Theorem \ref{thm:BTstab}, $g \in \operatorname{Stab}_{G_\F(O)}(\Omega) = U_\Omega^{-}U_\Omega^+N_\Omega$. We note that by Proposition \ref{thm:apt}, there is a point $p=a.o \in \Omega$ with $(-v)(\chi_{\alpha}(a)) > 0$, so we can apply Lemma \ref{lem:neg_kernel} with respect to the negative order on $\Sigma$ to see that $U_\Omega^- \subseteq \ker(\pi \colon G_\F(O) \twoheadrightarrow G_k)$. Writing $g=u^{-}u^+n$ for $u^- \in U_\Omega^-$, $u^+ \in U_\Omega^+$ and $n \in N_\Omega$, we notice that $n\in A_\F(O)M_{\F}$ by Theorem \ref{thm:stab_A}. We have $\overline{g}=\pi(u^{-})\pi(u^+)\pi(n) = \pi(u^+)\pi(n) \in U_k^+A_kM_k$. The other direction $B_k = U_k^+A_kM_k \subseteq \operatorname{Stab}_{G_k}(\Delta_os_0)$ follows from Theorem \ref{thm:NO_fixes_s0}, which states that $B_\F(O) = U_\F(O)A_\F(O)M_\F = \operatorname{Stab}_{G_{\F}}(s_0)$.
    
    If now $\overline{g} \in \operatorname{Stab}_{G_k}(\Delta_o\A)$, there exists $g\in \operatorname{Stab}_{G_\F(O)}(\Delta_o\A)$ with $\pi(g)=\overline{g}$. Let $\varepsilon \in \Lambda_{>0}$ such that $g.f_0(B_{\varepsilon}(0)) = f_0(B_{\varepsilon}(0))$. By axiom (A2) and Proposition \ref{thm:stab'_A} there is some $n\in A_\F(O)N_\F$ such that $n^{-1}g$ stabilises $\Omega \coloneqq  f_0(B_{\varepsilon}(0))$ pointwise. Then by Theorem \ref{thm:BTstab}, $n^{-1}g \in U_\Omega^-U_\Omega^+N_\Omega$ where we can apply Lemma \ref{lem:neg_kernel} to both the positive and negative order on $\Sigma$ to see that $\pi (U_\Omega^+) = \pi (U_\Omega^{-}) = \{\operatorname{Id}\} \subseteq G_k$. Thus $\overline{g} \in \pi(N_\Omega)\pi(n) \subseteq A_k N_k = \operatorname{Nor}_{G_k}(A_k)$, see Proposition \ref{thm:stab'_A}. The other direction $\operatorname{Nor}_{G_k}(A_k) = A_kN_k \subseteq \operatorname{Stab}_{G_k}(\Delta_o\A)$ follows directly since $A_\F(O)N_\F$ acts on $\A$ preserving $o$. 
\end{proof}

\subsection{Building at infinity}

Let $\B$ be the affine $\Lambda$-building defined in Section \ref{sec:building_def}.
In this section, we will prove the following characterization of the building at infinity.

\begin{theorem}\label{thm:building_at_infty}
	The group $G_\F$ acts strongly transitively on the building at infinity $\partial_\infty\B$, which can therefore be identified with the spherical building associated to the BN-pair $B=B_\F = U_\F A_\F M_\F =  \operatorname{Stab}_{G_\F}(\partial_\infty s_0)$ and $N=\operatorname{Nor}_{G_\F}(A_\F) = A_\F N_\F = \operatorname{Stab}_{G_\F}(\partial_\infty \mathbb{A})$.
\end{theorem}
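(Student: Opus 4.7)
The proof mirrors that of Theorem~\ref{thm:germ_building}: we verify that $G_\F$ acts strongly transitively on (chamber, apartment)-pairs in $\partial_\infty\B$, and then identify the stabilizers $B = \operatorname{Stab}_{G_\F}(\partial_\infty s_0)$ and $N = \operatorname{Stab}_{G_\F}(\partial_\infty\A)$; the BN-pair theory \cite[(5.2),(5.3),Remark 2]{Ron} then yields the identification of $\partial_\infty\B$ with the spherical building associated to this BN-pair. The action of $G_\F$ on $\partial_\infty\B$ is well defined because $G_\F$ acts on $\B$ by isometries and thus preserves parallelism of sector facets: a chain of in-apartment translations witnessing $s \parallel s'$ is transformed into a similar chain witnessing $g.s \parallel g.s'$.

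For strong transitivity, given two pairs $(\partial_\infty s_i, \partial_\infty A_i)$ with $\partial_\infty s_i \subseteq \partial_\infty A_i$, I would choose sector representatives $s_i \subseteq A_i$ in $\B$ and write $A_i = g_i.\A$ using $\Fun = \left\{ g.f_0 \colon g \in G_\F \right\}$. Then $g_2 g_1^{-1}.\partial_\infty A_1 = \partial_\infty A_2$, and the chambers $g_2 g_1^{-1}.\partial_\infty s_1$ and $\partial_\infty s_2$ both lie in $\partial_\infty A_2$; pulling back via $g_2$, simple transitivity of $W_s = N_\F/M_\F$ on chambers of $\partial_\infty\A$ provides an $n \in N_\F$ matching them, so that $g := g_2 n g_1^{-1}$ sends the first pair to the second.

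For the stabilizer $B$, the inclusion $B_\F \subseteq \operatorname{Stab}_{G_\F}(\partial_\infty s_0)$ holds because $M_\F$ fixes $\A$ pointwise, $A_\F$ translates $s_0$ to parallel sectors within $\A$, and each $u \in U_\F$ fixes some subsector $s \subseteq s_0$ pointwise by Lemma~\ref{lem:N_fixes_subsector}, forcing $\partial_\infty(u.s_0) = \partial_\infty s = \partial_\infty s_0$. Conversely, if $g$ stabilizes $\partial_\infty s_0$, the Bruhat decomposition \cite[Theorem~\ref{I-thm:BWB}]{AppAGRCF} writes $g = b_1 \tilde w b_2$ with $b_1, b_2 \in B_\F$ and $\tilde w \in N_\F$, and since $B_\F$ stabilizes $\partial_\infty s_0$, so does $\tilde w$. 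But $\tilde w.s_0$ is a Weyl chamber of $\A$ based at $o$, and two such chambers in a common apartment are parallel only if equal; the simple transitivity of $W_s$ on chambers through $o$ then forces $\tilde w \in M_\F \subseteq B_\F$, so $g \in B_\F$.

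The main obstacle is the stabilizer $N$. The inclusion $A_\F N_\F \subseteq \operatorname{Stab}_{G_\F}(\partial_\infty\A)$ follows from Proposition~\ref{thm:stab'_A}. Conversely, given $g \in \operatorname{Stab}_{G_\F}(\partial_\infty\A)$, the induced automorphism of the spherical Coxeter complex $\partial_\infty\A$ is realized by some $n \in N_\F$, and $n^{-1}g$ then fixes every chamber $\partial_\infty(w.s_0)$ of $\partial_\infty\A$ for $w \in W_s$ individually. Applying the stabilizer computation for $B$ to both $\partial_\infty s_0$ and to the opposite chamber $\partial_\infty(w_0.s_0)$ (with $w_0 \in W_s$ the longest element) yields $n^{-1}g \in B_\F \cap w_0 B_\F w_0^{-1} = B_\F \cap B_\F^-$. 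The crux is to identify this intersection with the Levi $T_\F = A_\F M_\F$ from Lemma~\ref{lem:BT_T_is_MA}: writing $uam = u^- a' m'$ for an element of $B_\F \cap B_\F^-$ gives $u = u^- \cdot t$ with $t \in T_\F$, and uniqueness in the Bruhat big-cell decomposition together with $U_\F \cap U_\F^- = \{\Id\}$ forces $u = u^- = \Id$. Hence $n^{-1}g \in A_\F M_\F$, so $g \in N_\F \cdot A_\F M_\F = A_\F N_\F$, completing the proof.
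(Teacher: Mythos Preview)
Your proof is correct, and the overall structure (strong transitivity, then BN-pair identification via \cite{Ron}) matches the paper. However, the computations of the two stabilizers take genuinely different routes.

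For $B = \operatorname{Stab}_{G_\F}(\partial_\infty s_0)$ you use the Bruhat decomposition: writing $g = b_1\tilde w b_2$ and observing that $\tilde w \in N_\F$ must then send $s_0$ to a parallel chamber based at $o$ inside $\A$, forcing $\tilde w \in M_\F$. The paper instead unwinds the definition of parallelism as a chain $s_0, s_1, \ldots, s_k = g.s_0$ of in-apartment translates and shows by induction along this chain, using Theorem~\ref{thm:NO_fixes_s0}, that each chart-defining element lies in $B_\F$. Your argument is shorter and more structural; the paper's stays closer to the building axioms and avoids invoking the Bruhat decomposition.

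For $N = \operatorname{Stab}_{G_\F}(\partial_\infty\A)$ the paper's argument is considerably simpler than yours: it uses the one-to-one correspondence between apartments of $\B$ and apartments of $\partial_\infty\B$ (stated just before the theorem) to conclude immediately that $g.\partial_\infty\A = \partial_\infty\A$ is equivalent to $g.\A = \A$, whence Proposition~\ref{thm:stab'_A} gives $g \in A_\F N_\F$. Your detour through $B_\F \cap B_\F^- = T_\F$ works, but that intersection identity is not proved in the paper; you would need to justify it, e.g.\ via the triangular form of $\operatorname{Ad}(U^\pm)$ as in the proof of Lemma~\ref{lem:U+U-fix_U+fix}, or by invoking uniqueness in the Iwasawa decomposition \cite[Theorem~\ref{I-thm:KAU}]{AppAGRCF}.
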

\begin{proof}
	Since $G_\F$ acts both on $\B$, but also on $\Fun$, the action preserves parallelism. Hence $G_\F$ acts on the building at infinity $\partial_\infty\B$. Let $s,s'$ be sectors contained in apartments $A,A'$ such that $( \partial_\infty s, \partial_\infty A), ( \partial_\infty s', \partial_\infty A') $ are pairs at infinity such that $ \partial_\infty s \subseteq  \partial_\infty A$ and $ \partial_\infty s' \subseteq  \partial_\infty A'$. 
	By the definition of apartments, $G_\F$ acts transitively on apartments in $\B$, hence also on apartments in $\partial_\infty \B$, so let $g\in G_\F$ such that $g.A = A'$. Now $g^{-1}.s'$ is some Weyl chamber in $A$. By the definition of Weyl chambers, $W_a$ acts transitively on Weyl chambers, so there exists an element $w\in W_a$ such that $w.s = g^{-1}.s'$. By axiom (A1) there exists $g' \in G_\F$ such that $g'.f = g.f \circ w$, so $g'.A = A'$ and $g'.s = s'$. This shows that $G_\F$ acts strongly transitively on $\partial_\infty \B$.
	
	By \cite[(5.2), (5.3) and Remark 2]{Ron} the strongly transitive action of $G_\F$ determines the BN-pair $B= \operatorname{Stab}_{G_\F}(\partial_\infty s_0)$ and $N=\operatorname{Stab}_{G_\F}(\partial_\infty \mathbb{A})$, which in turn determines a building, which is isomorphic to $\partial_\infty \B$. 
	
	We now compute $\operatorname{Stab}_{G_\F}(\partial_\infty s_0)$ and $\operatorname{Stab}_{G_\F}(\partial_\infty \A)$. Let $g\in \operatorname{Stab}_{G_\F}(\partial_\infty s_0)$. This means that there are sectors $s_0, s_1, \ldots, s_k \eqqcolon g.s_0$ and charts $f_i \in \Fun$ and translations $t_i\in T=\A$ for $i\in \{0,1, 2, \ldots , k-1\}$ with $s_i=f_i(C_0)$ and $s_{i+1} = f_i(t_i(C_0))$. Let $a_i\in A_\F$ with $a_i.f_0 = f_0\circ t_i$ and let $g\in G_\F$ with $f_i = g_i.f_0$. Notice $g_0=\Id \in B_\F$ and $g_k = g$. We prove by induction that if $g_i\in B_\F$, then also $g_{i+1}\in B_\F$: we have
	$$
	g_{i+1}.f_0(C_0) = s_{i+1} = g_i.f_0(t_i(C_0)) = g_i a_i .f_0(C_0),
	$$
	so $g_{i+1}^{-1} g_i a_i \in \operatorname{Stab}_{G_\F}(f_0(C_0)) \subseteq B_\F$, by Theorem \ref{thm:NO_fixes_s0} (note that the pointwise and setwise stabilizers of $f_0(C_0)$ coincide). 
	Since $a_i \in A_\F \subseteq B_\F$ and $g_i\in B_\F$ by the induction assumption, we also have $g_{i+1} \in B_\F$.
	The other direction $B_\F = U_\F A_\F M_\F \subseteq \operatorname{Stab}_{G_\F}(\partial_\infty s_0)$ follows from Theorem \ref{thm:NO_fixes_s0} and Proposition \ref{prop:exists_a_NO}. 

Now let $g\in \operatorname{Stab}_{G_\F}(\partial_\infty \A)$. Then $\partial_\infty(g.\A) = g.\partial_\infty(\A) = \partial_\infty (\A)$, which is equivalent to $g.\A = \A$, since there is a one-to-one correspondence of apartments in $\B$ and in $\partial_\infty \B$. Then by Proposition \ref{thm:stab'_A},  $\operatorname{Stab}_{G_\F}(\partial_\infty \A) = \operatorname{Stab}_{G_\F}(\A) = \operatorname{Nor}_{G_\F}(A_\F) = A_\F N_\F$. 
\end{proof}

\section{Appendix: The building for \texorpdfstring{$\operatorname{SL}(n,\F)$}{SL(n,F)}}\label{sec:appendixSLn}

To obtain Theorem \ref{thm:B_is_building}, that $\B$ is an affine $\Lambda$-building, a relatively large amount of effort goes into proving axiom (A2). The development of the theory following \cite{BrTi} in Subsections \ref{sec:BT_root_groups}, \ref{sec:BT_rank_1} and \ref{sec:BT_higher_rank} is not needed when the group $G$ is well understood. In this appendix, we give an alternative proof of axiom (A2) in the case where $G_\F = \operatorname{SL}_n(\F)$. The proof still relies on the general theory developed for semialgebraic groups in \cite[Sections \ref{I-sec:split_tori} and \ref{I-sec:decompositions}]{AppAGRCF}, but is significantly shorter.

Let $\F$ be a non-Archimedean real closed field with order compatible valuation $(-v) \colon \F \to \Lambda \cup \{ -\infty\}$ and valuation ring $O = \{a \in \F \colon (-v)(a) \leq 0\}$. We consider the semisimple linear algebraic group $G= \operatorname{SL}_n$ with maximal $\K$-split torus
$$
S = \left\{ \begin{pmatrix}
	\star & & \\ & \ddots & \\ && \star
\end{pmatrix} \in \operatorname{SL}_n \right\}.
$$ 
Then the groups showing up in the various decompositions of \cite[Section \ref{I-sec:decompositions}]{AppAGRCF} are given by 
\begin{align*}
	K_\F &= \operatorname{SO}_n(\F)\\
	A_{\F} &= \left\{ a = (a_{ij}) \in S_{\F} \colon a_{ii}>0   \right\} \\
	U_{\F} &= \left\{g=(g_{ij}) \in \operatorname{SL}_{n}(\F) \colon  g_{ii}= 1, \, g_{ij} = 0 \text{ for } i>j  \right\} \\
	N_{\F} &= \left\{ \text{ permutation matrices with entries in }\pm 1 \right\} \\
	M_{\F} & = \left\{ a=(a_{ij}) \in S_{\F} \colon a_{ii} \in \{ \pm 1\}    \right\} \\
	B_{\F} &= \left\{ g = (g_{ij}) \in \operatorname{SL}_n(\F) \colon g_{ij} = 0 \text{ for } i > j \right\}.
\end{align*}
Then $\fraka = \operatorname{Lie}(A_\R) = \{ H \in \R^{n\times n} \colon \operatorname{tr}(H) = 0 \text{ and $H$ is diagonal }  \}$ and the root system $\Sigma$ associated with $\operatorname{SL}(n,\R)$ is given by $\Sigma = \{ \alpha_{ij} \in \fraka^\star \colon i\neq j \in \{1, \ldots , n\} \}$ for the roots
\begin{align*}
	\alpha_{ij} \colon \fraka &\to \R \\
	H& \mapsto H_{ii} - H_{jj}.
\end{align*}
The spherical Weyl group $W_s$ is then isomorphic to the symmetric group $S_n$ on $n$ letters. If we choose the ordered basis $\Delta = \{ \alpha_{12}, \ldots , \alpha_{(n-1)n}\}$, we obtain the positive Weyl chamber
$$
A_\F^+ = \{  \operatorname{Diag}(a_1, \ldots , a_n) \in A_\F \colon a_1 \geq a_2 \geq \ldots  \geq a_n  \}.
$$
As in Section \ref{sec:building_def} we define the non-standard symmetric space
$$
P_{1}(n,\F) := \left\{ A \in \F^{n\times n} \colon A\tran = A, \ \det(A) = 1 ,\ A \text{ is positive definite } \right\}
$$
and note that $\operatorname{SL}(n,\F)$ acts transitively on $P_1(n,\F)$. The multiplicative norm \begin{align*}
	N_\F \colon A_\F &\to \F_{\geq 1} \\
	\operatorname{Diag}(a_1, \ldots , a_n) &\mapsto \prod_{i\neq j} \max \left\{ \frac{a_i}{a_j}, \frac{a_j}{a_i}   \right\}
\end{align*}
then gives a $G_\F$-invariant $\Lambda$-pseudo distance $d  = (-v) \circ N_\F \circ \delta_\F$ which allows us to define the $\Lambda$-metric space $\B := P_1(n,\F)/\!\!\sim$. We endow $\B$ with the apartment structure $\Fun = \{ g.f_0 \colon g \in \operatorname{SL}(n,\F) \}$, where $f_0 \colon \A  \to \B$ is the inclusion of the apartment $\A := A_\F.o$ for the basepoint $o = [\operatorname{Id}] \in \B$. The following is a special case of Theorem \ref{thm:B_is_building}.
\begin{theorem}
	The $\Lambda$-metric space $\B$ is an affine $\Lambda$-building of type $\A = \A(\Sigma^\vee,\Lambda,\Lambda^n)$, where $\Sigma^\vee$ is a root system of type $\operatorname{A}_{n-1}$.
\end{theorem}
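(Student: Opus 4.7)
Axioms (A1), (A3), and (TI) hold by the general arguments of Section~\ref{sec:axiomsA1A3TI}. The proofs of (A4) and (EC) in Sections~\ref{sec:A4} and~\ref{sec:EC} rely only on axiom (A2) together with results established before Subsection~\ref{sec:BT_root_groups}; since the root system of $\operatorname{SL}_n$ is the reduced system $\mathrm{A}_{n-1}$, those proofs transfer verbatim once (A2) is available. The task for the appendix is therefore a direct proof of axiom (A2) for $\operatorname{SL}_n(\F)$ that bypasses the Bruhat-Tits stabilizer calculus of Subsections~\ref{sec:BT_root_groups}--\ref{sec:BT_higher_rank}. By $G_\F$-equivariance this reduces to: for every $g\in\operatorname{SL}_n(\F)$ the set $\Omega_g:=g^{-1}\A\cap\A$ is a finite intersection of affine half-apartments of $\A$, and there exists $w\in W_a$ such that $g.p=w.p$ for all $p\in\Omega_g$.

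\textbf{Direct matrix computation.} Identify $\A$ with $\{\mu\in\Lambda^n:\sum_i\mu_i=0\}$ via $\operatorname{diag}(a_1,\ldots,a_n).o\leftrightarrow((-v)(a_i))_i$ and set $\gamma_{ij}:=(-v)(g_{ij})\in\Lambda\cup\{-\infty\}$. Combining Theorem~\ref{thm:stab} with Proposition~\ref{prop:stab_A}, the condition $g.(a.o)\in\A$ is equivalent to the existence of $b\in A_\F$ with $b^{-1}ga\in\operatorname{SL}_n(O)$; since $\det(b^{-1}ga)=1$, the adjugate formula makes the inverse being in $O^{n\times n}$ automatic, leaving only the entry-wise inequalities $(-v)(b_i)\geq\gamma_{ij}+\mu_j$ together with $\sum_i(-v)(b_i)=0$. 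Setting $L_i(\mu):=\max_j(\gamma_{ij}+\mu_j)$, the existence of such $b$ amounts to the single scalar condition $\sum_i L_i(\mu)\leq 0$. Applying $(-v)$ to the Leibniz expansion of $\det(g)=1$ via the ultrametric inequality gives the tropical determinant bound $\det^{\operatorname{trop}}(g):=\max_{\sigma\in S_n}\sum_i\gamma_{i,\sigma(i)}\geq(-v)(1)=0$; expanding $\sum_i L_i(\mu)=\max_\sigma\sum_i(\gamma_{i,\sigma(i)}+\mu_{\sigma(i)})$ over all functions $\sigma:\{1,\ldots,n\}\to\{1,\ldots,n\}$ and using $\sum_i\mu_{\sigma(i)}=0$ for permutations yields $\sum_i L_i(\mu)\geq 0$ on all of $\A$. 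Hence $\Omega_g$ is empty if $\det^{\operatorname{trop}}(g)>0$, while otherwise $\mu\in\Omega_g$ forces $\sum_i L_i(\mu)=0$ and pins down $b$ uniquely (modulo $A_\F(O)$) with $(-v)(b_i)=L_i(\mu)$.

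\textbf{Cell structure and main obstacle.} Partition $\A$ into the cells $R_\sigma$ on which a given function $\sigma$ realizes the maximum; the cell-defining inequalities $\gamma_{i,\sigma(i)}+\mu_{\sigma(i)}\geq\gamma_{ij}+\mu_j$ rewrite as root-inequalities $\mu_{\sigma(i)}-\mu_j\geq\gamma_{ij}-\gamma_{i,\sigma(i)}$, each a genuine affine half-apartment of the type $\mathrm{A}_{n-1}$ apartment. The equality $\sum_i L_i(\mu)=0$ selects precisely those cells $R_{\sigma^*}$ for which $\sigma^*\in S_n$ is a permutation achieving $\det^{\operatorname{trop}}(g)=0$; on such a cell the resulting map $\mu\mapsto(\gamma_{i,\sigma^*(i)}+\mu_{\sigma^*(i)})_i$ is visibly the action of an explicit element of $W_a=\A\rtimes W_s$, namely the composition of the coordinate permutation $\sigma^*$ with the translation by $(\gamma_{i,\sigma^*(i)})_i$. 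The main technical obstacle is the case in which several permutations $\sigma^*,\tau^*\in S_n$ simultaneously achieve $\det^{\operatorname{trop}}(g)=0$: one must verify that $(a)$ the cells $R_{\sigma^*}$ and $R_{\tau^*}$ overlap along sets cut out by root-wall equalities $\gamma_{i,\sigma^*(i)}+\mu_{\sigma^*(i)}=\gamma_{i,\tau^*(i)}+\mu_{\tau^*(i)}$, so that $\Omega_g=\bigcup_{\sigma^*}R_{\sigma^*}$ remains a finite intersection of affine half-apartments, and $(b)$ the corresponding $W_a$-elements coincide on the overlap, so they glue to a single $w\in W_a$. Both follow from elementary manipulations of the root system $\mathrm{A}_{n-1}$ combined with the uniqueness of $L_i(\mu)$ on $\Omega_g$ established above, completing the verification of (A2) and hence the theorem.
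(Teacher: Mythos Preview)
Your reduction to (A2) is correct, and the tropical set-up is a legitimate and attractive alternative to the paper's ``maximal regularity'' argument: the description of $\Omega_g$ as the zero set of $F(\mu)=\sum_i L_i(\mu)$, the bound $F\geq\det^{\operatorname{trop}}(g)\geq 0$, and the observation that any $\mu\in\Omega_g$ admits an optimal \emph{permutation} $\sigma^*$ with $L_i(\mu)=\gamma_{i,\sigma^*(i)}+\mu_{\sigma^*(i)}$ are all valid. This is genuinely different from what the paper does: the appendix never introduces the tropical permanent; it instead picks a maximally regular element $\pmb a\in B$, proves (Lemma~\ref{lem:A2regular}) that every other element of $B$ lies on at least the walls that $\pmb a$ lies on, extracts a single permutation $\sigma$ from the unit pattern of $g$ (Lemma~\ref{lem:A2unit}), and shows this $\sigma$ works globally (Propositions~\ref{pro:A2id}--\ref{pro:A2aW}). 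Only \emph{after} the single $w$ is in hand does the paper read off $W_a$-convexity from the matrix entries of $\tilde w^{-1}g$.

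Your handling of the ``main obstacle'', however, contains a real gap. The assertion that the $W_a$-elements $w_{\sigma^*}$ and $w_{\tau^*}$ ``glue to a single $w\in W_a$'' is not just unproved, it is false as a mechanism: two distinct elements of $W_a=\A\rtimes S_n$ with different spherical parts can agree only on an affine root hyperplane, so if $R_{\sigma^*}$ and $R_{\tau^*}$ both had full dimension in $\Omega_g$ there would be \emph{no} single $w$. What actually has to be shown is that among the optimal permutations there is one $\sigma^*$ with $R_{\sigma^*}=\Omega_g$ (equivalently $R_{\sigma^*}\supseteq R_{\tau^*}$ for every other optimal $\tau^*$); this is precisely the content of (A2) and is not a consequence of ``uniqueness of $L_i(\mu)$'' or ``elementary manipulations''. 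Likewise, your argument for $W_a$-convexity does not go through: $\Omega_g=\{\mu:F(\mu)\leq 0\}$ is an intersection of half-spaces $\{F_\sigma\leq 0\}$ over \emph{all functions} $\sigma$, and for non-permutations these are \emph{not} root half-apartments (e.g.\ for $n=4$ the function $\sigma\equiv(1,1,2,2)$ gives the linear form $\mu_1+\mu_2-\mu_3-\mu_4$), while presenting $\Omega_g$ as $\bigcup_{\sigma^*}R_{\sigma^*}$ is a union, not an intersection. The paper sidesteps this entirely: once a single $w$ is found, $\Omega_g$ equals the fixed set of $\tilde w^{-1}g$, and the entry-wise conditions $(\tilde w^{-1}g)_{ij}\,a_j/a_i\in O$ are manifestly root inequalities. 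To complete your approach you would need a direct tropical argument that one cell $R_{\sigma^*}$ dominates all others; this is doable, but it is the substantive step you have deferred.
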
 
The axioms (A1), (A3) and (TI) follow as described in Section \ref{sec:axiomsA1A3TI}. We will give a hands on proof of axiom (A2) that only relies on Theorem \ref{thm:stab} about the stabilizer of $o \in \B$. Axiom (A4) can then be proved as in Section \ref{sec:A4}, relying on (A2) and the fact that every $u \in U_\F$ stabilizes a sector, which can be proven as in Section \ref{sec:Wconvexity_for_U}, or looking at matrix entries directly. Finally, axiom (EC) relies on (A2) and the fact that Jacobson-Morozov morphisms can be found, which can be seen for $\operatorname{SL}(2,\F)$ explicitly. 

We first look at the axiom
\begin{enumerate}
	\item [(A2)] For all $f_1,f_2 \in \Fun$, if $f_1(\A)\cap f_2(\A)\neq \emptyset$, then the set $\Omega := f_2^{-1}\circ f_1(\A)$ is a $W_a$-convex set and there exists $w \in W_a$ such that $f_2|_\Omega = f_1\circ w |_\Omega$. 
\end{enumerate}
in the special case where $f_2=f_0$ and $f_1=g.f_0$ for some $g\in G_\F$ with $g.o=o$. The set $\Omega$ is defined by $f_0(\Omega)=f_0(\A)\cap g.f_0(\A)$. Let $B\subset A_\F$ be the set of all elements $a\in A_\F$ with $g.a.o\in f_0(\Omega)$. Note that elements in $B$ are diagonal matrices whose diagonal entries are positive elements in the real closed field $\F$, we can thus take their $n$-th roots. We would like to prove that $\Omega$ is a finite intersection of closed affine half-apartments. The reason we define $W_a$-convexity this way is that general convex combinations of points in the apartment $\A$ are not possible, since we do not have a vector space-structure on $\A$. Some convex combinations however are still possible and $\Omega$ contains them.  

\begin{lemma}
	\label{lem:A2conv}
	For all $a,a' \in B$ and all $n,m \in \N$, $
	\sqrt[n+m]{a^n a'^m} \in B$.
\end{lemma}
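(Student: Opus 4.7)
The approach is to translate the defining condition of $B$ into a concrete condition on valuations of matrix entries, and then observe that this condition transforms linearly under the geometric mean operation $(a,a') \mapsto \sqrt[n+m]{a^n (a')^m}$.

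First, I would reformulate $a \in B$ using Theorem \ref{thm:stab}. The condition $g.a.o \in \A$ means there exists $b \in A_\F$ with $d((ga).\operatorname{Id}, b.\operatorname{Id}) = 0$, and since $b^{-1} \in A_\F \subseteq G_\F$ acts on $\X_\F$ by isometries (Lemma \ref{lem:Cartan_projection}), this is equivalent to $b^{-1}ga \in \operatorname{Stab}_{G_\F}(o) = G_\F(O) = \operatorname{SL}_n(O)$. Because $a$ and $b^{-1}$ are diagonal, a direct matrix multiplication gives $(b^{-1}ga)_{ij} = (a_j/b_i)\,g_{ij}$, so the membership condition reduces to
\[
(-v)\bigl((a_j/b_i)\,g_{ij}\bigr) \leq 0 \quad \text{for all } i,j,
\]
together with the determinant being one, which is automatic since $a,b,g \in \operatorname{SL}_n$.

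Given $a,a' \in B$ with accompanying witnesses $b,b' \in A_\F$, the natural candidates are $c := \sqrt[n+m]{a^n (a')^m}$ and $d := \sqrt[n+m]{b^n (b')^m}$; both lie in $A_\F$ since $\Lambda$ is a $\mathbb{Q}$-vector space (so the roots exist) and their determinants equal $1$ by the computation $\prod_j c_j = ((\det a)^n (\det a')^m)^{1/(n+m)} = 1$, likewise for $d$. Using the multiplicativity of $v$ together with $(-v)(x^{1/(n+m)}) = (n+m)^{-1}(-v)(x)$ for $x \in \F_{>0}$, a short calculation yields
\[
(-v)\bigl((c_j/d_i)\,g_{ij}\bigr) \,=\, \frac{n \cdot (-v)\bigl((a_j/b_i)\,g_{ij}\bigr) + m \cdot (-v)\bigl((a'_j/b'_i)\,g_{ij}\bigr)}{n+m},
\]
which is a non-negative rational combination of two non-positive quantities, hence $\leq 0$. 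Thus all entries of $d^{-1}gc$ lie in $O$, and the determinant is $1$ by construction, so $d^{-1}gc \in \operatorname{SL}_n(O)$ and $c \in B$.

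I do not anticipate any real obstacle: the crux is that $(-v)$ converts the multiplicative geometric mean on diagonal matrices in $A_\F$ into an additive convex combination in $\Lambda$, so the defining inequalities for $B$ are automatically preserved. The argument uses nothing about $\operatorname{SL}_n$ beyond the fact that $A_\F$ consists of positive diagonal matrices of determinant one, which is what makes this appendix proof so much shorter than the general Bruhat--Tits style argument in Section \ref{sec:A2_subsubsection}.
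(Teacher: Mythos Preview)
Your proposal is correct and follows essentially the same approach as the paper: both arguments use Theorem~\ref{thm:stab} to convert $a\in B$ into the entrywise condition $g_{ij}\,a_j/b_i \in O$, then observe that taking geometric means preserves this condition (you phrase it via valuations, the paper multiplies the $O$-membership conditions and takes $(n+m)$-th roots directly). One small remark: the existence of $(n+m)$-th roots of positive elements in $\F$ is because $\F$ is real closed (transfer principle), not because $\Lambda$ is a $\Q$-vector space; the paper makes this point explicitly.
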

\begin{proof}
	Let $b,b' \in A_\F$ such that $g.a.o = b.o$ and $g.a'.o = b'.o$. By Theorem \ref{thm:stab} $b^{-1}ga, (b')^{-1}g a' \in G_\F(O)$. We can now exploit the explicit structure of $A_\F$ to write in coordinates
	$$
	g_{ij}\frac{a_j}{b_i} \in O \quad \text{and} \quad g_{ij}\frac{a'_j}{b'_i} \in O.
	$$
	Thus also 
	$$
	g_{ij}^{n+m} \frac{a_j^n {a'}_j^m}{b_i^n {b'}_i^m} \in O \quad \text{and so}\quad
	g_{ij} \frac{\sqrt[n+m]{a_j^n {a'}_j^m}}{\sqrt[n+m]{b_i^n {b'}_i^m}} \in O.
	$$
	We note that if $a = \operatorname{Diag}\left(a_1, \ldots , a_n\right) \in A_\F$, then also 
	$$
	\sqrt[n+m]{a} :=\operatorname{Diag}(\sqrt[n+m]{a_1} , \ldots , \sqrt[n+m]{a_n}  ) \in A_\F,
	$$
	since this is a first order statement that is true over the reals. Then
	$$
	g.\sqrt[n+m]{a^n a'^m}.o = \sqrt[n+m]{b^n b'^m}.o \in \A
	$$
completes the proof.
\end{proof}

We now introduce a notion of regularity. Elements $a.o \in f_0(\A)$ with $a\in A_\F$ are represented by diagonal matrices with entries $a_1, \ldots, a_n \in \F$. Consider the amount of distinct entries $(-v)(a_1), \ldots ,(-v)(a_n)$ as elements in $\Lambda = (-v)(\F_{>0})$. 
Let $\pmb{a}\in B$ be a maximal element with respect to the amount of distinct entries. The intuition is that elements with $a_i=a_j$ for $i\neq j$ lie on a wall. The more walls they lie on, the less regular they are. 
Other elements in $B$ may have as many distinct entries as $\pmb{a}$ but the next Lemma states that those are the same ones as the ones for $\pmb{a}$. 

\begin{lemma}
	\label{lem:A2regular}
	Let $b \in B$. If $(-v)(b_i)\neq(-v)(b_j)$ for some $i,j$, then $(-v)(\pmb{a}_i)\neq(-v)(\pmb{a}_j)$. Equivalently if $(-v)(\pmb{a}_i) =(-v)(\pmb{a}_j)$, then $(-v)(b_i) = (-v)(b_j)$.
\end{lemma}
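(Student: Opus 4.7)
The strategy is to argue by contradiction: assume there exist indices $i, j$ with $(-v)(\pmb{a}_i) = (-v)(\pmb{a}_j)$ but $(-v)(b_i) \neq (-v)(b_j)$, and construct from $\pmb{a}$ and $b$ a new element of $B$ with strictly more distinct valuation entries than $\pmb{a}$, contradicting maximality. The natural candidate, using Lemma \ref{lem:A2conv}, is the element
\[
c \coloneqq \sqrt[n+m]{\pmb{a}^n b^m} \in B
\]
for suitably chosen positive integers $n, m \in \N$. Since $\Lambda$ is a $\Q$-vector space (as it is the value group of an order compatible valuation on a real closed field), the entries satisfy
\[
(-v)(c_k) = \tfrac{n}{n+m}(-v)(\pmb{a}_k) + \tfrac{m}{n+m}(-v)(b_k) \in \Lambda
\]
for each $k$.

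I would then analyze the equivalence relation ``$k \sim_c \ell$ iff $(-v)(c_k) = (-v)(c_\ell)$'' and compare it to the corresponding relation $\sim_{\pmb{a}}$. For the pair $(i,j)$ with $(-v)(\pmb{a}_i) = (-v)(\pmb{a}_j)$ but $(-v)(b_i) \neq (-v)(b_j)$, one computes directly
\[
(-v)(c_i) - (-v)(c_j) = \tfrac{m}{n+m}\bigl((-v)(b_i) - (-v)(b_j)\bigr) \neq 0,
\]
so this pair becomes $c$-distinguished as soon as $m \neq 0$. Thus if I can ensure that every pair $(k,\ell)$ already distinguished by $\pmb{a}$ remains distinguished by $c$, then $\sim_c$ is a strict refinement of $\sim_{\pmb{a}}$, giving $c$ strictly more distinct valuation entries and the required contradiction with maximality.

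The remaining task—and the main technical point—is to choose $n, m$ so that for every pair $(k,\ell)$ with $u_k \coloneqq (-v)(\pmb{a}_k) \neq (-v)(\pmb{a}_\ell) \eqqcolon u_\ell$, one has
\[
n(u_k - u_\ell) + m(w_k - w_\ell) \neq 0,
\]
where $w_k \coloneqq (-v)(b_k)$. For each such pair, since $u_k - u_\ell \neq 0$, this linear equation in $(n,m) \in \N \times \N$ has a solution only if $(u_k - u_\ell)$ and $(w_k - w_\ell)$ are $\Q$-linearly dependent in $\Lambda$ with opposite signs, in which case there is a unique positive rational ratio $n/m$ (or $m/n$) that causes cancellation. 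Since there are only finitely many pairs $(k,\ell)$ to consider (at most $\binom{n}{2}$), this produces finitely many forbidden rational ratios. As $\Q_{>0}$ is infinite, I can choose a ratio $m/n$ avoiding all of them, with $m > 0$, yielding the desired $c$.

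The main obstacle is precisely this finiteness/avoidance argument: one has to be careful because in the general ordered abelian group $\Lambda$ one cannot invoke archimedeanness (e.g.\ ``take $m$ small compared to $n$''), so the argument has to proceed via $\Q$-linear dependence classes. Once this is in place, applying Lemma \ref{lem:A2conv} to put $c \in B$ and counting equivalence classes concludes the proof.
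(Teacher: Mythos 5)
Your proposal is correct and follows essentially the same strategy as the paper's proof: form the "$W_a$-convex combination" $\sqrt[n+m]{\pmb{a}^n b^m} \in B$ via Lemma~\ref{lem:A2conv}, observe that for each pair of indices there is at most one ratio of exponents that merges entries which were previously distinguished, and then avoid those finitely many bad ratios. The paper bypasses the $\Q$-linear-dependence discussion by fixing the total exponent at $n^2$ from the start, considering $c(k) = \sqrt[n^2]{\pmb{a}^k b^{n^2-k}}$ for $k \in \{0,\ldots,n^2\}$, and noting that $k \mapsto (-v)(c(k)_i/c(k)_j)$ is constant-nonzero or strictly monotone so that each pair forbids at most one value of $k$; since there are fewer than $n^2$ pairs, pigeonhole over the $n^2+1$ values of $k$ yields a good $c(k)$ with $I_{\pmb{a}} \cup I_b \subseteq I_{c(k)}$, and maximality of $\pmb{a}$ then forces $I_b \subseteq I_{\pmb{a}}$ directly. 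Your avoidance argument over $\Q_{>0}$ is just as valid; the paper's version is marginally tighter bookkeeping because the parameter set is made finite explicitly, and it also handles all pairs of $I_{\pmb{a}} \cup I_b$ uniformly rather than splitting into the ``new pair $(i,j)$'' and ``old pairs $(k,\ell)$'' cases. (One small notational note: you reuse $n$ both as the matrix size and as an exponent, which should be cleaned up in a final write-up.)
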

\begin{proof}
	For any element $b \in B$, we define $I_b = \{ (i,j) \colon (-v)(b_i) \neq (-v)(b_j) \}$. The maximality of $\pmb{a}$ means that $|I_b|\leq |I_{\pmb{a}} |$ for all $b \in B$. We want to prove that $I_b \subseteq I_{\pmb{a}}$ for all $b\in B$. We note that there are certainly less than $n^2$ elements in $I_{\pmb{a}}$, and define for $k \in \{0 , 1 , \ldots , n^2  \}$ the element
	$$
	c(k)=\sqrt[n^2]{\pmb{a}^{k} b^{n^2-k}} \in B 
	$$
	which is in $B$ by Lemma \ref{lem:A2conv}. Let $(i,j) \in I_{\pmb{a}} \cup I_b$. Thus $(-v)(\pmb{a_i}/\pmb{a_j}) \neq 0$ or $(-v)(b_i/b_j) \neq 0$. We define the map
	$$
	k \mapsto (-v)( c(k)_i / c(k)_j ) \in \Lambda
	$$
	which is either constant, but not $=0$, or strictly monotonous ascending or descending. Either way, there is at most one value of $k$ for which the map is $0$. Let $k_{ij}$ be this value, if it exists. 
	
	Now pick a $k$ which does not appear as $k_{ij}$ for any $(i,j) \in I_{\pmb{a}} \cup I_b$. This means that $c(k)_i \neq c(k)_j$ for all $(i,j) \in I_{\pmb{a}} \cup I_b$. We conclude that $I_{\pmb{a}} \cup I_b \subseteq I_{c(k)}$. But since $\pmb{a}$ is maximal, we have $I_{\pmb{a}} = I_{c(k)}$ and $I_b \subseteq I_{\pmb{a}}$.
\end{proof}

Recall that $O\subsetneqq \F$ is a strict subring of a non-Archimedean real closed field $\F$. Denote the units of $O$ by $O^\times$. The following is a standard fact about valuation rings.
\begin{lemma}
	In any valuation ring $O$, the set of nonunits $O\setminus O^\times$ is an ideal.
\end{lemma}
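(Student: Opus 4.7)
The plan is to show directly that $O\setminus O^\times$ is closed under addition and under multiplication by arbitrary elements of $O$, using only the defining property of a valuation ring (that for every $a$ in the fraction field, either $a\in O$ or $a^{-1}\in O$) together with the characterization of units as those $x\in O$ with $x^{-1}\in O$.

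First I would handle multiplication. Given $x\in O\setminus O^\times$ and $r\in O$, the product $rx$ lies in $O$, and if $rx$ were a unit then $(rx)^{-1}\in O$, which would give $x^{-1}=r\cdot(rx)^{-1}\in O$, contradicting $x\notin O^\times$. So $rx\in O\setminus O^\times$.

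For addition, let $x,y\in O\setminus O^\times$. The case $x=0$ or $y=0$ is immediate, so assume both are nonzero. Using the valuation ring property on $y/x$ or its inverse, I may assume without loss of generality that $y/x\in O$. Then $x+y=x(1+y/x)$ lies in $O$ since $1+y/x\in O$. If $x+y$ were a unit, then $(x+y)^{-1}\in O$, and a direct check gives $x^{-1}=(1+y/x)(x+y)^{-1}\in O$, again contradicting $x\notin O^\times$. Hence $x+y\in O\setminus O^\times$.

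The argument is short and routine; there is no real obstacle. The only point to be a little careful about is that in the addition step one must reduce to the symmetric case using the valuation ring dichotomy before writing $x+y=x(1+y/x)$, so that the factor $1+y/x$ genuinely belongs to $O$. Once that reduction is made, the two closure properties together show that $O\setminus O^\times$ is an additive subgroup closed under multiplication by $O$, i.e.\ an ideal.
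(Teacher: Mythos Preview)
Your proof is correct and essentially identical to the paper's: both handle multiplication by writing $x^{-1}=r\cdot(rx)^{-1}$, and both handle addition by reducing via the valuation-ring dichotomy to $y/x\in O$ and factoring $x+y=x(1+y/x)$. The only cosmetic difference is that the paper deduces $x+y\in O\setminus O^\times$ by citing the already-proven multiplication closure, whereas you recompute $x^{-1}=(1+y/x)(x+y)^{-1}$ directly.
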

\begin{proof}
	Let $x,y\in O\setminus O^\times, r \in O $. Since $x$ is not a unit, $x^{-1}=r\cdot (rx)^{-1} \not\in O$. Therefore $(rx)^{-1}\not\in O$, so $rx \in O \setminus O^\times$. If $x$ and $y$ are nonzero, then $x/y\in O$ or $y/x\in O$, since $O$ is a valuation ring. Writing $x+y=y\cdot(1+x/y)=x\cdot(1+y/x)$, we see that also $x+y\in O\setminus O^\times$.
\end{proof}
The determinant of a matrix is a polynomial of its entries. By the previous Lemma, if all entries of a matrix are in $O\setminus O^\times$, then so is its determinant. We conclude that if the determinant of a matrix with entries in $O$ is $1$, then at least one entry has to be a unit. Actually, even more holds.
\begin{lemma}
	\label{lem:A2unit}
	Let $g$ be a matrix with entries in $O$ and $\operatorname{det}(g)=1$. Then in every row and every column there exists at least one entry that is a unit. In fact, there is a permutation $\sigma$, such that $g_{i\sigma(i)}\in O^\times$.
\end{lemma}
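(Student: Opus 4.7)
The plan is to use the Leibniz expansion
\[
\det(g) = \sum_{\sigma \in S_n} \operatorname{sgn}(\sigma) \prod_{i=1}^n g_{i\sigma(i)}
\]
together with the ideal property of $O \setminus O^\times$ recalled just before the statement. Since $\det(g)=1$ is a unit, the right-hand side is a unit, and so cannot lie entirely in the ideal of non-units. Hence at least one summand, say the one corresponding to a permutation $\sigma \in S_n$, is itself a unit of $O$.

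Next I would observe that in any commutative ring a product is a unit if and only if each factor is a unit (one direction is immediate; for the other, if $\prod_i x_i \in O^\times$ then each $x_i$ divides a unit and is thus a unit). Applying this to $\prod_i g_{i\sigma(i)} \in O^\times$ yields $g_{i\sigma(i)} \in O^\times$ for every $i \in \{1,\dots,n\}$. This establishes the second assertion of the lemma: the permutation $\sigma$ has all entries $g_{i\sigma(i)}$ of the induced diagonal in $O^\times$.

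The first assertion then follows immediately, because the existence of such a permutation $\sigma$ already exhibits, in each row $i$, the unit entry $g_{i\sigma(i)}$, and in each column $j = \sigma(i)$, the unit entry $g_{\sigma^{-1}(j)\, j}$. There is no real obstacle here; the only mild point is to invoke that $O \setminus O^\times$ is an ideal (proved in the lemma just above) in order to pass from "the sum is a unit" to "some summand is a unit", and to note the product-of-units characterization in a valuation ring. Both are elementary, so the proof is essentially a two-line application of the Leibniz formula.
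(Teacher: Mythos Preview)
Your argument is correct. It uses the same two ingredients as the paper --- a determinant expansion together with the fact that $O\setminus O^\times$ is an ideal --- but you expand via the Leibniz formula over permutations rather than the paper's Laplace expansion along a row. This actually reaches the permutation $\sigma$ in one step: once some summand $\pm\prod_i g_{i\sigma(i)}$ is a unit, every factor is a unit, whereas the paper first shows each row and column contains a unit and then extracts $\sigma$ by an implicit induction (``alternating the row and column argument''), passing to the minor whose cofactor term is a unit. Both approaches are equally elementary; yours is a bit more direct for the second assertion.
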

\begin{proof}
	Consider the $i$'th row $(g_{i1}, g_{i2}, \ldots, g_{in})$ of $g$. Using Laplace's formula for the determinant
	$$
	\det (g) = \sum_{j=1}^n (-1)^{i+j} g_{ij} \cdot \det M_{ij},
	$$
	where $M_{ij}$ are the minor matrices after deleting row $i$ and column $j$, we see that since $\det (g)=1 \in O^\times$, at least one of the $g_{ij}$ for $j=1, \ldots , n$ has to be a unit. Equivalently there has to be a unit in every column. We can extract a permutation by alternating the row and column argument. 
\end{proof}

We will use Lemma \ref{lem:A2regular} and Lemma \ref{lem:A2unit} to prove a certain rigidity of the action of $G_\F$ on $f_0(\A)$, namely when an isometry $g\in G_\F$ fixes the identity, then its action on $f_0(\A)$ can be described by an element in the spherical Weyl group $W_s$. The apartment $\A$ itself could have many other symmetries, but the isometry can only be extended to all of $\B$, when it is one of the finitely many elements in $W_s$. Consider the fundamental sector
$$
s_0 = \{ a.o \in \A \colon a_1 \geq \ldots \geq a_n \}
$$
based at $o$. The sectors in $f_0(\A)$ based at $o$ are fundamental domains of the action of $W_s$, see for instance \cite[Chapter 10.3]{Hum2}. So two points that lie inside a common sector based at $o$ can only be related by an element in $W_s$ if they are the same. This implies a first restricted version of axiom (A2). 

\begin{proposition}
	\label{pro:A2id}
	Let as before $\pmb{a}\in B$ be maximal with respect to the amount of distinct entries and $g \in G_\F$ such that $g.o=o$. If both $\pmb{a}.o$ and $g.\pmb{a}.o$ lie in the same sector based at $o$, then $\pmb{a}.o=g.\pmb{a}.o$ and moreover the action of $g$ is the identity on the set $\Omega := f_0^{-1}\left( f_0(\A)\cap g.f_0(\A) \right)$, i.e. $f_0|_\Omega = g.f_0|_\Omega$.
\end{proposition}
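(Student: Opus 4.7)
The plan is to work coordinate-wise in $A_\F$, tracking the valuations $v_i \coloneqq (-v)(\pmb{a}_i)$ of diagonal entries. First, after relabeling coordinates by a permutation matrix in $K_\F$ (which simultaneously conjugates $g,\pmb{a},\ldots$ and places $\pmb{a}.o$ in the fundamental sector), I may assume $v_1 \geq v_2 \geq \cdots \geq v_n$. The hypothesis $g.o=o$ gives $g \in G_\F(O)$ via Theorem \ref{thm:stab}. The argument then splits naturally into two steps: (i) deduce $g.\pmb{a}.o = \pmb{a}.o$ from a multiset-equality argument using Lemma \ref{lem:A2unit} and the fact that both $\pmb{a}.o$ and $g.\pmb{a}.o$ sit in the same sector, and (ii) use the resulting rigid description of $g$ together with the maximality of $\pmb{a}$ (Lemma \ref{lem:A2regular}) to conclude that $g$ acts as the identity on all of $\Omega$.

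For step (i), I write $g.\pmb{a}.o = b.o$ for some $b \in A_\F$, so that $b^{-1}g\pmb{a} \in G_\F(O)$. Lemma \ref{lem:A2unit} produces a permutation $\sigma$ with $(b^{-1}g\pmb{a})_{i\sigma(i)} \in O^\times$, which translates to $(-v)(b_i) = (-v)(g_{i\sigma(i)}) + v_{\sigma(i)} \leq v_{\sigma(i)}$, using $g \in G_\F(O)$. Summing over $i$ yields $\sum_i (-v)(b_i) \leq \sum_i v_i$; the reverse inequality follows by the same argument applied to $\pmb{a}^{-1}g^{-1}b \in G_\F(O)$. Forced equality gives $(-v)(b_i) = v_{\sigma(i)}$, so the multisets of valuations of $b$ and $\pmb{a}$ agree. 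Because both $\pmb{a}.o$ and $b.o$ lie in the fundamental sector, their valuation sequences are monotone in the same direction, and matching equal multisets under the same monotone ordering forces $(-v)(b_i) = v_i$ for all $i$, i.e., $b.o = \pmb{a}.o$.

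For step (ii), the equality just established gives $\pmb{a}^{-1}g\pmb{a} \in G_\F(O)$, hence the structural bound $(-v)(g_{ij}) \leq v_i - v_j$; combined with $(-v)(g_{ij}) \leq 0$, an entry $g_{ij}$ can be a unit only when $v_i \geq v_j$. Now for arbitrary $a.o \in \Omega$, write $g.a.o = c.o$ and repeat the multiset argument to obtain a permutation $\rho$ with $(-v)(c_i) = (-v)(a_{\rho(i)})$ and, crucially, $(-v)(g_{i\rho(i)}) = 0$. The structural bound then forces $v_i \geq v_{\rho(i)}$ for all $i$; since $\rho$ is a permutation with $\sum_i v_i = \sum_i v_{\rho(i)}$, equality holds throughout, so $\rho$ preserves every block of indices on which $v$ is constant. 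Finally, Lemma \ref{lem:A2regular} ensures that any $a \in B$ has $(-v)(a_\cdot)$ constant on those same blocks, so $(-v)(a_{\rho(i)}) = (-v)(a_i)$ and $c.o = a.o$. The main obstacle will be step (ii): the multiset argument alone only shows that the valuations of $c$ permute those of $a$, and it is the structural constraint on $g$ inherited from step (i), together with the maximality of $\pmb{a}$ through Lemma \ref{lem:A2regular}, that forces this permutation to preserve the $v$-blocks and thus to act trivially on the valuation data of $a$.
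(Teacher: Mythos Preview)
Your proof is correct and follows the same overall strategy as the paper: extract a permutation via Lemma~\ref{lem:A2unit}, use the determinant-$1$ summation trick to force equality of valuation multisets, match monotone sequences in a common sector, and invoke the maximality of $\pmb{a}$ through Lemma~\ref{lem:A2regular} for the second part.

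The one substantive difference is the bookkeeping of step~(ii). The paper applies Lemma~\ref{lem:A2unit} once to $g$ itself, obtaining a single permutation $\sigma$ with $g_{i\sigma(i)}\in O^\times$; the ordering argument in step~(i) then shows $(-v)(\pmb{a}_i)=(-v)(\pmb{a}_{\sigma(i)})$, so this fixed $\sigma$ already preserves the $v$-blocks and is reused verbatim for every $a\in B$. You instead apply Lemma~\ref{lem:A2unit} to $c^{-1}ga$ for each $a$, producing a permutation $\rho$ that a priori depends on $a$, and then use the structural bound $(-v)(g_{ij})\le v_i-v_j$ (obtained from $\pmb{a}^{-1}g\pmb{a}\in G_\F(O)$) together with $(-v)(g_{i\rho(i)})=0$ to force $\rho$ to preserve the $v$-blocks. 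Both routes are valid; the paper's is marginally more economical, while yours makes the block-triangular shape of $g$ modulo the maximal ideal explicit. One small redundancy: your reverse inequality in step~(i) via $\pmb{a}^{-1}g^{-1}b$ is unnecessary, since $\sum_i v_i=\sum_i(-v)(b_i)=0$ already holds from $\det=1$, so the termwise equality $(-v)(b_i)=v_{\sigma(i)}$ follows immediately from $(-v)(b_i)\le v_{\sigma(i)}$ alone.
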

\begin{proof} Let $\pmb{a} = \diag{\pmb{a}}$ and $\pmb{b} = \diag{\pmb{b}} \in A_\F$ such that $g.\pmb{a}.o = \pmb{b}.o$. Since $\pmb{a}.o$ and $\pmb{b}.o$ lie in the same sector based at $o$, their entries satisfy the same ordering. Formally we can find a permutation $\rho$, such that 
	\begin{align}\label{eq:ord}
		(-v)(\pmb{a}_{\rho(1)}) & \leq (-v)(\pmb{a}_{\rho(2)}) \leq \ldots \leq (-v)(\pmb{a}_{\rho(n)}) \quad \text{and} \\
		(-v)(\pmb{b}_{\rho(1)}) & \leq (-v)(\pmb{b}_{\rho(2)}) \leq \ldots \leq (-v)(\pmb{b}_{\rho(n)}). \nonumber
	\end{align}
	From $g.o=o$ follows that $g\in G_\F(O)$ by Proposition \ref{thm:stab}. By the previous Lemma \ref{lem:A2unit} we get a permutation $\sigma$, such that $g_{i\sigma(i)}\in O^\times$. In coefficients, $\pmb{b}^{-1}g\pmb{a} \in G_\F(O)$ implies $\pmb{b}_i^{-1}g_{i\sigma(i)}\pmb{a}_{\sigma(i)} \in O$ and therefore also $\pmb{b}_i^{-1}\pmb{a}_{\sigma(i)} \in O$. This means that $\operatorname{Diag}( \pmb{a}_{\sigma(1)}, \ldots , \pmb{a}_{\sigma(n)}).o=\diag{\pmb{b}}.o$. So although the elements $\pmb{a}_{\sigma(i)}$ and $\pmb{b}_{i}$ may not be exactly the same, they satisfy $(-v)(\pmb{a}_{\sigma(i)}) = (-v)(\pmb{b}_i)$. So we can order the entries as
	\begin{alignat}{3}
		\label{eq:ineq}
		(-v)(\pmb{b}_{\rho(1)}) & \leq (-v)(\pmb{b}_{\rho(2)})&& \leq \ldots \leq (-v)(\pmb{b}_{\rho(n)}) \quad \text{and thus} \\
		\rotatebox{90}{$=$}\ \ \ \ & \quad \ \quad \ \rotatebox{90}{$=$} && \quad \quad \quad \quad \quad \ \rotatebox{90}{$=$} \nonumber
		\\
		(-v)(\pmb{a}_{\sigma(\rho(1))}) & \leq (-v)(\pmb{a}_{\sigma(\rho(2))})&& \leq \ldots \leq (-v)(\pmb{a}_{\sigma(\rho(n))}). \nonumber
	\end{alignat}
	But we already have a decreasing ordering of the diagonal entries of $\pmb{a}$ in equation (\ref{eq:ord}), so they have to be the same, i.e. for all $i$, $(-v)(\pmb{a}_{\rho(i)})=(-v)(\pmb{a}_{\sigma(\rho(i))})$ and thus $(-v)(\pmb{a}_i) = (-v)(\pmb{a}_{\sigma(i)})=(-v)(\pmb{b}_i) \in \Lambda$. We now know that $\pmb{a}.o=\pmb{b}.o$, but we still need to show that $g$ is the identity on all of $\Omega$. 
	
	If all the inequalities in (\ref{eq:ineq}) were strict, then $\sigma$ would necessarily be the identity. For indices $i,j$ where there is an equality, $\sigma$ could change these entries. However by Lemma \ref{lem:A2regular} we know that whenever $(-v)(\pmb{a}_i)=(-v)(\pmb{a}_j)$, then also $(-v)(a_i) = (-v)(a_j)$ for any $a=\diag{a} \in B$. So for all $i$, $(-v)(a_{\sigma(i)})=(-v)(a_i)$. In fact we see
	$$
	g_{i\sigma(i)}\frac{a_{\sigma(i)}}{a_i}\in g_{i\sigma(i)} O^\times \subseteq O
	$$
	and thus $g.a.o=a.o$ for all $a \in B$, so $f_0|_\Omega = g.f_0|_\Omega$.
\end{proof}
We will now allow slightly more general points $\pmb{a}.o$ and $\pmb{b}.o$.
\begin{proposition}
	\label{pro:A2W}
	Let $g\in G_\F$ with $g.o = o \in \B$, then there exists an element $w \in W_s$ such that $f_0|_\Omega = g.f_0 \circ w|_\Omega$, where $\Omega := f_0^{-1}\left( f_0(\A)\cap g.f_0(\A) \right)$.
\end{proposition}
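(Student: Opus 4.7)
The plan is to reduce Proposition \ref{pro:A2W} to Proposition \ref{pro:A2id} by precomposing $g$ with a suitable element of $N_\F$. The motivating observation is that Proposition \ref{pro:A2id} already handles the case when $\pmb{a}.o$ and $g.\pmb{a}.o$ lie in a common closed sector based at $o$, while any point of $\A$ can be moved into any prescribed closed sector by an element of $W_s$ (a consequence of the simply transitive $W_s$-action on open sectors based at $o$).

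First I would fix a maximal regular element $\pmb{a}\in B$, which exists because $B$ contains $\operatorname{Id}$ (indeed $g.o=o$). Write $\pmb{b}\in A_\F$ with $g.\pmb{a}.o=\pmb{b}.o$, pick any closed sector $\overline{C}$ based at $o$ containing $\pmb{a}.o$, and choose $w\in W_s$ so that $w(\pmb{b}.o)\in \overline{C}$; let $n\in N_\F$ be a representative of $w$. Set $g':=ng\in G_\F$. Since $N_\F\subseteq K_\F$ we have $g'.o=o$, and since $n$ stabilises $f_0(\A)$ by acting as $w$, one gets
\[
f_0(\A)\cap g'.f_0(\A)=n.\bigl(f_0(\A)\cap g.f_0(\A)\bigr),
\]
so the set $B':=\{a\in A_\F : g'.a.o\in f_0(\A)\}$ equals $B$ and $\Omega':=f_0^{-1}(f_0(\A)\cap g'.f_0(\A))=w(\Omega)$. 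In particular $\pmb{a}$ is still a maximal regular element for $B'$, and by construction $g'.\pmb{a}.o=w(\pmb{b}.o)$ lies in the same closed sector as $\pmb{a}.o$.

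Now Proposition \ref{pro:A2id} applies to $g'$ and yields $f_0|_{\Omega'}=g'.f_0|_{\Omega'}$; that is, $q=ng.q$ for every $q\in\Omega'$. Given $p\in\Omega$, substitute $q=w(p)\in\Omega'$ to obtain $w(p)=ng.w(p)$; cancelling $n$ on the left gives $p=g.w(p)=g.f_0(w(p))$, which is exactly the required identity $f_0|_\Omega=g.f_0\circ w|_\Omega$.

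The only delicate step is the choice of $w$: both $\pmb{a}.o$ and $\pmb{b}.o$ may lie on walls and therefore belong to several closed sectors simultaneously, so a priori it is not obvious one can align them. This is handled by choosing any open sector $C_1^\circ$ whose closure contains $\pmb{b}.o$ and any open sector $C_2^\circ$ whose closure contains $\pmb{a}.o$, and letting $w$ be the unique element of $W_s$ mapping $C_1^\circ$ to $C_2^\circ$; then $w(\pmb{b}.o)\in \overline{C_2^\circ}\ni\pmb{a}.o$ as required. Beyond this sector-bookkeeping, the argument is a direct application of Proposition \ref{pro:A2id}.
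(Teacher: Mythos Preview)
Your proof is correct and follows essentially the same strategy as the paper's: reduce to Proposition~\ref{pro:A2id} by composing $g$ with a suitable Weyl-group element so that the maximal regular point and its image land in a common closed sector. The only cosmetic difference is that you left-multiply by a representative $n\in N_\F$ (working with $g'=ng$, which changes $\Omega$ to $w(\Omega)$ but leaves $B$ unchanged), whereas the paper right-multiplies (working with $gw$, which leaves $\Omega$ unchanged but replaces $\pmb{a}$ by $w^{-1}\pmb{a}$); both unwind to the same identity $p=g.w(p)$ for $p\in\Omega$.
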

\begin{proof}
	Use Lemma \ref{lem:A2regular} to get $\pmb{a}\in B$ maximal with respect to the amount of distinct entries. Let $\pmb{b}\in A_\F$ such that $\pmb{b}.o=g.\pmb{a}.o \in f_0(\Omega)$. Since the sectors in $f_0(\A)$ are fundamental domains for $W_s$, there is a $w \in W_s$ such that $w.\pmb{b}.o$ and $\pmb{a}.o$ are in the same sector. Equivalently $\pmb{b}.o$ and $w^{-1}.\pmb{a}.o$ are in the same sector. Note that $gw.o = o$, $\pmb{b}.o = (gw).(w^{-1}\pmb{a}).o$, $f_0(\Omega)=f_0(\A)\cap g.f_0(\A)=f_0(\A)\cap (gw).f_0(\A)$ (since $w.\A = \A$) and $w^{-1}\pmb{a}$ is maximal with respect to the amount of distinct entries, so we can apply Proposition \ref{pro:A2id} to get $f_0|_\Omega = (gw).f_0|_\Omega=g.f_0 \circ w |_\Omega$.
\end{proof}
To show that axiom (A2) holds, there is one more obstacle, namely $g$ may not preserve $o$ in general. In fact, $\Omega$ may not even contain $o$, so we have to first translate $\Omega$ to be able to use the previous propositions. 
\begin{proposition}
	\label{pro:A2aW} 
	Let $g \in G_\F$ and define $\Omega := f_0^{-1}\left( f_0(\A)\cap g.f_0(\A) \right)$. Then there exists an element $\overline{w} \in W_a$ such that $f_0|_\Omega = g.f_0 \circ \overline{w}|_\Omega$.
\end{proposition}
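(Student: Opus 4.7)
The plan is to reduce the statement to Proposition~\ref{pro:A2W} by conjugating $g$ into an element that fixes $o$, using that $A_\F$ acts on the apartment $f_0(\A)$ by translations. If $\Omega = \emptyset$ the statement is vacuous, so I assume $\Omega \neq \emptyset$ and pick any point $p \in f_0(\Omega)$. Since $p \in f_0(\A) = A_\F.o$ and also $p \in g.f_0(\A) = g.A_\F.o$, there exist $a, a' \in A_\F$ with $a.o = p$ and $g.a'.o = p$. Set $h := a^{-1} g a' \in G_\F$. Then $h.o = a^{-1}g.a'.o = a^{-1}.p = o$, so $h$ fixes $o \in \B$.

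Next I would translate the intersection $\Omega$ back through $a^{-1}$. Write $t_b \in W_a$ for the translation of $\A$ corresponding to $b \in A_\F$ under the identification $\A \cong A_\Lambda$ from Theorem~\ref{thm:apt}; by construction $b.f_0(x) = f_0(t_b(x))$ for all $x \in \A$. Since $a'$ stabilizes $f_0(\A)$ setwise, $h.f_0(\A) = a^{-1}g.f_0(\A)$, so
\[
f_0(\A) \cap h.f_0(\A) \;=\; \A \cap a^{-1}g.\A \;=\; a^{-1}\bigl(\A \cap g.\A\bigr) \;=\; a^{-1}.f_0(\Omega).
\]
Therefore $\Omega' := f_0^{-1}(f_0(\A) \cap h.f_0(\A)) = t_{a^{-1}}(\Omega)$.

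Now I apply Proposition~\ref{pro:A2W} to $h$: since $h.o = o$, there is $w \in W_s$ with $f_0|_{\Omega'} = h.f_0 \circ w|_{\Omega'}$. For any $x \in \Omega$, set $y := t_{a^{-1}}(x) \in \Omega'$. Unwinding the definitions,
\[
f_0(x) \;=\; a.f_0(y) \;=\; a \cdot h.f_0(w(y)) \;=\; a \cdot a^{-1} g a'.f_0(w(y)) \;=\; g.f_0\bigl(t_{a'}(w(t_{a^{-1}}(x)))\bigr).
\]
Hence $\overline{w} := t_{a'} \circ w \circ t_{a^{-1}}$ satisfies $f_0|_{\Omega} = g.f_0 \circ \overline{w}|_{\Omega}$. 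Since $W_a = T \rtimes W_s$ with the full translation group $T = \A$, the composition $\overline{w}$ lies in $W_a$, completing the proof.

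The only real work has already been done in Proposition~\ref{pro:A2W}; the remaining difficulty here is purely bookkeeping with translations, and the one point to verify carefully is that the action of $A_\F$ on $f_0(\A)$ coincides with the translation subgroup $T$ of $W_a$, which is immediate from Theorem~\ref{thm:apt} and the choice $T = A_\Lambda$.
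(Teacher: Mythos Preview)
Your proof is correct and follows essentially the same approach as the paper: both pick a point in $f_0(\Omega)$, write it as $a.o = g.a'.o$ for suitable $a,a'\in A_\F$ (the paper calls these $\pmb{b},\pmb{a}$), conjugate $g$ to $h=a^{-1}ga'$ which fixes $o$, identify the translated intersection as $a^{-1}.f_0(\Omega)$, apply Proposition~\ref{pro:A2W} to $h$, and then untranslate to obtain $\overline{w}=t_{a'}\circ w\circ t_{a^{-1}}\in W_a$. The only differences are notational.
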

\begin{proof}
	If $\Omega=\emptyset$, then any element of $W_a$ will suffice. Otherwise choose and fix $\pmb{a}\in B$ (not necessarily maximal) and $\pmb{b}\in A_\F$ with $\pmb{b}.o =g.\pmb{a}.o \in f_0(\Omega)$. Now consider any $a,b \in A_\F$ with $b.o = g.a.o \in f_0(\Omega)$. We translate the problem by $\pmb{b}^{-1}$ and get
	$$
	\pmb{b}^{-1}.b.o = (\pmb{b}^{-1}.g.\pmb{a}).\pmb{a}^{-1}.a.o
	$$
	where $(\pmb{b}^{-1}.g.\pmb{a}).o = o$. We are almost in the situation of Proposition \ref{pro:A2W}, but we have a different $\Omega$. In fact since $\pmb{b}^{-1}.f_0(\A)=f_0(\A)=\pmb{a}.f_0(\A)$,
	\begin{align*}
		f_0(\A)\cap (\pmb{b}^{-1}.g.\pmb{a}).f_0(\A) &=\pmb{b}^{-1}.f_0(\A)\cap \pmb{b}^{-1}.g.f_0(\A) \\ &=\pmb{b}^{-1}.f_0(\Omega)=f_0(\pmb{b}^{-1}.\Omega),
	\end{align*} 
	we can apply Proposition \ref{pro:A2W} to get a $w \in W_s$ such that
	$$
	f_0|_{\pmb{b}^{-1}.\Omega}= (\pmb{b}^{-1}.g.\pmb{a}).f_0 \circ w |_{\pmb{b}^{-1}.\Omega} \ \ ,
	$$
	which can be rewritten as
	$$
	\pmb{b}^{-1}.f_0|_\Omega =  \pmb{b}^{-1}.g.f_0\circ (\pmb{a} w \pmb{b}^{-1}) |_\Omega.
	$$
	Renaming $\overline{w}=\pmb{a} w \pmb{b}^{-1} \in W_a$ and translating back results in the required formula
	\begin{align*}
	f_0|_\Omega &=  g.f_0\circ \overline{w} |_\Omega. \qedhere
		\end{align*}
\end{proof}
We can now conclude the proof of axiom (A2).
\begin{proposition}\label{prop:A2_SLn}
	\label{pro:A2} The axiom
	\begin{enumerate}
		\item [(A2)] For all $f_1,f_2 \in \Fun$, if $f_1(\A)\cap f_2(\A)\neq \emptyset$, then the set $\Omega := f_2^{-1}\circ f_1(\A)$ is a $W_a$-convex set and there exists $w \in W_a$ such that $f_2|_\Omega = f_1\circ w |_\Omega$. 
	\end{enumerate}
	holds for $(\B,\Fun)$.
\end{proposition}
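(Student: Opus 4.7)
The plan is to leverage Proposition \ref{pro:A2aW}, which already provides the $W_a$-element $w$ witnessing the change of charts, and then to establish the remaining assertion that $\Omega$ is a finite intersection of closed affine half-apartments. First, writing $f_1 = g_1.f_0$ and $f_2 = g_2.f_0$ with $g_1, g_2 \in G_\F$ and setting $g := g_2^{-1}g_1$, one checks that $\Omega = f_2^{-1}(f_1(\A) \cap f_2(\A)) = f_0^{-1}(\A \cap g.\A)$ and that the desired identity $f_2|_\Omega = f_1 \circ w|_\Omega$ is equivalent to $f_0|_\Omega = g.f_0 \circ w|_\Omega$; Proposition \ref{pro:A2aW} supplies exactly such a $w \in W_a$.

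To prove $W_a$-convexity, I would first dispose of the trivial case $\Omega = \emptyset$, and then reduce to the special case $g.o = o$. Concretely, given any $\pmb{a}.o \in \Omega$ and $\pmb{b} \in A_\F$ with $\pmb{b}.o = g.\pmb{a}.o$, the element $\tilde g := \pmb{b}^{-1} g \pmb{a}$ satisfies $\tilde g.o = o$, and a short direct computation (analogous to the translation step in Proposition \ref{pro:A2aW}) identifies $f_0^{-1}(\A \cap \tilde g.\A) = \pmb{a}^{-1}.\Omega$. Since $\pmb{a}$ induces a translation of $\A$ lying in $W_a$, and translations preserve $W_a$-convexity, it suffices to establish $W_a$-convexity under the additional assumption $g.o = o$.

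Assuming $g.o = o$, Proposition \ref{pro:A2W} produces $w \in W_s$ with $(gw).p = p$ for every $p \in \Omega$; conversely, any $a.o \in \A$ fixed by $gw$ satisfies $g.a.o = w^{-1}.a.o \in \A$, hence lies in $\Omega$. Using $\operatorname{Stab}_{G_\F}(a.o) = a G_\F(O) a^{-1}$ from Theorem \ref{thm:stab}, $\Omega$ then coincides with the set of $a.o \in \A$ for which $a^{-1}(gw)a \in G_\F(O)$. Since $g \in G_\F(O)$ by Theorem \ref{thm:stab} and $w$ is a signed permutation, $gw$ has all entries in $O$; writing $a = \operatorname{Diag}(a_1, \ldots, a_n)$, the condition $a^{-1}(gw)a \in G_\F(O)$ decomposes entrywise into the finite family of inequalities
$$(-v)(a_i) - (-v)(a_j) \geq (-v)((gw)_{ij})$$
over the pairs $(i,j)$ with $(gw)_{ij} \neq 0$, each of which defines a closed affine half-apartment $H_{\alpha_{ij}, (-v)((gw)_{ij})}^+ \subseteq \A$. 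Thus $\Omega$ is a finite intersection of such half-apartments, hence $W_a$-convex. The main obstacle in this strategy, namely the rigidity forcing $gw$ to act as the identity on \emph{all} of $\Omega$ once its action is pinned down on a single sufficiently regular point, has already been resolved in Propositions \ref{pro:A2id}--\ref{pro:A2W} through Lemmas \ref{lem:A2conv} and \ref{lem:A2regular}; the present proof only needs to package this rigidity together with the simple entrywise analysis above.
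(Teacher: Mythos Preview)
Your proof is correct and follows essentially the same approach as the paper: obtain $w$ from Proposition~\ref{pro:A2aW} for the change of charts, then characterize $\Omega$ via Theorem~\ref{thm:stab} as the set of $a.o$ with $a^{-1}(\cdot)a \in G_\F(O)$ and read off the half-apartment inequalities entrywise. The paper is slightly more direct in that it skips your reduction to $g.o=o$: it reuses the very element $w\in W_a$ already furnished by Proposition~\ref{pro:A2aW} and analyzes $(h^{-1}gw)_{ij}\,a_j/a_i\in O$ directly, so no separate translation or second appeal to Proposition~\ref{pro:A2W} is needed. (One small slip in your reduction step: the translated set is $\pmb{b}^{-1}.\Omega$, not $\pmb{a}^{-1}.\Omega$, as in the proof of Proposition~\ref{pro:A2aW}; this is harmless since either is a $W_a$-translate of $\Omega$.)
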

\begin{proof}
	Let $g,h \in G_\F$ with $f_1=g.f_0$ and $f_2=h.f_0$ such that $g.f_0(\A)\cap h.f_0(\A)\neq \emptyset$. We define $\Omega := (h.f_0)^{-1}(h.f_0(\A)\cap g.f_0(\A))$. We will first show the second part of (A2). To be able to apply Proposition \ref{pro:A2aW}, we act with $h^{-1}$ on
	$$
	h.f_0(\Omega)=h.f_0(\A)\cap g.f_0(\A),
	$$
	to get
	$$
	f_0(\Omega)=f_0(\A)\cap (h^{-1}g).f_0(\A).
	$$
	We can apply Proposition \ref{pro:A2aW} with $h^{-1}g$ to get an element $w \in W_a$ such that $
	f_0|_\Omega = (h^{-1}g).f_0 \circ w |_\Omega, 
	$
	and thus $f_2|_\Omega = h.f_0|_\Omega = g.f_0\circ w |_\Omega=f_1\circ w |_\Omega$.
	
	It remains to show that $\Omega$ is $W_a$-convex. Elements $a.o \in f_0(\Omega)$ are exactly those elements which have the special property that
	$
	h.a.o = g.w.a.o,
	$
	which is equivalent to $a^{-1}h^{-1}gwa \in G_\F(O)$ by Theorem \ref{thm:stab}. If $a = \diag{a} \in A_\F$, then
	$$
	(h^{-1}gw)_{ij}\frac{a_j}{a_i} \in O
	$$
	in matrix entries. Taking the valuation we obtain
	$$
	(-v)(\chi_{\alpha_{ij}}(a)) = (-v)(a_i/a_j) \geq (-v)((h^{-1}gw)_{ij}) \in \Lambda
	$$
	and these are exactly the inequalities that define affine half-apartments
	$$
	H_{\alpha_{ij},k}^+ := \left\{  a.o \in \A \colon (-v)(\chi_{\alpha_{ij}}(a)) \geq k  \right\}
	$$
	for $k = (-v)((h^{-1}gw)_{ij})  $. Since there are only finitely many pairs $(i,j)$, $\Omega$ is a finite intersection of half-apartments, i.e. $W_a$-convex.
\end{proof}

 \newpage
 
    \bibliographystyle{alpha_noand} 
    \bibliography{refs} 
 
	
\end{document}